\definecolor{amaranth}{rgb}{0.9, 0.17, 0.31}
\definecolor{shadecolor}{rgb}{1,0.9,0.7}
\newcommand\restr[2]{{
  \left.\kern-\nulldelimiterspace 
  #1 
  \vphantom{\big|} 
  \right|_{#2} 
  }}
\let\oldtocsection=\tocsection
\let\oldtocsubsection=\tocsubsection
\let\oldtocsubsubsection=\tocsubsubsection
\renewcommand{\tocsection}[2]{\hspace{0em}\oldtocsection{#1}{#2}}
\renewcommand{\tocsubsection}[2]{\hspace{1em}\oldtocsubsection{#1}{#2}}
\renewcommand{\tocsubsubsection}[2]{\hspace{2em}\oldtocsubsubsection{#1}{#2}}
\newtheorem{theorem}{Theorem}[section]
\newtheorem{lemma}[theorem]{Lemma}
\newtheoremstyle{defstyle}
  {.6em} 
  {.1em} 
  {} 
  {} 
  {\bfseries} 
  {.} 
  {.5em} 
  {} 
\theoremstyle{defstyle} \newtheorem{definition}[theorem]{Definition}
\newtheorem{example}[theorem]{Example}
\theoremstyle{remark}
\newtheorem{remark}[theorem]{Remark}
\numberwithin{equation}{section}
\numberwithin{figure}{section}
\newcommand{\NN} {\mathbb{N}}
\newcommand{\QQ} {\mathbb{Q}}
\newcommand{\RR} {\mathbb{R}}
\newcommand{\PP} {\mathbb{P}}
\renewcommand{\AA} {\mathbb{A}}
\newcommand{\GG} {\mathbb{G}}
\newcommand{\cO} {\mathcal{O}}
\newcommand {\shM}  {\mathcal{M}}
\newcommand {\shX}  {\mathcal{X}}
\newcommand {\fod}  {\mathfrak{d}}
\newcommand {\foj}  {\mathfrak{j}}
\newcommand {\Aut}  {\operatorname{Aut}}
\newcommand {\cl}  {\operatorname{cl}}
\newcommand {\codim} {\operatorname{codim}}
\newcommand {\coker} {\operatorname{coker}}
\newcommand {\ev}  {\operatorname{ev}}
\newcommand {\gp}  {{\operatorname{gp}}}
\newcommand {\Hom}  {\operatorname{Hom}}
\newcommand {\Int}  {\operatorname{Int}}
\renewcommand {\ker } {\operatorname{ker}}
\newcommand {\kk} {\Bbbk}
\newcommand {\lra}  {\longrightarrow}
\newcommand {\M} {\mathcal{M}}
\newcommand{\C} {\mathrm{C}}
\renewcommand{\O}  {\mathcal{O}}
\newcommand {\out}  {\mathrm{out}}
\newcommand{\trop}{\mathrm{trop}}
\renewcommand{\P}  {\mathscr{P}}
\newcommand {\pr}  {\operatorname{pr}}
\newcommand {\rk} {\operatorname{rk}}
\newcommand {\scrP}  {\mathscr{P}}
\newcommand {\sat}  {{\operatorname{sat}}}
\newcommand {\Spec} {\operatorname{Spec}}
\newcommand\Tr{\operatorname{Tr}}
\newcommand {\ul} {\underline}
\newcommand {\virt} {\mathrm{virt}}
\def\log{\mathrm{log}}
\def\ev{\mathrm {ev}}
\def\virt{\mathrm{vir}}
\def\PP{\mathbb{P}}
\def\cL{{\mathcal{L}}}
\def\cO{\mathcal{O}}
\def\cM{{\mathcal{M}}}
\def\cK{{\mathcal{K}}}
\def\C{{\mathcal{C}}}
\def\Z{\mathbb{Z}}
\def\C{\mathbb{C}}
\def\Q{\mathbb{Q}}
\def\Aut{{\rm Aut}}
\def\E{\mathrm{E}}
\def\mydate{\ifcase\month \or January\or February\or March\or
April\or May\or June\or July\or August\or September\or October\or 
November\or December\fi \space\number\day,\space\number\year}
\newtheoremstyle{cited}%
  {3pt}
  {3pt}
  {\itshape}
  {}
  {\bfseries}
  {.}
  {.3em}
  {\thmname{#1} \thmnumber{#2}\thmnote{\normalfont#3}}
\theoremstyle{cited}
\newtheorem{citedthm}{Theorem}
\theoremstyle{cited}
\begin{document}

\title[Quivers, Flow Trees, and Log Curves]{Quivers, Flow Trees, and Log Curves}

\author[H.\,Arg\"uz]{H\"ulya Arg\"uz}
\address{University of Georgia, Department of Mathematics, Athens, GA 30605}
\email{Hulya.Arguz@uga.edu}

\author[P.\,Bousseau]{Pierrick Bousseau}
\address{University of Georgia, Department of Mathematics, Athens, GA 30605}
\email{Pierrick.Bousseau@uga.edu}

\date{}

\begin{abstract}
Donaldson--Thomas (DT) invariants of a quiver with potential can be expressed in terms of simpler attractor DT invariants by a universal formula. The coefficients in this formula are calculated combinatorially using attractor flow trees. 
In this paper, we prove that these coefficients are genus 0 log Gromov--Witten invariants of $d$-dimensional toric varieties, where $d$ is the number of vertices of the quiver. 
This result follows from a log-tropical correspondence theorem which relates $(d-2)$-dimensional families of tropical curves obtained as universal deformations of attractor flow trees, and rational log curves in toric varieties.
\end{abstract}

\maketitle

\setcounter{tocdepth}{2}
\tableofcontents
\setcounter{section}{-1}

\section{Introduction}
This paper relates two seemingly different ways to define enumerative invariants given a quiver with potential: the quiver Donaldson--Thomas (DT) invariants, defined using moduli spaces of quiver representations, and log Gromov--Witten invariants of toric varieties, defined as counts of rational log curves satisfying constraints specified by the combinatorics of the quiver.

To relate quiver DT invariants to log Gromov--Witten invariants, we use the combinatorial calculation of quiver DT invariants in terms of attractor flow trees following our previous work \cite{ABflow}. We explain how to interpret such trees as tropical curves, and then prove a new correspondence theorem between counts of tropical curves and log Gromov--Witten invariants of toric varieties. 
Crucially, our log-tropical correspondence theorem involves non-rigid tropical curves moving in non-trivial families, unlike previous correspondence theorems in the literature where only rigid tropical curves are usually considered.

After a brief description of DT
quiver invariants in \S\ref{sec_dt_intro} and of log Gromov--Witten invariants in \S\ref{sec_gw_intro}, we provide a more detailed overview of the log-tropical correspondence in \S \ref{sec_log_tropical_intro}, and the quiver DT-log Gromov--Witten correspondence in \S \ref{sec_main_intro}.

\subsection{DT quiver invariants}
\label{sec_dt_intro}

A quiver with potential $(Q,W)$ is given by a finite oriented graph $Q$, and a finite formal linear combination $W$ of oriented cycles in $Q$. 
We denote by 
$Q_0$ the set of vertices of $Q$, and by $Q_1$ the set of edges of $Q$.
For every dimension vector $\gamma \in N \coloneqq \Z^{Q_0}$
and stability parameter
\begin{equation} \theta \in (\gamma^{\perp})_\RR  \subset M_\RR \coloneqq \Hom(N,\RR)\,,\end{equation} 
where $(\gamma^{\perp})_\RR \coloneqq \{\theta\in M_\RR|\,\theta(\gamma)=0\}$, 
the theory of King's stability for quiver representations
\cite{MR1315461} defines a quasiprojective variety
$M_\gamma^{\theta}$ over $\C$, parametrizing S-equivalence classes of 
$\theta$-semistable representations of $Q$ of dimension $\gamma$. Moreover, the trace of the potential naturally defines a regular function 
$\Tr (W)_\gamma^\theta \colon M_\gamma^\theta \longrightarrow \C$.

A stability parameter $\theta \in (\gamma^\perp)_\RR$ is called generic if $\theta \in (\gamma^{'
\perp})_\RR$ for $\gamma' \in N$ implies that $\gamma'$ is collinear with $\gamma$. For a generic stability parameter $\theta$, the \emph{DT invariant} $\Omega_\gamma^{+,\theta}$ is an integer which is a virtual count of the critical points of $\Tr (W)_\gamma^\theta$.
We have $\Omega_\gamma^{+,\theta}=0$ if the $\theta$-stable locus in $M_\gamma^\theta$ is empty, and else,  $\Omega_\gamma^{+,\theta}$ is the Euler characteristic of $M_\gamma^\theta$ valued in the perverse sheaf obtained by applying the vanishing-cycle functor for $\Tr (W)_\gamma^\theta$ to the intersection cohomology sheaf $IC$  normalized to be the constant sheaf in degree $0$ when $M_\gamma^\theta$ is smooth \cite{davison2015donaldson, MR4132957, MR4000572}:
\begin{equation} \label{eq_dt_intro}
\Omega_\gamma^{+,\theta}
=e(M_\gamma^\theta, \phi_{\mathrm{Tr}(W)_\gamma^\theta}(IC))
\in \Z
\,.\end{equation}
For example, if $\gamma$ is primitive and $W=0$, then $\Omega_\gamma^{+,\theta}$ is simply the topological Euler characteristic of $M_\gamma^\theta$.
For every $\gamma \in N$, we set 
\begin{equation*}
\kappa(\gamma):=(-1)^{\chi(\gamma,\gamma)} \in \{\pm 1\}\,,\end{equation*}
where $\chi: N \times N \rightarrow \Z$ is the Euler form of $Q$, given by 
\begin{equation*} 
\chi(\gamma,\gamma')=\sum_{i \in Q_0} \gamma_i \gamma_i' - \sum_{(\alpha:i \rightarrow j)\in Q_1} \gamma_i \gamma_j' \,.
\end{equation*}
It is often convenient to repackage the DT invariants 
$\Omega_\gamma^{+,\theta}$ into the \emph{rational DT invariants} 
\begin{equation}
\overline{\Omega}_\gamma^{+,\theta} \coloneqq \sum_{\substack{\gamma' \in N\\ 
    \gamma=k \gamma',\, k\in \Z_{\geq 1}}} \frac{\kappa(\gamma')^{k-1}}{k^2} \Omega_{\gamma'}^{+,\theta} \in \Q \,,
\end{equation}
 which can also be defined using the motivic Hall algebra \cite{JoyceSong, kontsevich2008stability, MR2650811,MR2801406}.
In the literature on DT invariants, it is more common to work with DT invariants $\Omega_\gamma^{\theta}$ defined by 
\[
\Omega_\gamma^{\theta}
:=(-1)^{\dim M_\gamma^\theta}\, e(M_\gamma^\theta, \phi_{\mathrm{Tr}(W)_\gamma^\theta}(IC)) \in \Z\,,\]
when the $\theta$-stable locus is non-empty, 
and the rational DT invariants $\overline{\Omega}_\gamma^\theta$ defined 
by 
\begin{equation} \nonumber
\overline{\Omega}_\gamma^\theta :=
\sum_{\substack{\gamma' \in N_Q\\ \gamma=k \gamma', k\in \Z_{\geq 1}}} \frac{1}{k^2} \Omega_{\gamma'}^\theta \in \QQ\,. \end{equation}
When the $\theta$-stable locus is non-empty, we have $\dim M_\gamma^\theta = 1-\chi(\gamma, \gamma)$, and so we have 
\[ \Omega_\gamma^{+,\theta}=-\kappa(\gamma) \Omega_\gamma^\theta
\,\,\,\,\text{and}\,\,\,\,\overline{\Omega}_\gamma^{+,\theta}=-\kappa(\gamma) \overline{\Omega}_\gamma^\theta\,.\]
In this paper, we work with the DT invariants $\Omega_\gamma^{+,\theta}$ as they are more directly related to log Gromov--Witten invariants.

The DT invariants $\Omega_\gamma^{+,\theta}$ are locally constant functions of the generic stability parameter $\theta \in \gamma^{\perp}$ and their jumps across the loci of non-generic stability parameters are controlled by the wall-crossing formula of Joyce-Song and 
Kontsevich-Soibelman \cite{JoyceSong,kontsevich2008stability}. 
Using the wall-crossing formula, the DT invariants can be computed in terms of the simpler \emph{attractor DT invariants}, which are 
DT invariants at specific values of the stability parameter.
Let $\omega \colon N \times N \rightarrow \Z$ be the skew-symmetric form on $N$ given by
\begin{equation}
\label{Eq: Euler form}
 \omega(\gamma, \gamma'):= \sum_{i,j\in Q_0}(a_{ij}-a_{ji})\gamma_i\gamma_j'\,,   
\end{equation}
where $a_{ij}$ is the number of arrows in $Q$ from the vertex $i$ to the vertex $j$. The specific point $\iota_\gamma \omega:=  \omega(\gamma,-) \in \gamma^{\perp} \subset M_{\RR}$ is called the \emph{attractor point} for $\gamma$ \cite{AlexandrovPioline, mozgovoy2020attractor}. In general, the attractor point 
$\iota_\gamma \omega$ is not generic and we define
the attractor DT invariants $\Omega_\gamma^{+,*}$ by
\begin{equation} \Omega_\gamma^{+,*}
\coloneqq \Omega_\gamma^{+, \theta_\gamma}\,,\end{equation} 
where $\theta_\gamma$
is a small generic perturbation of 
$\iota_\gamma \omega$ in $\gamma^{\perp}$ \cite{AlexandrovPioline, mozgovoy2020attractor}. One can check that $\Omega_\gamma^{+,*}$ is independent of the choice of the small perturbation
\cite{AlexandrovPioline, mozgovoy2020attractor}.

Iteratively using the wall-crossing formula, the rational DT invariants $\overline{\Omega}_\gamma^{+,\theta}$
for generic stability parameters $\theta \in \gamma^{\perp}$
are expressed in terms of the rational attractor DT invariants $\overline{\Omega}_\gamma^{+,*}$ by a formula of the form
\begin{equation} \label{eq_reconstruction_intro}
    \overline{\Omega}_\gamma^{+,\theta} = \sum_{r\geq 1} \sum_{\substack{\{\gamma_i\}_{1\leq i\leq r}\\ \sum_{i=1}^r \gamma_i = \gamma}} \frac{1}{|\Aut(\{\gamma_i\}_i)|} 
    F_r^{\theta}(\gamma_1,\dots,\gamma_r) \prod_{i=1}^r \overline{\Omega}_{\gamma_i}^{+,*}\,,
\end{equation}
where the second sum is over the multisets $\{\gamma_i\}_{1\leq i\leq r}$ with $\gamma_i \in N$ and $\sum_{i=1}^r \gamma_i=\gamma$ \cite{AlexandrovPioline, KS}. Here, the denominator $|\Aut(\{\gamma_i\}_i)|$ is the order of the symmetry group of $\{\gamma_i\}$: if $m_{\gamma'}$ is the number of times that $\gamma' \in N$ appears in $\{\gamma_i\}_i$, then 
$|\Aut(\{\gamma_i\}_i)|=\prod_{\gamma'\in N}m_{\gamma'}!$.
The coefficients $F_r^{\theta}(\gamma_1,\dots,\gamma_r)$ 
are integers and are universal in the sense that they depend 
on $(Q,W)$ only through the skew-symmetric form 
$\omega$ on $N$. The dependence on the potential $W$ is entirely contained in the attractor DT invariants $\overline{\Omega}_{\gamma_i}^{+,*}$. 
Moreover \cite[\S 3.2]{KS}, the coefficients $F_r^\theta(\gamma_1,\dots,\gamma_r)$
have a natural decomposition 
\begin{equation}\label{eq_trees_intro} F_r^\theta(\gamma_1,\dots,\gamma_r) = \sum_h F_{r,h}^\theta(\gamma_1,\dots,\gamma_r)\end{equation}
indexed by \emph{attractor flow trees} $h : T \rightarrow M_\RR$, which are rooted balanced trees in $M_\RR$ with root at $\theta$ and with $r$ leaves decorated by $\gamma_1, \dots,\gamma_r$ -- see Definition \ref{def_attractor_tree} for details. The contribution $F_{r,h}^\theta(\gamma_1,\dots,\gamma_r)$ of $h : T \rightarrow M_\RR$
is obtained by applying the wall-crossing formula at each vertex of $T$.

In our previous paper \cite{ABflow}, we proved an explicit formula, called the 
\emph{flow tree formula} and conjectured by 
Alexandrov-Pioline
 \cite{AlexandrovPioline}, which computes the coefficients
$F_{r,h}^{\theta}(\gamma_1,\dots,\gamma_r)$
in \eqref{eq_trees_intro}  in terms of a sum over trivalent perturbed attractor flow trees obtained as small perturbations of $h:T \rightarrow M_\RR$. A different combinatorial formula for $F_r^{\theta}(\gamma_1,\dots,\gamma_r)$
was proved by Mozgovoy in  \cite{mozgovoy2022operadic}. 
The main goal of the present paper is to give a geometric interpretation of these coefficients $F_{r,h}^{\theta}(\gamma_1,\dots,\gamma_r)$ in terms of genus $0$ log Gromov--Witten invariants of toric varieties.

\subsection{Log Gromov--Witten invariants} \label{sec_gw_intro}
Log Gromov--Witten theory, developed by Abramovich--Chen and Gross--Siebert \cite{logGWbyAC,logGW}, provides a framework to enumerate algebraic curves in an algebraic variety $X$ having prescribed tangency conditions along a divisor $D \subset X$ which is allowed to have particular types of singularities -- typically we consider normal crossing divisors. Generalizing the notion of a stable map $f: C\to X$ to the log setting amounts to endowing all spaces with (fine, saturated) log structures and lift all morphisms to morphisms of log spaces -- see \S \ref{sec_log_trop_review} for a brief review of log schemes and stable log maps.  

The additional data of a log structure on a variety $X$ encodes combinatorial information which is used to define a cone complex associated to $X$, referred to as the \emph{tropicalization} of $X$. This makes it feasible to count log curves in terms of their tropical counterparts. In \cite{NS}, Nishinou-Siebert used toric degenerations and log deformation theory to prove 
a correspondence theorem between counts of genus $0$ complex curves in $d$-dimensional toric varieties and counts of tropical curves in $\RR^d$. More precisely, the main result of \cite{NS} provides a correspondence between counts of stable log maps and counts of rigid tropical trees. To obtain such a correspondence, one considers log curves generically contained in the big torus orbit of the toric variety and with generically trivial log structure. 

In the present paper, given a quiver with $d$ vertices, we prove a correspondence theorem between counts of log curves in $d$-dimensional toric varieties and tropical counts associated to $(d-2)$-dimensional families of tropical curves in $\RR^d$ obtained as universal deformations of the attractor flow trees. 
A key and major difference with the set-up of \cite{NS} is that for $d \geq 3$, the relevant log curves are entirely contained in the toric boundary divisor, have generically non-trivial log structures, and correspond to non-rigid tropical curves moving in non-trivial $(d-2)$-dimensional families.

\begin{figure}[h]
\center{\includegraphics{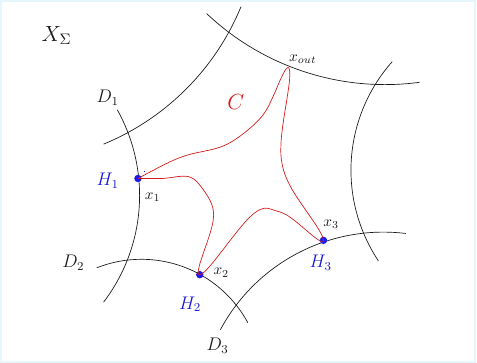}}
\caption{A stable log map in $\mathcal{M}_{\omega,\boldsymbol{\gamma},\mathbf{H}}^\log(X_\Sigma)$ for $d=2$ and $r=3$.\label{fig1}}
\end{figure}

We now describe more precisely our log-tropical correspondence theorem. As in \S\ref{sec_dt_intro}, we fix a quiver $Q$, with set of vertices $Q_0$ of cardinality $d$, and let $\omega$ be the skew-symmetric form on $N$ as in \eqref{Eq: Euler form}. We denote
\[N=\Z^{Q_0}\simeq \Z^d \,\ \mathrm{and} \,\ M_\RR=\Hom(N,\RR)\simeq \RR^d \,.\] 
We also fix a tuple $\boldsymbol{\gamma}=(\gamma_1,\dots,\gamma_r)$ of elements $\gamma_i \in N$ such that $\iota_{\gamma_i}\omega\neq 0$ for all $1\leq i\leq r$, and $\iota_\gamma \omega \neq 0$, where $\gamma=\sum_{i=1}^r \gamma_i$. We count curves in a $d$-dimensional projective toric variety $X_\Sigma$ defined by a fan in $M_\RR$ containing the rays $\RR_{\geq 0}\iota_{\gamma_i}\omega$, and such that the hyperplanes $(\gamma_i^\perp)_\RR$ in $M_\RR$ are unions of cones of $\Sigma$. We refer to such a fan as a $\boldsymbol{\gamma}$-fan (see Definition \ref{def_gamma_fan}). In particular, for every $1\leq i \leq r$, we have a toric divisor $D_i$ of $X_\Sigma$ corresponding to the ray $\RR_{\geq 0}\iota_{\gamma_i}\omega$. The toric variety $X_\Sigma$ has a natural divisorial log structure defined by its toric boundary $D_\Sigma$, and elements of $N$ naturally define tangency conditions along $D_\Sigma$  at marked point for stable log maps to $X_\Sigma$ -- see \S\ref{sec_log_trop_review} for details. 

Consider hypersurfaces $H_i$ in $D_i$, given by equation of the form $z^{\frac{\gamma_i}{|\gamma_i|}}=c_i$ for some constant $c_i$, where $|\gamma_i|$ is the divisibility of $\gamma_i$ in $N$, and let $\mathbf{H}=(H_1,\dots,H_r)$. We denote by
\[\mathcal{M}_{\omega,\boldsymbol{\gamma},\mathbf{H}}^\log(X_\Sigma)\] 
the moduli space of genus $0$ stable log maps $f :C \rightarrow X_\Sigma$ having $r+1$ marked points $x_1,\dots,x_r,x_{\mathrm{out}}$, with tangency condition along $D_\Sigma$ given by $\iota_{\gamma_i} \omega$ at $x_i$, $-\iota_\gamma \omega$ at $x_{\mathrm{out}}$, and such that $f(x_i) \in H_i$ for all $1\leq i \leq r$, see Figure \ref{fig1}. The moduli space $\mathcal{M}_{\omega,\boldsymbol{\gamma},\mathbf{H}}^\log(X_\Sigma)$ is naturally stratified by the combinatorics of the log structure, and is log smooth of dimension $d-2$ for general $H_i$'s -- see Theorem \ref{thm_enum}. If $f :C \rightarrow X_\Sigma$ is a general point in a codimension $k$-stratum of  $\mathcal{M}_{\omega,\boldsymbol{\gamma},\mathbf{H}}^\log(X_\Sigma)$, then the tropicalization of  $f :C \rightarrow X_\Sigma$ is naturally a $k$-dimensional face of the moduli space $\mathcal{M}_{\omega,\boldsymbol{\gamma},\mathbf{A}^0}^\trop(\Sigma)$ of genus $0$ tropical curves $h : \Gamma \rightarrow M_\RR$ in $M_\RR$ with $r+1$ legs $L_1,\dots,L_r, L_{\mathrm{out}}$, of weighted directions $\iota_{\gamma_i}\omega$ and $-\iota_\gamma \omega$ and such that $h(L_i) \subset A_i^0:=(\gamma_i^\perp)_\RR$, see Figure \ref{fig2}. In particular, given a $(d-2)$-dimensional face $\rho$ of $\mathcal{M}_{\omega,\boldsymbol{\gamma},\mathbf{A}^0}^\trop(\Sigma)$, one obtains a log Gromov--Witten 
\begin{equation}\label{eq_N_log_intro}
N_\rho^{\mathrm{toric}}(X_\Sigma)\end{equation}
by counting the $0$-dimensional strata of $\mathcal{M}_{\omega,\boldsymbol{\gamma},\mathbf{H}}^\log(X_\Sigma)$ with tropicalization $\rho$.

\begin{figure}[h]
\center{\includegraphics{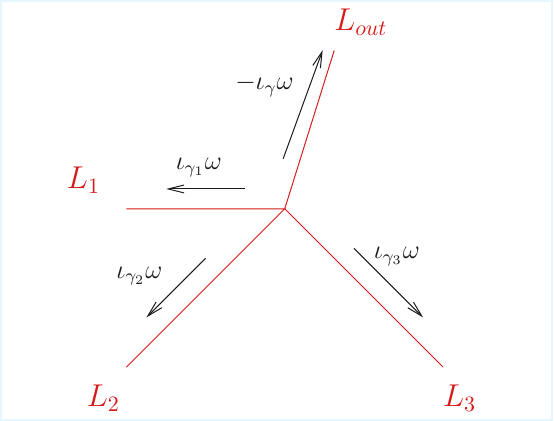}}
\caption{A tropical curve in $\mathcal{M}_{\omega,\boldsymbol{\gamma},\mathbf{A}^0}^\trop(\Sigma)$ for $d=2$ and $r=3$.\label{fig2}}
\end{figure}

\subsection{Log-tropical correspondence}
\label{sec_log_tropical_intro}
To compute the log Gromov--Witten invariant $N_\rho^{\mathrm{toric}}(X_\Sigma)$ tropically, one considers general affine perturbations $A_i$ of the linear hyperplanes $A_i^0=(\gamma_i^\perp)_\RR$, and the corresponding moduli space  \[\mathcal{M}_{\omega,\boldsymbol{\gamma},\mathbf{A}}^\trop\] of tropical curves in $M_\RR$ with $h(L_i) \subset A_i$ for all $1\leq i\leq r$. There is a finite $S_\rho$ of $(d-2)$-dimensional faces $\sigma$ of $\mathcal{M}_{\omega,\boldsymbol{\gamma},\mathbf{A}}^\trop$ which in the limit $\mathbf{A} \rightarrow \mathbf{A}^0$ recovers $\rho$. While a general tropical curve in the face $\rho$ might have vertices of arbitrary valency, general tropical curves in the faces $\sigma$ are trivalent
-- see Lemma \ref{lem_dimension} and Figure \ref{fig3}.

\begin{figure}[h]
\center{\includegraphics{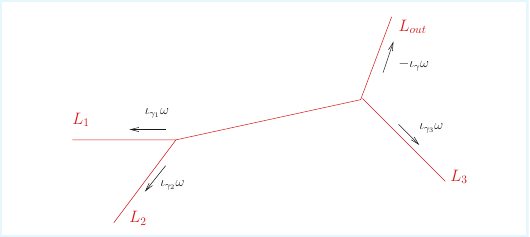}}
\caption{A tropical curve in $\mathcal{M}_{\omega,\boldsymbol{\gamma},\mathbf{A}^0}^\trop(\Sigma)$ for $d=2$ and $r=3$.\label{fig3}}
\end{figure}

In particular, one can define the tropical multiplicity $N_\sigma^\trop$ of $\sigma$ either as a lattice index -- see Definition \ref{def_trop_mult}, or as a product over the vertices of local multiplicities determined by the skew-symmetric form $\omega$ -- see Lemma \ref{lem: product2 for Ntrop}.
Moreover, if $\dim \fod_{L_{\mathrm{out}}}^\rho=d-1$, where $\fod_{L_{\mathrm{out}}}^\rho \subset M_\RR$ is the union of the loci $h(L_{\mathrm{out}})$ for $h \in \rho$, we define a tropical coefficient $k_\rho \in \Z_{\geq 1}$ as an explicit lattice index 
-- see \eqref{eq_coeff}. We also define a similar coefficient $k_\sigma$ for $\sigma \in S_\rho$.

We can now state the log-tropical correspondence theorem for the log Gromov--Witten invariants $N_\rho^{\mathrm{toric}}(X_\Sigma)$.

\begin{citedthm}
\label{thm_log_trop_intro}
Fix a skew-symmetric form $\omega$ on $N$, and a tuple $\boldsymbol{\gamma}=(\gamma_1,\dots,\gamma_r)$ of elements $\gamma_i \in N$ such that $\iota_{\gamma_i}\omega\neq 0$, and $\iota_\gamma \omega \neq 0$, where $\gamma=\sum_{i=1}^r \gamma_i$.  Fix also a $\boldsymbol{\gamma}$-fan $\Sigma$.
Then, for every general tuple $\mathbf{A}=(A_1,\dots,A_r)$ of affine hyperplane in $M_\RR$ parallel to the linear hyperplanes $(\gamma_i^\perp)_\RR$, and 
for every $(d-2)$-dimensional face $\rho$ of $\cM^\trop_{\omega,\boldsymbol{\gamma},\mathbf{A}^0}(\Sigma)$ such that $\dim \fod_{L_{\mathrm{out}}}^{\rho}=d-1$, 
the log Gromov--Witten invariant $N_\rho^{\mathrm{toric}}(X_\Sigma)$ of the toric variety $X_\Sigma$ 
satisfies
\begin{equation} \label{eq_log_trop_intro}
k_\rho N_\rho^{\mathrm{toric}}(X_\Sigma)=\sum_{\sigma \in S_\rho}  k_\sigma N_\sigma^\trop \,.\end{equation}
\end{citedthm}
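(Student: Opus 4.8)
The plan is to reduce the statement to the Nishinou--Siebert type degeneration/deformation machinery, adapted to the situation in which the relevant log curves lie entirely in the toric boundary and carry nontrivial log structure. First I would fix a $(d-2)$-dimensional face $\rho$ with $\dim\fod_{L_{\mathrm{out}}}^\rho=d-1$, and consider the moduli space $\cM^\trop_{\omega,\boldsymbol\gamma,\mathbf A}(\Sigma)$ for a generic perturbation $\mathbf A$ of $\mathbf A^0$. By Lemma \ref{lem_dimension} (together with the dimension statement of Theorem \ref{thm_enum}), exactly the faces $\sigma\in S_\rho$ in the $(d-2)$-skeleton limit onto $\rho$, and a generic tropical curve in each such $\sigma$ is trivalent. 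The left side counts $0$-dimensional log strata with tropicalization $\rho$; the key point is that these are in bijection, after the perturbation, with the trivalent tropical curves of the faces $\sigma$, once one tracks the lattice bookkeeping carried by the coefficients $k_\rho$ and $k_\sigma$. So the proof has two halves: (i) a tropical statement, that the weighted count $\sum_{\sigma\in S_\rho}k_\sigma N^\trop_\sigma$ correctly enumerates the combinatorial types that can arise as degenerations of curves in the $\rho$-stratum; (ii) a log-geometric statement, that each such combinatorial type is realized by exactly $N^\trop_\sigma$ stable log maps (with the stated multiplicity), and these exhaust the $0$-dimensional $\rho$-stratum, contributing $k_\rho N_\rho^{\mathrm{toric}}(X_\Sigma)$ in total.

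For half (i) I would argue as in the deformation-of-tropical-curves part of the Gross--Pandharipande--Siebert and Nishinou--Siebert arguments: start with a generic $h\in\rho$, perturb the constraint hyperplanes $A_i^0\rightsquigarrow A_i$, and resolve the higher-valent vertices of $h$ into trees of trivalent vertices in all combinatorially allowed ways consistent with the balancing condition and the constraint $h(L_i)\subset A_i$; the hypothesis $\dim\fod_{L_{\mathrm{out}}}^\rho=d-1$ guarantees that the $x_{\mathrm{out}}$-leg direction spans a hyperplane, which is exactly what is needed to make the resolved family again $(d-2)$-dimensional and to make the index $k_\rho$ meaningful. One then checks that the lattice indices defining $k_\rho$ and $k_\sigma$ are precisely the change-of-coordinates factors relating the volume of the $\rho$-family to the sum of volumes of the $\sigma$-families, so that equation \eqref{eq_log_trop_intro} is the resulting conservation-of-degree identity. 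Here I would use the product formula for $N^\trop_\sigma$ over trivalent vertices from Lemma \ref{lem: product2 for Ntrop} and Definition \ref{def_trop_mult} to make the comparison local at each vertex.

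For half (ii), the log-geometric realization, I would invoke the log smoothness and dimension count of $\cM^\log_{\omega,\boldsymbol\gamma,\mathbf H}(X_\Sigma)$ from Theorem \ref{thm_enum}, together with the standard log-deformation/obstruction analysis: a $0$-dimensional log stratum corresponds to a stable log map whose tropicalization is a rigid trivalent tropical curve, and conversely each rigid trivalent tropical curve of the required type deforms, uniquely up to the automorphisms encoded in the multiplicity, to such a stable log map. Because our curves are generically \emph{in} the boundary rather than in the big torus, the gluing is governed by the log structure on strata of $X_\Sigma$ rather than by ordinary torus factors; the correct tool is the decomposition/gluing formula for stable log maps (Abramovich--Chen--Gross--Siebert), applied componentwise along the tropical tree, with each trivalent vertex contributing a $\PP^1$ (or a weighted cover thereof) whose count is the local vertex multiplicity. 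The factor $k_\rho$ on the left arises because the marked point $x_{\mathrm{out}}$, constrained only to lie on a hyperplane rather than on a point, contributes a lattice-index discrepancy between the naive log count and the tropical count, exactly as recorded in \eqref{eq_coeff}.

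The main obstacle I expect is half (ii): controlling the stable log maps that are entirely contained in the toric boundary $D_\Sigma$ and have generically nontrivial log structure. In the Nishinou--Siebert setting one can work torically in the big orbit, so that the realization statement is essentially classical; here one must instead run the deformation theory over the log point and show that the obstruction spaces vanish generically (this is where generic choice of the $H_i$, hence general $\mathbf A$, is used) and that the gluing/smoothing of the nodal log curve along boundary strata is unobstructed with the expected multiplicity. Making the multiplicities match \emph{on the nose} — as opposed to up to a global constant — requires a careful local computation at each trivalent vertex, identifying the number of log smoothings with the lattice index $N^\trop_\sigma$; this bookkeeping, and the comparison of the two lattice coefficients $k_\rho$ and $k_\sigma$, is the technical heart of the argument.
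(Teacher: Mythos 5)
Your overall strategy — perturb the constraint hyperplanes, resolve into trivalent tropical types, and then realize each resolved type by log curves that are then counted via a gluing formula — has the right flavor, and you correctly identify that the curves live in the toric boundary with nontrivial log structure, which is the source of the difficulty. But there are two genuine gaps, one conceptual and one structural, and both sit at what the paper treats as the technical heart of the proof.

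The conceptual gap: you describe a $0$-dimensional log stratum as ``a stable log map whose tropicalization is a rigid trivalent tropical curve.'' That is precisely not what happens here, and it is the whole point of the theorem being new relative to Nishinou--Siebert. A $0$-dimensional stratum of $\cM^\log_{\omega,\boldsymbol\gamma,\mathbf H}(X_\Sigma)$ is a stable log map over $\Spec\kk$ whose basic monoid $Q_{\tau_\rho}$ has rank $d-2$; its tropicalization is a $(d-2)$-dimensional cone of tropical curves, not a single rigid one. Correspondingly, the faces $\sigma\in S_\rho$ are themselves $(d-2)$-dimensional families, not isolated tropical curves, so the ``bijection between $0$-strata and trivalent tropical curves'' you invoke does not literally make sense. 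Any correct realization argument has to build this non-rigidity into the dimension/multiplicity bookkeeping from the start, rather than reducing to the rigid case.

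The structural gap: you never degenerate the target. The paper's argument hinges on a two-step reduction that is absent from your proposal. First, using the birational invariance of log Gromov--Witten invariants under log-\'etale modifications of the fan (which exactly compensates the change in $k_\rho$), the theorem for an arbitrary $\boldsymbol\gamma$-fan is reduced to a single, specially chosen fan. Second, that fan is taken to be the asymptotic fan $\Sigma_{\scrP}$ of a (very) good polyhedral decomposition $\scrP$ of $M_\RR$, and the corresponding toric degeneration $\shX_{\scrP}\to\AA^1$ is introduced. Only in the special fiber $\shX_{\scrP}|_0$ do the components $C_v$ of a relevant stable log map become lines in $2$-dimensional toric strata $Z_v$, which is what makes the pre-log curve count (Nishinou--Siebert) and the subsequent count of log enhancements (via punctured log maps \`a la Abramovich--Chen--Gross--Siebert) explicitly computable vertex by vertex. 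Without the degeneration, the stable log maps in $\cM^\log_{\rho,\mathbf H}(X_\Sigma)$ are entirely in the boundary with no useful local product structure, and there is no tractable ``componentwise gluing.'' Note also that the decomposition formula (ACGS~I) and the punctured-gluing theory (ACGS~II) play distinct roles: the former relates $N_\rho^{\mathrm{toric}}(X_{\Sigma_{\scrP}})$ to contributions $N_{\tilde\sigma}^{\mathrm{toric}}(\shX_{\scrP}/\AA^1)$ of the special-fiber types, and the latter is used inside the per-type count; your proposal blurs these together. As written, your plan identifies the right ingredients but does not supply the mechanism — the degeneration and the birational-invariance reduction — that actually makes the count possible.
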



We prove Theorem \ref{thm_log_trop_intro} using a toric degeneration of $X_\Sigma$ and the recent theory of punctured log maps of Abramovich--Chen--Gross--Siebert \cite{ACGSII} to construct stable log maps to the special fiber by gluing.

\subsection{Quiver DT-log Gromov--Witten correspondence}
\label{sec_main_intro}
Combining the flow tree formula of \cite{ABflow} with the log-tropical correspondence given by Theorem \ref{thm_log_trop_intro}, we obtain a geometric interpretation of the coefficients $F_{r,h}^\theta(\gamma_1,\dots,\gamma_r)$ in terms of genus $0$ log Gromov--Witten invariants of toric varieties. 

Let $(Q,W)$ be a quiver with potential with $d$ vertices as in 
\S \ref{sec_dt_intro} and $\omega$ be the corresponding skew-symmetric form on $N=\Z^{Q_0}$. We fix a dimension vector $\gamma \in N$
such that $\iota_\gamma \omega\neq 0$
and a decomposition $\gamma=\sum_{i=1}^r \gamma_i$ such that $\iota_{\gamma_i}\omega \neq 0$ for all $1\leq i\leq r$. We denote $\boldsymbol{\gamma}=(\gamma_1,\dots,\gamma_r)$ and we fix a $\boldsymbol{\gamma}$-fan $\Sigma$.
Then, every attractor flow tree $(h: T \rightarrow M_\RR)$ naturally defines a  $(d-2)$-dimensional face $\Tilde{\rho}_h$ of the moduli space of tropical curves $\mathcal{M}_{\omega,\boldsymbol{\gamma},\mathbf{A}^0}^\trop$. One first turns $h$ into a tropical curve $\overline{h}: \overline{T} \rightarrow M_\RR$ by extending its root to infinity in the direction $-\iota_\gamma \omega$. Then, $\Tilde{\rho}_h$ is the smallest face of $\mathcal{M}_{\omega,\boldsymbol{\gamma},\mathbf{A}^0}^\trop$ containing $\overline{h}$, that is, the universal family of deformations of $\overline{h}$ preserving its combinatorial type -- see Lemma \ref{lem_tree_dim}.
 In particular, one can consider as in \S\ref{sec_gw_intro} the genus $0$ log Gromov--Witten invariant $N_{\widetilde{\rho}_h}^{\mathrm{toric}}(X_\Sigma)$ of the $d$-dimensional toric variety $X_\Sigma$ of fan $\Sigma$, and the tropical coefficient $k_{\widetilde{\rho}_h}$.

\begin{citedthm} \label{thm_dt_gw_intro}
Let $(Q,W)$ be a quiver with potential. Fix a dimension vector $\gamma \in N=\Z^{Q_0}$ such that $\iota_\gamma \omega \neq 0$, a decomposition $\gamma=\sum_{i=1}^r \gamma_i$ such that $\iota_{\gamma_i}\omega \neq 0$ for all $1\leq i\leq r$, a generic stability parameter $\theta \in \gamma^\perp$, and an attractor flow tree $h: T \rightarrow M_\RR$ with root at $\theta$ and $r$ leaves decorated by $\gamma_1,\dots,\gamma_r$.
Then, for every $\boldsymbol{\gamma}$-fan $\Sigma$,
the coefficient  $F_r^\theta(\gamma_1,\dots,\gamma_r)$
in \eqref{eq_reconstruction_intro}-\eqref{eq_trees_intro}
satisfies
\begin{equation} \label{eq_dt_log_intro}
F_{r,h}^\theta(\gamma_1,\dots,\gamma_r)
= \frac{\prod_{i=1}^r |\gamma_i|}{|\gamma|}
k_{\widetilde{\rho}_h}
N_{\widetilde{\rho}_h}^{\mathrm{toric}}(X_\Sigma)\,.\end{equation}
\end{citedthm}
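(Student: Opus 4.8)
The plan is to deduce Theorem~\ref{thm_dt_gw_intro} by combining the flow tree formula of \cite{ABflow} with the log-tropical correspondence of Theorem~\ref{thm_log_trop_intro}. The flow tree formula expresses $F_{r,h}^\theta(\gamma_1,\dots,\gamma_r)$ as a sum, over trivalent perturbed attractor flow trees obtained as small deformations of $h:T\to M_\RR$, of local contributions determined at each trivalent vertex by the skew-symmetric form $\omega$. On the other side, Lemma~\ref{lem: product2 for Ntrop} expresses the tropical multiplicity $N_\sigma^\trop$ as a product over the vertices of $\sigma$ of local multiplicities determined by $\omega$. First I would identify these two sets of combinatorial data: the perturbed attractor flow trees appearing in the flow tree formula for $h$ should be precisely the trivalent faces $\sigma \in S_{\widetilde{\rho}_h}$ of $\cM^\trop_{\omega,\boldsymbol{\gamma},\mathbf{A}}$ that degenerate to $\widetilde{\rho}_h$ in the limit $\mathbf{A}\to\mathbf{A}^0$. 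This requires a careful dictionary: extending the root of $h$ to infinity in the direction $-\iota_\gamma\omega$ produces $\overline{h}$, whose combinatorial-type-preserving deformations sweep out $\widetilde{\rho}_h$ (Lemma~\ref{lem_tree_dim}), and the affine perturbation $\mathbf{A}$ of the constraint hyperplanes $\mathbf{A}^0=((\gamma_i^\perp)_\RR)_i$ is exactly the perturbation used to trivalentize an attractor flow tree. One must check that the balancing/attractor conditions match on the nose, including the direction constraints $h(L_i)\subset A_i$ versus the attractor flow condition at each edge.

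The second step is to match the local vertex weights. At a trivalent vertex of a perturbed attractor flow tree where edges carrying classes $\alpha,\beta$ meet and produce $\alpha+\beta$, the flow tree formula contributes a factor built from $\omega(\alpha,\beta)$ (together with sign and the precise normalization from \cite{ABflow}); the tropical local multiplicity at the corresponding trivalent vertex of $\sigma$, as in Lemma~\ref{lem: product2 for Ntrop}, is also a function of $\omega(\alpha,\beta)$. I would verify that these agree up to the global prefactors that are tracked separately. The prefactors $\prod_{i=1}^r|\gamma_i|$ and $1/|\gamma|$ in \eqref{eq_dt_log_intro} arise because the flow tree formula is naturally phrased in terms of primitive generators of the rays $\RR_{\geq 0}\iota_{\gamma_i}\omega$ and $\RR_{\geq 0}\iota_\gamma\omega$, whereas the tropical moduli space $\cM^\trop$ uses the actual weighted directions $\iota_{\gamma_i}\omega$, $-\iota_\gamma\omega$; converting between ``weight at a leg'' and ``divisibility of $\gamma_i$'' produces exactly these lattice-index factors, which should also be reconcilable with the coefficient $k_{\widetilde{\rho}_h}$ defined as an explicit lattice index in \eqref{eq_coeff}. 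Concretely, I expect the identity $F_{r,h}^\theta(\gamma_1,\dots,\gamma_r)=\frac{\prod_i|\gamma_i|}{|\gamma|}\sum_{\sigma\in S_{\widetilde{\rho}_h}} k_\sigma N_\sigma^\trop$ to drop out of this bookkeeping, possibly after re-indexing the sum in the flow tree formula; Theorem~\ref{thm_log_trop_intro} then rewrites the right-hand side as $\frac{\prod_i|\gamma_i|}{|\gamma|}\,k_{\widetilde{\rho}_h}\,N_{\widetilde{\rho}_h}^{\mathrm{toric}}(X_\Sigma)$.

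The main obstacle, I expect, is the bookkeeping of normalizations and lattice indices: making the combinatorial identification of perturbed attractor flow trees with the faces $\sigma\in S_{\widetilde{\rho}_h}$ fully rigorous, and checking that the local vertex weights of \cite{ABflow} match the tropical vertex multiplicities of Lemma~\ref{lem: product2 for Ntrop} with the correct powers of divisibilities and the correct handling of the coefficients $k_\sigma$ and $k_{\widetilde{\rho}_h}$. A subtlety to watch: the flow tree formula of \cite{ABflow} is stated for the summand $F_{r,h}^\theta$ attached to a single tree $h$, so one must ensure that the decomposition $F_r^\theta=\sum_h F_{r,h}^\theta$ of \eqref{eq_trees_intro} is compatible with the partition of $(d-2)$-faces $\rho$ of $\cM^\trop_{\omega,\boldsymbol{\gamma},\mathbf{A}^0}(\Sigma)$ into those of the form $\widetilde{\rho}_h$ (this is where the hypothesis $\iota_\gamma\omega\neq 0$ and the condition $\dim\fod_{L_{\mathrm{out}}}^{\widetilde{\rho}_h}=d-1$ enter, the latter needing to be verified for faces coming from attractor flow trees so that Theorem~\ref{thm_log_trop_intro} is applicable). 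Once these identifications are pinned down, the proof is a formal concatenation of the two results.
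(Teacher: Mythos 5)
Your proposal follows essentially the same route as the paper's proof: apply the flow tree formula of \cite{ABflow} (as reformulated in Theorem~\ref{thm_F_trop}) to write $F_{r,h}^\theta$ as a sum over the trivalent faces $\sigma\in\Phi^{-1}(\rho_h)=S_{\widetilde{\rho}_h}$ of vertex-local factors $\prod_v|\omega(\gamma_{E_{1,v}},\gamma_{E_{2,v}})|$, use the product formula of Lemma~\ref{lem: product2 for Ntrop} to convert each such product into $\frac{\prod_i|\gamma_i|}{|\gamma|}k_\sigma N_\sigma^\trop$, and finally invoke Theorem~\ref{thm_log_trop_intro} together with Lemmas~\ref{lem_flow_trees_nice} and \ref{lem_tree_dim} (which supply the needed $\dim\sigma=d-2$ and $\dim\fod_{L_{\mathrm{out}}}=d-1$ hypotheses) to sum to $k_{\widetilde{\rho}_h}N_{\widetilde{\rho}_h}^{\mathrm{toric}}(X_\Sigma)$. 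Your attribution of the prefactor $\prod_i|\gamma_i|/|\gamma|$ to a primitive-vector-vs-weighted-direction conversion is slightly off — in the paper it drops out directly from the identity $k_\sigma N_\sigma^\trop=\frac{|\gamma|}{\prod_i|\gamma_i|}\prod_v|\omega(\gamma_{E_{1,v}},\gamma_{E_{2,v}})|$ of Lemma~\ref{lem: product2 for Ntrop} — but the overall argument and its bookkeeping are the same.
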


If $\iota_\gamma \omega =0$, then $\overline{\Omega}_\gamma^{+,\theta}$ does not experience any non-trivial wall-crossing and so $\overline{\Omega}_\gamma^{+,\theta}=\overline{\Omega}_\gamma^{+,\star}$ for every $\theta \in \gamma^\perp$. 
Moreover, $F_{r,h}^\theta(\gamma_1,\dots,\gamma_r)=0$ if $r\geq 2$ and if $\iota_{\gamma_i}\omega = 0$ for some $i$. In particular, Theorem \ref{thm_dt_gw_intro} applies to compute $F_{r,h}^\theta(\gamma_1,\dots,\gamma_r)$ in every non-trivial situation.

By the flow tree formula of \cite{ABflow}, one can compute $F_{r,h}^\theta(\gamma_1,\dots,\gamma_r)$ using generic trivalent perturbations of $h$. To prove Theorem \ref{thm_dt_gw_intro}, we show that the universal deformations of these perturbed attractor trees are exactly the $(d-2)$-dimensional families of tropical curves $\sigma$ appearing in the right-hand side of \eqref{eq_log_trop_intro}, and then we use the log-tropical correspondence of Theorem \ref{thm_log_trop_intro} to rewrite the corresponding tropical counts as the log Gromov--Witten invariant $N_{\widetilde{\rho}_h}^{\mathrm{toric}}(X_\Sigma)$.

\subsection{Related works}
Another way to construct a geometrical object out of a quiver is based on the theory of cluster algebras
\cite{FZ} and cluster varieties \cite{FG}.
From a quiver $Q$ with $d$ vertices, one can construct the corresponding $d$-dimensional cluster variety using cluster transformations in a way prescribed by the combinatorics of $Q$. A key result of Gross--Hacking--Keel--Kontsevich \cite{GHKK} shows that the algebra of regular functions of a cluster variety admits a canonical basis, which can be constructed using a combinatorial gadget, known as the cluster scattering diagram. A version of such a cluster scattering diagram is shown to capture quiver DT invariants in the work of Bridgeland \cite{BridgelandCluster}. On the other hand, Gross--Siebert in \cite{gross2021canonical} show how to construct a mirror to a given log Calabi--Yau variety from a canonical scattering diagram, defined using counts of log curves. In our previous paper \cite{ABclustermirror} we showed how the canonical scattering diagram for cluster varieties relates to the cluster scattering.

Building on the results in this paper, in a sequel work \cite{ABtrans} we provide a correspondence between quiver DT invariants, in the situation when attractor DT invariants are trivial, and counts of log curves in cluster varieties, using the comparison between cluster and canonical scattering diagrams. This generalizes the Kronecker-Gromov--Witten correspondence which was shown earlier in the case when the cluster variety is of dimension two \cite{bousseau2020quantum, GP, GPS,MR3033514, MR3004575}. For variants of the Kronecker-Gromov--Witten correspondence in dimension two see also \cite{bousseau2018example,reineke2021moduli}. Such a correspondence between quiver DT invariants and log curve counts in cluster varieties is compatible with the results of the present paper, as cluster varieties can be obtained by non-toric blow-ups of toric varieties as explained in \cite{GHKbirational}, and so admit  degenerations into a toric variety and other simpler varieties, as studied in \cite{HDTV,GPS}. In particular, log curve counts in cluster varieties are naturally related to log curve counts in toric varieties considered in this paper. 

\subsection{Acknowledgments}
H\"ulya Arg\"uz was supported by the NSF grant DMS-2302116, and Pierrick Bousseau was supported by the NSF grant DMS-2302117. Some of the material in this paper was originally written with the intention of appearing as part of \cite{HDTV} in an earlier form. We thank Mark Gross for his generosity to let us use this material. We also thank the anonymous referee for the time they spared to carefully read our manuscript and for the very useful comments they provided towards the improvement of it.

\section{The tropical enumerative problem}
\label{sec:tropical_enum}
Throughout this paper we work over an algebraically closed field $\kk$ of characteristic $0$. We denote by $M \simeq \Z^d$ a free abelian group of finite rank $d$, and  by $M_\RR:=M \otimes_\Z \RR \simeq \RR^d$ the associated $d$-dimensional real vector space. We let $N:=\Hom(M,\Z)$ denote the dual abelian group. 

\subsection{Tropical curves}
\label{sec_trop_curves}

We mainly follow the convention and notation of \cite{NS}.
In this paper, a \emph{graph} $\Gamma$ is  finite and connected, consisting of vertices, edges connecting pairs of vertices, and legs adjacent to single vertices. Moreover, we assume that legs are marked by distinct labels.
We denote by
\begin{align*}
V(\Gamma) & \coloneqq \text{the set of vertices of\,\,} \Gamma \\
E(\Gamma) & \coloneqq \text{the set of edges of\,\,} \Gamma \\
L(\Gamma) & \coloneqq \text{the set of legs of\,\,} \Gamma\,. 
\end{align*}
We denote the set of the two vertices adjacent to an edge $E$ by $\partial E$, and similarly the vertex adjacent to a leg $L$ by $\partial L$.

\begin{definition} A \emph{weighted graph} is a graph $\Gamma$ together with a weight function $w\colon E(\Gamma)\cup L(\Gamma) \rightarrow \Z_{\geq 1}$ assigning a weight $w(E) \in \Z_{\geq 1}$ to every edge or leg $E$ of $\Gamma$.
\end{definition}

\begin{definition} \label{def_tropical_curve}
A \emph{parametrized tropical curve in $M_\RR$} 
is a weighted graph $\Gamma$ without divalent vertices, together with a proper continuous map $h: \Gamma \rightarrow M_\RR$
satisfying the following conditions:
\begin{itemize}
    \item[(i)] for every edge or leg $E$, the restriction $h|_E$ is an embedding with image contained in an affine line with rational slope,
       \item[(ii)] for every vertex $v\in V(\Gamma)$, the following \emph{balancing condition} holds: Let $E_1,\dots,E_m \in E(\Gamma)\cup L(\Gamma)$ be the edges or legs adjacent to $v$, and let $\bar{u}_i \in M$ be the primitive integral vector emanating from $h(v)$ in the direction of $h(E_i)$, then
\[\sum_{i=1}^m w(E_i)\bar{u}_i=0\,.\]
\end{itemize}

\end{definition}

An isomorphism of parameterized tropical curves $h:\Gamma \to M_\RR$ and $h_0 :\Gamma_0 \to M_\RR$
is a homeomorphism $\Psi:\Gamma\to\Gamma_0$ respecting the marking of the legs, the weights of the edges and legs, and such that $h=h_0\circ \Psi$. A \emph{tropical curve} is an isomorphism class of parameterized tropical
curves. The genus of a tropical curve $h:\Gamma \to M_\RR$ is the first Betti number of its domain $\Gamma$. A rational tropical curve is a tropical curve of genus zero. In what follows we focus attention to rational tropical curves.

\begin{definition}\label{def_tropical_type}
A \emph{tropical type} for $M_\RR$ is the data $(\Gamma,\bar{u})$ of a weighted graph $\Gamma$ and of a map
\begin{align*}
    \bar{u} : F(\Gamma) & \longrightarrow M \\
    (v,E) & \longmapsto \bar{u}_{v,E} 
\end{align*} 
where $F(\Gamma)$ is the set of flags of $\Gamma$, that is, of pairs $(v,E)$ where $v \in V(\Gamma)$ is a vertex and $e\in E(\Gamma)\subset L(\Gamma)$ is an edge or leg adjacent to $v$.
\end{definition}

\begin{definition}\label{def_type}
The \emph{type} of a tropical curve $h \colon \Gamma \rightarrow M_\RR$ is the tropical type $(\Gamma, \bar{u})$ where for every $(v,E)\in F(\Gamma)$,  $\bar{u}_{v,E}$ is the primitive integral in $M$ emanating from $h(v)$ in the direction of $h(E)$.
\end{definition}

The moduli space of tropical curves of a given type $\tau=(\Gamma, \bar{u})$ is naturally the interior of a polyhedral cone. For example, when $\Gamma$ is of genus $0$, this moduli space is isomorphic to $M_\RR \times \RR_{>0}^{|E(\Gamma)|}$, where $M_\RR$ parametrizes the position of a chosen vertex of $\Gamma$, and $\RR_{>0}^{|E(\Gamma)|}$ parametrizes the affine lengths of the images $h(E)$ of the edges $E\in E(\Gamma)$.

\begin{definition}
The \emph{degree} of a tropical type $(\Gamma,\bar{u})$ is the tuple \[\Delta=(w(L)\bar{u}_{v,L})_{L\in L(\Gamma)} \in M^{L(\Gamma)}\]
of weighted directions of the legs, where $v$ is the unique vertex of $\Gamma$ adjacent to the leg $L$. The degree of a tropical curve is the degree of its type.
\end{definition}

By \cite[Proposition 2.1]{NS}, there are finitely many tropical types of given genus and degree which are realized by tropical curves. In particular, the moduli space $\cM^\trop(\Delta)$ of rational tropical curves of given degree $\Delta$ is naturally a finite cone complex, obtained by gluing together the cones obtained as the closure of the spaces of tropical curves of given types (when the length of an edge goes to $0$, a vertex of higher valency is produced).

\subsection{$(\omega,\boldsymbol{\gamma})$-marked tropical curves}
\label{sec_marked_tropical_curves}
Let $\omega \in \bigwedge^2 M$ be a skew-symmetric form on $N$. For every $n \in N$, we denote by $\iota_n \omega \in M$ the contraction of $\omega$ by $n$, that is, the linear form $\omega(n,-)$ on $N$.
Let $\boldsymbol{\gamma}=(\gamma_1,\dots,\gamma_r)$ be a $r$-tuple of elements $\gamma_i \in N$ such that $\iota_{\gamma_i} \omega \neq 0$ for all $1\leq i \leq r$ and $\iota_\gamma \omega \neq 0$, where $\gamma:=\sum_{i=1}^r \gamma_i$.

\begin{definition} \label{def_marked}
An $(\omega,\boldsymbol{\gamma})$-marked tropical curve in $M_\RR$ is a rational tropical curve $h\colon \Gamma \rightarrow M_{\RR}$ of 
degree \[\Delta_{\omega,\boldsymbol{\gamma}}:=(\iota_{\gamma_1}\omega,
\dots,\iota_{\gamma_r}\omega, -\iota_\gamma \omega)\,.\] In other words, $\Gamma$ is a genus $0$ graph, with $r+1$ legs $L_1,\dots, L_r$, $L_{\mathrm{out}}$, such that
$w(L_i)\bar{u}_{v_i,L_i}=-\iota_{\gamma_i} \omega$ for all 
$1\leq i \leq r$, and $w(L_{\mathrm{out}})\bar{u}_{v_{\mathrm{out}},L_{\mathrm{out}}}=-\iota_\gamma \omega$, where $v_i$ (resp.\ $v_{\mathrm{out}}$) is the unique vertex of $\Gamma$ adjacent to $L_i$ (resp.\ $L_{\mathrm{out}}$).
\end{definition}

Given an $(\omega,\boldsymbol{\gamma})$-marked tropical curve $h\colon \Gamma \rightarrow M_\RR$, we usually view $\Gamma$ as a  tree with root leg $L_{\mathrm{out}}$ and leaves $L_i$ for $1\leq i\leq r$. Correspondingly, we say that an edge $E$ is a child (resp.\ a parent) of a vertex $v$ if $E$ is adjacent to $v$ and is not (resp.\ is) contained in the unique path in $\Gamma$ between $v$ and the root. We similarly define descendants and ancestors. For every edge or leg $E$ of $\Gamma$,
we denote 
\begin{equation}\label{eq_u_E}u_E:=w(E)\bar{u}_{v,E}\,,\end{equation}
where $v$ is the unique vertex such that $E$ is a parent of $v$. In other words, $u_E$ is the weighted direction of $E$ following the flow on $\Gamma$ starting at the leaves and ending at the root.

\begin{definition} \label{def_class}
Let $h \colon \Gamma \rightarrow M_\RR$ be an $(\omega,\boldsymbol{\gamma})$-marked tropical curve in $M_\RR$. Then for every edge or leg $E$ of $\Gamma$, the \emph{class of $E$} is the element $\gamma_E \in N$ defined by 
\[ \gamma_E=\sum_{i} \gamma_i \]
where the sum is over the indices $1\leq i \leq r$ such that the leg $L_i$ is a descendant of $E$.
\end{definition}

\begin{lemma}  \label{lem_class}
Let $h \colon \Gamma \rightarrow M_\RR$ be an $(\omega,\boldsymbol{\gamma})$-marked tropical curve in $M_\RR$. Then for every edge or leg $E$ of $\Gamma$, we have $u_E= -\iota_{\gamma_E}\omega$. In particular, 
the class $\gamma_E$ is non-zero.
\end{lemma}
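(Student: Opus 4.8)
The plan is to prove the identity $u_E = -\iota_{\gamma_E}\omega$ by induction on the tree structure of $\Gamma$, working from the leaves toward the root. Recall that $\Gamma$ is viewed as a rooted tree with root leg $L_{\mathrm{out}}$ and leaves $L_1,\dots,L_r$, and that for an edge or leg $E$ which is a parent of a vertex $v$, we have set $u_E = w(E)\bar u_{v,E}$, the weighted direction pointing away from the leaves toward the root. The base case is when $E$ is one of the leaves $L_i$: then by Definition \ref{def_marked} we have $u_{L_i} = w(L_i)\bar u_{v_i,L_i} = -\iota_{\gamma_i}\omega$ (note the sign: $\bar u_{v_i,L_i}$ points \emph{into} the vertex $v_i$, i.e. opposite to the flow direction, which is why $w(L_i)\bar u_{v_i,L_i} = -\iota_{\gamma_i}\omega$ matches $u_{L_i}$ up to the sign convention; one must check carefully that the convention in \eqref{eq_u_E} gives exactly $u_{L_i} = -\iota_{\gamma_i}\omega$). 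Since $\gamma_{L_i} = \gamma_i$ by Definition \ref{def_class}, the base case reads $u_{L_i} = -\iota_{\gamma_i}\omega = -\iota_{\gamma_{L_i}}\omega$.

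For the inductive step, consider an edge or leg $E$ that is a parent of a vertex $v$, and suppose the claim holds for all children of $v$. Let $E_1,\dots,E_m$ be the children of $v$ and let $E$ be its parent. The balancing condition at $v$ (Definition \ref{def_tropical_curve}(ii)) states $\sum_{\text{all flags at }v} w(\cdot)\bar u_{v,\cdot} = 0$. Separating the parent flag from the child flags and using that $\bar u_{v,E}$ points from $h(v)$ along $h(E)$ toward the root while the flow-direction $u_E$ is defined with the same orientation, while for each child $E_j$ the vector $u_{E_j}$ is oriented \emph{toward} $v$ (opposite to $\bar u_{v,E_j}$), the balancing condition rearranges to $u_E = \sum_{j=1}^m u_{E_j}$. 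This is the key geometric observation: at each internal vertex, the outgoing flow equals the sum of the incoming flows. By the inductive hypothesis, $u_{E_j} = -\iota_{\gamma_{E_j}}\omega$ for each $j$, so $u_E = -\iota_{(\sum_j \gamma_{E_j})}\omega$ by linearity of $n \mapsto \iota_n\omega$. Finally, the descendants of $E$ are exactly the disjoint union of the descendants of the $E_j$'s, so $\gamma_E = \sum_{j=1}^m \gamma_{E_j}$ by Definition \ref{def_class}, giving $u_E = -\iota_{\gamma_E}\omega$, which completes the induction.

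The ``in particular'' statement that $\gamma_E \neq 0$ then follows immediately: since $E$ is an edge or leg of a genuine tropical curve, $u_E = w(E)\bar u_{v,E}$ with $w(E) \geq 1$ and $\bar u_{v,E}$ a nonzero primitive vector, so $u_E \neq 0$; hence $\iota_{\gamma_E}\omega = -u_E \neq 0$, which forces $\gamma_E \neq 0$.

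The main obstacle is purely bookkeeping: getting the orientation and sign conventions exactly right, since \eqref{eq_u_E} defines $u_E$ via the flag $(v,E)$ where $v$ is the child endpoint of $E$, whereas the balancing condition in Definition \ref{def_tropical_curve} is stated in terms of primitive vectors emanating from each vertex. One must be careful that $\bar u_{v,E}$ in \eqref{eq_u_E} (pointing away from $v$, toward the root) versus $\bar u_{v,E_j}$ for a child (pointing away from $v$, toward the leaf of that branch) contribute with consistent signs so that balancing genuinely yields $u_E = \sum_j u_{E_j}$ rather than some sign-flipped variant. Once this is pinned down, the argument is a routine induction with no analytic content.
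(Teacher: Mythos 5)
Your proof is correct and takes exactly the same route as the paper: base case for legs from Definition~\ref{def_marked}, inductive step propagating from leaves to root via the balancing condition together with linearity of $n \mapsto \iota_n\omega$, and nonvanishing of $\gamma_E$ deduced from $u_E \neq 0$. The only wobble is the parenthetical side remark about the sign in the base case: by Definition~\ref{def_type}, $\bar u_{v_i,L_i}$ emanates \emph{from} $h(v_i)$ in the direction of $h(L_i)$, so it points away from $v_i$ toward the leaf, not ``into'' $v_i$ as you write. The consistent reading of~\eqref{eq_u_E} for a leaf $L_i$ is that $u_{L_i}$ follows the flow from the leaf toward the root, so $u_{L_i} = -w(L_i)\bar u_{v_i,L_i}$; combined with $w(L_i)\bar u_{v_i,L_i} = \iota_{\gamma_i}\omega$ from the degree specification $\Delta_{\omega,\boldsymbol{\gamma}}$ this gives $u_{L_i} = -\iota_{\gamma_i}\omega$, and it is then consistent with the sign you use for child edges in the inductive step ($u_{E_j} = -w(E_j)\bar u_{v,E_j}$). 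The ``in other words'' clause in Definition~\ref{def_marked} appears to carry a sign typo for the $L_i$ (it contradicts the displayed degree $\Delta_{\omega,\boldsymbol{\gamma}}$), which is likely the source of your hedging; regardless, your final formulas and the structure of the induction are right.
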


\begin{proof}
The equality $u_E= -\iota_{\gamma_E}\omega$ is true for leaves by Definition \ref{def_marked} of a $\boldsymbol{\gamma}$-marked tropical curve  and then follows for the other edges and the root leg by the balancing condition in 
Definition \ref{def_tropical_curve}(ii). This implies that $\gamma_E \neq 0$ because $u_E \neq 0$ by definition of a tropical curve in $M_\RR$ (see \eqref{eq_u_E} and Definition \ref{def_tropical_curve}(i)).
\end{proof}

To simplify the notation, instead of $\cM^{\trop}(\Delta_{\omega,\boldsymbol{\gamma}})$, in what follows we use
$\cM_{\omega,\boldsymbol{\gamma}}^{\trop}$ for the moduli space of $(\omega,\boldsymbol{\gamma})$-marked tropical curves in $M_\RR$. It follows from the discussion after Definition \ref{def_type} that $\cM_{\omega,\boldsymbol{\gamma}}^{\trop}$ is naturally a finite polyhedral cone.
For every cone $\sigma$ of $\cM_{\omega,\boldsymbol{\gamma}}^\trop$, the relative interior $\Int(\sigma)$ of $\sigma$
parametrizes tropical curves of a given type $(\Gamma_\sigma, \bar{u}_\sigma)$. Moreover, the dimension of $\sigma$ is given by 
\[ \dim \sigma=d+|E(\Gamma_\sigma)|\,,\]
which can also be written using that $\Gamma_\sigma$ has genus $0$ as 
\[ \dim \sigma=d-2+r-\mathrm{ov}(\Gamma_\sigma)\,.\]
Here, $\mathrm{ov}(\Gamma_\sigma)$ is the \emph{overvalence} of $\Gamma_\sigma$, which is the non-negative integer defined by 
\begin{equation}
\label{eq_overvalence}
    \mathrm{ov}(\Gamma_\sigma):=\sum_{v\in V(\Gamma)}\mathrm{ov}(v)\,,
\end{equation}
where $\mathrm{ov}(v):=k-3$ if $v$ is a $k$-valent vertex. In particular, $\sigma$ is of dimension at most $d-2+r$, and is of dimension $d-2+r$ if and only if $\Gamma_\sigma$ is trivalent.

\subsection{Tropical constraints}
\label{sec_tropical_constraints}

\begin{definition}
\label{Def affine constraint}
A \emph{$\boldsymbol{\gamma}$-constraint} is a $r$-tuple $\mathbf{A}=(A_1,\dots, A_r)$ of affine hyperplanes $A_i$ in $M_\RR$ whose associated linear hyperplanes are given by $(\gamma_i^{\perp})_\RR \subset M_\RR$.
\end{definition}

Choices of $\boldsymbol{\gamma}$-constraints $\mathbf{A}$ are in one-to-one correspondence with choices of points $[\mathbf{A}]=([A_i])_i \in \prod_{i=1}^r M_\RR/(\gamma_i^{\perp})_\RR\simeq \RR^r$.

\begin{definition} \label{def_matching}
An $(\omega,\boldsymbol{\gamma})$-marked tropical curve $h: \Gamma \rightarrow M_\RR$ \emph{matches} a $\boldsymbol{\gamma}$-constraint $\mathbf{A}$ if $h(L_i) \subset A_i$ for all $1\leq i \leq r$.
\end{definition}

We denote by $\cM_{\omega,\boldsymbol{\gamma}, \mathbf{A}}^\trop$ the moduli space of $(\omega,\boldsymbol{\gamma})$-marked tropical curves matching the $\boldsymbol{\gamma}$-constraint $\mathbf{A}$. 
In other words,
$\cM_{\omega,\boldsymbol{\gamma}, \mathbf{A}}^\trop\coloneqq (\ev^\trop)^{-1}([\mathbf{A}])$,
where $\ev^\trop$ is the evaluation map at the legs $L_i$, defined by
\begin{align}\label{eq_ev_trop}
\ev^\trop \colon \cM_{\omega,\boldsymbol{\gamma}}^\trop &\longrightarrow \prod_{i=1}^r M_\RR/(\gamma_i^{\perp})_\RR \simeq \RR^r
\\ 
\nonumber h &\longmapsto ([h(L_i)])_i\,,
\end{align}
which is well-defined because $\omega(\gamma_i,\gamma_i)=0$ implies that $u_{L_i}=\iota_{\gamma_i}\omega \in \gamma_i^{\perp}$.
As the evaluation map is affine in restriction to each cone of $\cM_{\omega,\boldsymbol{\gamma}}^\trop$, the moduli space $\cM_{\omega,\boldsymbol{\gamma}, \mathbf{A}}^\trop$ is naturally a finite polyhedral complex. For every face $\sigma$ of $\cM_{\omega,\boldsymbol{\gamma}, \mathbf{A}}^\trop$, the relative interior $\Int(\sigma)$ of $\sigma$
parametrizes tropical curves of a given type $(\Gamma_\sigma, \bar{u}_\sigma)$ and matching $\mathbf{A}$.

For every subset $I \subset \{1,\dots,r\}$, we denote $\boldsymbol{\gamma}_I:=(\gamma_i)_{i \in I}$ and we consider the $\boldsymbol{\gamma}_I$-constraint $\mathbf{A}_I:=(A_i)_{i\in I}$. 

\begin{definition}
\label{def_general_constraints}
A $\boldsymbol{\gamma}$-constraint $\mathbf{A}=(A_1,\dots,A_r)$ is \emph{general} if the following conditions are satisfied:
\begin{itemize}
    \item[(i)] $A_i \neq A_j$ for $i \neq j$,
    \item[(ii)] for every $I\subset \{1,\dots,r\}$, $\dim \cM_{\omega,\boldsymbol{\gamma}_I, \mathbf{A}_I}^\trop \leq d-2$,
    \item[(iii)] for every $I \subset \{1,\dots,r\}$, if $\sigma$ is a face of $\cM_{\omega,\boldsymbol{\gamma}_I, \mathbf{A}_I}^\trop$, then $\dim \sigma=d-2$ if and only if $\Gamma_\sigma$ is trivalent.
\end{itemize}

\end{definition}

\begin{lemma} \label{lem_dimension}
The set of general $\boldsymbol{\gamma}$-constraints is an open dense subset in the space $\prod_{i=1}^r M_\RR/(\gamma_i^\perp)_\RR \simeq \RR^r$ of $\boldsymbol{\gamma}$-constraints.
\end{lemma}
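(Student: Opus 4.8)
The plan is to prove Lemma \ref{lem_dimension} by showing that each of the three conditions (i), (ii), (iii) in Definition \ref{def_general_constraints} fails only on a closed subset of measure zero (in fact a finite union of proper affine-linear or polyhedral subsets) of the parameter space $\RR^r \simeq \prod_{i=1}^r M_\RR/(\gamma_i^\perp)_\RR$; a finite intersection of open dense sets is open dense, which gives the claim. Since there are only finitely many subsets $I \subset \{1,\dots,r\}$, it suffices to treat each $I$ separately, and since there are only finitely many tropical types $\tau=(\Gamma_\tau,\bar u_\tau)$ of genus $0$ and degree $\Delta_{\omega,\boldsymbol{\gamma}_I}$ realized by tropical curves (by \cite[Proposition 2.1]{NS}), it suffices to analyze, for each such type $\tau$, the locus of $\mathbf{A}$ for which the closed cone $\sigma_\tau \subset \cM^\trop_{\omega,\boldsymbol{\gamma}_I}$ meets the fiber $(\ev^\trop)^{-1}([\mathbf{A}_I])$ in dimension larger than expected.

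The key computation is the following. The moduli cone $\sigma_\tau$ has dimension $d + |E(\Gamma_\tau)|$, equivalently $d-2+|I|-\mathrm{ov}(\Gamma_\tau)$, by the dimension formula recorded after Definition \ref{def_type}. The evaluation map $\ev^\trop$ restricted to $\sigma_\tau$ is the restriction of a linear map $\RR^{d}\times\RR^{|E(\Gamma_\tau)|} \to \RR^{|I|}$; write $V_\tau \subseteq \RR^{|I|}$ for its image and $e_\tau := \dim \ker = \dim\sigma_\tau - \dim V_\tau$ for the generic fiber dimension over $V_\tau$. For $[\mathbf{A}_I] \notin V_\tau$ the intersection $\sigma_\tau \cap (\ev^\trop)^{-1}([\mathbf{A}_I])$ is empty; for $[\mathbf{A}_I]$ in the relative interior of $V_\tau$ it has dimension exactly $e_\tau$, and the locus where it jumps above $e_\tau$ is contained in the boundary $\partial V_\tau$, which is a finite union of proper affine subspaces of $V_\tau$, hence has measure zero in $\RR^{|I|}$ — and hence, after intersecting with all the faces of $V_\tau$ and pulling back, carves out only a measure-zero (closed) bad locus in $\RR^r$. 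So for generic $\mathbf{A}$ every nonempty $\sigma_\tau \cap (\ev^\trop)^{-1}([\mathbf{A}_I])$ has dimension exactly $e_\tau \le \dim\sigma_\tau - \dim V_\tau$. One then checks that $\dim V_\tau = |I|$ whenever the type $\tau$ is ``nondegenerate'' in the relevant sense: the $|I|$ evaluation coordinates $[h(L_i)] \in M_\RR/(\gamma_i^\perp)_\RR$ are independent functions on $\sigma_\tau$ because moving the root vertex in $M_\RR$ together with stretching the edges on the path from the root to each leaf moves the legs transversally to their hyperplanes (here one uses $\iota_{\gamma_i}\omega \neq 0$, so $u_{L_i} \notin (\gamma_i^\perp)_\RR$). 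Granting $\dim V_\tau = |I|$, we get $\dim(\sigma_\tau\cap\text{fiber}) \le \dim\sigma_\tau - |I| = d-2-\mathrm{ov}(\Gamma_\tau) \le d-2$, with equality iff $\mathrm{ov}(\Gamma_\tau)=0$, i.e.\ iff $\Gamma_\tau$ is trivalent; this is exactly conditions (ii) and (iii). Condition (i) is trivially open dense since $A_i = A_j$ (for $\gamma_i,\gamma_j$ with the same perpendicular hyperplane) is a single linear equation in the parameters, and if the hyperplanes differ it never holds.

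The one genuinely delicate point — and the step I expect to be the main obstacle — is handling types $\tau$ for which $\dim V_\tau < |I|$, i.e.\ where the evaluation coordinates at the legs in $I$ are \emph{not} independent along $\sigma_\tau$. This happens precisely when some subtree forces a linear relation among the $[h(L_i)]$, for instance when a collection of legs is constrained to pass through a common lower-dimensional locus forced by the combinatorics and by $\omega$. In that situation the image $V_\tau$ is a proper subspace of $\RR^{|I|}$, so for \emph{generic} $\mathbf{A}$ we have $[\mathbf{A}_I] \notin V_\tau$ and the corresponding stratum is simply \emph{empty}, contributing nothing — but to run this cleanly one must show that the set of such bad $V_\tau$, ranging over the finitely many types, still only excludes a measure-zero set of $\mathbf{A}$; this is automatic since each $V_\tau \subsetneq \RR^{|I|}$ is itself measure zero, and a $\boldsymbol{\gamma}$-constraint avoiding all these finitely many proper subspaces (when pulled back to $\RR^r$) is generic. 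I would also take care to phrase genericity in terms of the finitely many closed conditions ``$[\mathbf{A}_I]$ lies in $\overline{V_\tau}$ for some $\tau$ with $e_\tau > \dim\sigma_\tau - \dim V_\tau$ along a boundary face'' plus ``$[\mathbf{A}_I]\in V_\tau$ for $\tau$ with $\dim V_\tau < |I|$'' plus the hyperplane-coincidence conditions, so that the complement is visibly a finite intersection of open dense sets, hence open dense. Finally, I would remark that the stronger statement — equality $\dim\sigma = d-2 \iff \Gamma_\sigma$ trivalent in (iii), rather than merely $\le$ — follows from the exact fiber-dimension count above together with the observation that for a trivalent type one genuinely achieves the expected dimension, because then $\ev^\trop|_{\sigma_\tau}$ is a surjection onto $\RR^{|I|}$ with zero-dimensional... no: with $(d-2)$-dimensional kernel, so the generic fiber has dimension exactly $d-2$.
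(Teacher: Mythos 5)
Your overall decomposition (reduce to a fixed $I$ and a fixed combinatorial type $\tau$, count dimensions) mirrors the paper's, but you replace the citation to Markwig--Rau (their Prop.~2.5 for non-superabundance of genus-$0$ types and Prop.~2.10 for the generic dimension of the constrained space) by a hands-on analysis of the image of the evaluation map $\ev^{\trop}\vert_{\sigma_\tau}$. That is a legitimate and more self-contained route. The numerology you use, $\dim\sigma_\tau=d+|E(\Gamma_\tau)|=d-2+|I|-\mathrm{ov}(\Gamma_\tau)$, is correct for genus $0$ and matches the paper.

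There is, however, a concrete factual error in the step where you try to verify $\dim V_\tau=|I|$. You write that ``one uses $\iota_{\gamma_i}\omega\neq 0$, so $u_{L_i}\notin(\gamma_i^\perp)_\RR$,'' but this is false: $u_{L_i}=\iota_{\gamma_i}\omega$, and since $\omega$ is skew-symmetric we have $\omega(\gamma_i,\gamma_i)=0$, hence $u_{L_i}\in(\gamma_i^\perp)_\RR$. The paper even states this explicitly right after \eqref{eq_ev_trop}; it is precisely the reason the class $[h(L_i)]\in M_\RR/(\gamma_i^\perp)_\RR$ is well-defined (independent of the point chosen on the leg). So ``moving the legs transversally to their hyperplanes'' by stretching the leg itself does nothing to the evaluation coordinate, and your proposed mechanism for surjectivity of $\ev^{\trop}\vert_{\sigma_\tau}$ collapses. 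In particular, your final remark deriving the ``only if'' direction of (iii) from surjectivity onto $\RR^{|I|}$ is not justified as written.

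What rescues you is the dichotomy you already introduce but do not fully lean on: for each of the finitely many types $\tau$, either $\dim V_\tau=|I|$ or $\dim V_\tau<|I|$, and in the second case a generic $\mathbf A$ satisfies $[\mathbf A_I]\notin V_\tau$, so $\tau$ contributes no face at all. In the first case, for a generic $\mathbf A$ one can further assume $[\mathbf A_I]$ avoids $\partial V_\tau$, so the fiber is either empty or has dimension exactly $e_\tau=d-2-\mathrm{ov}(\Gamma_\tau)$ and meets $\Int(\sigma_\tau)$. Running both directions of Definition~\ref{def_general_constraints}(iii) through this dichotomy gives the biconditional without ever having to decide, for a specific $\tau$, which alternative holds; no surjectivity claim is needed. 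I would therefore strike the ``nondegeneracy'' paragraph and the closing surjectivity remark, and instead organize the proof around the dichotomy directly. (Also a small simplification: the fiber $\sigma_\tau\cap(\ev^{\trop})^{-1}([\mathbf A_I])$ is contained in an affine subspace of dimension $e_\tau$, so its dimension is always $\le e_\tau$; there is no ``jump above $e_\tau$'' to worry about on $\partial V_\tau$, only a possible drop or emptiness.)
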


\begin{proof}
As there are only finitely many subsets $I \subset \{1,\dots,r\}$, it is enough to prove that the set of $\mathbf{A}$ satisfying Definition \ref{def_general_constraints}(ii)-(iii) for a given $I$ is open and dense.
Let $\tau=(\Gamma,\bar{u})$ be the type of 
an $(\omega,\boldsymbol{\gamma}_I)$-marked tropical curve. As $\Gamma$ is of genus $0$, $h$ is not superabundant by \cite[Proposition 2.5]{MR}. In this case, \cite[Proposition 2.10]{MR} implies that, for $\mathbf{A}$ in an open dense subset of the space of constraints, the space of $(\omega,\boldsymbol{\gamma})$-marked tropical curves of type $\tau$ matching $\mathbf{A}$ is a non-empty open convex polyhedron in a vector space of dimension 
\[ d+\overline{e}-\sum_{i\in I} \codim (A_i)=d+\overline{e}-|I|\,,\]
where by \cite[\S 2, Eq.\! (2)]{MR}, using that $\Gamma$ has $|I|+1$ legs,
\[ \overline{e}=(|I|+1)-3-\mathrm{ov}(\Gamma)\,,\]
where the overvalence $\mathrm{ov}(\Gamma)$ is given by \eqref{eq_overvalence} and is a non-negative integer.
Therefore, the space of $(\omega,\boldsymbol{\gamma}_I)$-marked tropical curves of type $\tau$ matching $\mathbf{A}$ is of dimension at most 
\[ d+(|I|+1)-3-\mathrm{ov}(\Gamma)-|I|=d-2-\mathrm{ov}(\Gamma)\,,\]
and of dimension $d-2$ if and only if $\mathrm{ov}(\Gamma)=0$, that is, if $\Gamma$ is trivalent.
\end{proof}

\subsection{Properties of $(\omega,\boldsymbol{\gamma})$-marked tropical curves}

Let $\mathbf{A}$ be a $\boldsymbol{\gamma}$-constraint and let $\sigma$ be a face of the moduli space
$\cM_{\omega,\boldsymbol{\gamma},\mathbf{A}}^\trop$ of $(\omega,\boldsymbol{\gamma})$-marked tropical curves in $M_\RR$ matching $\mathbf{A}$.
For each vertex $v$ of $\Gamma_\sigma$, we obtain a polyhedron 
$\foj_v^\sigma$ in $M_\RR$ defined by 
\begin{equation}
\label{Eq:jv}
\foj_v^\sigma \coloneqq \{h(v)\,|\, h\in \Int(\sigma)\}^{\cl}\subseteq M_{\RR} \,,   
\end{equation}
and for each edge or leg $E$ of $\Gamma_\sigma$, a polyhedron $\fod_E^\sigma$ in $M_\RR$ defined by
\begin{equation}
\label{Eq:Ev}
    \fod_E^\sigma \coloneqq \left(\bigcup_{h\in \Int(\sigma)} h(E)\right)^{\cl} \subseteq M_{\RR}\,,
\end{equation}
see Figure \ref{fig4}.
For every polyhedron $\fod$ in $M_\RR$, we denote by $\Lambda_{\fod} \subset M$ the subspace of integral tangent vectors to $\fod$.

\begin{figure}[h]
\center{\includegraphics{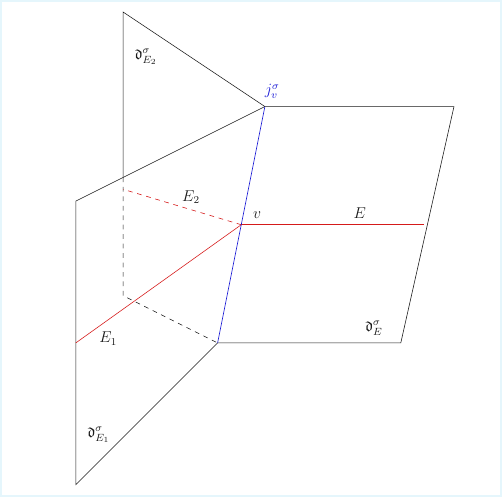}}
\caption{Polyhedra $\foj_v^\sigma$ and $\fod_E^\sigma$ for $d=3$.}
\label{fig4}
\end{figure}

By Definition \ref{Def affine constraint}, the affine hyperplanes $A_i$ of $\mathbf{A}$ have equations of the form $\gamma_i=\epsilon_i$ for some $\epsilon_i \in \RR$.
For every edge or leg $E$ of $\Gamma_\sigma$, we denote by $A_E$ the affine hyperplane in $M_\RR$ with equation 
\begin{equation}\label{eq_AE}
\gamma_E= \sum_i \epsilon_i\,,
\end{equation}
where the sum is over the indices $1\leq i \leq r$ such that the leg $L_i$ is a descendant of $E$. We have $\dim A_E= d-1$ because $\gamma_E\neq 0$ by Lemma \ref{lem_class}.

\begin{lemma} \label{lem_prep}
Let $\mathbf{A}$ be a $\boldsymbol{\gamma}$-constraint and let
$\sigma$ be a face of $\cM_{\omega,\boldsymbol{\gamma},\mathbf{A}}^\trop$.
Then,
\begin{itemize}
    \item[(i)] for every edge or leg $E$ of $\Gamma_\sigma$, we have $\fod_E^\sigma \subset A_E$, $\Lambda_{\fod_E^\sigma} \subset \gamma_E^{\perp}$, and in particular $\dim \fod_E^{\sigma} \leq d-1$,
    \item[(ii)] for every vertex $v$ of $\Gamma_\sigma$, we have $\foj_v^\sigma \subset \bigcap_{i=1}^k A_{E_i}$, and $\Lambda_{\foj_v^\sigma} \subset \bigcap_{i=1}^k \gamma_{E_i}^{\perp}$ where $E_1,\dots,E_k$ are the children edges of $v$.
\end{itemize}
\end{lemma}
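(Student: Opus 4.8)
The plan is to reduce everything to the two defining relations: the affine constraints $h(L_i)\subset A_i$ (which say $\gamma_i$ is constant on $h(L_i)$) and the balancing condition, and then propagate from the leaves toward the root. First I would establish part (i) by a downward induction on the tree $\Gamma_\sigma$, reading ``downward'' as going from the leaves $L_i$ toward the root $L_{\mathrm{out}}$. For a leaf $L_i$, the statement is immediate: $h(L_i)\subset A_i=A_{L_i}$ since $\gamma_{L_i}=\gamma_i$ by Definition \ref{def_class}, and the tangent direction of $h(L_i)$ is $u_{L_i}=-\iota_{\gamma_i}\omega$ by Lemma \ref{lem_class}, which lies in $\gamma_i^\perp$ because $\omega(\gamma_i,\gamma_i)=0$. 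For the inductive step at an edge $E$ with bottom vertex $v$, the children edges $E_1,\dots,E_k$ of $v$ already satisfy $\fod_{E_j}^\sigma\subset A_{E_j}$; since every $h(v)$ for $h\in\Int(\sigma)$ lies on $h(E_j)\subset \fod_{E_j}^\sigma$, we get $\foj_v^\sigma\subset\bigcap_j A_{E_j}$. Now $\gamma_E=\sum_j\gamma_{E_j}$ by Definition \ref{def_class}, and summing the equations $\gamma_{E_j}=\sum_{i\in\mathrm{desc}(E_j)}\epsilon_i$ gives exactly that any point of $\foj_v^\sigma$ satisfies $\gamma_E=\sum_{i\in\mathrm{desc}(E)}\epsilon_i$, i.e. $\foj_v^\sigma\subset A_E$. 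Finally $\fod_E^\sigma$ is swept out by segments $h(E)$, each starting at a point of $\foj_v^\sigma\subset A_E$ and running in the direction $u_E=-\iota_{\gamma_E}\omega\in\gamma_E^\perp$ (Lemma \ref{lem_class} again, using $\omega(\gamma_E,\gamma_E)=0$); hence $\fod_E^\sigma\subset A_E$ and $\Lambda_{\fod_E^\sigma}\subset\gamma_E^\perp$. Since $\gamma_E\neq 0$ by Lemma \ref{lem_class}, $\dim A_E=d-1$, giving the dimension bound.

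Part (ii) then follows as a byproduct of the induction above: for any vertex $v$ with children edges $E_1,\dots,E_k$, we have already shown $\foj_v^\sigma\subset\bigcap_{i=1}^k A_{E_i}$ by the argument that each $h(v)$ lies on the corresponding $h(E_i)$. For the tangent-space statement, a tangent vector to $\foj_v^\sigma$ at an interior point arises from varying $h\in\Int(\sigma)$; such a variation moves $h(v)$ while keeping it on each $\fod_{E_i}^\sigma\subset A_{E_i}$, so the resulting tangent vector lies in each $\Lambda_{\fod_{E_i}^\sigma}\subset\gamma_{E_i}^\perp$, whence $\Lambda_{\foj_v^\sigma}\subset\bigcap_{i=1}^k\gamma_{E_i}^\perp$. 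One should be slightly careful to phrase this in terms of the polyhedral structure: $\foj_v^\sigma$ is the closure of the image of $\Int(\sigma)$ under the affine-on-each-cone evaluation $h\mapsto h(v)$, so its affine span is contained in the affine span of $\bigcup_i\fod_{E_i}^\sigma$ restricted appropriately; I would simply note that $\foj_v^\sigma\subset\fod_{E_i}^\sigma$ for each $i$ (every $h(v)$ is an endpoint of $h(E_i)$), so $\Lambda_{\foj_v^\sigma}\subset\Lambda_{\fod_{E_i}^\sigma}$ directly, which is cleaner and avoids any deformation-theoretic language.

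I expect the only real subtlety to be bookkeeping: making the notions of ``descendant,'' ``children,'' and the summation index set in \eqref{eq_AE} precisely match across the induction, and confirming that the closure operations in \eqref{Eq:jv}--\eqref{Eq:Ev} do not enlarge the affine hull beyond $A_E$ (they do not, since $A_E$ is closed and contains the set before closure). No genuine geometric obstacle arises; the whole statement is a formal consequence of Lemma \ref{lem_class} together with the observation that $\omega(\gamma_E,\gamma_E)=0$ forces $u_E=-\iota_{\gamma_E}\omega$ to be tangent to the level hyperplane $A_E$ of the linear form $\gamma_E$.
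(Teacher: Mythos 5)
Your proposal is correct and takes essentially the same approach as the paper: both proceed by induction from the leaves toward the root, using the base case $h(L_i)\subset A_i$ together with $u_{L_i}\in\gamma_i^\perp$, and at each internal vertex deducing $\foj_v^\sigma\subset\bigcap_j\fod_{E_j}^\sigma\subset\bigcap_j A_{E_j}\subset A_E$ and then propagating to the parent edge via $\fod_E^\sigma\subset\foj_v^\sigma+\RR_{\geq0}u_E$ with $u_E=-\iota_{\gamma_E}\omega\in\gamma_E^\perp$. Your ``cleaner'' remark that $\Lambda_{\foj_v^\sigma}\subset\Lambda_{\fod_{E_i}^\sigma}$ follows immediately from the containment $\foj_v^\sigma\subset\fod_{E_i}^\sigma$ is exactly what the paper does.
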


\begin{proof}
We prove the result by induction following the flow on $\Gamma_\sigma$ starting at the leaves and ending at the root.

For the initial step of the induction, we consider a leg $E=L_i$ of $\Gamma_\sigma$. Every tropical curve $h \in \Int(\sigma)$ matches the $\boldsymbol{\gamma}$-constraint $\mathbf{A}$, and so $h(L_i) \subset A_i$.
It follows that $\fod_{L_i}^\sigma \subset A_i$ and so $\Lambda_{\fod_{L_i}^\sigma}\subset \gamma_i^{\perp}$ because $\Lambda_{A_i}=\gamma_i^{\perp}$.

For the induction step, let $v$ be a vertex of $\Gamma_\sigma$, $E$ the parent edge of $v$ and $E_1, \dots, E_k$ the children edges of $v$. We assume that Lemma \ref{lem_prep} holds for $E_1, \dots, E_k$, and we have to show that Lemma \ref{lem_prep} holds for $v$ and $E$. For every tropical curve $h \in \Int(\sigma)$, we have $h(v)\subset \cap_{i=1}^k h(E_i)$, and so $\fod_v^\sigma \subset \cap_{i=1}^k \fod_{E_1}^{\sigma}$.
By the induction hypothesis, we have
$\fod_{E_i}^\sigma \subset A_{E_i}$ and 
$\Lambda_{\fod_{E_i}^\sigma} =\gamma_{E_i}^{\perp}$ for all $1\leq i\leq k$, and so we conclude that 
$\foj_v^\sigma \subset \cap_{i=1}^k A_{E_i}$ and
$\Lambda_{\foj_v^\sigma} \subset \cap_{i=1}^k \gamma_{E_i}^\perp$. Similarly, we have $h(E) \subset h(v)+\RR_{\geq 0} u_E$. Using Lemma \ref{lem_class}, this implies that $\fod_E^\sigma \subset \foj_{v}^\sigma -
\RR_{\geq 0} \iota_{\gamma_E} \omega$. As we have already showed that $\Lambda_{\foj_v^\sigma} \subset \cap_{i=1}^k \gamma_{E_i}^{\perp}$, we conclude that
$\fod_E^\sigma \subset \cap_{i=1}^k A_{E_i}-
\RR_{\geq 0} \iota_{\gamma_E} \omega \subset A_E$ and 
$\Lambda_{\fod_E^\sigma} \subset \cap_{i=1}^k \gamma_{E_i}^{\perp}-
\RR_{\geq 0} \iota_{\gamma_E} \omega \subset \gamma_E^{\perp}$. Indeed, we have $\cap_{i=1}^k \gamma_{E_i}^{\perp}\subset \gamma_E^{\perp}$ because $\gamma_E=\sum_{i=1}^k \gamma_{E_i}$, and we 
have $\iota_{\gamma_E}\omega \in \gamma_E^{\perp}$ because $\omega(\gamma_E,\gamma_E)=0$.
\end{proof}

Let $(\Gamma,\bar{u})$ be a tropical type and let $v \in V(\Gamma)$ be a vertex. 
We denote by $T_v \Gamma$ the $\RR$-span in $M_\RR$ of all the weighted directions $u_E$ of the edges or legs of $\Gamma$ adjacent to $v$.
By the balancing condition in Definition \ref{def_tropical_curve}(ii), it is also the span of the weighted directions $u_{E_i}$ of the children edges or legs $E_i$ of $v$.

\begin{lemma} \label{lem_locally_planar}
Let $\mathbf{A}$ be a $\boldsymbol{\gamma}$-constraint and let
$\sigma$ be a face of $\cM_{\omega,\boldsymbol{\gamma},\mathbf{A}}^\trop$. Let $v \in V(\Gamma_\sigma)$ be a vertex such that $\dim \fod^\sigma_{E} =d-1$, where $E$ is the unique parent edge of $v$. Then, $\dim T_v \Gamma_\sigma \leq 2$.
\end{lemma}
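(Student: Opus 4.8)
The plan is to combine the affine constraints coming from Lemma \ref{lem_prep} at the children of $v$ with the balancing condition at $v$, and to extract a dimension bound. Write $E$ for the parent edge of $v$ and $E_1,\dots,E_k$ for the children edges (or legs) of $v$. First I would use Lemma \ref{lem_prep}(i) applied to $E$: the hypothesis $\dim \fod_E^\sigma=d-1$ together with $\Lambda_{\fod_E^\sigma}\subset \gamma_E^\perp$ and $\dim \gamma_E^\perp=d-1$ (which holds since $\gamma_E\neq 0$ by Lemma \ref{lem_class}) forces the equality $\Lambda_{\fod_E^\sigma}=\gamma_E^\perp$. By the recursion in the proof of Lemma \ref{lem_prep}, one has $\fod_E^\sigma \subset \foj_v^\sigma - \RR_{\geq 0}\iota_{\gamma_E}\omega$, so $\Lambda_{\fod_E^\sigma}\subset \Lambda_{\foj_v^\sigma}+\RR\,\iota_{\gamma_E}\omega$. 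Combined with $\Lambda_{\foj_v^\sigma}\subset \bigcap_{i=1}^k \gamma_{E_i}^\perp \subset \gamma_E^\perp$ and $\iota_{\gamma_E}\omega\in\gamma_E^\perp$, this pins down both $\Lambda_{\foj_v^\sigma}$ and its relation to $\gamma_E^\perp$: either $\Lambda_{\foj_v^\sigma}=\gamma_E^\perp$ already has dimension $d-1$, or $\Lambda_{\foj_v^\sigma}$ has dimension $d-2$ and $\gamma_E^\perp = \Lambda_{\foj_v^\sigma}\oplus\RR\,\iota_{\gamma_E}\omega$.

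Next I would bring in the balancing condition. Recall $T_v\Gamma_\sigma$ is the $\RR$-span of the weighted directions $u_{E_i}$ of the children of $v$, equivalently (by balancing) the span of all directions at $v$ including $u_E=-\iota_{\gamma_E}\omega$. The key point is that the constraint polyhedron $\foj_v^\sigma$ is cut out, inside the already-constrained ambient space, precisely by imposing the affine conditions $A_{E_i}$, i.e. the conditions $\gamma_{E_i}=\text{const}$; by Lemma \ref{lem_prep}(ii) we have $\Lambda_{\foj_v^\sigma}\subset\bigcap_i \gamma_{E_i}^\perp = (\Span(\iota_{\gamma_{E_1}}\omega,\dots))^{\perp}$... more to the point, $T_v\Gamma_\sigma = \Span(u_{E_1},\dots,u_{E_k}) = \Span(\iota_{\gamma_{E_1}}\omega,\dots,\iota_{\gamma_{E_k}}\omega)$ (using Lemma \ref{lem_class}, $u_{E_i}=-\iota_{\gamma_{E_i}}\omega$), so $\Lambda_{\foj_v^\sigma}\subset (T_v\Gamma_\sigma)^\perp$ and therefore $\dim\Lambda_{\foj_v^\sigma}\leq d-\dim T_v\Gamma_\sigma$. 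Since we showed $\dim\Lambda_{\foj_v^\sigma}\geq d-2$, we get $\dim T_v\Gamma_\sigma\leq 2$, which is exactly the claim.

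The one place requiring care — and what I expect to be the main obstacle — is justifying the inclusion $\Lambda_{\foj_v^\sigma}\subset (T_v\Gamma_\sigma)^\perp$, or more precisely that it is not weaker than the crude bound. The containment $\Lambda_{\foj_v^\sigma}\subset\bigcap_i\gamma_{E_i}^\perp$ from Lemma \ref{lem_prep}(ii) is exactly $(T_v\Gamma_\sigma)^\perp$ once one identifies $\bigcap_i\gamma_{E_i}^\perp$ with the annihilator of $\Span_i\iota_{\gamma_{E_i}}\omega$; this identification is where one must be slightly attentive, because a priori $\bigcap_i\gamma_{E_i}^\perp$ is the annihilator (inside $N$, viewed through $M=\Hom(N,\RR)$) of $\Span_i\gamma_{E_i}$, not of $\Span_i\iota_{\gamma_{E_i}}\omega$. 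However, for any $\gamma'$, the hyperplane $\gamma'^\perp\subset M_\RR$ contains $\iota_{\gamma'}\omega$ (since $\omega(\gamma',\gamma')=0$) and $\iota_{\gamma'}\omega$ spans the image of $\gamma'$ under $N\to M$... so in fact the directions $\iota_{\gamma_{E_i}}\omega$ automatically lie in every $\gamma_{E_j}^\perp$ that contains $\foj_v^\sigma$? That is not automatic; so the honest route is: $\Lambda_{\foj_v^\sigma}\subset\bigcap_i\gamma_{E_i}^\perp$, and each $u_{E_i}=-\iota_{\gamma_{E_i}}\omega$ pairs to zero against every vector in $\bigcap_j\gamma_{E_j}^\perp$ — indeed $\langle w,\iota_{\gamma_{E_i}}\omega\rangle = \omega(\gamma_{E_i},w^\vee)$ type expression, which is not obviously zero. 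The clean fix, and the step I would write most carefully, is to instead argue directly with the $A_{E_i}$'s: $\foj_v^\sigma$ lies in $\bigcap_i A_{E_i}$, whose linear part is $\bigcap_i\gamma_{E_i}^\perp$, and separately each direction $u_{E_i}\in T_v\Gamma_\sigma$ satisfies $u_{E_i}=-\iota_{\gamma_{E_i}}\omega$; then observe that $\gamma_{E_i}$ evaluated on $u_{E_j}$ is $\gamma_{E_i}(\iota_{\gamma_{E_j}}\omega)=-\omega(\gamma_{E_i},\gamma_{E_j})$, which need not vanish — so $T_v\Gamma_\sigma$ and $\bigcap_i\gamma_{E_i}^\perp$ are \emph{not} in general orthogonal complements, and one cannot conclude $\dim\Lambda_{\foj_v^\sigma}\leq d-\dim T_v\Gamma_\sigma$ so cheaply. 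The correct argument must use more: namely that $\foj_v^\sigma$ is obtained from $\fod_{E}^\sigma$ (or from the $\fod_{E_i}^\sigma$) by a \emph{transverse} intersection forced by $\dim\fod_E^\sigma = d-1$, controlling the ranks. Concretely, I would show $\dim\foj_v^\sigma \geq \dim\fod_E^\sigma - 1 = d-2$, using that $\fod_E^\sigma$ is swept out from $\foj_v^\sigma$ by translating along the single ray $\RR_{\geq0}u_E$, hence $\dim\fod_E^\sigma\leq\dim\foj_v^\sigma+1$; this gives $\dim\foj_v^\sigma\geq d-2$. On the other hand $\foj_v^\sigma\subseteq\bigcap_i A_{E_i}$, and $\dim\bigcap_i A_{E_i}\leq d-\dim\Span_i\gamma_{E_i}$. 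Now I invoke $\gamma_E=\sum_i\gamma_{E_i}$ and the local geometry: the map sending a deformation of $h$ near $v$ to the positions $h(v)$ realizes $\foj_v^\sigma$ with $\dim\foj_v^\sigma = d - \rk(\text{the } \gamma_{E_i}\text{'s on the relevant subspace}) + (\text{edge-length freedoms at descendants})$, and the hypothesis $\dim\fod_E^\sigma=d-1$ says the descendant freedoms already fill up $\foj_v^\sigma$ to dimension $d-2$; since the $\gamma_{E_i}$ are constrained inside the $(d-1)$-dimensional span orthogonal to $\iota_{\gamma_E}\omega$ in a way that allows at most $2$ of them to be linearly independent directions $u_{E_i}$ while keeping $\dim\foj_v^\sigma\geq d-2$, we get $\dim T_v\Gamma_\sigma=\dim\Span_i u_{E_i}\leq 2$. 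I would present this final counting step as the crux, formalizing "descendant freedoms fill up $\foj_v^\sigma$ to dimension $d-2$" via the explicit parametrization of $\Int(\sigma)$ recalled after Definition \ref{def_type}, namely $\Int(\sigma)\cong M_\RR\times\RR_{>0}^{|E(\Gamma_\sigma)|}$ cut down by the $r$ affine constraints, so that $\dim\foj_v^\sigma$ equals $d$ minus the number of independent constraints "visible" at or below $v$, and $\dim\fod_E^\sigma=d-1$ forces that number to be exactly $d - (d-2) = 2$ fewer constraints than generic, which via the balancing relation $\sum_i\iota_{\gamma_{E_i}}\omega = \iota_{\gamma_E}\omega$ translates into $\dim\Span_i\iota_{\gamma_{E_i}}\omega\leq 2$.
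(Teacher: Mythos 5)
Your argument is correct and is essentially the contrapositive of the paper's own proof: both rest on the chain $\dim\foj_v^\sigma \geq \dim\fod_E^\sigma - 1 = d-2$ (since $\fod_E^\sigma\subset\foj_v^\sigma+\RR_{\geq 0}u_E$), then $\dim\foj_v^\sigma \leq d-\dim\Span_i\gamma_{E_i}$ via Lemma~\ref{lem_prep}(ii), and finally $\dim T_v\Gamma_\sigma = \dim\Span_i\iota_{\gamma_{E_i}}\omega\leq\dim\Span_i\gamma_{E_i}$. The long digression about $(T_v\Gamma_\sigma)^\perp$ and the closing appeal to balancing and descendant freedoms are unnecessary: the last inequality is simply linearity of $n\mapsto\iota_n\omega$, not balancing, and the three displayed inequalities already close the argument.
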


\begin{proof}
Let $E_1,\dots, E_k$ be the children edges of $v$.
If $\dim T_v\Gamma_\sigma >2$, then, as $u_{E_k}=-\iota_{\gamma_{E_k}}\omega$ by Lemma \ref{lem_class}, the $\RR$-span in $N_\RR$ of the vectors $\gamma_{E_k}$ is also $>2$, and so $\dim \cap_{i=1}^k \gamma_{E_k}^{\perp}<d-2$. By Lemma \ref{lem_prep}, we have $\foj_v^\sigma \subset \cap_{i=1}^k \gamma_{E_i}^{\perp}$, and so $\dim \foj_v^\sigma <d-2$. Finally, as $\fod_E^\sigma \subset \foj_v^\sigma +\RR_{\geq 0}u_E$, this implies that $\dim \fod_E^\sigma <d-1$, contradiction.   
\end{proof}

The following lemma is similar to  \cite[Lemma 3.2.4]{KS} and 
\cite[Lemma C.2]{GHKK}.

\begin{lemma}\label{lem_key}
Let $\mathbf{A}$ be a $\boldsymbol{\gamma}$-constraint and let
$\sigma$ be a face of $\cM_{\omega,\boldsymbol{\gamma},\mathbf{A}}^\trop$. Let $v \in V(\Gamma_\sigma)$ be a  vertex such that 
$\dim T_v \Gamma_\sigma >1$ and
$\dim \fod^\sigma_{E} =d-1$, where $E$ is the unique parent edge of $v$. Then, we have $\dim \fod_{E_1}^\sigma=d-1$, for any child edge $E_1$ of $v$.
\end{lemma}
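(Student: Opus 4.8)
The plan is to argue by descending along the tree, in the same style as the proof of Lemma \ref{lem_prep}, and to use the key dimension estimate of Lemma \ref{lem_locally_planar} together with the balancing condition. Fix the vertex $v$ with parent edge $E$ and children $E_1,\dots,E_k$. Since $\dim \fod_E^\sigma=d-1$ and $\fod_E^\sigma \subset \foj_v^\sigma + \RR_{\geq 0}u_E$, we must have $\dim \foj_v^\sigma \geq d-1$; combined with Lemma \ref{lem_prep}(ii), which gives $\Lambda_{\foj_v^\sigma} \subset \bigcap_{i=1}^k \gamma_{E_i}^\perp$, and with the fact that $\dim \fod_E^\sigma = d-1$ forces $u_E = -\iota_{\gamma_E}\omega \in \Lambda_{\foj_v^\sigma}$ (so actually $\dim \foj_v^\sigma = d-1$ and $\Lambda_{\foj_v^\sigma}$ is a hyperplane containing $\iota_{\gamma_E}\omega$), I would deduce that $\bigcap_{i=1}^k \gamma_{E_i}^\perp$ has dimension exactly $d-1$. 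Equivalently, the $\RR$-span of the vectors $\gamma_{E_1},\dots,\gamma_{E_k}$ in $N_\RR$ is $1$-dimensional. Since by hypothesis $\dim T_v\Gamma_\sigma>1$ and $\dim T_v \Gamma_\sigma \leq 2$ by Lemma \ref{lem_locally_planar}, we have $\dim T_v\Gamma_\sigma = 2$; and since $T_v\Gamma_\sigma$ is the span of the $u_{E_i}=-\iota_{\gamma_{E_i}}\omega$, this seems to be in tension with the $\gamma_{E_i}$ spanning only a line — the resolution being that $\iota_{-}\omega$ has a nontrivial kernel, so a $2$-plane of $u_{E_i}$'s can come from vectors $\gamma_{E_i}$ whose span is larger; I need to be careful and instead directly analyze $\foj_v^\sigma$ and the children polyhedra.

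The cleaner route: set $P := \foj_v^\sigma$, a polyhedron of dimension $d-1$ with $\Lambda_P \subset \bigcap_i \gamma_{E_i}^\perp$. Since each $\gamma_{E_i}^\perp$ is a hyperplane and their intersection contains the $(d-1)$-dimensional $\Lambda_P$, each $\gamma_{E_i}^\perp \supseteq \Lambda_P$, hence $\gamma_{E_i}^\perp = \Lambda_P$ for every $i$ (they are all equal, the unique hyperplane containing $\Lambda_P$ — unless $\Lambda_P$ is contained in more than one hyperplane, impossible since $\dim \Lambda_P = d-1$). Thus all $\gamma_{E_i}$ are proportional: $\gamma_{E_i} \in \RR\gamma_E$ (as $\gamma_E = \sum \gamma_{E_i}$ lies in that line too). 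Now fix a child $E_1$. I want $\dim \fod_{E_1}^\sigma = d-1$. By Lemma \ref{lem_prep}(i) we already know $\dim \fod_{E_1}^\sigma \leq d-1$ and $\Lambda_{\fod_{E_1}^\sigma} \subset \gamma_{E_1}^\perp = \Lambda_P$. For the reverse inequality, note $\fod_{E_1}^\sigma$ is swept out by the segments $h(E_1)$ for $h \in \Int(\sigma)$, and its closure contains the point set $\foj_{v_1}^\sigma$ for $v_1$ the child vertex of $E_1$ and also $\foj_v^\sigma = P$; concretely $\fod_{E_1}^\sigma \supseteq \foj_v^\sigma$-translates-toward-$v_1$, so $\dim \fod_{E_1}^\sigma \geq \dim \foj_{v}^\sigma$ provided the edge $E_1$ genuinely varies, i.e.\ provided $v$ itself moves in a $(d-1)$-dimensional family. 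But $\dim \foj_v^\sigma = d-1$ exactly says $v$ moves in such a family, and $E_1$ is attached to $v$, so $\fod_{E_1}^\sigma \supseteq$ a translate of $\foj_v^\sigma$ along $E_1$'s direction; since $\fod_{E_1}^\sigma$ contains $\foj_v^\sigma$ (the locus of endpoints $h(v)$), we get $\dim \fod_{E_1}^\sigma \geq d-1$, hence equality.

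The main obstacle I anticipate is the last step: making rigorous that $\fod_{E_1}^\sigma \supseteq \foj_v^\sigma$, i.e.\ that the full $(d-1)$-dimensional freedom of the vertex $v$ is inherited by the edge $E_1$ adjacent to it. This requires knowing that as $h$ ranges over $\Int(\sigma)$, the point $h(v)$ and the segment $h(E_1)$ vary compatibly — which is clear since $h(v) \in h(E_1)$ always, so the union of the $h(E_1)$ contains the union of the $h(v)$, giving $\fod_{E_1}^\sigma \supseteq \foj_v^\sigma$ directly from definitions \eqref{Eq:jv}--\eqref{Eq:Ev}. So in fact this is immediate: $\dim \fod_{E_1}^\sigma \geq \dim \foj_v^\sigma = d-1$, and Lemma \ref{lem_prep}(i) gives the matching upper bound. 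Thus the plan reduces to: (1) from $\dim\fod_E^\sigma = d-1$ and the containment $\fod_E^\sigma \subset \foj_v^\sigma + \RR_{\geq 0}u_E$, deduce $\dim \foj_v^\sigma = d-1$; (2) invoke $\fod_{E_1}^\sigma \supseteq \foj_v^\sigma$ and Lemma \ref{lem_prep}(i) to conclude. The hypotheses $\dim T_v\Gamma_\sigma > 1$ and Lemma \ref{lem_locally_planar} are presumably what is needed to ensure step (1) is non-vacuous or to rule out degenerate configurations where $E$ points in a direction making the sum $\foj_v^\sigma + \RR_{\geq 0}u_E$ jump dimension; I would double-check whether step (1) truly needs $\dim T_v \Gamma_\sigma > 1$, since if $\dim T_v\Gamma_\sigma = 1$ then $u_E$ could be tangent to $\foj_v^\sigma$ and one could have $\dim \foj_v^\sigma = d-2$ yet $\dim\fod_E^\sigma = d-1$ only if $u_E \notin \Lambda_{\foj_v^\sigma}$ — so the hypothesis $\dim T_v\Gamma_\sigma>1$ is exactly what forces $u_E$ to lie in the span together with some child direction, pinning $\dim\foj_v^\sigma = d-1$. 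I would make that case analysis explicit.
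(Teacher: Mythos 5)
Your proposal has a genuine gap, and you in fact notice the symptom of it yourself without resolving it. The pivotal claim in step (1) — that $\dim\fod_E^\sigma=d-1$ together with $\fod_E^\sigma\subset\foj_v^\sigma+\RR_{\geq0}u_E$ forces $u_E\in\Lambda_{\foj_v^\sigma}$ and hence $\dim\foj_v^\sigma=d-1$ — is unjustified and false. From the Minkowski sum inclusion one only obtains $\dim\foj_v^\sigma\geq d-2$; the case $\dim\foj_v^\sigma=d-2$ with $u_E\notin\Lambda_{\foj_v^\sigma}$ is perfectly compatible with $\dim\fod_E^\sigma=d-1$, and in fact Theorem~\ref{thm_trop}(ii)--(iii) says this is exactly what happens for the faces of interest. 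Worse, the conclusion $\dim\foj_v^\sigma=d-1$ is \emph{inconsistent} with the hypothesis $\dim T_v\Gamma_\sigma>1$: as you observe, it would force all $\gamma_{E_i}$ to lie on a single line in $N_\RR$, and since $\iota_{-}\omega$ is linear, proportional inputs give proportional outputs $u_{E_i}=-\iota_{\gamma_{E_i}}\omega$, so $\dim T_v\Gamma_\sigma\leq1$ — a contradiction. The ``resolution'' you propose via the kernel of $\iota_{-}\omega$ does not exist; a nontrivial kernel cannot make proportional vectors become non-proportional. Once $\dim\foj_v^\sigma=d-1$ is abandoned, your step (2) only gives $\dim\fod_{E_1}^\sigma\geq\dim\foj_v^\sigma=d-2$, which is exactly the content of Lemma~\ref{lem_prep}(i) and not the desired bound.

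The missing idea is the skew-symmetry of $\omega$, which your argument never uses. The paper argues by contradiction: assume $\dim\fod_{E_1}^\sigma\leq d-2$. After reducing to the case $\Lambda_{\fod_{E_1}^\sigma}\subset\bigcap_{i\geq2}\Lambda_{\fod_{E_i}^\sigma}$ (the other case kills $\dim\foj_v^\sigma$ directly), one finds $u_{E_1}=-\iota_{\gamma_{E_1}}\omega\in\bigcap_{i\geq2}\gamma_{E_i}^\perp$, i.e.\ $\omega(\gamma_{E_1},\gamma_{E_i})=0$ for all $i\geq2$. By Lemma~\ref{lem_locally_planar} the hypotheses force $\dim T_v\Gamma_\sigma=2$, so some $u_{E_1},u_{E_i}$ span $T_v\Gamma_\sigma$; the relation $\omega(\gamma_{E_1},\gamma_{E_i})=0$ then traps this $2$-plane, and hence $\foj_v^\sigma+\RR_{\geq0}u_E\supseteq\fod_E^\sigma$, inside the codimension-$2$ subspace $\gamma_{E_1}^\perp\cap\gamma_{E_i}^\perp$, yielding $\dim\fod_E^\sigma\leq d-2$, a contradiction. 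Without invoking the skew-symmetric structure of the directions $u_{E_i}=-\iota_{\gamma_{E_i}}\omega$, there is no way to rule out the bad configuration, so the elementary dimension-counting route you outline cannot succeed.
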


\begin{proof}
By Lemma \ref{lem_prep}(i), it is enough to prove that $\dim \fod_{E_1}^\sigma \geq d-1$ in order to show that $\dim \fod_{E_1}^\sigma = d-1$.

Assume by contradiction that $\dim \fod_{E_1}^\sigma \leq d-2$ for a child edge or leg $E_1$ of $v$. Denote by $E_2,\dots, E_k$ the other child edges or legs of $v$.
For every tropical curve $h \in \Int(\sigma)$, we have $h(v)\subset \cap_{i=1}^k h(E_i)$, and so $\fod_v^\sigma \subset \cap_{i=1}^k \fod_{E_i}^{\sigma}$. By assumption, we have $\dim \fod_{E_1}^\sigma \leq d-2$. On the other hand, by Lemma \ref{lem_prep}(i), we also have $\dim \fod_{E_i}^\sigma \leq d-1$ for all $2 \leq i\leq k$.
If $\Lambda_{\fod_{E_1}^\sigma}$ is not included in  $\Lambda_{\fod_{E_i}^\sigma}$ for some $2\leq i \leq k$, it follows that $\dim \foj_v^{\sigma} \leq d-3$, and so, as $\fod_E^\sigma \subset \foj_v^\sigma +\RR_{\geq 0}u_E$, we conclude that $\dim \fod_{E}^\sigma \leq d-2$, contradiction. 

Therefore, to end the proof, it is enough to consider the case where $\Lambda_{\fod_{E_1}^\sigma} \subset \cap_{i=2}^k \Lambda_{\fod_{E_i}^\sigma}$ . In this case, the direction $u_{E_1}$ is contained in $\cap_{i=2}^k \Lambda_{\fod_{E_2}^\sigma}$, and so in $\cap_{i=2}^k \gamma_{E_2}^{\perp}$ by Lemma \ref{lem_prep}(i). By Lemma \ref{lem_class}, we have $u_{E_1}=-\iota_{\gamma_{E_1}}\omega$, and so $u_{E_1}\in 
\cap_{i=2}^k \gamma_{E_i}^{\perp}$
implies $\omega(\gamma_{E_1},\gamma_{E_i})=0$ for all $2 \leq i \leq k$. 

As we are assuming that $\dim T_v \Gamma_\sigma >1$ and $\dim \fod_E^\sigma=d-1$, we actually have $\dim T_v \Gamma_\sigma =2$ by Lemma \ref{lem_locally_planar}. Hence, there exists $2 \leq i \leq k$ such that $u_{E_1}$ and $u_{E_i}$ generate $T_v 
\Gamma_\sigma$. Hence, $\omega(\gamma_{E_1},\gamma_{E_i})=0$ implies that $T_v \Gamma_\sigma \subset \gamma_{E_1}^{\perp} \cap \gamma_{E_i}^{\perp}$.
In particular, we obtain that $u_E \in \gamma_{E_1}^{\perp}\cap \gamma_{E_i}^{\perp}$. On the other hand, by Lemma \ref{lem_prep}(ii), we also have $\Lambda_{\foj_v^\sigma} \subset \gamma_{E_1}^{\perp} \cap \gamma_{E_i}^{\perp}$. We conclude that 
$\fod_E^\sigma \subset \foj_v^\sigma +\RR_{\geq 0}u_{E} \subset \gamma_{E_1}^{\perp} \cap \gamma_{E_i}^{\perp}$. On the other hand, as we are assuming that $u_{E_1}$ and $u_{E_i}$ span the 2-dimensional space $T_v \Gamma_\sigma$, $u_{E_1}$ and $u_{E_i}$ are not proportional, and so the charges $\gamma_{E_1}$ and $\gamma_{E_i}$ are also not proportional. Hence, the hyperplanes $\gamma_{E_1}^\perp$ and $\gamma_{E_i}^{\perp}$ are distinct, and so $\dim \fod_E^\sigma \leq d-2$, contradiction. 
\end{proof}

\begin{lemma} \label{lem_prep1}
Let $\mathbf{A}$ be a $\boldsymbol{\gamma}$-constraint and let
$\sigma$ be a face of $\cM_{\omega,\boldsymbol{\gamma},\mathbf{A}}^\trop$
such that $\dim \sigma=d-2$ and $\dim \fod_{L_{\mathrm{out}}}^\sigma =d-1$.
Then, we have $\dim T_v \Gamma_\sigma=2$ for all vertices $v \in V(\Gamma_\sigma)$, and $\dim \fod_{E}^\sigma=d-1$ for all edges or legs $E \in E(\Gamma_\sigma) \cup L(\Gamma_\sigma)$.
\end{lemma}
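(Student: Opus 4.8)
The plan is to prove both assertions simultaneously by an induction running along the tree $\Gamma_\sigma$ from the root leg $L_{\mathrm{out}}$ towards the leaves, with Lemmas \ref{lem_locally_planar} and \ref{lem_key} as the engine. For a vertex $v$, write $E_v$ for its parent edge or leg. I will prove, by induction on the combinatorial distance from $v$ to the root, the pair of statements: (a) $\dim\fod^\sigma_{E_v}=d-1$, and (b) if $v$ has at least one child, then $\dim T_v\Gamma_\sigma=2$. Since $L_{\mathrm{out}}$ is an ancestor of every edge and leg and $v_{\mathrm{out}}$ is an ancestor of every vertex, establishing (a)--(b) for all $v$ proves the Lemma.

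The base case is $v=v_{\mathrm{out}}$, where (a) is exactly the hypothesis $\dim\fod^\sigma_{L_{\mathrm{out}}}=d-1$. For the inductive step, let $v$ be a vertex whose parent vertex $v'$ already satisfies (a)--(b). Since $\dim T_{v'}\Gamma_\sigma=2>1$ and $\dim\fod^\sigma_{E_{v'}}=d-1$, applying Lemma \ref{lem_key} at $v'$ with the child edge $E_v$ gives $\dim\fod^\sigma_{E_v}=d-1$, which is (a) for $v$. To obtain (b), first note that from $\fod^\sigma_{E_v}\subseteq\foj^\sigma_v+\RR_{\geq 0}u_{E_v}$ together with the bound $\dim\foj^\sigma_v\leq\dim\sigma=d-2$ (the map $h\mapsto h(v)$ is the restriction of an affine map to $\sigma$) and $\dim\fod^\sigma_{E_v}=d-1$, one deduces $\dim\foj^\sigma_v=d-2$ and $u_{E_v}\notin\Lambda_{\foj^\sigma_v}$. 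Lemma \ref{lem_locally_planar} then gives $\dim T_v\Gamma_\sigma\leq 2$, so it remains only to rule out $\dim T_v\Gamma_\sigma=1$.

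This exclusion is the crux, and I would argue it by a dimension count contradicting $\dim\sigma=d-2$. Let $\tau=(\Gamma_\sigma,\bar u_\sigma)$ be the type of $\sigma$; realising $\sigma$ inside $\cM^\trop_\tau\cong M_\RR\times\RR_{>0}^{|E(\Gamma_\sigma)|}$ as the locus cut out by the $r$ affine conditions $[h(L_i)]=[A_i]$, one has $\dim\sigma=d+|E(\Gamma_\sigma)|-\rk\big(\ev^\trop|_{\cM^\trop_\tau}\big)$, and since $|E(\Gamma_\sigma)|=r-2-\mathrm{ov}(\Gamma_\sigma)$ for a genus $0$ tree with $r+1$ legs, the hypothesis $\dim\sigma=d-2$ forces $\rk(\ev^\trop|_{\cM^\trop_\tau})=r-\mathrm{ov}(\Gamma_\sigma)$. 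Suppose now $\dim T_v\Gamma_\sigma=1$, with children $E_1,\dots,E_k$ of $v$ (so $k\geq 2$, there being no divalent vertices): then $u_{E_1},\dots,u_{E_k}$ and $u_{E_v}$ are all collinear, so by Lemma \ref{lem_class} the classes $\gamma_{E_1},\dots,\gamma_{E_k}$ have pairwise parallel images $\iota_{\gamma_{E_j}}\omega$, and the displacements $h(v_j)-h(v)$ of the child vertices all lie on the line $\RR u_{E_v}$. Tracking this collinearity through the rows of the differential of $\ev^\trop$ indexed by the legs $L_i$ lying below $v$, it produces a linear dependence among those rows over and above the one already accounted for by $\mathrm{ov}(\Gamma_\sigma)$, so that $\rk(\ev^\trop|_{\cM^\trop_\tau})<r-\mathrm{ov}(\Gamma_\sigma)$, hence $\dim\sigma>d-2$, a contradiction. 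Therefore $\dim T_v\Gamma_\sigma=2$, and a final application of Lemma \ref{lem_key} at $v$ shows that each child edge $E_j$ satisfies $\dim\fod^\sigma_{E_j}=d-1$, so the induction continues.

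The main obstacle is exactly this crux: converting ``all edges at $v$ are collinear'' into a genuine contradiction with $\dim\sigma=d-2$. Everything else is essentially formal once the base case is supplied, since Lemma \ref{lem_locally_planar} forces $\dim T_v\Gamma_\sigma\leq 2$ and Lemma \ref{lem_key} propagates the $(d-1)$-dimensionality of the $\fod^\sigma_E$. The delicate point is to bookkeep the rank of $\ev^\trop$ vertex by vertex so as to see that a locally one-dimensional vertex forces an additional rank drop; it is here that one needs $\dim\sigma=d-2$ on the nose rather than merely $\dim\sigma\leq d-2$, and one must take care that the argument does not tacitly assume $\Gamma_\sigma$ trivalent.
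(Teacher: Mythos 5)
You correctly isolate the crux — ruling out $\dim T_v\Gamma_\sigma=1$ — and you openly flag that your treatment of it is incomplete; I agree there is a genuine gap there. The rank-count sketch does not hold up as written: collinearity of the edges incident to $v$ tells you nothing about the weighted directions of legs strictly below $v$ (the subtree below $v$ branches freely), so no dependence among the corresponding rows of $d\,\ev^\trop$ is forced; moreover, the rank drop coming from $\mathrm{ov}(\Gamma_\sigma)$ and a putative one coming from a locally one-dimensional vertex are not disjoint sources that can be added up (a trivalent vertex can satisfy $\dim T_v\Gamma_\sigma=1$ while contributing no overvalence), so the phrase ``over and above'' needs an accounting you have not supplied.

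The irony is that you already have the pieces for a clean exclusion and did not combine them. You derived, from $\dim\fod^\sigma_{E_v}=d-1$ and $\dim\foj^\sigma_v\leq d-2$, that $u_{E_v}\notin\Lambda_{\foj^\sigma_v}$. But if $\dim T_v\Gamma_\sigma=1$, then all edges and legs at $v$ point along the line $\RR u_{E_v}$, and sliding $h(v)$ by a small $t\,u_{E_v}$ while fixing all other vertices only changes edge lengths, preserves all directions, and fixes every leg, hence produces a curve still in $\Int(\sigma)$; this forces $u_{E_v}\in\Lambda_{\foj^\sigma_v}$, a contradiction. That closes your induction with no rank count at all. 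The paper's own proof takes a different route: it first proves $\dim T_v\Gamma_\sigma>1$ globally, by choosing $v'$ closest to the root with $\dim T_{v'}\Gamma_\sigma=1$ and showing the same sliding deformation gives an extra degree of freedom — either $\dim\foj^\sigma_{v'}=\dim\fod^\sigma_{L_{\mathrm{out}}}=d-1$ when $v'=v_{\mathrm{out}}$, or $\dim\sigma\geq 1+\dim\foj^\sigma_v\geq d-1$ via Lemma \ref{lem_key} applied down to the parent $v$ of $v'$ — contradicting $\dim\sigma=d-2$; only afterwards does it iterate Lemma \ref{lem_key} and invoke Lemma \ref{lem_locally_planar}. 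Both strategies work; yours just needs the exclusion step replaced by the direct $u_{E_v}\in\Lambda_{\foj^\sigma_v}$ contradiction rather than the elusive row dependence.
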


\begin{proof}
We first show that $\dim T_v \Gamma_\sigma >1$ for all $v \in V(\Gamma_\sigma)$.
Assume by contradiction that there exists $v \in V(\Gamma_\sigma)$ such that $\dim T_v \Gamma_\sigma =1$. Then, there exists at least one vertex $v' \in \Gamma_\sigma$ such that $\dim T_{v'} \Gamma_\sigma =1$ and such that $\dim T_{v''}\Gamma_\sigma >1$ for all vertices $v'' \neq v'$ on the unique path in $\Gamma_\sigma$ from $v'$ to the root. 

If $v'$ is the unique vertex adjacent to $L_{\mathrm{out}}$, then, as $\dim T_{v'}\Gamma_\sigma=1$, one can move $h(v')$ is the direction of $h(L_{\mathrm{out}})$, and so $\foj_{v'}^\sigma = \fod_{L_{\mathrm{out}}}^\sigma$, so $\dim \foj_{v'}^\sigma=d-1$. This contradicts the assumption that $\dim \sigma=d-2$ because $\foj_{v'}^\sigma$ is the image of $\sigma$ by the evaluation map $h \mapsto h(v')$.

If $v'$ is not adjacent at $L_{\mathrm{out}}$, then $v'$ admits a unique parent vertex $v$. Denote by $E_1$ the edge connecting $v'$ and $v$. As $\dim T_{v''}\Gamma_\sigma >1$ for every vertex $v''$ on the path from $v$ to the root, applying Lemma \ref{lem_key} iteratively from the root down to $v$  shows that the assumption $\dim \fod_{L_{\mathrm{out}}}^\sigma=d-1$ implies $\dim \fod_{E_1}^\sigma  =d-1$, and so $\dim \foj_v^{\sigma}\geq d-2$.
On the other hand, as $\dim T_{v'}\Gamma_\sigma=1$, one can move $h(v')$ is the direction of $h(E_1)$ without moving $h(v)$, and so $\dim \sigma \geq 1+\dim \foj_v^{\sigma} \geq d-1$, contradiction with the assumption that $\dim \sigma=d-2$.

Once one knows that $\dim T_v \Gamma_\sigma >1$ for all $v \in V(\Gamma_\sigma)$, applying Lemma \ref{lem_key} iteratively starting from the root  shows that the assumption $\dim \fod_{L_{\mathrm{out}}}^\sigma=d-1$ implies $\dim \fod_{E}^\sigma  =d-1$ for all edges or legs $E$. We conclude that $\dim T_v \Gamma_\sigma=2$ for all $v \in V(\Gamma_\sigma)$ by Lemma \ref{lem_locally_planar}.
\end{proof}

\begin{theorem} \label{thm_trop}
Let $\mathbf{A}$ be a $\boldsymbol{\gamma}$-constraint and let $\sigma$ be a $(d-2)$-dimensional face of $\cM_{\omega,\boldsymbol{\gamma},\mathbf{A}}^\trop$
such that $\dim \fod_{L_{\mathrm{out}}}^\sigma =d-1$.
Then, 
\begin{itemize}
\item[(i)]for every edge or leg $E$ of $\Gamma_\sigma$, we have $\dim \fod_{E}^\sigma=d-1$ and $\Lambda_{\fod_{E}^\sigma} =\gamma_E^{\perp}$,
\item[(ii)] for every vertex $v\in V(\Gamma_\sigma)$, we have $\dim T_v \Gamma_\sigma=2$, $\dim \foj_v^\sigma=d-2$ and $\Lambda_{\foj_v^\sigma}=\cap_{i=1}^k \gamma_{E_i}^\perp$ where $E_1,\dots, E_k$ are the children edges of $v$,
\item[(iii)] for every edge or leg $E$ of $\Gamma_\sigma$ with parent vertex $v$, we have $u_E \notin \Lambda_{\foj_v^\sigma}$.
\end{itemize}
\end{theorem}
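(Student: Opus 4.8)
The plan is to derive the statement as a corollary of Lemma \ref{lem_prep1}, whose hypotheses coincide exactly with those assumed here. So I would first invoke Lemma \ref{lem_prep1} to record that $\dim T_v \Gamma_\sigma = 2$ for all $v \in V(\Gamma_\sigma)$ and $\dim \fod_E^\sigma = d-1$ for all edges or legs $E$ of $\Gamma_\sigma$; this already gives the first assertions of (i) and (ii), so that only the identification of the linear tangent spaces $\Lambda_{\fod_E^\sigma}$ and $\Lambda_{\foj_v^\sigma}$ remains. The only tools I need are two elementary facts: for a polyhedron $\fod \subseteq M_\RR$ one has $\dim \fod = \rk \Lambda_\fod$, and an inclusion $\fod \subseteq \fod' + \RR_{\geq 0} u$ forces $\Lambda_\fod \subseteq \Lambda_{\fod'} + \RR u$. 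I also use throughout that $\gamma_E \neq 0$ by Lemma \ref{lem_class}, so that $\gamma_E^\perp$ has dimension $d-1$.

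For (i): by Lemma \ref{lem_prep}(i) we have $\Lambda_{\fod_E^\sigma} \subseteq \gamma_E^\perp$, and since both have dimension $d-1$ (the left-hand side by Lemma \ref{lem_prep1}), the inclusion is an equality. For (ii): fix $v \in V(\Gamma_\sigma)$ with parent edge or leg $E$ and children $E_1, \dots, E_k$. From $h(E) \subseteq h(v) + \RR_{\geq 0} u_E$ for $h \in \Int(\sigma)$ (as in the proof of Lemma \ref{lem_prep}) we get $\fod_E^\sigma \subseteq \foj_v^\sigma + \RR_{\geq 0} u_E$, hence $\dim \foj_v^\sigma \geq \dim \fod_E^\sigma - 1 = d-2$; since $\foj_v^\sigma$ is the closure of the image of $\Int(\sigma)$ under $h \mapsto h(v)$, we also have $\dim \foj_v^\sigma \leq \dim \sigma = d-2$, so $\dim \foj_v^\sigma = d-2$. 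Moreover $h(v) \in h(E_i)$ for every $i$ gives $\foj_v^\sigma \subseteq \bigcap_{i=1}^k \fod_{E_i}^\sigma$, whence $\Lambda_{\foj_v^\sigma} \subseteq \bigcap_{i=1}^k \gamma_{E_i}^\perp$ by (i). On the other hand, by Lemma \ref{lem_class} the linear contraction $n \mapsto \iota_n \omega$ maps $\Span\{\gamma_{E_i}\}_i$ onto $\Span\{u_{E_i}\}_i = T_v \Gamma_\sigma$, which is $2$-dimensional; hence $\gamma_{E_i}$ and $\gamma_{E_j}$ are linearly independent for some $i \neq j$, so $\gamma_{E_i}^\perp \neq \gamma_{E_j}^\perp$ and $\dim \bigcap_{i=1}^k \gamma_{E_i}^\perp \leq d-2$. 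Comparing dimensions, the inclusion $\Lambda_{\foj_v^\sigma} \subseteq \bigcap_{i=1}^k \gamma_{E_i}^\perp$ is then forced to be an equality, which, together with $\dim T_v \Gamma_\sigma = 2$, proves (ii).

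For (iii): let $v$ be the parent vertex of $E$. Then $\fod_E^\sigma \subseteq \foj_v^\sigma + \RR_{\geq 0}(\pm u_E)$ (the sign depending on whether $E$ is the parent or a child edge of $v$), so $\Lambda_{\fod_E^\sigma} \subseteq \Lambda_{\foj_v^\sigma} + \RR u_E$; were $u_E$ to lie in $\Lambda_{\foj_v^\sigma}$, this would give $\Lambda_{\fod_E^\sigma} \subseteq \Lambda_{\foj_v^\sigma}$ and hence $d-1 = \dim \fod_E^\sigma \leq \dim \foj_v^\sigma = d-2$, a contradiction. The argument is essentially bookkeeping with the polyhedra $\fod_E^\sigma$ and $\foj_v^\sigma$; the step I expect to require the most care is the one in (ii) transporting the $2$-dimensionality of $T_v\Gamma_\sigma$ through the linear map $n \mapsto \iota_n\omega$ to the bound $\dim \bigcap_i \gamma_{E_i}^\perp \leq d-2$, which is precisely what rules out the a priori possibility $\dim \foj_v^\sigma = d-1$. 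All of the substantive geometric content has already been supplied by Lemmas \ref{lem_locally_planar}--\ref{lem_prep1}.
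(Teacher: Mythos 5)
Your proof is correct and follows essentially the same route as the paper's: invoke Lemma~\ref{lem_prep1} for the dimension statements $\dim T_v\Gamma_\sigma=2$ and $\dim\fod_E^\sigma=d-1$, then upgrade the inclusions of Lemma~\ref{lem_prep} to equalities by dimension/rank counting, and derive (iii) by contradiction from the gap $d-1>d-2$. Your write-up is more explicit than the paper's (which compresses (ii) into a one-line appeal to Lemma~\ref{lem_prep}(ii) and the identity $\fod_E^\sigma=\foj_v^\sigma+\RR_{\geq 0}u_E$), in particular in spelling out why $\dim T_v\Gamma_\sigma=2$ forces $\dim\bigcap_i\gamma_{E_i}^\perp\leq d-2$ via the contraction $n\mapsto\iota_n\omega$, but the substance is the same.
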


\begin{proof}
(i) follows from Lemma \ref{lem_prep1} and Lemma \ref{lem_prep}(i). 
The first part of (ii)
also follows from Lemma \ref{lem_prep1}. Moreover, denoting by $E$ the parent edge of $v$, we have $\fod_{E}^\sigma=\foj_v^\sigma +\RR_{\geq 0} u_E$, and so the last part of (ii) follows from (i) and Lemma \ref{lem_prep}(ii). Finally, for (iii), if one had $u_E \in \Lambda_{\foj_v^\sigma}$, one would have $\Lambda_{\fod_E^\sigma}=\Lambda_{\foj_v^\sigma}$, in contradiction with (i) and (ii).
\end{proof}

\subsection{Tropical multiplicities and coefficients}
\subsubsection{Tropical multiplicities}
Let $\mathbf{A}$ be a $\boldsymbol{\gamma}$-constraint and let $\sigma$ be a face of $\cM_{\omega,\boldsymbol{\gamma},\mathbf{A}}^\trop$.
For every leg $L\in L(\Gamma_\sigma)$, there exists a  unique vertex $\partial^- L$ adjacent to $L$.
For every edge $E \in E(\Gamma_\sigma)$, we denote by $\partial^+ E$ the parent vertex of $E$ and by $\partial^-E$ the child vertex of $E$. 

\begin{definition} \label{def_gluing_map}
Let $\mathbf{A}$ be a $\boldsymbol{\gamma}$-constraint. 
For every face $\sigma$ of $\cM_{\omega,\boldsymbol{\gamma},\mathbf{A}}^\trop$, the
 \emph{gluing map}
$\mathrm{gl}_\sigma$ is the map
\begin{align}\label{eq_gl} \mathrm{gl}_\sigma: \prod_{v \in V(\Gamma_\sigma)} M &\longrightarrow \prod_{e \in E(\Gamma_\sigma)} M/\Z u_E \times \prod_{i=1}^r M/\gamma_i^\perp
\\ \nonumber
(m_v)_{v \in V(\Gamma_\sigma)} &\longmapsto ((m_{\partial^+ E}-m_{\partial^- E})_{E\in E(\Gamma_\sigma)}, m_{\partial^- E}) \,.
\end{align}
\end{definition}

By construction, the kernel $T_\sigma:=\ker\, \mathrm{gl}_\sigma$ is the integral tangent space to $\sigma$ (see \cite[Proposition 2.10]{MR}): an element $(m_v)_{v\in V(\Gamma_\sigma)} \in \prod_{v\in V(\Gamma_\sigma)} M$ contained in the kernel of $\mathrm{gl}_\sigma$ defines an integral way to move the vertices of $\Gamma_\sigma$ in $M_\RR$ while preserving the directions of the edges and the matching of the constraint $\mathbf{A}$.

\begin{lemma} \label{lem_finite}
Let $\mathbf{A}$ be a general $\boldsymbol{\gamma}$-constraint as in Definition \ref{def_general_constraints}. For every $(d-2)$-dimensional face  $\sigma$ of $\cM_{\omega,\boldsymbol{\gamma},\mathbf{A}}^\trop$, the cokernel $\coker \mathrm{gl}_\sigma$ of the gluing map $\mathrm{gl}_\sigma$ is finite. 
\end{lemma}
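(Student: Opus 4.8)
The plan is to deduce the lemma from a rank count: I will show that $\im \mathrm{gl}_\sigma$ has the same rank as the target group $\prod_{E \in E(\Gamma_\sigma)} M/\Z u_E \times \prod_{i=1}^r M/\gamma_i^\perp$, and since this target is finitely generated, equality of ranks forces $\coker \mathrm{gl}_\sigma$ to be finite.

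First I would record the combinatorics of $\Gamma_\sigma$. Because $\mathbf{A}$ is general and $\dim \sigma = d-2$, Definition \ref{def_general_constraints}(iii), applied with $I = \{1,\dots,r\}$, implies that $\Gamma_\sigma$ is trivalent. Since $\Gamma_\sigma$ is connected of genus $0$, it is a tree, so $|V(\Gamma_\sigma)| = |E(\Gamma_\sigma)| + 1$; and since every vertex has valence $3$ while there are $r+1$ legs $L_1,\dots,L_r,L_{\mathrm{out}}$, summing valences over vertices gives $3|V(\Gamma_\sigma)| = 2|E(\Gamma_\sigma)| + (r+1)$. Solving these two relations, $|V(\Gamma_\sigma)| = r-1$ and $|E(\Gamma_\sigma)| = r-2$. (In particular $r \geq 2$; for $r = 1$ there are no $(\omega,\boldsymbol{\gamma})$-marked tropical curves, so the statement is vacuous.)

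Next I would compute the three relevant ranks. The source $\prod_{v\in V(\Gamma_\sigma)} M$ is free of rank $d(r-1)$. In the target, $u_E \neq 0$ by Lemma \ref{lem_class}, so each $M/\Z u_E$ has rank $d-1$, and each $M/\gamma_i^\perp$ has rank $1$ since $\gamma_i \neq 0$; hence the target has rank $(d-1)(r-2) + r$. Finally, by construction $\ker \mathrm{gl}_\sigma$ is the integral tangent space to $\sigma$ (cf. \cite[Proposition 2.10]{MR}), so it has rank $\dim \sigma = d-2$ — this is the only step that genuinely uses that $\sigma$ is a face of dimension exactly $d-2$. Applying additivity of rank to the exact sequence $0 \to \ker \mathrm{gl}_\sigma \to \prod_{v} M \to \im \mathrm{gl}_\sigma \to 0$ then gives
\[ \rank(\im \mathrm{gl}_\sigma) = d(r-1) - (d-2) = dr - 2d + 2 = (d-1)(r-2) + r , \]
which is exactly the rank of the target. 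Therefore $\im \mathrm{gl}_\sigma$ is a full-rank subgroup of the finitely generated abelian group $\prod_{E} M/\Z u_E \times \prod_{i=1}^r M/\gamma_i^\perp$, so $\coker \mathrm{gl}_\sigma$ has rank $0$ and is finite.

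The argument has no real obstacle beyond careful bookkeeping: all of the content sits in the trivalence of $\Gamma_\sigma$ (which is where the genericity of $\mathbf{A}$ enters, via Definition \ref{def_general_constraints}(iii)) and in the identification of $\ker \mathrm{gl}_\sigma$ with the $(d-2)$-dimensional tangent space of $\sigma$; the numerical identity $d(r-1) - (d-2) = (d-1)(r-2)+r$ then does the rest.
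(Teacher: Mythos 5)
Your proof is correct and follows essentially the same route as the paper's: both show that the cokernel of $\mathrm{gl}_\sigma \otimes \QQ$ vanishes via a rank/dimension count, using trivalence of $\Gamma_\sigma$ (from Definition \ref{def_general_constraints}(iii)), the genus-$0$ tree relation $|V(\Gamma_\sigma)| = |E(\Gamma_\sigma)| + 1$, and the identification of $\ker \mathrm{gl}_\sigma$ with the $(d-2)$-dimensional integral tangent space $T_\sigma$. The only difference is cosmetic bookkeeping: the paper computes $\dim\coker$ directly and simplifies it to $|E(\Gamma_\sigma)| + r - 2 = 0$, while you compare $\rank(\im\mathrm{gl}_\sigma)$ with the rank of the target — these are the same calculation rearranged.
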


\begin{proof}
It is enough to show that $\mathrm{gl}_\sigma$ is surjective after tensoring with $\QQ$. This follows from a dimension count: the domain of $\mathrm{gl}_\sigma \otimes \QQ$ is of dimension $d |V(\Gamma_\sigma)|$, its codomain is of dimension $(d-1)|E(\Gamma_\sigma)|+r$, and its kernel is of dimension $d-2$.
Hence, 
\[ \dim \coker\,\mathrm{gl}_\sigma \otimes \QQ=(d-1)|E(\Gamma_\sigma)|+r-d|V(\Gamma_\sigma)|+d-2\]
\[=d(|E(\Gamma_\sigma)|-|V(\Gamma_\sigma)|+1)-|E(\Gamma_\sigma)|+r-2\,.\]
As $\Gamma_\sigma$ is of genus $0$, we have $|E(\Gamma_\sigma)|-|V(\Gamma_\sigma)|+1=0$, and so 
\begin{equation} \label{eq_coker}
\dim \coker\,\mathrm{gl}_\sigma \otimes \QQ =-|E(\Gamma_\sigma)|+r-2\,.\end{equation}
Moreover,
as $\mathbf{A}$ is general and $\dim \sigma=d-2$, the graph $\Gamma_\sigma$ is trivalent by Definition \ref{def_general_constraints}(iii),
with $r+1$ legs, and so we have $3|V(\Gamma_\sigma)|=2|E(\Gamma_\sigma)|+r+1$. Combining these two relations, we obtain \[ 3(|E(\Gamma_\sigma)|+1)=2|E(\Gamma_\sigma)|+r+1\,,\]
and so $|E(\Gamma_\sigma)|=r-2$, which implies that $\dim \coker\,\mathrm{gl}_\sigma \otimes \QQ=0$ by \eqref{eq_coker}.
\end{proof}

\begin{definition}\label{def_trop_mult}
Let $\mathbf{A}$ be a general $\boldsymbol{\gamma}$-constraint. For every $(d-2)$-dimensional face  $\sigma$ of $\cM_{\omega,\boldsymbol{\gamma},\mathbf{A}}^\trop$, the \emph{tropical multiplicity} $N_\sigma^\trop$ is the cardinality of the cokernel of the gluing map $\mathrm{gl}_\sigma$, which is finite by Lemma \ref{lem_finite}:
\begin{equation} \label{eq_N_trop} N_\sigma^\trop:=|\coker\, \mathrm{gl}_\sigma|\,.
\end{equation}
\end{definition}

\subsubsection{Tropical coefficients}
\label{sec_trop_coeff}

Let $\mathbf{A}$ be a $\boldsymbol{\gamma}$-constraint and $\sigma$ be a $(d-2)$-dimensional face of $\cM_{\omega,\boldsymbol{\gamma},\mathbf{A}}^\trop$ such that $\dim \fod_{L_{\mathrm{out}}}^\sigma=d-1$.
Let $p : T_\sigma \longrightarrow M$
be the composition of the inclusion $T_\sigma=\ker\, \mathrm{gl}_\sigma \subset \prod_{v \in V(\Gamma_\sigma)} M$ with the projection $\prod_{v \in V(\Gamma_\sigma)} M \rightarrow M$ on the factor corresponding to the vertex $v_{\mathrm{out}} \in V(\Gamma_\sigma)$ adjacent to $L_{\mathrm{out}}$. In other words, $p$ is the integral derivative of the evaluation map 
\begin{align*} \sigma &\longrightarrow  \foj_{v_{\mathrm{out}}}^\sigma \\
h &\longmapsto h(v_{\mathrm{out}})\,.
\end{align*}
This evaluation map is an affine map which is surjective by definition.
By Theorem \ref{thm_trop}(ii), we have $\dim \foj_{v_{\mathrm{out}}}^\sigma =d-2$, and so, as we also have $\dim \sigma=d-2$, this evaluation map is in fact bijective, and we have an
inclusion $p(T_\sigma) \subset \Lambda_{\foj_{v_{\mathrm{out}}}^\sigma }$ of finite index. 
By Theorem \ref{thm_trop}(iii), we have $u_{L_{\mathrm{out}}} \notin \Lambda_{\foj_{v_{\mathrm{out}}}^\sigma}$, and so the lattice 
\[ \cL:= p(T_\sigma)+\Z u_{L_{\mathrm{out}}} \]
is of rank $d-1$ and the natural inclusion $\cL \subset \Lambda_{\fod_{L_{\mathrm{out}}}^\sigma}$ is of finite index.

\begin{definition}
Let $\mathbf{A}$ be a $\boldsymbol{\gamma}$-constraint and $\sigma$ be a $(d-2)$-dimensional face of $\cM_{\omega,\boldsymbol{\gamma},\mathbf{A}}^\trop$ such that $\dim \fod_{L_{\mathrm{out}}}^\sigma=d-1$. The \emph{tropical coefficient} $k_\sigma$ is the index of the inclusion $\cL \subset \Lambda_{\fod_{L_{\mathrm{out}}}^\sigma}$, that is,
\begin{equation}
\label{eq_coeff}
k_{\sigma}:=|\Lambda_{\fod_{L_{\mathrm{out}}}^\sigma}/\cL|\,.\end{equation}
\end{definition}

Note that $\Lambda_{\fod_{L_{\mathrm{out}}}^\sigma}$ is also equal to the saturation $\cL^{\sat}$ of $\cL$ in $M$, and so one also have 
$k_\sigma=|\cL^\sat/\cL|$.

\subsection{Product formula for the tropical multiplicities}
\label{subsec: product formula}

In this section, we prove in Lemma \ref{lem: product for Ntrop} and Lemma \ref{lem: product2 for Ntrop}
two product formulas for the tropical multiplicity $N_\sigma^\trop$ introduced in Definition \ref{def_trop_mult}.

Let $\mathbf{A}$ be a general $\boldsymbol{\gamma}$-constraint and $\sigma$ a $(d-2)$-dimensional face of $\cM_{\omega,\boldsymbol{\gamma},\mathbf{A}}^\trop$ such that $\dim \fod_{L_{\mathrm{out}}}^\sigma =d-1$. By Definition \ref{def_general_constraints}(iii) of a general $\boldsymbol{\gamma}$-constraint, the graph $\Gamma_\sigma$ is trivalent.
In particular, every vertex $v \in \Gamma_\sigma$ has two children edges or legs, that we denote by $E_{1,v}$ and $E_{2,v}$, and one parent edge $E_{\mathrm{out},v}$.
We denote by $I_{k,v}$ (resp.\ $I_{\mathrm{out},v}$) the subset of $\{1,\dots,r\}$ consisting of indices $1\leq i \leq r$ such that $L_i$ is a descendant leg of $E_{k,v}$
(resp.\ $E_{\mathrm{out},v}$).

For $k=1, 2$, if $E_{k,v}$ is an edge, we denote by 
$\tau_{k,v}$ the type of tropical curves obtained from tropical curves $h \in \Int(\sigma)$
by removing the vertex $v$ and extending the connected component containing $E_{k,v}$ to infinity.   
We denote by $\sigma_{k,v}$ the face of $\cM^\trop_{\omega,\boldsymbol{\gamma}_{I_{k,v}}, \mathbf{A}_{I_{k,v}}}$ whose relative interior parametrizes tropical curves of type $\tau_{k,v}$ matching $\mathbf{A}_{I_{k,v}}$.
Similarly, we denote by $\tau_{\mathrm{out},v}$ the type of tropical curves obtained from tropical curves $h \in \Int(\sigma)$
by cutting the edge (or leg ) $E_{\mathrm{out},v}$ and extending the connected component containing $E_{\mathrm{out},v}$ to infinity.   
We denote by $\sigma_{\mathrm{out},v}$
the face of $\cM^\trop_{\omega,\boldsymbol{\gamma}_{I_{\mathrm{out},v}}, \mathbf{A}_{I_{\mathrm{out},v}}}$ whose relative interior parametrizes tropical curves of type $\tau_{\mathrm{out},v}$ matching $\mathbf{A}_{I_{\mathrm{out},v}}$. By a small abuse of notation, we still denote by $E_{k,v}$ and $E_{\mathrm{out},v}$ the new legs of $\Gamma_{\sigma_{k,v}}$ and $\Gamma_{\sigma_{\mathrm{out},v}}$ obtained by cutting and extending the edges
$E_{k,v}$ and $E_{\mathrm{out},v}$ of $\Gamma_\sigma$.

\begin{lemma} \label{lem_dim_new}
Let $\mathbf{A}$ be a general $\boldsymbol{\gamma}$-constraint and $\sigma$ a $(d-2)$-dimensional face of $\cM_{\omega,\boldsymbol{\gamma},\mathbf{A}}^\trop$ such that $\dim \fod_{L_{\mathrm{out}}}^\sigma =d-1$.  Then, for every vertex $v \in V(\Gamma)$, we have 
\begin{itemize}
    \item[(i)] for $k=1,2$, if $E_{k,v}$ is an edge, then $\dim \sigma_{k,v}=d-2$ and $\dim \fod_{E_{k,v}}^{\sigma_{k,v}}=d-1$.
    \item[(ii)] $\dim \sigma_{\mathrm{out},v}=d-2$ and $\dim \fod_{E_{\mathrm{out},v}}^{\sigma_{\mathrm{out},v}}=d-1$.
\end{itemize}
\end{lemma}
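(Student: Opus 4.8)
The plan is to deduce everything from Theorem~\ref{thm_trop}, which applies to $\sigma$ under the standing hypotheses $\dim\sigma=d-2$ and $\dim\fod_{L_{\mathrm{out}}}^\sigma=d-1$, together with the dimension bookkeeping of Definition~\ref{def_general_constraints} and the bound of Lemma~\ref{lem_prep}(i). Assertions (i) and (ii) have essentially the same proof, so I would write out (i) in detail and only flag the minor changes needed for (ii).

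\textbf{Step 1: cutting preserves trivalence, hence $\dim\sigma_{k,v}=d-2$.} Fix $v\in V(\Gamma_\sigma)$ and $k\in\{1,2\}$ with $E:=E_{k,v}$ an edge, and let $v'$ be the child vertex of $E$. The graph $\Gamma_{\sigma_{k,v}}$ is obtained from the trivalent tree $\Gamma_\sigma$ by deleting $v$, retaining the connected component containing $v'$ (that is, the subtree of descendants of $v'$, carrying the legs $L_i$ with $i\in I_{k,v}$), and turning $E$ into an infinite leg attached to $v'$. This operation changes no vertex valence, so $\Gamma_{\sigma_{k,v}}$ is again trivalent; and $\sigma_{k,v}$ is a nonempty face because $\Int(\sigma)\neq\emptyset$ and the type $\tau_{k,v}$ is realized by restricting any $h\in\Int(\sigma)$. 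Since $\mathbf{A}$ is general, Definition~\ref{def_general_constraints}(ii) with $I=I_{k,v}$ gives $\dim\sigma_{k,v}\leq d-2$, and Definition~\ref{def_general_constraints}(iii) then forces $\dim\sigma_{k,v}=d-2$ because $\Gamma_{\sigma_{k,v}}$ is trivalent. For (ii) the same argument applies with $v$ in the role of $v'$ and $E_{\mathrm{out},v}$ in the role of $E_{k,v}$; when $v=v_{\mathrm{out}}$, so that $E_{\mathrm{out},v}=L_{\mathrm{out}}$, there is nothing to prove since then $\sigma_{\mathrm{out},v}=\sigma$.

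\textbf{Step 2: the edge locus of the cut face.} The key observation is the inclusion of polyhedra
\[
\fod_{E_{k,v}}^\sigma\ \subseteq\ \fod_{E_{k,v}}^{\sigma_{k,v}}\,.
\]
Indeed, restricting $h\in\Int(\sigma)$ to the subtree of descendants of $v'$, with $E_{k,v}$ extended to an infinite leg, yields a tropical curve $h'\in\Int(\sigma_{k,v})$ — it lies in the relative interior because $h$, being generic in $\sigma$, has all edge lengths positive — with $h'(v')=h(v')$; moreover the leg image $h'(E_{k,v})$ is the full ray from $h(v')$ in the direction of $E_{k,v}$, which contains the bounded segment $h(E_{k,v})$ from $h(v')$ to $h(v)$. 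Taking the union over $h\in\Int(\sigma)$ and passing to closures gives the displayed inclusion. Now Theorem~\ref{thm_trop}(i) gives $\dim\fod_{E_{k,v}}^\sigma=d-1$, while Lemma~\ref{lem_prep}(i), applied to the face $\sigma_{k,v}$ of $\cM^\trop_{\omega,\boldsymbol{\gamma}_{I_{k,v}},\mathbf{A}_{I_{k,v}}}$ — legitimate since $\iota_{\gamma_{E_{k,v}}}\omega=-u_{E_{k,v}}\neq0$ by Lemma~\ref{lem_class} — gives $\dim\fod_{E_{k,v}}^{\sigma_{k,v}}\leq d-1$. The inclusion pinches these bounds together to $\dim\fod_{E_{k,v}}^{\sigma_{k,v}}=d-1$. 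For (ii) one repeats this verbatim with $E_{\mathrm{out},v}$ in place of $E_{k,v}$ and $v$ in place of $v'$, using $\dim\fod_{E_{\mathrm{out},v}}^\sigma=d-1$ from Theorem~\ref{thm_trop}(i) (or directly from the hypothesis in the case $E_{\mathrm{out},v}=L_{\mathrm{out}}$).

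\textbf{Main point.} There is no deep difficulty here; the content is organizational. The step requiring the most care is the direction of the inclusion in Step~2: it is essential that passing from $\sigma$ to the cut face $\sigma_{k,v}$ can only \emph{enlarge} the edge locus (a bounded segment becomes an unbounded ray, and the moving vertex is subject to fewer constraints), so that the dimension can only increase, while Lemma~\ref{lem_prep}(i) caps it at $d-1$; the two inequalities then pinch. One should also check the trivalence bookkeeping of Step~1 in the degenerate situations where a child of $v$ is itself a leg, or where $v=v_{\mathrm{out}}$, but these are immediate.
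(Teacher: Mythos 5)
Your proof is correct and follows the same strategy as the paper's: establish trivalence of the cut graph $\Gamma_{\sigma_{k,v}}$ to get $\dim\sigma_{k,v}=d-2$ from Definition~\ref{def_general_constraints}(iii), then pinch $\dim\fod_{E_{k,v}}^{\sigma_{k,v}}$ between the upper bound $d-1$ and the lower bound coming from the inclusion $\fod_{E_{k,v}}^\sigma\subseteq\fod_{E_{k,v}}^{\sigma_{k,v}}$ and Theorem~\ref{thm_trop}(i). You supply a bit more detail than the paper (in particular, a justification of the inclusion, which the paper simply asserts), but the argument is essentially identical.
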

\begin{proof}
As the graph $\Gamma_{\sigma_{k,v}}$ is cut out form the trivalent graph $\Gamma_\sigma$, it is also trivalent, and so $\dim \sigma_{k,v}=d-2$ by Definition \ref{def_general_constraints}(iii) of a general $\boldsymbol{\gamma}$-constraint. In particular, $\dim \fod_{E_{k,v}}^{\sigma_{k,v}}\leq d-1$. But we also have 
$\fod_{E_{k,v}}^{\sigma} \subset \fod_{E_{k,v}}^{\sigma_{k,v}}$ and $\dim \fod_{E_{k,v}}^{\sigma}=d-1$ by Theorem \ref{thm_trop}(i), and so we conclude that $\dim \fod_{E_{k,v}}^{\sigma_{k,v}}= d-1$. This proves the case (i) of Lemma \ref{lem_dim_new}.
The proof of case (ii) is identical. 
\end{proof}
 
If $E_{k,v}$ is an edge, then by Lemma \ref{lem_dim_new}(i), $\sigma_{k,v}$ satisfies the assumptions of \S \ref{sec_trop_coeff}. In particular, following \S \ref{sec_trop_coeff}, we obtain a map $p_{k,v}\colon T_{\sigma_{k,v}} \rightarrow M$, a rank $(d-1)$-lattice 
\[ \cL_{k,v}:= p_{k,v}(T_{\sigma_{k,v}})+\Z u_{E_{k,v}} \subset M\,,\]
of finite index in $\Lambda_{\fod^{\sigma_{k,v}}_{E_{k,v}}}$, and a coefficient
\begin{equation}
\label{eq_coeff_1}
k_{\sigma_{k,v}}=|\Lambda_{\fod^{\sigma_{k,v}}_{E_{k,v}}}/\cL_{k,v}|=|\cL_{k,v}^{\sat}/\cL_{k,v}|\,.\end{equation}
If $E_{k,v}$ is a leg $L_i$ of $\Gamma_\sigma$ for some $1\leq i\leq r$, then we define 
\[ \cL_{k,v}:=\gamma_i^\perp \subset M\,.\]
By Theorem \ref{thm_trop}(ii), we have $\dim T_v \Gamma_\sigma=2$. In particular, the lattice 
$\cL_{1,v}+\cL_{2,v}$ is of rank $d$, and so of finite index in $M$.

Similarly by Lemma \ref{lem_dim_new}(ii), $\sigma_{\mathrm{out},v}$ satisfies the assumptions of \S \ref{sec_trop_coeff}. In particular, we obtain a map $p_{\mathrm{out},v}\colon T_{\sigma_{\mathrm{out},v}} \rightarrow M$, a rank $(d-1)$-lattice 
\[ \cL_{\mathrm{out},v}:= p_{\mathrm{out},v}(T_{\sigma_{\mathrm{out},v}})+\Z u_{E_{\mathrm{out},v}} \subset M\,,\]
of finite index in $\Lambda_{\fod^{\sigma_{\mathrm{out},v}}_{E_{\mathrm{out},v}}}$, and a coefficient
\begin{equation}
\label{eq_coeff_2}
k_{\sigma_{\mathrm{out},v}}=|\Lambda_{\fod^{\sigma_{\mathrm{out},v}}_{E_{\mathrm{out},v}}}/\cL_{\mathrm{out},v}|=|\cL_{\mathrm{out},v}^{\sat}/\cL_{\mathrm{out},v}|\,.\end{equation}
As a tropical curve in $\sigma_{\mathrm{out},v}$
is obtained by gluing at $v$ a tropical curve in $\sigma_{1,v}$ with a tropical curve in $\sigma_{2,v}$, we have $p_{\mathrm{out},v}(T_{\sigma_{\mathrm{out},v}})=\cL_{1,v}\cap \cL_{2,v}$, and so 
\begin{equation}\label{eq_cL_out}
\cL_{\mathrm{out},v}:= \cL_{1,v}\cap \cL_{2,v}+\Z u_{E_{\mathrm{out},v}} \subset M\,.\end{equation}

\begin{lemma}
\label{lem: product for Ntrop}
Let $\mathbf{A}$ be a general $\boldsymbol{\gamma}$-constraint and $\sigma$ a $(d-2)$-dimensional face of $\cM_{\omega,\boldsymbol{\gamma},\mathbf{A}}^\trop$ such that $\dim \fod_{L_{\mathrm{out}}}^\sigma =d-1$.
Then, we have
\[ N_\sigma^\trop=\prod_{v \in V(\Gamma_\sigma)}|M/(\cL_{1,v}+\cL_{2,v})|\,.\]
\end{lemma}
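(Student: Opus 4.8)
The plan is to argue by induction on the number of vertices $|V(\Gamma_\sigma)|$ — equivalently on $r$, since $\Gamma_\sigma$ is trivalent of genus $0$ with $r+1$ legs, so that $|V(\Gamma_\sigma)|=r-1$ — by peeling off the root vertex $v_0:=v_{\mathrm{out}}$ adjacent to $L_{\mathrm{out}}$, with children $E_1:=E_{1,v_0}$ and $E_2:=E_{2,v_0}$. If $|V(\Gamma_\sigma)|=1$, then $E_1,E_2$ are legs, $\mathrm{gl}_\sigma$ is the diagonal map $M\to M/\gamma_1^\perp\oplus M/\gamma_2^\perp$, $m\mapsto(\bar m,\bar m)$, so $N_\sigma^\trop=|\coker\mathrm{gl}_\sigma|=|M/(\gamma_1^\perp+\gamma_2^\perp)|=|M/(\cL_{1,v_0}+\cL_{2,v_0})|$, which is the claimed formula. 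If $|V(\Gamma_\sigma)|\geq 2$, then at least one of $E_1,E_2$ is an edge; for each such $E_k$, Lemma \ref{lem_dim_new} shows that $\sigma_k:=\sigma_{k,v_0}$ is a $(d-2)$-dimensional face of $\cM^\trop_{\omega,\boldsymbol{\gamma}_{I_{k,v_0}},\mathbf{A}_{I_{k,v_0}}}$ with $\dim\fod_{E_k}^{\sigma_k}=d-1$, and $\mathbf{A}_{I_{k,v_0}}$ is again a general constraint (a restriction of a general $\boldsymbol{\gamma}$-constraint is general by Definition \ref{def_general_constraints}), so the induction hypothesis gives $N_{\sigma_k}^\trop=\prod_{v\in V(\Gamma_{\sigma_k})}|M/(\cL_{1,v}+\cL_{2,v})|$; for a child $E_k$ that is a leg we adopt the conventions $N_{\sigma_k}^\trop:=1$ and $V(\Gamma_{\sigma_k}):=\emptyset$.

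Since $V(\Gamma_\sigma)=\{v_0\}\sqcup V(\Gamma_{\sigma_1})\sqcup V(\Gamma_{\sigma_2})$, the inductive step reduces to the multiplicativity identity
\[ N_\sigma^\trop=|M/(\cL_{1,v_0}+\cL_{2,v_0})|\cdot N_{\sigma_1}^\trop\cdot N_{\sigma_2}^\trop\,. \]
To prove it, I would realize $\mathrm{gl}_\sigma$ as an extension via the snake lemma. Removing $v_0$ splits the domain as $\prod_{v\in V(\Gamma_\sigma)}M=M_{v_0}\oplus D_1\oplus D_2$, with $D_k=\prod_{v\in V(\Gamma_{\sigma_k})}M$, and the codomain of $\mathrm{gl}_\sigma$ as $K_1\oplus K_2\oplus C_1\oplus C_2$, where $C_k$ is the codomain of $\mathrm{gl}_{\sigma_k}$, while $K_k=M/\Z u_{E_k}$ if $E_k$ is an edge and $K_k=M/\gamma_j^\perp$ if $E_k=L_j$ is a leg. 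Using that every edge and every constrained leg of $\Gamma_{\sigma_k}$ is adjacent only to vertices lying in $V(\Gamma_{\sigma_k})$, one checks directly that $\mathrm{gl}_\sigma$ carries $M_{v_0}$ into $K_1\oplus K_2$ by the diagonal map $\psi\colon M\to K_1\oplus K_2$, $m\mapsto(\bar m,\bar m)$, and induces $\mathrm{gl}_{\sigma_1}\oplus\mathrm{gl}_{\sigma_2}$ on $D_1\oplus D_2\to C_1\oplus C_2$. Thus $(\psi,\mathrm{gl}_\sigma,\mathrm{gl}_{\sigma_1}\oplus\mathrm{gl}_{\sigma_2})$ is a morphism from the short exact sequence $0\to M_{v_0}\to\prod_{v}M\to D_1\oplus D_2\to 0$ to the short exact sequence $0\to K_1\oplus K_2\to C\to C_1\oplus C_2\to 0$.

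The snake lemma then yields an exact sequence $T_{\sigma_1}\oplus T_{\sigma_2}\xrightarrow{\partial}\coker\psi\to\coker\mathrm{gl}_\sigma\to\coker\mathrm{gl}_{\sigma_1}\oplus\coker\mathrm{gl}_{\sigma_2}\to 0$, where $T_{\sigma_k}=\ker\mathrm{gl}_{\sigma_k}$. The standard isomorphism $(M/A_1\oplus M/A_2)/\Delta M\cong M/(A_1+A_2)$, $(\bar x,\bar y)\mapsto\overline{x-y}$, identifies $\coker\psi$ with $M/(A_1+A_2)$, where $A_k=\Z u_{E_k}$ or $\gamma_j^\perp$ accordingly. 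A short diagram chase — lift $(x_1,x_2)\in T_{\sigma_1}\oplus T_{\sigma_2}$ to $(0,x_1,x_2)\in\prod_v M$ and apply $\mathrm{gl}_\sigma$ — identifies $\partial(x_1,x_2)$ with the class of $p_{1,v_0}(x_1)-p_{2,v_0}(x_2)$ in $M/(A_1+A_2)$, where $p_{k,v_0}\colon T_{\sigma_k}\to M$ is the projection onto the component of the vertex of $\Gamma_{\sigma_k}$ adjacent to its out-leg $E_k$, exactly as in \S\ref{sec_trop_coeff} (with $p_{k,v_0}\equiv 0$ when $E_k$ is a leg). Hence $\mathrm{im}\,\partial$ is the image of the subgroup $p_{1,v_0}(T_{\sigma_1})+p_{2,v_0}(T_{\sigma_2})$, so
\[ \coker\psi/\mathrm{im}\,\partial=M/\bigl(A_1+A_2+p_{1,v_0}(T_{\sigma_1})+p_{2,v_0}(T_{\sigma_2})\bigr)=M/(\cL_{1,v_0}+\cL_{2,v_0}) \]
by the definitions of $\cL_{1,v_0}$ and $\cL_{2,v_0}$ in \S\ref{subsec: product formula}. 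All relevant cokernels being finite by Lemma \ref{lem_finite}, the snake sequence collapses to $0\to M/(\cL_{1,v_0}+\cL_{2,v_0})\to\coker\mathrm{gl}_\sigma\to\coker\mathrm{gl}_{\sigma_1}\oplus\coker\mathrm{gl}_{\sigma_2}\to 0$; comparing orders yields the multiplicativity identity, and then the formula follows from the induction hypothesis together with $V(\Gamma_\sigma)=\{v_0\}\sqcup V(\Gamma_{\sigma_1})\sqcup V(\Gamma_{\sigma_2})$.

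The step I expect to be the main obstacle is the bookkeeping in setting up the morphism of short exact sequences: distributing the factors of the codomain of $\mathrm{gl}_\sigma$ correctly between the $K_k$ and the $C_k$, checking that the restriction of $\mathrm{gl}_\sigma$ to a subtree genuinely equals $\mathrm{gl}_{\sigma_k}$, and handling uniformly the case where a child of $v_0$ is a leg — so that the corresponding subtree is empty yet its constraint hyperplane still occupies a factor of the codomain. The conceptual crux, which is what makes the lattices $\cL_{k,v_0}$ appear, is the identification of the connecting homomorphism $\partial$ with the projections $p_{k,v_0}$ of \S\ref{sec_trop_coeff}.
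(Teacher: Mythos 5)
Your proposal is correct and follows essentially the same route as the paper: both proofs reduce to the multiplicativity identity at a single vertex, established via the snake lemma applied to the same morphism of short exact sequences (splitting off the vertex $v$ from the domain and the two adjacent child factors from the codomain), with the connecting homomorphism identified with the projections $p_{k,v}$ so that its image saturates $A_1+A_2$ to $\cL_{1,v}+\cL_{2,v}$. The only cosmetic differences are the direction of the induction (you peel off the root and recurse top-down, the paper propagates from leaves to root via $N_{\sigma_{\mathrm{out},v}}^\trop$) and that you unify the edge/leg cases with conventions $N^\trop_{\sigma_k}=1$, $V(\Gamma_{\sigma_k})=\emptyset$, where the paper treats the three cases separately.
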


\begin{proof}
We prove by induction, following the flow on $\Gamma_\sigma$ starting at the leaves and ending at the root, that, for every vertex $v \in V(\Gamma_\sigma)$, we have
\[ N_{\sigma_{\mathrm{out},v}}^\trop=\prod_{v \in V(\Gamma_{\sigma_{\mathrm{out},v})}}|M/(\cL_{1,v}+\cL_{2,v})|\,.\]

To shorten the notation, denote $\Gamma_{\sigma_k}$, $E_{k}$, $\sigma_k$,
$\sigma_{\mathrm{out}}$, $p_k$, $\cL_k$ for $\Gamma_{\sigma_{k,v}}, E_{k,v}, \sigma_{k,v}, \sigma_{\mathrm{out},v}$, $p_{k,v}$, $\cL_{k,v}$
respectively.
If $E_{1}$ and $E_{2}$ are both edges, we have to show that
\begin{equation}\label{eq_1} N_{\sigma_{\mathrm{out}}}^\trop=|M/(\cL_{1}+\cL_{2})|
N_{\sigma_1}^{\trop} N_{\sigma_2}^\trop \,.
\end{equation}
Denote the domains of the gluing maps $\mathrm{gl}_{\sigma_{\sigma_{\mathrm{out}}}}$, 
$\mathrm{gl}_{\sigma_1}$, $\mathrm{gl}_{\sigma_2}$ of Definition 
\ref{def_gluing_map}
by
$\cM$, $\cM_1$, $\cM_2$
and the codomains by 
$\cK$, $\cK_1$, $\cK_2$.
We have $\cM=\cM_1 \oplus \cM_2 \oplus M_v$, where 
$M_v$ is the factor $M$ in $\cM$ corresponding to the vertex $v$, and 
$\cK=\cK_1 \oplus \cK_2 \oplus M/\Z u_{E_1}\oplus M/\Z u_{E_2}$.
With respect to 
these decompositions, one can write
\[ \mathrm{gl}_{\sigma_{\mathrm{out}}} (m_1,m_2,m_v)
=(\mathrm{gl}_{\sigma_1} (m_1),
\mathrm{gl}_{\sigma_2} (m_2),
m_{v_1}-m_v, m_{v_2}-m_v)\,,\]
where $v_1$ (resp.\ $v_2$) is the vertex of $\Gamma_{\sigma_1}$ (resp.\ 
 $\Gamma_{\sigma_2}$) adjacent to $E_1$ (resp.\ $E_2$).
In other words, we have a commutative diagram 
\[\begin{tikzcd}
0 
\arrow[r]
&
M_v \arrow[r]
\arrow[d,"\psi"]
&
\cM \arrow[r]\arrow[d,"\mathrm{gl}_{\sigma_{\mathrm{out}}}"]
&
\cM_1\oplus \cM_2 \arrow[r] \arrow[d,"\mathrm{gl}_{\sigma_1} \oplus \mathrm{gl}_{\sigma_2}"]
& 0\\
0 \arrow[r]& 
 M/\Z u_{E_1} \oplus M/\Z u_{E_2} \arrow[r] &
\cK \arrow[r]& \cK_1 \oplus \cK_2 \arrow[r]&
0\,,
\end{tikzcd}\]
where the rows are short exact sequences of abelian groups, and where $\psi$ is given by $\psi(m)=(-m,-m)$.
By Theorem \ref{thm_trop}(ii), we have $\dim T_v \Gamma_\sigma=2$, and so the vectors $u_{E_1}$ and $u_{E_2}$ are linearly independent.
This implies that the map $\psi$ is injective. Hence, by the snake lemma, we obtain a long exact sequence
\[ 0 \rightarrow T_{\sigma_{\mathrm{out}}} \rightarrow T_{\sigma_1}\oplus T_{\sigma_2} \xrightarrow{\xi} \coker\, \psi \rightarrow \coker\, \mathrm{gl}_{\sigma_{\mathrm{out}}}
\rightarrow \coker\, \mathrm{gl}_{\sigma_1} \oplus \coker\, \mathrm{gl}_{\sigma_2} \rightarrow 0\,, \]
and so, using \eqref{eq_N_trop},
\begin{equation} \label{eq_2} N_{\sigma_{\mathrm{out}}}^\trop=|\coker\, \xi|
N_{\sigma_1}^{\trop} N_{\sigma_2}^\trop \,.\end{equation}
Moreover, the connecting homomorphism $\xi$ is given by 
$\xi(m_1,m_2)=(\bar{p}_1(m_1),\bar{p}_2(m_2) )$,
where $\bar{p}_k(m_k)$ is the image of $p_k(m_k)$ in $M/\Z u_{E_k}$. As $\coker\, \psi = M/(\Z u_{E_1}+ \Z u_{E_2})$, we deduce that $\coker\, \xi=M/(\cL_{1}+\cL_{2})$, and so \eqref{eq_2} implies \eqref{eq_1}.

Consider now the case when one of $E_1$ or $E_2$ is an edge and the other is a leg. By symmetry, one can assume that $E_1$ is an edge, with adjacent vertex $v_1$ in $\Gamma_{\sigma_1}$, and $E_2$ is a leg $L_i$
 for some $1\leq i\leq r$. 
In this case, we have to show that  
 \begin{equation}\label{eq_11} N_{\sigma_{\mathrm{out}}}^\trop=|M/(\cL_{1}+\cL_{2})|
N_{\sigma_1}^{\trop}\,.
\end{equation}
We have $\cM=\cM_1\oplus M_v$, and 
$\cK=\cK_1 \oplus M/\Z u_{E_1} \oplus M/\gamma_i^\perp$, and so a commutative diagram 
\[\begin{tikzcd}
0 
\arrow[r]
&
M_v \arrow[r]
\arrow[d,"\psi"]
&
\cM \arrow[r]\arrow[d,"\mathrm{gl}_\sigma"]
&
\cM_1\arrow[r] \arrow[d,"\mathrm{gl}_{\sigma_1}"]
& 0\\
0 \arrow[r]& 
 M/\Z u_{E_1} \oplus M/\gamma_i^\perp \arrow[r] &
\cK \arrow[r]& \cK_1 \arrow[r]&
0\,,
\end{tikzcd}\]
where $\psi$ is given by $\psi(m)=(-m,m)$. 
By Theorem \ref{thm_trop}(ii), we have $\dim T_v \Gamma_\sigma=2$, and so $u_{E_1} \notin \gamma_i^\perp$.
Hence, $\psi$ is injective, and so by the snake lemma, we obtain a long exact sequence
\[ 0 \rightarrow T_{\sigma_{\mathrm{out}}} \rightarrow T_{\sigma_1} \xrightarrow{\xi} \coker\, \psi \rightarrow \coker\, \mathrm{gl}_{\sigma_{\mathrm{out}}}
\rightarrow \coker\, \mathrm{gl}_{\sigma_1} \rightarrow 0\,, \]
and so, using \eqref{eq_N_trop},
\begin{equation} \label{eq_3} N_\sigma^\trop=|\coker\, \xi|
N_{\sigma_1}^{\trop} \,.\end{equation}
Moreover, we have $\coker\, \psi = M/(\Z u_{E_1}+\gamma_i^\perp)$ and the connecting homomorphism $\xi$ is given by 
$\xi(m_1)=\bar{p}_1(m_1)$. As $\cL_{1}=p_1(T_{\sigma_1})+\Z u_{E_1}$ and
$\cL_{2} =\gamma_i^\perp$, we deduce that $\coker\, \xi=M/(\cL_{1}+\cL_{2})$, and so \eqref{eq_3} implies \eqref{eq_11}.

Finally, we consider the case where both $E_1$ and $E_2$ are legs
$L_i$ and $L_j$ of $\Gamma$ for some $1 \leq i,j \leq r$. In this case,
we have to show that 
\[N_\sigma^\trop=|M/(\cL_{1}+\cL_{2})|\,.\]
This follows because in this case the gluing map 
 $\mathrm{gl}_{\sigma_{\mathrm{out}}}$ is simply given by the  projection map
\[ M \longrightarrow M/\gamma_i^\perp \oplus M/\gamma_j^\perp\,,\]
and $\cL_{1}=\gamma_i^\perp$ and $\cL_{2}=\gamma_j^\perp$.
\end{proof}

For every non-zero $n\in N$, we denote by $|n|$ the divisibility of $n$ in $N$.

\begin{lemma}\label{lem: product2 for Ntrop}
Let $\mathbf{A}$ be a general $\boldsymbol{\gamma}$-constraint. 
Then, for every $(d-2)$-dimensional face $\sigma$ of $\cM_{\omega,\boldsymbol{\gamma},\mathbf{A}}^\trop$ such that $\dim \fod_{L_{\mathrm{out}}}^\sigma =d-1$, we have 
\begin{equation} \label{eq_product}
k_\sigma N_\sigma^{\trop} = \frac{|\gamma|}{\prod_{i=1}^r |\gamma_i|} \prod_{v \in V(\Gamma_\sigma)} |\omega (\gamma_{E_{1,v}},\gamma_{E_{2,v}})|\,,\end{equation}
where for every vertex $v$ we denote by $E_{1,v}$ and $E_{2,v}$ the two children edges of $v$.
\end{lemma}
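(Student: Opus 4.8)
The plan is to combine the product formula for $N_\sigma^\trop$ already obtained in Lemma \ref{lem: product for Ntrop} with a careful analysis of the index $k_\sigma$, reducing everything to a local computation at each vertex of the trivalent tree $\Gamma_\sigma$. The right-hand side of \eqref{eq_product} is a product of local factors $|\omega(\gamma_{E_{1,v}},\gamma_{E_{2,v}})|$ (times the global correction $|\gamma|/\prod_i|\gamma_i|$), so I would aim to express $k_\sigma N_\sigma^\trop$ as a telescoping product in which each vertex contributes $|\omega(\gamma_{E_{1,v}},\gamma_{E_{2,v}})| \cdot |\gamma_{E_{\mathrm{out},v}}|/(|\gamma_{E_{1,v}}|\,|\gamma_{E_{2,v}}|)$, and then check that the telescoping of the divisibility factors along the tree collapses to $|\gamma|/\prod_{i=1}^r|\gamma_i|$ (using $\gamma_{L_i}=\gamma_i$ at the leaves, $\gamma_{L_{\mathrm{out}}}=\gamma$ at the root, and $\gamma_{E_{\mathrm{out},v}}=\gamma_{E_{1,v}}+\gamma_{E_{2,v}}$ at each vertex).

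First I would set up, at each vertex $v$, the relevant lattices: $\cL_{1,v}, \cL_{2,v} \subset M$ of rank $d-1$, their intersection of rank $d-2$ (by Theorem \ref{thm_trop}(ii), since $\dim T_v\Gamma_\sigma = 2$ the sum $\cL_{1,v}+\cL_{2,v}$ has rank $d$), and $\cL_{\mathrm{out},v} = (\cL_{1,v}\cap \cL_{2,v}) + \Z u_{E_{\mathrm{out},v}}$ from \eqref{eq_cL_out}. The key elementary fact I would establish is that, for a rank-$(d-1)$ sublattice $\cL \subset M$ with saturation $\Lambda = \cL^\sat$ and tangent hyperplane $\gamma_E^\perp$ (so $\Lambda = M \cap \gamma_E^\perp$), the index $|M/(\cL + \Z u)|$ for a vector $u \notin \gamma_E^\perp$ relates the "slab multiplicity" to $k_\sigma$-type indices: concretely I want a clean identity of the form $|M/(\cL_{1,v}+\cL_{2,v})| = |\omega(\gamma_{E_{1,v}},\gamma_{E_{2,v}})| \cdot \frac{|\gamma_{E_{\mathrm{out},v}}|}{|\gamma_{E_{1,v}}|\,|\gamma_{E_{2,v}}|} \cdot \frac{k_{\sigma_{1,v}} k_{\sigma_{2,v}}}{k_{\sigma_{\mathrm{out},v}}}$, where $k_{\sigma_{k,v}}$ and $k_{\sigma_{\mathrm{out},v}}$ are the coefficients defined in \eqref{eq_coeff_1}--\eqref{eq_coeff_2} (with $k_{\sigma_{k,v}}:=1$ when $E_{k,v}$ is a leg, so $\cL_{k,v}=\gamma_i^\perp$ is already saturated). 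The factor $|\omega(\gamma_{E_{1,v}},\gamma_{E_{2,v}})|$ enters because $u_{E_{k,v}} = -\iota_{\gamma_{E_{k,v}}}\omega$ (Lemma \ref{lem_class}), so pairing the two children directions against each other's charge hyperplanes produces exactly $\omega(\gamma_{E_{1,v}},\gamma_{E_{2,v}})$, up to divisibility corrections coming from the difference between the primitive generators $\bar u_{E_{k,v}}$ and the weighted directions $u_{E_{k,v}} = w(E_{k,v})\bar u_{E_{k,v}}$.

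Then I would assemble the global identity: multiply the above local identity over all $v\in V(\Gamma_\sigma)$. By Lemma \ref{lem: product for Ntrop}, $\prod_v |M/(\cL_{1,v}+\cL_{2,v})| = N_\sigma^\trop$. The coefficients $k_{\sigma_{1,v}}, k_{\sigma_{2,v}}, k_{\sigma_{\mathrm{out},v}}$ telescope: for an internal edge $E$ shared between a vertex $v$ (where it is $E_{\mathrm{out},v}$) and its parent $v'$ (where it is one of the children $E_{k,v'}$), the factor $k_{\sigma_{\mathrm{out},v}}$ in one denominator cancels $k_{\sigma_{k,v'}}$ in one numerator — this requires checking $\sigma_{\mathrm{out},v} = \sigma_{k,v'}$ as faces (they parametrize the same cut-off subtree, extended to infinity), hence the lattices $\cL_{\mathrm{out},v}$ and $\cL_{k,v'}$ and the coefficients agree. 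After telescoping, the only surviving $k$-coefficient is $k_{\sigma_{\mathrm{out},v_{\mathrm{root}}}} = k_\sigma$ in the denominator — wait, I should be careful with orientation: the root vertex's parent edge is $L_{\mathrm{out}}$ itself, and $\sigma_{\mathrm{out}, v_{\mathrm{out}}} = \sigma$ with $k_{\sigma_{\mathrm{out},v_{\mathrm{out}}}} = k_\sigma$; the leaf children contribute $k=1$. So $\prod_v \frac{k_{\sigma_{1,v}}k_{\sigma_{2,v}}}{k_{\sigma_{\mathrm{out},v}}}$ collapses to $1/k_\sigma$, giving $N_\sigma^\trop = \frac{1}{k_\sigma}\Big(\prod_v |\omega(\gamma_{E_{1,v}},\gamma_{E_{2,v}})|\Big)\Big(\prod_v \frac{|\gamma_{E_{\mathrm{out},v}}|}{|\gamma_{E_{1,v}}|\,|\gamma_{E_{2,v}}|}\Big)$; and the divisibility product telescopes similarly to $\frac{|\gamma_{L_{\mathrm{out}}}|}{\prod_{i=1}^r |\gamma_{L_i}|} = \frac{|\gamma|}{\prod_{i=1}^r |\gamma_i|}$, which rearranges to \eqref{eq_product}.

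\textbf{The main obstacle} I anticipate is proving the local index identity cleanly — specifically, controlling the saturations and the divisibility factors $|\gamma_{E}|$ simultaneously. The subtlety is that $\cL_{k,v}$ is generally \emph{not} saturated ($\Lambda_{\fod_{E_{k,v}}^\sigma}/\cL_{k,v}$ has index $k_{\sigma_{k,v}}$), and $u_{E_{\mathrm{out},v}}$ is a weighted (non-primitive) vector, so I must track how $|M/(\cL_1+\cL_2)|$ decomposes as (index coming from saturating each $\cL_{k,v}$ inside its hyperplane) $\times$ (index of the two saturated hyperplane-lattices summing inside $M$, which is where $\omega(\gamma_{E_1},\gamma_{E_2})$ lives up to divisibility) $\times$ (correction from the saturation of $\cL_{\mathrm{out},v}$). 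I would handle this by a short exact sequence / multiplicativity-of-index argument: localize at each prime, or more cleanly use the formula $|M/(\cL_1+\cL_2)| \cdot |\Lambda_1 \cap \Lambda_2 / \cL_1 \cap \cL_2| = |\Lambda_1 \cap \Lambda_2 : \text{stuff}| \cdots$ — in practice, writing $M \cong \Lambda_1 \oplus \Z u_{E_1}'$-type splittings and computing determinants, with $\omega(\gamma_{E_1},\gamma_{E_2})$ appearing as the determinant of the pairing $\langle \gamma_{E_1}, \cdot\rangle$ restricted to a complement of $\gamma_{E_2}^\perp$. This is the step where the skew-symmetric form $\omega$ genuinely enters, and getting the divisibility bookkeeping exactly right (so the global telescoping produces precisely $|\gamma|/\prod|\gamma_i|$ and not some other combination) is the delicate part.
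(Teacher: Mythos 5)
Your proposal is correct and matches the paper's proof in substance: the central local identity $|M/(\cL_{1,v}+\cL_{2,v})| = \frac{k_{\sigma_{1,v}}k_{\sigma_{2,v}}}{k_{\sigma_{\mathrm{out},v}}}\cdot\frac{|\gamma_{E_{\mathrm{out},v}}|}{|\gamma_{E_{1,v}}|\,|\gamma_{E_{2,v}}|}\,|\omega(\gamma_{E_{1,v}},\gamma_{E_{2,v}})|$ that you aim for is exactly the paper's \eqref{eq_proof_2}, the paper also proves it via the saturation decomposition and a determinant computation in the rank-two quotient $\overline M=M/(\cL_1^{\sat}\cap\cL_2^{\sat})$ dual to $\overline N=(\Z\gamma_{E_1}+\Z\gamma_{E_2})^{\sat}$, and your global telescoping is just a repackaging of the paper's root-to-leaves induction. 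The one place where you leave an honest gap — nailing the divisibility bookkeeping in the local identity — is precisely where the paper invests equations \eqref{eq_proof_3}--\eqref{eq_proof_9}, so you have correctly located the work even if you have not carried it out.
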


\begin{proof}
We prove by induction, following the flow on $\Gamma_\sigma$ starting at the leaves and ending at the root, that, for every vertex $v \in V(\Gamma_\sigma)$, we have
\[ k_{\sigma_{\mathrm{out},v}} N_{\sigma_{\mathrm{out},v}}^\trop=\frac{|\gamma_{E_{\mathrm{out},v}}|}{\prod_{i \in I_v}|\gamma_i|}\prod_{v' \in V(\Gamma_{\sigma_{\mathrm{out},v}})}|\omega (\gamma_{E_{1,v'}},\gamma_{E_{2,v'}})|\,.\]
Given a vertex $v \in V(\Gamma_\sigma)$, we use the same simplified notations $\E_1$, $E_2$, $E_{\mathrm{out}}$ and so on, as in the proof of Lemma \ref{lem: product for Ntrop}.
If $E_1$ and $E_2$ are both edges, we have to show that 
\begin{equation} \label{eq_proof_1}k_{\sigma_{\mathrm{out}}}
N_{\sigma_{\mathrm{out}}}^\trop = \frac{|\gamma_{E_{\mathrm{out}}}|}{|\gamma_{E_1}||\gamma_{E_2}|}
|\omega(\gamma_{E_1},\gamma_{E_2})|k_{\sigma_1}
N_{\sigma_1}^\trop k_{\sigma_2}
N_{\sigma_2}^\trop  \,.
\end{equation}
By Lemma \ref{lem: product for Ntrop}, we have 
\[ N_{\sigma_{\mathrm{out}}}^\trop = |M/(\cL_1+\cL_2)|
N_{\sigma_1}^\trop 
N_{\sigma_2}^\trop  \,,\]
and so we have to show that
\begin{equation}\label{eq_proof_2}
|M/(\cL_1+\cL_2)|=
\frac{k_{\sigma_1}k_{\sigma_2}}{k_{\sigma_{\mathrm{out}}}}
\frac{|\gamma_{E_{\mathrm{out}}}|}{|\gamma_{E_1}||\gamma_{E_2}|} 
|\omega(\gamma_{E_1},\gamma_{E_2})|\,.
\end{equation}
First of all, we have 
\begin{equation}\label{eq_proof_3} |M/(\cL_1+\cL_2)|=|M/(\cL_{1}^\sat+\cL_2^\sat)|\,.
|(\cL_{1}^\sat+\cL_2^\sat)/(\cL_{1}+\cL_2)|\,.
\end{equation}
Then, the exact sequence
\begin{equation}\label{eq_sat}
0 \rightarrow 
\frac{\cL_1^\sat \cap \cL_2^\sat}{\cL_1\cap \cL_2} \rightarrow \cL_1^\sat/\cL_1 \oplus \cL_2^\sat/\cL_2 \rightarrow
\frac{\cL_1^\sat +\cL_2^\sat}{\cL_1+\cL_2} \rightarrow 0\,,\end{equation}
and \eqref{eq_coeff_1} imply that 
\begin{equation}\label{eq_proof_4}
|(\cL_1^\sat+\cL_2^\sat)/(\cL_1+\cL_2)|
=\frac{k_{\sigma_1}k_{\sigma_2}}{|(\cL_1^\sat \cap \cL_2^\sat)/(\cL_1\cap \cL_2)|} \,.\end{equation}
On the other hand, we have $\cL_{\mathrm{out}} =\cL_1 \cap \cL_2 + \Z u_{E_{\mathrm{out}}}$
by \eqref{eq_cL_out}, and so 
\begin{equation}\label{eq_proof_5}
    k_{\sigma_{\mathrm{out}}}=|\cL_{\mathrm{out}}^\sat/\cL_{\mathrm{out}}|
    =|(\cL_1^\sat \cap \cL_2^\sat)/(\cL_1\cap \cL_2)| |\overline{u}_{E_{\mathrm{out}}}|\,,
\end{equation}
where for every $u \in M$, $\overline{u}$ is the image of $u$ in the rank two lattice \[\overline{M}:=M/(\cL_1^\sat \cap \cL_2^\sat)\,.\]
Similarly, we also have 
\begin{equation} \label{eq_proof_6}
|M/(\cL_1^\sat +\cL_2^\sat)|=\frac{|\overline{u}_{E_1} \wedge \overline{u}_{E_2}|}{|\overline{u}_{E_1} ||\overline{u}_{E_2}| }\,.
\end{equation}
Combining \eqref{eq_proof_3}-\eqref{eq_proof_4}-\eqref{eq_proof_5}, we obtain
\[ |M/(\cL_1+\cL_2)|=\frac{k_{\sigma_1}k_{\sigma_2}}{k_{\sigma_{\mathrm{out}}}} 
\frac{|\overline{u}_{E_{\mathrm{out}}}|}{|\overline{u}_{E_1}| |\overline{u}_{E_2}|}
|\overline{u}_{E_1} \wedge \overline{u}_{E_2}|\,.\]
Hence, comparing with \eqref{eq_proof_2}, it remains to show that 
\begin{equation}\label{eq_proof_7}
\frac{|\overline{u}_{E_{\mathrm{out}}}|}{|\overline{u}_{E_1}| |\overline{u}_{E_2}|}
|\overline{u}_{E_1} \wedge \overline{u}_{E_2}|
= \frac{|\gamma_{E_{\mathrm{out}}}|}{|\gamma_{E_1}||\gamma_{E_2}|} 
|\omega(\gamma_{E_1},\gamma_{E_2})|\,.
\end{equation}
Denote $\overline{N}:=(\Z \gamma_{E_1}+\Z \gamma_{E_2})^{\sat}$, it is a rank two saturated sublattice of $N$. As $\cL_1=\gamma_{E_1}^\perp$ and $\cL_2^\perp=\gamma_{E_2}^{\perp}$, the duality between $N$ and $M$ implies that $\overline{N}$ and $\overline{M}$ are naturally dual to each other.
Let $(e_1, e_2)$ be a basis of $\overline{N}$ and $(e_1^\star, e_2^\star)$ the corresponding dual basis of $\overline{M}$. Then, we have 
$\iota_{e_1} \omega= \omega(e_1,e_2) e_2^\star$, and $\iota_{e_2} \omega = -\omega(e_1,e_2)e_1^\star$.
In particular, for every $n=\alpha e_1+\beta e_2 \in \overline{N}$, we have $\overline{\iota_n \omega}=\omega(e_1,e_2)(\alpha e_2^{\star}-\beta e_1^\star)$, and so $|\iota_n \omega|=|\omega(e_1,e_2))| \gcd(\alpha,\beta)=|\omega(e_1,e_2))| |n|$.
As $\overline{u}_{E_1}=\iota_{\gamma_{E_1}}\omega$,
$\overline{u}_{E_2}=\iota_{\gamma_{E_2}}\omega$,
and $\overline{u}_{E_{\mathrm{out}}}=\iota_{\gamma_{E_{\mathrm{out}}}}\omega$, we obtain 
\begin{equation}\label{eq_proof_8}
|\overline{u}_{E_1}|=\omega(e_1,e_2)|\gamma_{E_1}|\,,
|\overline{u}_{E_2}|=\omega(e_1,e_2)|\gamma_{E_2}|\,,
\text{and}\, |\overline{u}_{E_{\mathrm{out}}}|
=\omega(e_1,e_2)|\gamma_{E_{\mathrm{out}}}|\,.\end{equation}
On the other hand, writing $\gamma_{E_1}=ae_1+be_2$
and $\gamma_{E_2}=ce_1+de_2$, we have 
$|\overline{u}_{E_1} \wedge \overline{u}_{E_2}|=
\omega(e_1,e_2)^2 |ad-bc|$, whereas 
$|\omega(\gamma_{E_1},\gamma_{E_2})|=|\omega(e_1,e_2)||ad-bc|$, and so
\begin{equation}\label{eq_proof_9}
    |\overline{u}_{E_1} \wedge \overline{u}_{E_2}|
    =\omega(e_1,e_2)|\omega(\gamma_{E_1},\gamma_{E_2})|\,.
\end{equation}
Combining \eqref{eq_proof_8} and \eqref{eq_proof_9}, we obtain \eqref{eq_proof_7}, and this concludes the proof of \eqref{eq_proof_1}.

If $E_1$ is an edge and $E_2$ is a leg $L_i$ for some $1\leq i \leq r$, then we similarly show that 
\[ k_{\sigma_{\mathrm{out}}}
N_{\sigma_{\mathrm{out}}}^\trop = \frac{|\gamma_{E_{\mathrm{out}}}|}{|\gamma_{E_1}||\gamma_{E_2}|}
|\omega(\gamma_{E_1},\gamma_{E_2})|k_{\sigma_1}
N_{\sigma_1}^\trop \,.\]
The only difference is that in this case, we have $\cL_2=\gamma_{i}^{\perp}$, and so $|\cL_2^\sat/\cL_2|=1$.

Finally, if $E_1$ and $E_2$ are both legs $L_i$ and $L_j$ respectively for some $1\leq i,j \leq r$, then we similarly show that 
\[ k_{\sigma_{\mathrm{out}}}
N_{\sigma_{\mathrm{out}}}^\trop = \frac{|\gamma_{E_{\mathrm{out}}}|}{|\gamma_{E_1}||\gamma_{E_2}|}
|\omega(\gamma_{E_1},\gamma_{E_2})| \]
using that $\cL_1=\gamma_{i}^{\perp}$, $\cL_2=\gamma_j^{\perp}$, and so 
$|\cL_1^\sat/\cL_1|=1$ and $|\cL_2^\sat/\cL_2|=1$.
\end{proof}

\begin{remark} The tropical problem, the coefficient $k_\sigma$, and the tropical multiplicity $N_\sigma^\trop$ only depend on the vectors $\iota_{\gamma_i}\omega$ and on the hyperplanes $\gamma_i^\perp$. In particular, they are invariant under a common rescaling of $\gamma_i$ and $\omega$ of the form $\gamma_i \mapsto t \gamma_i$ and $\omega \mapsto t^{-1} \omega$. As a consistency check, one can verify directly that the right-hand side of \eqref{eq_product} in Lemma \ref{lem: product2 for Ntrop} is invariant under such rescaling: a trivalent tree with $r+1$ legs has $r-1$ vertices, and so the right-hand side of \eqref{eq_product} scales as $\frac{t(t^{-1}  t^2)^{r-1}}{t^r}=1$.
\end{remark}

\subsection{Polyhedral decompositions and tropical multiplicities}
\label{sec_polyh}

Until now, we considered tropical curves in $M_\RR$. In this section, we introduce tropical curves in $M_\RR$ endowed with a polyhedral decomposition.
The following definition can be found in \cite[Def 3.1]{NS}.
\begin{definition}
\label{polyhedral decomposition}
A \emph{polyhedral decomposition} of $M_{\RR}$ is a
covering $\P=\{\Xi\}$ of $M_{\RR}$ by a finite number of strongly
convex polyhedra satisfying the following properties:
\begin{enumerate}
\item[(i)]
If $\Xi \in \P$ and $\Xi' \subset \Xi$
is a face, then $\Xi' \in \P$.
\item[(ii)]
If $\Xi, \Xi' \in \P$, then $\Xi \cap \Xi'$
is a common face of $\Xi$ and $\Xi'$.
\end{enumerate}
\end{definition}

For the remaining of this section, we fix a polyhedral decomposition $\scrP$ of $M_\RR$.

\begin{definition} \label{def_tropical_curve_polyh}
A \emph{parametrized tropical curve in $(M_\RR, \scrP)$} 
is a weighted graph $\Gamma$ together with a proper continuous map $h: \Gamma \rightarrow M_\RR$
satisfying the following conditions:
\begin{itemize}
    \item[(i)] for every edge or leg $E$, the restriction $h|_E$ is an embedding with image contained in an affine line with rational slope,
       \item[(ii)] for every vertex $v\in V(\Gamma)$, the following \emph{balancing condition} holds: Let $E_1,\dots,E_m \in E(\Gamma)\cup L(\Gamma)$ be the edges or legs adjacent to $v$, and let $\bar{u}_i \in M$ be the primitive integral vector emanating from $h(v)$ in the direction of $h(E_i)$, then $\sum_{i=1}^m w(E_i)\bar{u}_i=0$.
       \item[(iii)] for every edge or leg $E$, there exists a cell of $\scrP$ containing $h(E)$,
       \item[(iv)] for every divalent vertex $v\in V(\Gamma)$, denoting by $\boldsymbol{\sigma}(v)$ the smallest cell of $\scrP$ containing $h(v)$, there exists an open subset $U$ of $\Gamma$ containing $v$ such that $h(U) \cap \boldsymbol{\sigma}(v)=\{h(v)\}$.
\end{itemize}
\end{definition}

We define as in \S 1.1 a \emph{tropical curve in $(M_\RR,\scrP)$} as an isomorphism class of parametrized tropical curves in $(M_\RR,\scrP)$. 
Compared with tropical curves in $M_
\RR$ of Definition \ref{def_tropical_curve}, tropical curves in $(M_\RR,\scrP)$ are required to be compatible with the polyhedral decomposition $\scrP$ in the sense of Definition \ref{def_tropical_curve_polyh}(iii), and can have divalent vertices, which are required to satisfy Definition \ref{def_tropical_curve_polyh}(iv).

\begin{definition}
A \emph{tropical type} for $(M_\RR,\scrP)$ is the data $(\Gamma,\bar{u}, \boldsymbol{\sigma})$ of a weighted graph $\Gamma$, a map
$\bar{u} : F(\Gamma)  \rightarrow M$ as in
Definition \ref{def_tropical_type}, and a map $\boldsymbol{\sigma} \colon V(\Gamma) \cup E(\Gamma) \cup L(\Gamma) \rightarrow \scrP$ assigning a cell of $\scrP$ to every vertex, edge or leg of $\Gamma$.
\end{definition}

\begin{definition}\label{def_type_polyh}
The type of a tropical curve $h \colon \Gamma \rightarrow M_\RR$ in $(M_\RR,\scrP)$ is the tropical type $(\Gamma, \bar{u},\boldsymbol{\sigma})$ where for every $(v,E)\in F(\Gamma)$,  $\bar{u}_{v,E}$ is the primitive integral in $M$ emanating from $h(v)$ in the direction of $h(E)$, and for every vertex, edge or leg $x$ of $\Gamma$, $\boldsymbol{\sigma}(x)$ is the smallest cell of $\scrP$ containing $x$.
\end{definition}

One defines as in \S\ref{sec_marked_tropical_curves}-\ref{sec_tropical_constraints} the notion of $(\omega,\boldsymbol{\gamma})$-tropical curves in $(M_\RR,\scrP)$, the notion of $(\omega,\boldsymbol{\gamma})$-tropical curves in $(M_\RR,\scrP)$ matching a $\boldsymbol{\gamma}$-constraint $\mathbf{A}$, and the moduli space $\cM_{\omega,\boldsymbol{\gamma},\mathbf{A}}^\trop(\scrP)$ of $(\omega,\boldsymbol{\gamma})$-tropical curves in $(M_\RR,\scrP)$ matching  $\mathbf{A}$. As the moduli space $\cM_{\omega,\boldsymbol{\gamma},\mathbf{A}}$ of tropical curves in $M_\RR$, the moduli space $\cM_{\omega,\boldsymbol{\gamma},\mathbf{A}}(\scrP)$ is a polyhedral complex: the relative interior $\Int(\sigma)$ of a face $\sigma \in \cM_{\omega,\boldsymbol{\gamma},\mathbf{A}}(\scrP)$ parametrizes $(\omega,\boldsymbol{\gamma})$-tropical curves $h \colon \Gamma_\sigma \rightarrow M_\RR$ in $(M_\RR,\scrP)$ matching
$\mathbf{A}$ and of a given type for $(M_\RR,\scrP)$.

In fact, there is a natural homeomorphism $\cM_{\omega,\boldsymbol{\gamma},\mathbf{A}}(\scrP) \simeq \cM_{\omega,\boldsymbol{\gamma},\mathbf{A}}$ realizing $\cM_{\omega,\boldsymbol{\gamma},\mathbf{A}}(\scrP)$ as a polyhedral refinement of $\cM_{\omega,\boldsymbol{\gamma},\mathbf{A}}$: every face  of $\cM_{\omega,\boldsymbol{\gamma},\mathbf{A}}$ is a union of faces $\cM_{\omega,\boldsymbol{\gamma},\mathbf{A}}(\scrP)$. Indeed, given a tropical curve in $(M_\RR,\scrP)$, we obtain a tropical curve in $M_\RR$ by ``erasing" the divalent vertices, and conversely, given a tropical curve in $M_\RR$ there is a unique minimal way to add divalent vertices so that Definition \ref{def_tropical_curve_polyh}(iii) is satisfied and we obtain a tropical curve in $(M_\RR,\scrP)$.

However, the identification $\cM_{\omega,\boldsymbol{\gamma},\mathbf{A}}(\scrP) \simeq \cM_{\omega,\boldsymbol{\gamma},\mathbf{A}}$ 
does not preserve the integral structures in general: if $\tilde{\sigma}$ is a face of $\cM_{\omega,\boldsymbol{\gamma},\mathbf{A}}(\scrP)$ contained in a face $\sigma$ of $\cM_{\omega,\boldsymbol{\gamma},\mathbf{A}}$
of the same dimension, then there is a natural lattice embedding of integral tangent spaces $T_{\tilde{\sigma}} \subset T_\sigma$
which can have a non-trivial finite cokernel.
The issue is that when adding a divalent vertex on a edge in a family of tropical curves, the lengths of the newly created edges are affine functions on the base of the family which might not be integral with respect to the original integral structure on the base: one needs in general to coarsen the integral structure on the base to make all length functions integral affine.  
In the general context of tropicalizations of stable log maps, this phenomenon is discussed in \cite[\S 4]{johnston2022birational}.
Let $\mathbf{A}$ be a general $\boldsymbol{\gamma}$-constraint, $\sigma$ be a $(d-2)$-dimensional face of $\cM_{\omega,\boldsymbol{\gamma},\mathbf{A}}^\trop$ such that $\dim \fod_{L_{\mathrm{out}}}^\sigma =d-1$, and $\tilde\sigma$ be a $(d-2)$-dimensional face of $\cM_{\omega,\boldsymbol{\gamma},\mathbf{A}}^\trop(\scrP)$
contained in $\sigma$.
We give below an explicit description of the difference between $T_{\Tilde{\sigma}}$ and $T_\sigma$.

Recall from \eqref{Eq:jv}-\eqref{Eq:Ev} that we defined for every vertex $v \in V(\Gamma_\sigma)$ the locus $\foj_v^\sigma$ as the closure in $M_\RR$ of the locus of $h(v)$ for $h\in \Int(\sigma)$, and for every edge or leg $E$ of $\Gamma_\sigma$ the locus $\fod_E^\sigma$ as the closure in $M_\RR$ of the locus of $h(E)$ for $h\in \Int(\sigma)$.
We define similarly $\foj_v^{\tilde{\sigma}}$ and $\fod_E^{\tilde{\sigma}}$ for every vertex $v$ and edge or leg $E$ of $\Gamma_{\tilde{\sigma}}$. To simplify the notations, we set $T_v$ for the integral tangent space to $\foj_v^\sigma$ or $\foj_v^{\tilde{\sigma}}$ depending if $v \in V(\Gamma_\sigma)$ or $v\in V(\Gamma_{\tilde{\sigma}})$, and $T_E$
for the integral tangent space to $\fod_E^{\sigma}$ or $\fod_E^{\tilde{\sigma}}$
depending if $E \in E(\Gamma_\sigma)\cup L(\Gamma_\sigma)$ or $E\in E(\Gamma_{\tilde{\sigma}})\cup V(\Gamma_{\tilde{\sigma}})$.

To compare $T_{\Tilde{\sigma}}$ and $T_\sigma$, we first give an alternative description of $T_\sigma$. Consider the map
\begin{equation}
    \label{Eq:Psi}
    \Psi_{\sigma}:\prod_{v\in V(\Gamma_\sigma)} T_v\times \prod_{E\in E(\Gamma_\sigma)} \Z
\rightarrow \prod_{E\in E(\Gamma_\sigma)} T_E\,,
\end{equation}
given by
\[
\Psi_\sigma \big((m_v)_v,(\ell_E)_E\big)=(m_{\partial^+ E}-m_{\partial^- E}+\ell_E u_E)_E \,.
\]
Then $T_{\sigma}=\ker\Psi_\sigma$. Recall that we also had $T_\sigma=\ker \mathrm{gl}_\sigma$, with the gluing map $\mathrm{gl}_\sigma$ defined by \eqref{eq_gl}, and that we defined the tropical multiplicity 
$N_\sigma^\trop=|\coker \mathrm{gl}_\sigma|$
in Definition \ref{def_trop_mult}. However,
$N_\sigma^\trop$ is in general distinct from $|\coker\Psi_\sigma|$. Rather, using the notations introduced in \S\ref{subsec: product formula}, we have the following lemma:

\begin{lemma}
\label{lem:coker psi}
Let $\mathbf{A}$ be a general $\boldsymbol{\gamma}$-constraint, and $\sigma$ a $(d-2)$-dimensional face of $\cM_{\omega,\boldsymbol{\gamma},\mathbf{A}}^\trop$ such that $\dim \fod_{L_{\mathrm{out}}}^\sigma =d-1$. Then, we have
\[|\coker\Psi_\sigma|=\prod_{v\in V(\Gamma_\sigma)} 
|(\cL_{1,v}^{\sat}+\cL_{2,v}^{\sat})/
(\cL_{1,v}+\cL_{2,v})|\,.\]
\end{lemma}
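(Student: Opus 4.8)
The plan is to compare the two maps $\mathrm{gl}_\sigma$ and $\Psi_\sigma$ directly, since both have kernel $T_\sigma$, and to organize the comparison so that the cokernel of $\Psi_\sigma$ decomposes as a product over the vertices of $\Gamma_\sigma$. First I would recall the product formula $N_\sigma^\trop = \prod_{v}|M/(\cL_{1,v}+\cL_{2,v})|$ from Lemma \ref{lem: product for Ntrop}, established by induction along the flow on $\Gamma_\sigma$; the same inductive scheme, running from the leaves to the root, is what I would use here. At each vertex $v$ with children edges $E_1, E_2$ and parent $E_{\mathrm{out}}$, I would relate $\Psi_{\sigma_{\mathrm{out},v}}$ to $\Psi_{\sigma_{1,v}}$ and $\Psi_{\sigma_{2,v}}$ via a commutative diagram with short exact rows, exactly analogous to the one in the proof of Lemma \ref{lem: product for Ntrop}, and apply the snake lemma.

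**Key steps.** (1) Show that $T_\sigma = \ker \Psi_\sigma$; this is a small lemma: an element $\big((m_v)_v,(\ell_E)_E\big)$ in the kernel records an integral deformation of the vertices together with the induced change in edge lengths, which is precisely the integral tangent vector, and the $\ell_E$ are determined by the $m_v$ since $u_E \notin \Lambda_{\foj^\sigma_{\partial^+ E}}$ by Theorem \ref{thm_trop}(iii). (2) For the inductive step at a vertex $v$ where $E_1$, $E_2$ are both edges, set up the snake-lemma diagram comparing $\Psi_{\sigma_{\mathrm{out},v}}$ with $\Psi_{\sigma_{1,v}}\oplus\Psi_{\sigma_{2,v}}$; the ``new'' part of the domain is $T_v \times \Z\,(\text{for }E_1)\times\Z\,(\text{for }E_2)$ mapping to $T_{E_1}\oplus T_{E_2}$. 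Since all $\foj^\sigma_v$ have $\dim = d-2$ and the relevant $\fod^\sigma_E$ have $\dim = d-1$ by Theorem \ref{thm_trop}, these lattices have the expected ranks and the connecting map is injective on the relevant piece. (3) Identify the resulting cokernel contribution at $v$: the local map $T_v\times\Z\times\Z \to T_{E_1}\oplus T_{E_2}$ sends $(m, \ell_1,\ell_2)$ to $(m + \ell_1 u_{E_1}, -m + \ell_2 u_{E_2})$ or similar, whose image is $(\Lambda_{\foj_v^\sigma}+\Z u_{E_1})\oplus(\Lambda_{\foj_v^\sigma}+\Z u_{E_2})$-type lattices glued along $T_v$; after the snake-lemma bookkeeping and using $p_{k,v}(T_{\sigma_{k,v}})\subset\Lambda_{\fod^{\sigma_{k,v}}_{E_{k,v}}}$ of finite index, the cokernel contribution is $|(\cL_{1,v}^\sat + \cL_{2,v}^\sat)/(\cL_{1,v}+\cL_{2,v})|$. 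The subtlety is that $\Psi_\sigma$ uses the full integral tangent lattices $T_E = \Lambda_{\fod^\sigma_E}$ (i.e. saturated), whereas $\mathrm{gl}_\sigma$ uses the unsaturated lattices $\cL_{k,v}$, which is exactly the source of the discrepancy between $|\coker\Psi_\sigma|$ and $N_\sigma^\trop$. (4) Handle the cases where one or both of $E_1,E_2$ is a leg exactly as in Lemma \ref{lem: product for Ntrop}, where $\cL_{k,v}=\gamma_i^\perp$ is already saturated so that case contributes a trivial factor.

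**Main obstacle.** I expect the delicate point to be step (3): correctly tracking which lattices appear saturated and which do not in the snake-lemma output, so that the per-vertex factor comes out as the index $|(\cL_{1,v}^\sat+\cL_{2,v}^\sat)/(\cL_{1,v}+\cL_{2,v})|$ rather than, say, $|M/(\cL_{1,v}+\cL_{2,v})|$ or $|M/(\cL_{1,v}^\sat+\cL_{2,v}^\sat)|$. Concretely, one must check that the map $\Psi_\sigma$, restricted to the ``new'' variables at $v$, has image whose saturation-vs-actual index is governed by the $\Lambda_{\fod^{\sigma_{k,v}}_{E_{k,v}}}$ versus $\cL_{k,v}$ comparison, which requires invoking Lemma \ref{lem_dim_new} to know $\dim \fod^{\sigma_{k,v}}_{E_{k,v}}=d-1$ and Theorem \ref{thm_trop} for the ranks. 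Once the per-vertex factor is pinned down, the induction along the flow assembles the product $\prod_{v\in V(\Gamma_\sigma)}|(\cL_{1,v}^\sat+\cL_{2,v}^\sat)/(\cL_{1,v}+\cL_{2,v})|$ as claimed, completing the proof.
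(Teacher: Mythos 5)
Your proposal follows essentially the same route as the paper: induction along the flow from leaves to root, a snake-lemma diagram comparing $\Psi_{\sigma_{\mathrm{out},v}}$ with $\Psi_{\sigma_{1,v}}\oplus\Psi_{\sigma_{2,v}}$ where the ``new'' piece of the domain at $v$ is $T_v\times\Z\times\Z$ mapping into $T_{E_1}\times T_{E_2}$, and identification of the per-vertex cokernel contribution as $|(\cL_{1,v}^\sat+\cL_{2,v}^\sat)/(\cL_{1,v}+\cL_{2,v})|$ via the connecting homomorphism involving $p_{1,v},p_{2,v}$. Your identification of the source of the saturated-versus-unsaturated discrepancy is exactly what the paper's computation of $\coker\theta$ and $\coker\xi$ makes precise.
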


\begin{proof}
We prove by induction, following the flow on $\Gamma_\sigma$ starting at the leaves and ending at the root, that for every vertex $v \in V(\Gamma_\sigma)$, we have
\[ |\coker \Psi_{\sigma_{\mathrm{out},v}}|
=\prod_{v' \in V(\Gamma_{\sigma_{\mathrm{out}},v})}
|(\cL_{1,v'}^{\sat}+\cL_{2,v'}^{\sat})/
(\cL_{1,v'}+\cL_{2,v'})| \,.\]

Given a vertex $v \in V(\Gamma_\sigma)$, we use the same simplified notations $\E_1$, $E_2$, $E_{\mathrm{out}}$, $\cL_1$, $\cL_2$,
$\cL_{\mathrm{out}}$, and so on, as in the proof of Lemmas \ref{lem: product for Ntrop} and \ref{lem: product2 for Ntrop}.
If $E_1$ and $E_2$ are both edges, we have to show that 
\begin{equation} \label{eq_coker_1}
|\coker \Psi_{\sigma_{\mathrm{out}}}|
=
|(\cL_{1}^{\sat}+\cL_{2}^{\sat})/
(\cL_{1}+\cL_{2})| 
|\coker \Psi_{\sigma_1}||\coker \Psi_{\sigma_2}|\,.
\end{equation}

For $k=1$ and $k=2$, denote
\[
M_k=\prod_{v \in V(\Gamma_{\sigma_k})}T_v \times \prod_{E\in E(\Gamma_{\sigma_k})}
\Z,\quad\quad
K_k=\prod_{E\in E(\Gamma_{\sigma_k})} T_E,
\]
and consider the commutative diagram of exact sequences
\[
\xymatrix@C=30pt
{
0\ar[r]&T_v\times\Z^2\ar[d]_{\theta}\ar[r]& M_1\times M_2\times T_v\times
\Z^2\ar[r]\ar[d]_{{\Psi}_{\sigma_{\mathrm{out}}}}&M_1\times M_2\ar[d]_{\Psi_{\sigma_1}
\times \Psi_{\sigma_2}}\ar[r]&0\\
0\ar[r]&T_{E_1}\times T_{E_2}\ar[r]&
K_1\times K_2\times T_{E_1}\times T_{E_2}\ar[r]&
K_1\times K_2\ar[r]&0\,,
}
\]
where the map $\theta$ is given by 
\[
\theta(m,\ell_1,\ell_2)=(\ell_1 u_{E_1}-m,\ell_2 u_{E_2}-m).
\]
As $\ker \Psi_{\sigma_{\mathrm{out}}}=
T_{\sigma_{\mathrm{out}}}$ and $\ker \Psi_{\sigma_k}=T_{\sigma_k}$, we obtain by the snake lemma a long exact sequence
\[
T_{\sigma_{\mathrm{out}}} 
\rightarrow T_{\sigma_1}\times T_{\sigma_2}\xrightarrow{\xi} \coker\theta\rightarrow
\coker{\Psi}_{\sigma_{\mathrm{out}}}\rightarrow \coker{\Psi}_{\sigma_1}\times
\coker{\Psi}_{\sigma_2}\rightarrow 0.
\]
As $T_v=T_{E_1}\cap T_{E_2}$, the
cokernel of the map $T_v\rightarrow T_{E_1}\times T_{E_2}$
given by $m\mapsto (-m,-m)$ can be identified with 
$T_{E_1}+T_{E_2}\subseteq M$, and so 
\[
\coker\theta=(T_{E_1}+T_{E_2})/(\Z u_{E_1}+\Z u_{E_2})
=(\cL_1^{\sat}+\cL_2^{\sat})/(\Z u_{E_1}+\Z u_{E_2})\,.
\]
On the other hand, the connecting map $\xi:T_{\sigma_1}\times T_{\sigma_2}\rightarrow
\coker\theta$ is given by
$\xi(t_1,t_2)=(p_1(t_1), p_2(t_2))$,
and so
\[
\coker\xi=
(\cL_1^{\sat}+\cL_2^{\sat})/(\cL_1+\cL_2)\,,
\]
because $\cL_k=p_k(T_{\sigma_k})+\Z u_{E_k}$ by definition. This conclude the proof of 
\eqref{eq_coker_1}.

Consider now the case when one of $E_1$ or $E_2$ is an edge and the other is a leg. By symmetry, one can assume that $E_1$ is an edge, with adjacent vertex $v_1$ in $\Gamma_{\sigma_1}$, and $E_2$ is a leg $L_i$
 for some $1\leq i\leq r$. We have to show that 
 \begin{equation} \label{eq_coker_2}
|\coker \Psi_{\sigma_{\mathrm{out}}}|
=
|(\cL_{1}^{\sat}+\cL_{2}^{\sat})/
(\cL_{1}+\cL_{2})| 
|\coker \Psi_{\sigma_1}|\,.
\end{equation}
In this case, we consider the commutative diagram of exact sequences
\[
\xymatrix@C=30pt
{
0\ar[r]&T_v\times\Z\ar[d]_{\theta}\ar[r]& M_1\times T_v\times
\Z\ar[r]\ar[d]_{{\Psi}_{\sigma_{\mathrm{out}}}}&M_1\ar[d]_{\Psi_{\sigma_1}}\ar[r]&0\\
0\ar[r]&T_{E_1}\ar[r]&
K_1\times T_{E_1}
\ar[r]&
K_1\ar[r]&0\,,
}
\]
where the map $\theta$ is given by 
\[
\theta(m,\ell)=(\ell u_{E_1}-m).
\]
As $\ker \Psi_{\sigma_{\mathrm{out}}}=
T_{\sigma_{\mathrm{out}}}$ and $\ker \Psi_{\sigma_1}=T_{\sigma_1}$, we obtain by the snake lemma a long exact sequence
\[
T_{\sigma_{\mathrm{out}}} 
\rightarrow T_{\sigma_1}\xrightarrow{\xi} \coker\theta\rightarrow
\coker{\Psi}_{\sigma_{\mathrm{out}}}\rightarrow \coker{\Psi}_{\sigma_1}\rightarrow 0.
\]
As $T_v=T_{E_1} \cap \gamma_i^{\perp}$ and $\cL_2=\cL_2^{\sat}=\gamma_i^{\perp}$, we have
$\coker\theta=T_{E_1}/(T_v+\Z u_{E_1})
=\cL_1^\sat/(\cL_1^\sat \cap \cL_2^\sat +\Z u_{E_1})$.
On the other hand, the connecting map $\xi:T_{\sigma_1}\rightarrow
\coker\theta$ is given by
$\xi(t)=p_1$,
and so
\[
\coker\xi=
\cL_1^{\sat}/(\cL_1+\cL_1^\sat \cap \cL_2^\sat)\,,
\]
which is equal to $(\cL_1^\sat +\cL_2^\sat)/(\cL_1+\cL_2)$ because $\cL_2=\cL_2^\sat$. This concludes the proof of 
\eqref{eq_coker_2}.

Finally, if both $E_1$ and $E_2$ are legs $L_i$ and $L_j$, then $\Psi_{\sigma_{\mathrm{out}}}$ is the zero map $T_v \rightarrow 0$, so $\coker \Psi_{\sigma_{\mathrm{out}}}=0$. Moreover, $\cL_1=\cL_1^\sat=\gamma_i^\perp$ and $\cL_2=\cL_2^\perp=\gamma_j^\perp$, and so 
$(\cL_1^\sat +\cL_2^\sat)/(\cL_1+\cL_2)=0$.
In particular, we have $\coker \Psi_{\sigma_{\mathrm{out}}}=(\cL_1^\sat +\cL_2^\sat)/(\cL_1+\cL_2)$
and this ends the proof of Lemma \ref{lem:coker psi}.
\end{proof}

Similarly, the integral tangent space $T_{\tilde\sigma}$ can be viewed as
the kernel of the map
\begin{equation}
    \label{Eq: WidetildePsi}
\Psi_{\tilde\sigma}:
\prod_{v\in V(\Gamma_{\tilde\sigma})} T_v\times \prod_{E\in E(\Gamma_{\tilde\sigma})}
\Z\rightarrow \prod_{E\in E(\Gamma_{\tilde\sigma})} T_E    
\end{equation}
with $\Psi_{\tilde\sigma}$ defined analogously as $\Psi_\sigma$ in \eqref{Eq:Psi}. The 
comparison we need between $T_\sigma$
and $T_{\tilde\sigma}$
is provided by the following lemma.
We denote by $B$ the set of divalent vertices of $\Gamma_{\tilde\sigma}$ and for every $v \in B$, we denote $w_v := 
|\overline{u}_E|$, where $E$ is an edge adjacent to $v$
and $\overline{u}_E$ is the image of $u_E$ in the rank two lattice $M/T_v$. The definition of $w_v$ is independent of the choice of $E$  by the balancing condition at $v$.

\begin{lemma}
\label{lem:two gammas compare}
Let $\mathbf{A}$ be a general $\boldsymbol{\gamma}$-constraint, $\sigma$ a $(d-2)$-dimensional face of $\cM_{\omega,\boldsymbol{\gamma},\mathbf{A}}^\trop$ such that $\dim \fod_{L_{\mathrm{out}}}^\sigma =d-1$, and $\tilde\sigma$ a $(d-2)$-dimensional face of $\cM_{\omega,\boldsymbol{\gamma},\mathbf{A}}^\trop(\scrP)$
contained in $\sigma$,
Then, there is a long exact sequence
\begin{equation}\label{eq_les}
0\rightarrow T_{\tilde\sigma}\rightarrow T_{\sigma}
\rightarrow \prod_{v\in B} \Z/w_v\Z\rightarrow \coker\Psi_{\tilde\sigma}
\rightarrow \coker\Psi_\sigma\rightarrow 0 \,.
\end{equation}
\end{lemma}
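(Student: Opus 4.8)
The plan is to exhibit $\Psi_{\tilde\sigma}$ as a block lower-triangular extension of $\Psi_\sigma$ whose ``new'' diagonal block is a direct sum of elementary maps indexed by the divalent vertices $v\in B$, and then to read off \eqref{eq_les} from the snake lemma. Write $D_\sigma,C_\sigma$ (resp.\ $D_{\tilde\sigma},C_{\tilde\sigma}$) for the domain and codomain of $\Psi_\sigma$ (resp.\ $\Psi_{\tilde\sigma}$), and for $v\in B$ let $E(v)\in E(\Gamma_\sigma)\cup L(\Gamma_\sigma)$ denote the edge or leg of $\Gamma_\sigma$ on which $v$ lies.

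First I would record the local geometry at a divalent vertex $v\in B$, on $E:=E(v)$. By Theorem \ref{thm_trop}(i), $\fod_E^\sigma$ is $(d-1)$-dimensional with $\Lambda_{\fod_E^\sigma}=\gamma_E^\perp$; since erasing divalent vertices does not move the images of tropical curves, $\fod_E^\sigma$ is the union of the polyhedra $\fod_{E'}^{\tilde\sigma}$ over the sub-edges $E'$ of $E$ in $\Gamma_{\tilde\sigma}$, and every such sub-edge $E'$ has at least one divalent endpoint $v'$, so $\fod_{E'}^{\tilde\sigma}$ contains $\foj_{v'}^{\tilde\sigma}$ together with a half-segment in the direction $u_E$ through each of its points. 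As $\foj_{v'}^{\tilde\sigma}$ is $(d-2)$-dimensional ($M/T_{v'}$ has rank two) and $u_E\notin T_{v'}$, this forces $\dim\fod_{E'}^{\tilde\sigma}=d-1$ and $T_{E'}=\gamma_E^\perp$ for every sub-edge $E'$ of $E$. In particular $T_v\subset\gamma_E^\perp$, the quotient $\gamma_E^\perp/T_v$ is a saturated rank-one sublattice of the rank-two lattice $M/T_v$, and $\overline{u}_E$ spans a subgroup of index $w_v$ in it; hence
\[
\theta_v:T_v\times\Z\longrightarrow\gamma_E^\perp,\qquad(m,\ell)\longmapsto\ell u_E-m,
\]
is injective with $\coker\theta_v=(\gamma_E^\perp/T_v)/\Z\overline{u}_E\cong\Z/w_v\Z$.

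Next I would compare the two gluing maps. An edge of $\Gamma_\sigma$ carrying $m$ divalent vertices is subdivided into $m+1$ sub-edges in $\Gamma_{\tilde\sigma}$, and a leg carrying $m$ divalent vertices into $m$ sub-edges and one outer leg. Summing the $\Psi_{\tilde\sigma}$-components along the sub-edges of a fixed edge $E$ of $\Gamma_\sigma$ telescopes: the contributions of the divalent vertices cancel and one recovers the $E$-component of $\Psi_\sigma$, the total length displacement along $E$ playing the role of $\ell_E$. Implementing this via a unipotent change of coordinates on the $\Z$-factors of $D_{\tilde\sigma}$ and on the $T_{E'}=\gamma_E^\perp$-factors of $C_{\tilde\sigma}$ yields splittings $D_{\tilde\sigma}\cong D_\sigma\oplus\bigoplus_{v\in B}(T_v\times\Z)$ and $C_{\tilde\sigma}\cong C_\sigma\oplus\bigoplus_{v\in B}\gamma_{E(v)}^\perp$ — here $D_\sigma\subset D_{\tilde\sigma}$ uses that $T_v$ computed in $\tilde\sigma$ equals $T_v$ computed in $\sigma$ for $v\in V(\Gamma_\sigma)$, which holds because $\foj_v^{\tilde\sigma}\subseteq\foj_v^\sigma$ with both $(d-2)$-dimensional by Theorem \ref{thm_trop}(ii) — with respect to which $\Psi_{\tilde\sigma}$ becomes block lower-triangular with diagonal blocks $\Psi_\sigma$ and $\Theta:=\bigoplus_{v\in B}\theta_v$; the off-diagonal block plays no role in what follows. (The sub-edges of a leg $L_i$ of $\Gamma_\sigma$ are handled in the same way, the only difference being that all of their components are assigned to the $\bigoplus_v$-part, since legs contribute nothing to $C_\sigma$.)

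Finally I would apply the snake lemma to the commutative diagram with exact rows
\[
\begin{CD}
0 @>>> \bigoplus_{v\in B}(T_v\times\Z) @>>> D_{\tilde\sigma} @>>> D_\sigma @>>> 0\\
@. @VV{\Theta}V @VV{\Psi_{\tilde\sigma}}V @VV{\Psi_\sigma}V @.\\
0 @>>> \bigoplus_{v\in B}\gamma_{E(v)}^\perp @>>> C_{\tilde\sigma} @>>> C_\sigma @>>> 0
\end{CD}
\]
coming from the splittings above; since $\ker\Psi_{\tilde\sigma}=T_{\tilde\sigma}$, $\ker\Psi_\sigma=T_\sigma$, $\ker\Theta=0$ and $\coker\Theta=\prod_{v\in B}\Z/w_v\Z$, the resulting long exact sequence is precisely \eqref{eq_les}. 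The point demanding the most care is the claim in the second paragraph that \emph{every} sub-edge of $\Gamma_{\tilde\sigma}$ has full tangent space $\gamma_{E(v)}^\perp$ — without it the local cokernels $\coker\theta_v$ would fail to be the clean groups $\Z/w_v\Z$ and the count in the snake sequence would be wrong; the secondary subtlety is to set up the unipotent changes of coordinates compatibly with the flow orientations on $\Gamma_\sigma$, so that the telescoping is correct and the squares in the diagram genuinely commute, after which the snake lemma is automatic.
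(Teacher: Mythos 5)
Your proposal is correct and follows essentially the same route as the paper's proof: both set up a short exact sequence of the domains $0\to\bigoplus_{v\in B}(T_v\times\Z)\to D_{\tilde\sigma}\to D_\sigma\to 0$ and of the codomains $0\to\bigoplus_{v\in B}T_{E_1(v)}\to C_{\tilde\sigma}\to C_\sigma\to 0$, observe that the left vertical map is injective with cokernel $\prod_{v\in B}\Z/w_v\Z$, and apply the snake lemma. Your ``telescoping / unipotent change of coordinates'' description of the horizontal maps is just a different presentation of the explicit inclusion and projection maps $\delta,\pi,\delta',\pi'$ written out in the paper, and your identification $T_{E'}=\gamma_{E(v)}^\perp$ for every sub-edge $E'$ matches the paper's appeal to the good polyhedral decomposition; the point you flag as needing care (that all sub-edges of a single edge have full-rank tangent space $\gamma_E^\perp$) is likewise implicitly used by the paper.
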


\begin{proof}
We first introduce some notation. For every divalent vertex $v\in V(\Gamma_{\tilde\sigma})$, 
we denote by $E_1(v)$ and $E_2(v)$ the
two edges of $\Gamma_{\tilde\sigma}$ with vertex $v$, with $E_1(v)$ oriented towards $v$ and $E_2(v)$ oriented away from $v$.
We consider the map 
\[ \delta: \prod_{v\in B} T_v\times\prod_{v\in B}\Z 
\longrightarrow \prod_{v\in V(\Gamma_{\tilde\sigma})}T_v\times\prod_{E\in E(\Gamma_{\tilde\sigma})} \Z\,,\]
which is the obvious inclusion on $\prod_{v\in B}T_v\rightarrow \prod_{v\in V(\Gamma_{\tilde\sigma})} T_v$, and such that the component of $\delta\big((m_v),(\ell_v)\big)$ indexed by $E\in E(\Gamma_{\tilde\sigma})$ is given by 
\[ \sum_{\substack{v\in B\\ E_1(v)=E}} l_v
- \sum_{\substack{v\in B\\ E_2(v)=E}} l_v\,.\]

We also define a map 
\[ \delta': \prod_{v\in B} T_{E_1(v)} \longrightarrow
\prod_{E\in E(\Gamma_{\tilde\sigma})} T_E \]
by $\delta'((m_v)_{v\in B})=(n_E)_{E\in E(\Gamma_{\tilde\sigma})}$, where
\[ n_E = \sum_{\substack{v\in B\\ E_1(v)=E}}m_v - \sum_{\substack{v \in B\\ E_2(v)=E}} m_v \,.\]
Finally, we define
\begin{align} \pi : \prod_{v\in V(\Gamma_{\tilde\sigma})}T_v\times\prod_{E\in E(\Gamma_{\tilde\sigma})} \Z
&\longrightarrow 
\prod_{v\in V(\Gamma_{\sigma})}T_v\times\prod_{E\in E(\Gamma_{\sigma})}\Z \\
\big((n_v)_{v\in V(\Gamma_{\tilde\sigma})},(\ell_E)_{E\in E(\Gamma_{\tilde\sigma})}\big)
& \longmapsto 
\big((n_v)_{v\in V(\Gamma_\sigma)},(\sum_{\substack{E'\subset E\\ E'\in E(\Gamma_{\tilde\sigma})}} \ell_{E'})_{E
\in E(\Gamma_\sigma)}\big)\,,
\end{align}
and 
\begin{align} \pi' : \prod_{E\in E(\Gamma_{\tilde\sigma})}T_E
&\longrightarrow 
\prod_{E\in E(\Gamma_{\sigma})}T_E \\
(n_E)_{E\in E(\Gamma_{\tilde\sigma})}
& \longmapsto 
(\sum_{\substack{E'\subset E\\ E'\in E(\Gamma_{\tilde\sigma})}} n_{E'})_{E
\in E(\Gamma_\sigma)}\,,
\end{align}
where we view $\Gamma_{\tilde\sigma}$ as a refinement of $\Gamma_\sigma$,
so that each edge of $\Gamma_{\tilde\sigma}$ is contained in some edge of
$\Gamma_{\sigma}$.
These maps fit into  a commutative diagram of exact sequences
\[
\xymatrix@C=20pt
{
0\ar[r]&\prod_{v\in B} T_v\times\prod_{v\in B}\Z \ar[r]^{\delta}\ar[d]_{\Psi'}&
\prod_{v\in V(\Gamma_{\tilde\sigma})}T_v\times\prod_{E\in E(\Gamma_{\tilde\sigma})} \Z
\ar[r]^{\pi}\ar[d]_{\Psi_{\tilde\sigma}}&
\prod_{v\in V(\Gamma_{\sigma})}T_v\times\prod_{E\in E(\Gamma_\sigma)}\Z\ar[d]^{\Psi_\sigma}\ar[r]&0\\
0\ar[r]&\prod_{v\in B} T_{E_1(v)}\ar[r]_{\delta'}&
\prod_{E\in E(\Gamma_{\tilde\sigma})} T_E\ar[r]_{\pi'}&
\prod_{E\in E(\Gamma_{\sigma})}T_E\ar[r]&0
}
\]
where $\Psi'$ is defined by 
$\Psi'\big((m_v),(\ell_v)\big)=(\ell_v u_{E_1(v)}-m_v)$.
The map $\Psi'$ is injective and the
cokernel of $T_v\times\Z\rightarrow T_{E_1(v)}$, $(m,\ell)\mapsto
-m+\ell u_{E_1(v)}$, is just $\Z/w_v\Z$. Therefore, the 
snake lemma gives the desired long exact sequence \eqref{eq_les}.
\end{proof}

\section{Counts of $(\omega,\boldsymbol{\gamma})$-marked stable log maps in toric varieties}
\label{sec_log_gw}

In this section, after briefly reviewing in \S\ref{sec_log_trop_review}-\ref{subsubsec:stable punctured} the theory of stable log maps, we define in \S\ref{subsec: The moduli space of mw log maps}-\ref{sec_log_gw_invts} a particular class of genus $0$ log Gromov--Witten invariants of toric varieties.

\subsection{Log schemes and their tropicalizations}
\label{sec_log_trop_review}
We assume basic familiarity with log
geometry throughout this section \cite{Kk, Ogus}. We nonetheless roughly review definitions to establish notation. 

A \emph{log structure} on a scheme $X$ is a sheaf of commutative monoids $\shM_X$ together with a homomorphism of sheaves of multiplicative monoids
$\alpha_X : \shM_X \to \O_X$ inducing an isomorphism $\alpha_X^{-1}(\O_X^{\times})\rightarrow \O_X^{\times}$, allowing us to identify $\O_X^{\times}$ as a subsheaf
of $\shM_X$. The standard notation we use for a log scheme is $X:=(\ul{X},\shM_X,\alpha_X)$, where by $\ul X$ we denote the underlying scheme. By abuse of notation we also denote the underlying scheme just by $X$ when it is clear from the context. Throughout this paper, we assume that all log schemes are fine and saturated (fs) 
\cite[I,\S1.3]{Ogus}. A \emph{morphism of log schemes} $f:X \rightarrow Y$ consists of an
ordinary morphism $\ul{f}:\ul{X}\rightarrow \ul{Y}$ of schemes along with
a map $f^{\flat}:f^{-1}\shM_Y\rightarrow \shM_X$ which is compatible with $f^\#:f^{-1}\O_Y\rightarrow \O_X$ via the structure homomorphisms
$\alpha_X$ and $\alpha_Y$.

The \emph{ghost sheaf}, defined by
$\overline{\shM}_{X}:=\shM_X/\O_X^{\times}$, captures the key combinatorial
information about the log structure. In particular, it leads
to the description of the \emph{tropicalization} $\Sigma(X)$ of a log scheme $X$, as an abstract polyhedral
cone complex, see \cite[\S2.1]{ACGSI} for details. Briefly,
$\Sigma(X)$ is a collection of cones along with face maps between
them. There is one cone $\sigma_{\bar x}:=
\Hom(\overline{\shM}_{X,\bar x},\RR_{\ge 0})$ for every 
geometric point $\bar x\rightarrow X$,
and if $\bar x$ specializes to $\bar y$, there is a generization map
$\overline\shM_{X,\bar y}\rightarrow \overline\shM_{X,\bar x}$
which leads dually to a map
$\sigma_{\bar x}\rightarrow\sigma_{\bar y}$.
The condition of being fine and saturated implies this is an inclusion
of faces. Note that each cone $\sigma_{\bar x}\in \Sigma(X)$ comes with a
tangent space of integral tangent vectors $$\Lambda_{\sigma_{\bar x}}:=\Hom(\overline\shM_{X,\bar x},\Z)$$
and a set of integral points
\[
\sigma_{\bar x,\Z}:=\Hom(\overline\shM_{X,\bar x},\NN).
\]
Tropicalization is functorial, with $f:X\rightarrow Y$ inducing
$f_{\mathrm{trop}}:\Sigma(X)\rightarrow\Sigma(Y)$, with a map of cones
$\sigma_{\bar x}\rightarrow\sigma_{f(\bar x)}$ induced by
$\bar f^{\flat}:\overline\shM_{Y,f(\bar x)}\rightarrow \overline\shM_{X,x}$. In cases we consider in this paper, after identifying $\sigma_{\bar x}$ and
$\sigma_{\bar y}$ whenever $\sigma_{\bar x}\rightarrow\sigma_{\bar y}$
is an isomorphism, we obtain an ordinary polyhedral cone complex. 

\begin{example}
\label{Ex: divisorial}
Let $X$ be a toric variety associated to a fan $\Sigma$ in $M_\RR$. There is a canonical \emph{divisorial log structure} $\M_X=\M_{(X,D)}$, where $D\subset X$ is the toric boundary divisor \cite[ex. 3.8]{Mark}. Furthermore, the tropicalization $\Sigma(X_\Sigma)$ is naturally identified with $M_\RR$ endowed with the fan $\Sigma$.
\end{example}

\begin{example}
\label{Ex: log point}
Let $X$ be a log point, that is, a log scheme whose underlying scheme is a point $\underline{X}=\Spec \kk$ for some field $\kk$. Then, there exists a monoid $Q$ with $Q^\times=\{0\}$, such that $\mathcal{M}_X=Q \oplus \kk^\star$ and $\alpha_X$ is the map
\[
Q\oplus\kk^\times\lra \kk,\quad
(q,a)\longmapsto \begin{cases} a,&q=0\\ 0,&q\neq0\,.\end{cases}
\]
One recovers $Q$ as the ghost sheaf of $X$, that is, $\overline{\mathcal{M}}_X=Q$, and the tropicalization $\Sigma(X)$ of $X$ is the cone $Q^\vee_\RR:=\Hom(Q,\RR_{\geq 0})$.
\end{example}

\subsection{Stable log maps}
\label{subsubsec:stable punctured}
  
Following \cite{logGWbyAC, logGW},
a \emph{prestable log map} with
target $(X,\mathcal{M}_X) \rightarrow (S,\mathcal{M}_S)$
is a commutative diagram in the category of log schemes
\begin{equation}
\label{eq__log_map}
    \xymatrix@C=30pt
{
C\ar[r]^f\ar[d]_{\pi} & X\ar[d]\\
W\ar[r]&S
}
\end{equation}
where $W=(W,\mathcal{M}_W)$ is a log point\footnote{For families of stable log maps, $W$ can be an arbitrary scheme.}, $\pi$ is a log smooth family of curves, which is required to be an integral morphism all of whose geometric fibers are reduced curves. In this situation, one can describe locally the log structure on the log curve $C$ \cite{katoF}. In particular, $C/W$ comes with a set of disjoint sections $p_1,\ldots,p_n:\ul{W}\rightarrow
\ul{C}$, referred to as the \emph{marked points}, disjoint from the nodal locus of $C$, and such that away from the nodal locus, \[\overline{\shM}_C=\pi^*\overline{\shM}_W
\oplus\bigoplus_{i=1}^n p_{i*}\ul{\NN}\,.\] 
A \emph{stable log map} is a prestable log map whose underlying prestable marked map is a stable map.

If $p\in C$ is a marked point, we have
\[
\bar f^{\flat}:P_p:=\overline{\shM}_{X,f(p)}\longrightarrow
\overline{\shM}_{C,p}=\overline{\shM}_{W,\pi(p)}\oplus\NN
\stackrel{\pr_2}{\longrightarrow}\NN,
\]
which can be viewed as an element $u_p\in P_p^{\vee}:=\Hom(P_p,\NN)
\subseteq \sigma_{f(p)}$, called the \emph{contact order at $p$}. Similarly, if $x=q$ is a node,
there exists a homomorphism 
\begin{equation}
\label{eq:node contact order}
u_q:P_q :=\overline{\shM}_{X,f(q)} \lra \Z,
\end{equation}
called the \emph{contact order at $q$}, see \cite[(1.8)]{logGW} or \cite[\S2.3.4]{ACGSI}.
In the case the target space is $(X,D)$,
the contact order records tangency information with the irreducible
components of $D$. 

A key point in log Gromov-Witten theory is the tropical interpretation of stable log maps. Let $f:C \to X$ be a stable log map as in \eqref{eq__log_map}, where $W$ is the log point $(\Spec \kk, \kk^\star \oplus Q)$, as in Example \ref{Ex: log point}. Then by functoriality of tropicalization, 
we obtain a diagram
\begin{equation}
\label{eq:tropical diagram}
\xymatrix@C=45pt
{
\Gamma:=\Sigma(C)\ar[r]^>>>>>>{h=f_{\mathrm{trop}}}\ar[d]_{\pi_{\mathrm{trop}}} & \Sigma(X)\ar[d]\\
\Sigma(W)\ar[r]& \Sigma(S)
}
\end{equation}
Here $\Sigma(W)=\Hom(Q,\RR_{\ge 0})=Q^{\vee}_{\RR}$ is a rational
polyhedral cone, and for $q\in \Int(Q^{\vee}_{\RR})$, 
$\pi_{\mathrm{trop}}^{-1}(q)$ can be identified with the dual graph $G$
of the curve $C$, which is the graph with vertices corresponding to an irreducible components of $C$, edges corresponding to nodes of $C$, and legs corresponding to marked points. 

The map $h: \Sigma(C) \to \Sigma(X)$ defines a family of tropical maps to
$\Sigma(X)$ as defined in \cite[Def.\ 2.21]{ACGSI}. For
$s\in \Int(Q^{\vee}_{\RR})$, write
\[
h_s:G\rightarrow\Sigma(X)
\]
for the restriction of $h$ to $G=\pi_{\mathrm{trop}}^{-1}(s)$.

Associated to any family of tropical maps to $\Sigma(X)$ is the type,
recording which cones of $\Sigma(X)$ vertices, edges and legs of
$G$ are mapped to, and tangent vectors to the images of edges and legs:

\begin{definition}
\label{Def: type of the tropical curve}
A \emph{type} of tropical map to $\Sigma(X)$ is data of a triple
$\tau=(G,\boldsymbol{\sigma}, \mathbf{u})$ where $G$ is a graph,  $\boldsymbol{\sigma}$
is a map
\[
\boldsymbol{\sigma}:V(G)\cup E(G)\cup L(G)\rightarrow\Sigma(X)
\]
with the property that if $v$ is a vertex of an edge or leg $E$,
then $\boldsymbol{\sigma}(v)\subseteq \boldsymbol{\sigma}(E)$.
Next, $\mathbf{u}$ associates to each oriented edge $E\in E(G)$
a tangent vector $\mathbf{u}(E)\in \Lambda_{\boldsymbol{\sigma}(E)}$
and to each leg $L\in L(G)$ a tangent vector $\mathbf{u}(L)\in
\Lambda_{\boldsymbol{\sigma}(L)}$.
\end{definition}

Associated to a  type is a moduli space of tropical maps of the
given type, and this dually defines a monoid called the \emph{basic monoid}:

\begin{definition}
\label{def:basic monoid}
Given a  type $\tau=(G,\boldsymbol{\sigma}, \mathbf{u})$,
we define the
\emph{basic monoid} $Q_{\tau}=\Hom(Q_{\tau}^{\vee},\NN)$ of 
$\tau$ by defining its dual:
\begin{equation}
\label{eq:basic dual}
Q^{\vee}_{\tau}:=\big\{\big((p_v)_{v\in V(G)},(\ell_E)_{E\in E(G)}\big)\,|\,
\hbox{$p_{v'}-p_{v}=\ell_E \mathbf{u}(E)$ for all $E\in E(G)$}\big\},
\end{equation}
a submonoid of
\[
\prod_{v\in V(G)} \boldsymbol{\sigma}(v)_\Z \times \prod_{E\in E(G)}\NN.
\]
Here $\boldsymbol{\sigma}(v)_{\Z}$ denotes the set of integral points
of the cone $\boldsymbol{\sigma}(v)$, and
$v'$, $v$ are taken to be the endpoints of $E$ consistent
with the chosen orientation of the edge.
\end{definition}

Given a stable log map 
$f \colon C/W \rightarrow X/S$, there is a canonical map 
$Q_\tau \rightarrow \overline{M}_W$, where $Q_\tau$ is the basic monoid defined by the combinatorial type of $f$ \cite{logGW,logGWbyAC}. A stable log map 
$f \colon C/W \rightarrow X/S$
is said to be \emph{basic} if the natural map of monoids 
$Q_\tau \rightarrow \overline{\mathcal{M}}_W$ is an isomorphism. 
When $X \rightarrow S$ is log smooth and projective, \cite{logGWbyAC,logGW} shows that the moduli space of basic stable log maps with given genus, number of marked points, and degree, is a Deligne-Mumford stack, proper over $S$, and carrying a natural virtual fundamental class which can be used to define log Gromov--Witten invariants. From now on, we use stable log maps to mean basic stable log maps.

\subsection{Moduli spaces of $(\omega,\boldsymbol{\gamma})$-marked stable log maps}
\label{subsec: The moduli space of mw log maps}

Let $(\omega, \boldsymbol\gamma)$ be as in \S\ref{sec_marked_tropical_curves}:  $\omega \in \bigwedge^2 M$ is a skew-symmetric form on $N$, and $\boldsymbol{\gamma}=(\gamma_1,\dots,\gamma_r)$ is a $r$-tuple of elements $\gamma_i \in N$ such that $\iota_{\gamma_i} \omega \neq 0$ for all $1\leq i \leq r$ and $\iota_\gamma \omega \neq 0$, where $\gamma:=\sum_{i=1}^r \gamma_i$. 

\begin{definition}\label{def_gamma_fan}
A \emph{$\boldsymbol{\gamma}$-fan}
is a fan $\Sigma$ in $M_\RR$ of a $d$-dimensional projective toric variety $X_\Sigma$ over $\C$ such that:
\begin{itemize}
    \item[(i)] for every $1\leq i\leq r$, $\RR_{\geq 0}\iota_{\gamma_i}\omega$ is a ray of $\Sigma$,
    \item[(ii)] for every $1 \leq i \leq r$, the hyperplane $(\gamma_i^{\perp})_\RR$ is a union of $(d-1)$-dimensional cones of $\Sigma$.
\end{itemize}
Given a $\boldsymbol{\gamma}$-fan $\Sigma$, for every $1\leq i\leq r$, we denote by $D_i$ the toric divisor of $X_\Sigma$ corresponding to the ray $\RR_{\geq 0}\iota_{\gamma_i}\omega$.
\end{definition}

In what follows we focus attention on particular stable log maps, referred to as $(\omega,\boldsymbol{\gamma})$-marked stable log maps, defined as follows.

\begin{definition} \label{def_stable_log}
Let $\Sigma$ be a $\boldsymbol{\gamma}$-fan. 
An $(\omega,\boldsymbol{\gamma})$-marked stable log map to $X_\Sigma$ is a genus $0$ stable log map $f \colon C \rightarrow X_\Sigma$
endowed with a bijection between its set of marked points and 
\[ \{ x_i\,|\, 1\leq i\leq r \} \cup \{ x_{\mathrm{out}}\}\,,\]
and such that $u_{x_{i}}=\iota_{\gamma_i}\omega$ for all 
$1\leq i \leq r$, and $u_{x_{\mathrm{out}}}=-\iota_{\gamma}\omega$.
\end{definition}

Let $\Sigma$ be a $\boldsymbol{\gamma}$-fan.
We denote by $\cM_{\omega,\boldsymbol{\gamma}}^{\log}(X_\Sigma)$ the moduli space of $(\omega,\boldsymbol{\gamma})$-marked stable log maps to $X_\Sigma$. This moduli space is a proper Deligne-Mumford log stack \cite{logGW}. By \cite[Proposition 3.3.1]{ran2017toric},
$\cM_{\omega,\boldsymbol{\gamma}}^{\log}(X_\Sigma)$ is log smooth of the expected dimension $d-3+(r+1)=d-2+r$.
Moreover, $\cM_{\omega,\boldsymbol{\gamma}}^{\log}(X_\Sigma)$ is also irreducible by \cite[Proposition 3.3.5]{ran2017toric}.

It follows from Definition \ref{def_gamma_fan} that, for every $1\leq  i\leq r$, the projection 
\[M_\RR/\RR \iota_{\gamma_i}\omega \longrightarrow M_\RR/(\gamma_i^{\perp})_\RR \simeq \RR \]
is a morphism of fans from the fan $\Sigma(D_i)$ of $D_i$ in 
$M_\RR/\RR \iota_{\gamma_i} \omega$ to the fan $\Sigma(\PP^1)$ of $\PP^1$
in $M_\RR/(\gamma_i^{\perp})_\RR \simeq \RR$
(consisting of the cones $\RR_{\leq 0}$, $\{0\}$, $\RR_{\geq 0}$). Hence, there exists a corresponding toric morphism 
\[ \pi_i : D_i \longrightarrow \PP^1\,,\]
which in restriction to the dense torus orbit of $D_i$ is simply the monomial $z^{\frac{\gamma_i}{|\gamma_i|}}$.
For every point $[\mathbf{H}]=([H_1],\dots,[H_r]) \in (\PP^1)^r$, we denote $\mathbf{H}:=(H_1,\dots, H_r)$, where, for every $1\leq i \leq r$, $H_i$ is the hypersurface in $D_i$ given by the fiber of $\pi_i$ over $[H_i]\in \PP^1$, that is, $H_i:=\pi_i^{-1}([H_i])$. For $[H_i]=c_i \in \kk^{\star} \subset \PP^1$, $H_i$ is the closure in $D_i$ of the hypersurface $-c_i+z^{\frac{\gamma_i}{|\gamma_i|}}=0$
in $(\kk^{\star})^{d-1} \subset D_i$.

For every $1 \leq i \leq r$, we have a (scheme) evaluation morphism at the marked point $x_i$:
\begin{align*}\underline{\ev}_{i} \colon \cM_{\omega,\boldsymbol{\gamma}}^{\log}(X_\Sigma) &\longrightarrow D_i
\\ f &\longmapsto f(x_{i})\,,
\end{align*}
Composing with $\pi_i:  D_i \rightarrow \PP^1$, we obtain a (scheme) morphism
\begin{align} \label{eq_nu_underline}
\underline{\nu} \colon 
\cM_{\omega,\boldsymbol{\gamma}}^{\log}(X_\Sigma) &\longrightarrow (\PP^1)^r \\
\nonumber
f &\longmapsto (\pi_i(f(x_i)))_i \,.
\end{align}

For every $[\mathbf{H}]\in (\PP^1)^r$, the moduli space $\cM_{\omega,\boldsymbol{\gamma},\mathbf{H}}^\log(X_\Sigma)$ of $(\omega,\boldsymbol{\gamma})$-stable log maps matching $\mathbf{H}$ is defined by the fiber diagram (in the category of schemes)
\begin{equation} \label{eq_diag}
\begin{tikzcd}
\cM_{\omega,\boldsymbol{\gamma},\mathbf{H}}^\log(X_\Sigma)
\arrow[r]
\arrow[d]
&
\cM_{\omega,\boldsymbol{\gamma}}^{\log}(X_\Sigma)
\arrow[d,"(\underline{\ev}_{i})_{i}"]
\\
\prod_{i=1}^r H_i
\arrow[r,"\iota_H"]
& 
\prod_{i=1}^r D_i\,,
\end{tikzcd}
\end{equation}
where the bottom horizontal arrow $\iota$
is defined by the inclusion morphisms $H_{i}\subset D_i$,
or equivalently by the fiber diagram
\begin{equation}\label{eq_diag1}
\begin{tikzcd}
\cM_{\omega,\boldsymbol{\gamma},\mathbf{H}}^\log (X_\Sigma)
\arrow[r]
\arrow[d]
&
\cM_{\omega,\boldsymbol{\gamma}}^{\log}(X_\Sigma)
\arrow[d,"\underline{\nu}"]
\\
 \,[\mathbf{H}]\, \arrow[r,"\iota_{[\mathbf{H}]}"]
& 
(\PP^1)^r\,,
\end{tikzcd}\end{equation}
where $\iota_{[\mathbf{H}]}$ is the inclusion of the point $[\mathbf{H}]$ in $(\PP^1)^r$.

\begin{lemma}  \label{lem_log_lift}
For every $1\leq i\leq r$, the evaluation scheme morphism $\underline{\ev}_{i}$ lifts naturally to a log morphism 
\[ \ev_{i}: \cM_{\omega,\boldsymbol{\gamma}}^{\log}(X_\Sigma) \longrightarrow D_i\,,\]
where $D_i$ is given its toric log structure (as opposed to the log structure restricted from $X_\Sigma$).
\end{lemma}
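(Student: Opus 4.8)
The plan is to construct the lift directly from the universal curve, exploiting that the contact order $\iota_{\gamma_i}\omega$ at $x_i$ spans the ray $\rho_i=\RR_{\geq 0}\iota_{\gamma_i}\omega$ of $\Sigma$. The starting observation is that $\underline{\ev}_i$ already factors through $\underline{D}_i$: for any $(\omega,\boldsymbol{\gamma})$-marked stable log map $f\colon C\to X_\Sigma$, the contact order $u_{x_i}=\iota_{\gamma_i}\omega$ lies in the cone $\sigma_{f(x_i)}\in\Sigma$ indexing the stratum containing $f(x_i)$; since $\rho_i$ is a ray of $\Sigma$ it is then a face of $\sigma_{f(x_i)}$, so $f(x_i)$ lies in the corresponding orbit closure, which is $D_i$.

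The key step is to describe the toric log structure $\M_{D_i}$ as a sub-log-structure of $\iota_i^{*}\M_{X_\Sigma}$, where $\iota_i\colon D_i\hookrightarrow X_\Sigma$. Concretely, at a point $x$ of $D_i$ in the orbit of a cone $\tau\supseteq\rho_i$ one has $\overline{\M}_{X_\Sigma,x}=\tau^{\vee}\cap N$, and $\overline{\M}_{D_i,x}$ is the face of this monoid consisting of the elements pairing to $0$ with $\rho_i$, i.e.\ the monomials vanishing along the toric divisors $D_i\cap D_j$ of $D_i$. This face is compatible with the structure maps $\alpha$ restricted from $X_\Sigma$ precisely because it omits the generator dual to $\rho_i$ — whose image under $\alpha$ is the equation of $D_i$, which vanishes identically on $D_i$ — and it is exactly the failure of this compatibility for that generator that prevents $\iota_i$ itself from being a log morphism for the restricted log structure, explaining the dichotomy emphasized in the statement.

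With this identification in hand, I would argue as follows over the universal curve $\pi\colon\mathcal{C}\to\cM$, with $\cM:=\cM_{\omega,\boldsymbol{\gamma}}^{\log}(X_\Sigma)$, universal log map $f$, and universal marked section $p_i\colon\cM\to\mathcal{C}$. Since $p_i$ is disjoint from the nodal locus, near $p_i$ we have $\overline{\M}_{\mathcal{C}}=\pi^{*}\overline{\M}_{\cM}\oplus p_{i*}\underline{\NN}$, so $(f\circ p_i)^{\flat}$ lands in the log structure on $\cM$ with ghost sheaf $\overline{\M}_{\cM}\oplus\NN$, and its composite with the projection to $\NN$ equals the pairing with $u_{x_i}=\iota_{\gamma_i}\omega$ by the very definition of the contact order. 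Restricting $(f\circ p_i)^{\flat}$ to the sub-log-structure $\M_{D_i}\subset\iota_i^{*}\M_{X_\Sigma}$: any local section $m$ of $\overline{\M}_{D_i}$ satisfies $\langle m,u_{x_i}\rangle=0$ because $m\perp\rho_i$, so its image has trivial $\NN$-component and hence lies in $\M_{\cM}$. This produces the desired $\ev_i^{\flat}\colon\underline{\ev}_i^{-1}\M_{D_i}\to\M_{\cM}$, with $\alpha$-compatibility inherited from $f^{\flat}$; since the whole construction is stable under base change, it defines a log morphism of stacks $\ev_i$ lifting $\underline{\ev}_i$.

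I expect the main obstacle to be the bookkeeping in the second step — pinning down the precise relation between $\M_{D_i}$ and $\iota_i^{*}\M_{X_\Sigma}$ stratum by stratum and verifying the $\alpha$-compatibility — since this is where the toric-versus-restricted distinction is doing all the work; once it is settled, the contact-order computation in the last step is immediate and the passage to families is formal.
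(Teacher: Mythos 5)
Your argument is correct and follows essentially the same route as the paper's own proof: both identify the ghost monoid of $D_i$ (with its toric log structure) at $f(x_i)$ with the face $P_p\cap u_{x_i}^\perp$ of $P_p=\overline{\cM}_{X_\Sigma,f(x_i)}$, and then observe that the pullback of a section of this face under $f$ near $x_i$ has vanishing $\NN$-component precisely because the contact order there is $u_{x_i}=\iota_{\gamma_i}\omega$, hence lands in $\cM_T$ rather than $\cM_T\oplus\NN$. The only cosmetic difference is that you phrase the argument over the universal curve, whereas the paper works with an arbitrary family over a base $T$ and lets base change do the bookkeeping.
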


\begin{proof} 
Let $(\pi: C \rightarrow T, f:C \rightarrow X_\Sigma)$ be an $(\omega,\boldsymbol{\gamma})$-marked stable log map. Evaluation at the marked point $x_i$ defines a scheme evaluation morphism $\underline{\ev}_i : T \rightarrow D_i$, and we want to show that $\underline{\ev}_i$ naturally lifts to a log morphism $\ev_i : T \rightarrow D_i$, where $D_i$ is given its toric log structure.

Let $T_i$ be the log scheme with underlying scheme $\underline{T}$ and with log structure $x_i^\star \cM_C$, where the marked point $x_i$ is viewed as a section $\underline{T} 
\rightarrow \underline{C}$. Then $x_i$ lifts as a log morphism $T_i \rightarrow C$, and composing with $f: C \rightarrow X_\Sigma$, we obtain a log lift of $\underline{\ev}_i$ as a log 
morphism $\ev_i: T_i \rightarrow D_i'$, where 
$D_i'$ is the log scheme with underlying scheme $\underline{D}_i$ and log structure restricted from $X_\Sigma$. We have $\cM_T \subset \cM_{T_i}$ and $\cM_{D_i} \subset \cM_{D_i'}$, so it is sufficient to show that 
$\ev_i^\flat: \ev_i^{*} \cM_{D_i'} \rightarrow \cM_{T_i}$
takes  $\ev_i^{*} \cM_{D_i}$ into $\cM_T$. 
To do this, it is sufficient to show this at the level of stalks of ghost sheaves. 

For any $p \in C$ in the image of the section $x_i$, we obtain an induced map
\[ \overline{\ev}^\flat_i: P_p :=\overline{\cM}_{X_\Sigma,f(p)}=\overline{\cM}_{D_i' ,f(p)} 
\longrightarrow \overline{\cM}_{C,p}=Q \oplus \NN \,,\]
where $Q=\overline{\cM}_{T,\pi(p)}$, and where the
composition with the second projection is given by the contact order $u_{x_{i}}=\iota_{\gamma_i}\omega \in P_p^\vee$. Necessarily $\Spec \kk [P_p]$ is an affine toric chart of $X_\Sigma$ containing $f(p)$, while $\Spec \kk [P_p \cap u_{x_i}^\perp]$ is the intersection of this toric affine chart with $D_i$. In particular, $\overline{\cM}_{D_i,f(p)}=P_p \cap u_{x_i}^\perp$ and so $\overline{\ev}^\flat_i$ induces a homomorphism between submonoids
\[ \overline{\cM}_{D_i,f(p)} \longrightarrow Q\,, \]
as desired.
\end{proof}

The toric morphisms $\pi_i: D_i  \rightarrow \PP^1$ naturally lifts to log morphisms when $D_i$ and $\PP^1$ are endowed with their toric log structures, and so by composition with the log lifts $\ev_{i}$ of the evaluation morphisms $\underline{\ev}_i$ given by Lemma \ref{lem_log_lift}, we obtain a log lift of the scheme morphism $\underline{\nu}$ introduced in \eqref{eq_nu}, 
\begin{equation}\label{eq_nu}
\nu \colon 
\cM_{\omega,\boldsymbol{\gamma}}^{\log}(X_\Sigma) \longrightarrow (\PP^1)^r\,,\end{equation}
where $(\PP^1)^r$ is endowed with its toric log structure. For every point $[\mathbf{H}] \in (\PP^1)^r$, we endow $[\mathbf{H}]$ with the log structure restricted from the toric log structure on $(\PP^1)^r$. In particular, this log structure is trivial if $[\mathbf{H}] \in (\kk^\star)^r \subset (\PP^1)^r$. Similarly, we endow $H_i$ with the log structure restricted from the toric log structure on $D_i$.
Then, the inclusions $\iota_H$ and $\iota_{[\mathbf{H}]}$ become a strict morphism of log schemes, and so the fiber diagrams of schemes
\eqref{eq_diag} and
\eqref{eq_diag1} lift to fiber diagrams of fs log schemes.  

For $[\mathbf{H}] \in (\kk^{\star})^r \subset (\PP^1)^r$, the tropicalization of $[\mathbf{H}]$  is the origin $0$ in the tropicalization $\Sigma((\PP^1)^r) \simeq \RR^r$ of $(\PP^1)^r$.
Hence, taking the tropicalization of \eqref{eq_diag1} viewed as a fiber diagram of fs log schemes, we obtain the fiber diagram  
of cone complexes
\begin{equation}\label{eq_diag_trop}
\begin{tikzcd}
\Sigma(\cM_{\omega,\boldsymbol{\gamma},\mathbf{H}}^\log(X_\Sigma))
\arrow[r]
\arrow[d]
&
\Sigma(\cM_{\omega,\boldsymbol{\gamma}}^\log(X_\Sigma))
\arrow[d,"\Sigma(\nu)"]
\\
0
\arrow[r]
& \RR^r\,,
\end{tikzcd}\end{equation}
where $\Sigma(\cM_{\omega,\boldsymbol{\gamma},\mathbf{H}}(X_\Sigma))$ and $\Sigma(\cM_{\omega,\boldsymbol{\gamma}}^\log(X_\Sigma))$ are the tropicalizations of $\cM_{\omega,\boldsymbol{\gamma},\mathbf{H}}(X_\Sigma)$ and $\Sigma(\cM_{\omega,\boldsymbol{\gamma}}^\log(X_\Sigma)$ respectively.

As discussed in \S\ref{subsubsec:stable punctured}, the tropicalization of an $(\omega,\boldsymbol{\gamma})$-marked stable log map to $X_\Sigma$ is a family of $(\omega,\boldsymbol{\gamma})$-marked tropical curve in $(M_\RR,\Sigma)$ as in Definition \ref{def_tropical_curve_polyh}, where one views the fan $\Sigma$ as a particular polyhedral decomposition of $M_\RR$.
This induces a map of cone complexes 
\begin{equation} \label{eq_cone1} T:\Sigma(\cM_{\omega,\boldsymbol{\gamma}}^{\log}(X_\Sigma)) \longrightarrow \cM_{\omega,\boldsymbol{\gamma}}^\trop(\Sigma)\,.\end{equation}
Moreover, the map $\Sigma(\nu)$ in \eqref{eq_diag_trop} is the composition of $T$ with the tropical evaluation map at the legs $\ev^\trop: \cM_{\omega,\boldsymbol{\gamma}}^\trop(\Sigma) \rightarrow \prod_{i=1}^r M_\RR/(\gamma_i^\perp)_\RR \simeq \RR^r$
given in \eqref{eq_ev_trop}:
\[ \Sigma(\nu) =  \ev^\trop \circ T \,.\]
Recall from Definition \ref{Def affine constraint} that $\prod_{i=1}^r M_\RR/(\gamma_i^\perp)_\RR \simeq \RR^r$ is naturally the space of $\boldsymbol{\gamma}$-constraints. In particular, the origin $0 \in \RR^r$ corresponds to the 
$\boldsymbol{\gamma}$-constraint
\begin{equation}
    \label{Eq: A0}
    \mathbf{A}^0:=(A^0_1,\cdots,A^0_r) \,
\end{equation}
where $A_i^0$ is the linear hyperplane $(\gamma_i^\perp)_\RR$ in $M_\RR$.
Therefore, it follows from the diagram \eqref{eq_diag_trop}
that the restriction of $T$ to 
$\Sigma(\cM_{\omega,\boldsymbol{\gamma},\mathbf{H}}^\log(X_\Sigma))$ defines a map
\begin{equation}
T_H : \Sigma(\cM_{\omega,\boldsymbol{\gamma},\mathbf{H}}^\log(X_\Sigma)) \longrightarrow \cM_{\omega,\boldsymbol{\gamma}, \mathbf{A}^0}^\trop (\Sigma)\,,
\end{equation}
where $\cM_{\omega,\boldsymbol{\gamma}, \mathbf{A}^0}^\trop (\Sigma)$ is the moduli space of $(\omega,\boldsymbol{\gamma})$-marked tropical curves to $(M_\RR,\Sigma)$ matching $\mathbf{A}^0$. In other words, the tropicalization of a stable log map matching $\mathbf{H}$ is a tropical curve matching $\mathbf{A}^0$.

\subsection{Log Gromov--Witten invariants}
\label{sec_log_gw_invts}
Let $\rho$ be a $(d-2)$-dimensional face of $\cM^\trop_{\omega,\boldsymbol{\gamma}, \mathbf{A}^0}(\Sigma)$. As $\cM^\trop_{\omega,\boldsymbol{\gamma}, \mathbf{A}^0}(\Sigma)$ is a sub-cone complex of $\cM^\trop_{\omega,\boldsymbol{\gamma}}(\Sigma)$,  $\rho$ is also a face of $\cM^\trop_{\omega,\boldsymbol{\gamma}}(\Sigma)$.
The relative interior $\Int(\rho)$ parametrizes $(\omega,\boldsymbol{\gamma})$-marked tropical curves in $(M_\RR,\Sigma)$ of a given type $\tau_\rho$.
Let $\cM^\log_{\rho}(X_\Sigma)$ be the closure in 
$\cM^\log_{\omega,\boldsymbol{\gamma}}(X_\Sigma)$
of the locus of $(\omega,\boldsymbol{\gamma})$-marked stable log maps of type $\tau_\rho$.

As reviewed in
\S\ref{subsec: The moduli space of mw log maps},
$\cM^\log_{\omega,\boldsymbol{\gamma}}(X_\Sigma)$ is log smooth of dimension $d-2+r$. By definition, $\cM^\log_{\rho}(X_\Sigma)$ is a union of log strata of $\cM^\log_{\omega,\boldsymbol{\gamma}}(X_\Sigma)$ where the ghost monoid is generically give by the basic monoid $Q_{\tau_\rho}$. As $\rk Q_{\tau_\rho}^\gp =d-2$, it follows that 
$\cM^\log_{\rho}(X_\Sigma)$ is of pure dimension $r$. In particular, the virtual fundamental class on $\cM^\log_{\rho}(X_\Sigma)$ constructed by log Gromov--Witten theory \cite{logGW} coincides with the usual fundamental class 
$[\cM^\log_{\rho}(X_\Sigma)]$.

For every $[\mathbf{H}] \in (\PP^1)^r$, let  $\cM_{\rho,\mathbf{H}}^\log(X_\Sigma)$ be 
the closure in $\cM^\log_{\omega,\boldsymbol{\gamma},\mathbf{H}}(X_\Sigma)$
of the locus of $(\omega,\boldsymbol{\gamma})$-marked stable log maps of type $\tau_\rho$.
Restricting the morphism $\nu$ of \eqref{eq_nu} to $\cM_{\rho}^\log(X_\Sigma)$, we obtain a fiber diagram
\begin{equation}\label{eq_diag3}
\begin{tikzcd}
\cM_{\rho,\mathbf{H}}^\log(X_\Sigma)
\arrow[r]
\arrow[d]
&
\cM_{\rho}^\log(X_\Sigma)
\arrow[d,"\nu"]
\\
\,[\mathbf{H}]\,
\arrow[r,"\iota_{[\mathbf{H}]}"]
& 
(\PP^1)^r\,,
\end{tikzcd}\end{equation}
and we define a 0-dimensional virtual fundamental class on $\cM^\log_{\rho,\mathbf{H}}(X_\Sigma)$
by 
\begin{equation}\label{eq_vclass1} [\cM^\log_{\rho,\mathbf{H}}(X_\Sigma)]^\virt:=\iota_{[\mathbf{H}]}^! [\cM^\log_\rho(X_\Sigma)]\,,
\end{equation}
where $\iota_{[\mathbf{H}]}^!$ is the Gysin pullback \cite[Chapter 6]{Fult} defined by the regular embedding $\iota_{[\mathbf{H}]}$ of codimension $r$.
As the moduli space $\cM_{\rho,\mathbf{H}}^\log (X_\Sigma)$ is proper, one can define a log Gromov--Witten invariant $N_\rho^{\mathrm{toric}}$
as the degree of this class:
\begin{equation}\label{eq_N_toric}
N_\rho^{\mathrm{toric}}(X_\Sigma)=\mathrm{deg}[\cM_{\rho,\mathbf{H}}^\log (X_\Sigma)]^\virt \,.
\end{equation}
By deformation invariance of the Gysin pullback $\iota_{[\mathbf{H}]}^!$, the log Gromov--Witten invariant $N_\rho^{\mathrm{toric}}$ is independent of the choice of $[\mathbf{H}] \in (\PP^1)^r$.

\begin{theorem} \label{thm_enum}
For general $[\mathbf{H}]\in (\PP^1)^l$,
the moduli stack $\mathcal{M}_{\omega,\boldsymbol{\gamma},\mathbf{H}}^\log(X_\Sigma)$ is log smooth of dimension $d-2$, and, for every $(d-2)$-dimensional face of $\cM^\trop_{\omega,\boldsymbol{\gamma}, \mathbf{A}^0}(\Sigma)$, 
the moduli stack $\cM_{\rho, \mathbf{H}}^\log(X_\Sigma)$ is reduced and of pure dimension $0$. In particular, the virtual fundamental class is in this case given by the usual fundamental class,
\begin{equation}\label{eq_N_toric_enum}
N_\rho^{\mathrm{toric}}(X_\Sigma)=\mathrm{deg}[\cM_{\rho,\mathbf{H}}^\log(X_\Sigma)]\,,\end{equation}
and $N_\rho^{\mathrm{toric}}(X_\Sigma)$ is an enumerative count with automorphisms.
\end{theorem}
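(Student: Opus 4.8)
The plan is to prove the two assertions---log smoothness of $\cM_{\omega,\boldsymbol{\gamma},\mathbf{H}}^\log(X_\Sigma)$ of dimension $d-2$, and reducedness and $0$-dimensionality of $\cM_{\rho,\mathbf{H}}^\log(X_\Sigma)$---by a transversality/generic smoothness argument for the evaluation morphism $\underline{\nu}$ of \eqref{eq_nu_underline}, combined with the dimension bookkeeping already available from \S\ref{subsec: The moduli space of mw log maps} and the tropical estimates of \S\ref{sec:tropical_enum}. First I would recall that $\cM_{\omega,\boldsymbol{\gamma}}^\log(X_\Sigma)$ is log smooth, irreducible, of dimension $d-2+r$ (by \cite{ran2017toric}), and that $\cM_{\omega,\boldsymbol{\gamma},\mathbf{H}}^\log(X_\Sigma)$ is cut out inside it as the fiber of the scheme morphism $\underline{\nu}\colon \cM_{\omega,\boldsymbol{\gamma}}^\log(X_\Sigma)\to (\PP^1)^r$ over $[\mathbf{H}]$, via the Gysin pullback along the codimension-$r$ regular embedding $\iota_{[\mathbf{H}]}$. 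The key point is thus to show that for generic $[\mathbf{H}]$ the fiber has the expected dimension $d-2$ and is in fact log smooth, which by generic smoothness in characteristic $0$ reduces to showing that $\underline{\nu}$ is dominant when restricted to each log stratum, or rather that the composed morphism on each stratum is smooth over a dense open of $(\PP^1)^r$.

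The heart of the argument is stratum-by-stratum. A log stratum of $\cM_{\omega,\boldsymbol{\gamma}}^\log(X_\Sigma)$ is indexed by a type $\tau$ of tropical curve in $(M_\RR,\Sigma)$, and by the basic theory its tropicalization lives in a cone $\sigma$ of $\cM_{\omega,\boldsymbol{\gamma}}^\trop(\Sigma)$; the dimension of the stratum matches $\dim\sigma$ up to the $d$ torus-translation directions that are already accounted for. Applying $\ev^\trop$ and using $\Sigma(\nu)=T\circ\ev^\trop$, the fiber of $\underline{\nu}$ over a generic $[\mathbf{H}]$ tropicalizes to the fiber of $\ev^\trop$ over the corresponding generic $\boldsymbol{\gamma}$-constraint $\mathbf{A}$, and this is controlled by Lemma \ref{lem_dimension}: for a general constraint, each face of $\cM_{\omega,\boldsymbol{\gamma},\mathbf{A}}^\trop$ has dimension $\leq d-2$, with equality only for trivalent types. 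I would then upgrade this tropical genericity statement to a statement about the scheme-theoretic fibers: the strata of $\cM_{\omega,\boldsymbol{\gamma},\mathbf{H}}^\log(X_\Sigma)$ over which the ghost monoid is nontrivial have strictly smaller dimension, so the generic point lies in the open stratum with trivial log structure, where the moduli problem is the honest scheme-theoretic one of rational curves in the toric boundary with fixed point conditions $f(x_i)\in H_i$. A dimension count there---$(d-2+r)$ minus $r$ point conditions---gives $d-2$, and smoothness of the fibered product follows from transversality of $\underline{\ev}_i$ to the generic $H_i$, which in turn follows because the $H_i$ sweep out all of $D_i$ as $[H_i]$ varies, so Kleiman-type genericity (or a direct Sard argument over $\kk$ of characteristic $0$) applies.

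Once log smoothness of dimension $d-2$ is in hand, the second half is essentially formal. For a $(d-2)$-dimensional face $\rho$, the locus $\cM_{\rho,\mathbf{H}}^\log(X_\Sigma)$ is a union of log strata whose generic ghost monoid is the basic monoid $Q_{\tau_\rho}$ of rank $d-2$; since the total space has pure dimension $d-2$ and such a stratum has dimension at most $(d-2)-\mathrm{rk}\,Q_{\tau_\rho}^{\gp}=0$, it is $0$-dimensional, hence a finite set of points. Reducedness then follows because $\cM_{\omega,\boldsymbol{\gamma},\mathbf{H}}^\log(X_\Sigma)$ is log smooth, so smooth over the trivial log point at its generic points, and a $0$-dimensional log smooth stack over a field of characteristic $0$ is reduced; equivalently, the virtual class is the fundamental class since the obstruction bundle vanishes for dimension reasons. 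Finally, the enumerative interpretation \eqref{eq_N_toric_enum} is immediate: $N_\rho^{\mathrm{toric}}(X_\Sigma)=\deg[\cM_{\rho,\mathbf{H}}^\log(X_\Sigma)]$ counts the finitely many reduced points, each with its automorphism weight.

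\textbf{Expected main obstacle.} The genuinely delicate step is transferring the tropical genericity of Lemma \ref{lem_dimension} into scheme-theoretic transversality for the $\underline{\ev}_i$ on the nontrivial log strata; one must ensure that no excess-dimensional component of $\cM_{\omega,\boldsymbol{\gamma},\mathbf{H}}^\log(X_\Sigma)$ hides in a deeper stratum where curves degenerate further inside the toric boundary, and this requires matching the decomposition of $\Sigma(\cM_{\omega,\boldsymbol{\gamma}}^\log(X_\Sigma))$ into cones with the combinatorial types appearing in $\cM_{\omega,\boldsymbol{\gamma},\mathbf{A}}^\trop$, using the refinement discussion of \S\ref{sec_polyh} and the fact that $\Sigma(\nu)=T\circ\ev^\trop$. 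I would handle it by a stratum-induction along the flow on the tree, exactly parallel to the inductive proofs of Lemmas \ref{lem_prep}--\ref{lem_prep1}, replacing the polyhedra $\foj_v^\sigma$, $\fod_E^\sigma$ by the images of the evaluation maps on the corresponding pieces of the log moduli stack.
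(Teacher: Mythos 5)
The paper's proof is much shorter and uses a different key tool: it applies the generic \emph{log} smoothness theorem (Theorem~\ref{thm_appendix}, a log Bertini--Sard result proved in the appendix) directly to the log morphism $\nu\colon\cM_{\omega,\boldsymbol{\gamma}}^\log(X_\Sigma)\to(\PP^1)^r$ constructed in Lemma~\ref{lem_log_lift}, obtaining log smoothness of the fiber $\cM_{\omega,\boldsymbol{\gamma},\mathbf{H}}^\log(X_\Sigma)$ of dimension $d-2$ in a single stroke. Once that is available, the second half of the statement follows formally exactly as you describe: a log stratum with ghost monoid of rank $d-2$ inside a log smooth stack of dimension $d-2$ is $0$-dimensional, hence reduced. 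Your closing sentence about reducedness and the virtual class coinciding with the fundamental class matches the paper.

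However, your proposed route via the \emph{scheme} morphism $\underline{\nu}$, classical Sard/Kleiman transversality, and a stratum-by-stratum induction has a genuine gap, and you correctly identify the location of the danger but do not resolve it. First, the claim that ``the fiber of $\underline{\nu}$ over a generic $[\mathbf{H}]$ tropicalizes to the fiber of $\ev^\trop$ over the corresponding generic $\boldsymbol{\gamma}$-constraint $\mathbf{A}$'' is incorrect: for $[\mathbf{H}]$ in the dense torus $(\kk^\star)^r\subset(\PP^1)^r$, the tropicalization lives over the \emph{origin}, i.e.\ over the degenerate constraint $\mathbf{A}^0$, as recorded in \eqref{eq_diag_trop}. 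Lemma~\ref{lem_dimension} gives dimension bounds for a \emph{general} $\mathbf{A}$, not for $\mathbf{A}^0$; over $\mathbf{A}^0$ one can and does have tropical curves with vertices of high valency, and the faces of $\cM_{\omega,\boldsymbol{\gamma},\mathbf{A}^0}^\trop$ may a priori have large dimension. So the ``dimension count on the open stratum'' does not automatically control what happens on the deeper log strata, which is precisely the obstacle you flag. Second, classical Kleiman transversality applied to $\underline{\ev}_i$ bounds the dimension of the generic fiber of the underlying scheme, but it does not directly control the ghost monoid structure along the boundary strata, which is what is needed to conclude that the $(d-2)$-dimensional tropical types contribute $0$-dimensional (and reduced) strata. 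The paper's log Bertini argument circumvents both issues at once by working with the log morphism $\nu$: generic log smoothness constrains the dimensions of \emph{all} log strata simultaneously and yields reducedness of the $0$-dimensional ones for free. If you want to salvage the scheme-theoretic route, you would need to establish, stratum by stratum, that the evaluation maps restricted to each log stratum of $\cM_{\omega,\boldsymbol{\gamma}}^\log(X_\Sigma)$ are dominant onto $(\PP^1)^r$ with the expected relative dimension, and that the basic monoid has the expected rank on the fiber, which amounts to reproving the log Bertini theorem in this special case.
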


\begin{proof}
As $\mathcal{M}_{\omega,\boldsymbol{\gamma}}^\log(X_\Sigma)$ is log smooth of dimension $d-2+r$ over the trivial log point, it follows from the generic log smoothness result in Theorem \ref{thm_appendix} that $\mathcal{M}_{\omega,\boldsymbol{\gamma},\mathbf{H}}^\log(X_\Sigma)$ is also log smooth of dimension $d-2$ over the trivial log point for general $[\mathbf{H}] \in (\PP^1)^l$. In particular, as
$\cM^\log_{\rho,\mathbf{H}}(X_\Sigma)$ is a union of log strata of $\cM^\log_{\omega,\boldsymbol{\gamma},\mathbf{H}}(X_\Sigma)$ where the ghost monoid is generically given by the basic monoid $Q_{\tau_\rho}$ with $\rk Q_{\tau_\rho}^\gp =d-2$, this implies that $\cM^\log_{\rho}(X_\Sigma)$ is a union of $0$-dimensional strata of $\mathcal{M}_{\omega,\boldsymbol{\gamma},\mathbf{H}}^\log(X_\Sigma)$, which are necessarily reduced.
\end{proof}

\section{The log-tropical correspondence}

In this section, we establish our main correspondence result between the tropical multiplicities $N_\sigma^\trop$ introduced in 
\S \ref{sec:tropical_enum} and the log Gromov--Witten invariants $N_\rho^{\mathrm{toric}}(X_\Sigma)$ defined in \S \ref{sec_log_gw}.

\subsection{Good polyhedral decompositions and toric degenerations}
\label{sec_good_polyh}

As in the work of Nishinou-Siebert \cite{NS}, we obtain our log-tropical correspondence theorem by the study of a toric degeneration 
defined by an appropriately chosen polyhedral decomposition of $M_\RR$. In this section, we define the notion of a ``good" polyhedral decomposition
and we prove that good polyhedral decompositions exist. The main difference with the set up of \cite{NS} is that we are considering families of tropical curves and not rigid tropical curves in general.

We fix a skew-symmetric form $\omega \in \bigwedge^2 M$
on $N$, and a $r$-tuple
$\boldsymbol{\gamma}=(\gamma_1,\dots,\gamma_r)$ of elements $\gamma_i \in N$ such that $\iota_{\gamma_i} \omega \neq 0$ for all $1\leq i \leq r$ and $\iota_\gamma \omega \neq 0$, where $\gamma:=\sum_{i=1}^r \gamma_i$ as in \S\ref{sec_marked_tropical_curves}. We also fix a general $\boldsymbol{\gamma}$-constraint $\mathbf{A}$ defined as in Definitions \ref{Def affine constraint}-\ref{def_general_constraints}. In what follows, the tuple 
\begin{equation}
\label{Eq:tropical data}
(\omega,\boldsymbol{\gamma},\mathbf{A})    
\end{equation}
is referred to as the \emph{tropical data}.
Moreover, we say that $\mathbf{A}$ is \emph{rational} if the affine hyperplanes $A_i$ are defined over $\Q$, that is, $[\mathbf{A}] \in \Q^r \subset \RR^r$.
Recall that we reviewed the notion of polyhedral decomposition in Definition \ref{polyhedral decomposition}. Then, a good polyhedral decomposition for a given tropical data is defined as follows.

\begin{definition}
\label{def:good polyhedral}
Let $(\omega,\boldsymbol{\gamma},\mathbf{A})$ be a tropical data with rational $\mathbf{A}$.
A \emph{good polyhedral decomposition} $\scrP$ of $M_{\RR}$ for $(\omega,\boldsymbol{\gamma},\mathbf{A})$  is a rational polyhedral decomposition of $M_{\RR}$ which satisfies the following conditions:
\begin{enumerate}
\item For each $d-2$-dimensional face $\sigma$ of $\cM_{\omega,\boldsymbol{\gamma}, \mathbf{A}}^{\trop}$, each $\foj_v^\sigma$ defined as in \eqref{Eq:jv} is a union of $d-2$-dimensional faces
of $\scrP$ and each $\fod_E^\sigma$ defined as in \eqref{Eq:Ev} is a union of $d-1$-dimensional faces of
$\scrP$.
\item For every $1\leq i \leq r$, the constraint affine hyperplane $A_i$ is a union of $d-1$ dimensional faces of $\scrP$
\end{enumerate}
\end{definition}

To show that a good polyhedral decomposition exists, we first need the following lemma.

\begin{lemma} \label{lem_polyh}
Let $P$ be a $k$-dimensional polyhedron in $\RR^n$. Then, there exists a polyhedral decomposition $\scrP$ of $\RR^n$ such that $P$ is a union of $k$-dimensional cell of $\scrP$.
\end{lemma}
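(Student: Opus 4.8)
The plan is to produce $\scrP$ as the common refinement of two fans: the normal fan of the polytope hull of a suitable compactification of $P$, together with the hyperplane arrangement determined by the affine hull of $P$ and the supporting hyperplanes of its facets. More concretely, write $P = \{x \in \RR^n \mid \langle a_j, x\rangle \geq b_j,\ j=1,\dots,m\}$ as an intersection of half-spaces, where we may assume the affine hull of $P$ is cut out by some subset of the defining inequalities taken with equality. First I would reduce to the case where $P$ is full-dimensional in its affine hull $H = \aff(P)$, and where $H$ itself is rational (it is, since $P$ is a polyhedron, hence defined over $\Q$ by the standard structure theorem).

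\textbf{Construction.} Consider the finite collection $\shH$ of affine hyperplanes in $\RR^n$ consisting of: (i) a minimal set of hyperplanes whose intersection is $H$ (there are $n-k$ of them); and (ii) for each facet $F$ of $P$, the hyperplane $H_F \subset H$ spanned by $F$, extended arbitrarily to an affine hyperplane of $\RR^n$ — or, more cleanly, for each facet-defining inequality $\langle a_j, x\rangle \geq b_j$ the full hyperplane $\{\langle a_j, x\rangle = b_j\}$ in $\RR^n$. The arrangement $\shH$ subdivides $\RR^n$ into relatively open polyhedral cells (faces of the arrangement), and the closures of these cells, together with all their faces, form a polyhedral decomposition $\scrP_0$ of $\RR^n$ in the sense of Definition \ref{polyhedral decomposition}: condition (i) holds because the face poset is closed under passing to faces, and condition (ii) (intersections are common faces) is the standard fact for hyperplane arrangements. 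By construction every hyperplane in $\shH$ is rational, so $\scrP_0$ is rational. The only remaining issue is that the cells of an affine hyperplane arrangement need not be strongly convex (bounded-face-wise they are fine, but unbounded cells are genuine polyhedra, which is allowed by Definition \ref{polyhedral decomposition} — strongly convex there means containing no line, and a chamber of a hyperplane arrangement in which the normals span $\RR^n$ contains no line; if the normals do not span, intersect with one more generic rational hyperplane arrangement in the missing directions to fix this).

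\textbf{Verification that $P$ is a union of $k$-cells.} Since $H = \aff(P)$ is an intersection of hyperplanes from $\shH$, the set $H$ is a union of cells of $\scrP_0$, and every cell contained in $H$ has dimension $\leq k$. Within $H$, the hyperplanes $H_F$ (intersected with $H$) subdivide $H$ into a polyhedral decomposition, and $P$ — being the intersection within $H$ of the half-spaces bounded by the $H_F$ — is a union of the closed chambers of this induced arrangement that lie on the "inside", hence a union of $k$-dimensional cells of $\scrP_0$ together with their faces. This is exactly the assertion of the lemma.

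\textbf{The main obstacle} I anticipate is purely bookkeeping: ensuring the cells are strongly convex (no lines) while keeping the decomposition finite and rational, and being careful that "$P$ is a union of $k$-dimensional cells" is read correctly — one wants $P$ itself, not merely $\partial P$, expressed via the decomposition, which forces the facet hyperplanes of $P$ (not just $\aff(P)$) to belong to the arrangement. Neither point is deep; the hyperplane-arrangement construction handles both once one remembers to throw in enough generic rational hyperplanes to kill any lineality. I would present the argument in the "common refinement of an arrangement" language above, as it makes conditions (i)–(ii) of Definition \ref{polyhedral decomposition} immediate and transparently yields rationality.
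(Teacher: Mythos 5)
Your proposal is correct, and it takes a different route from the paper's. The paper argues by induction on $k$ after reducing to $n=k$: for each codimension-one face $F$ of $P$ it applies the inductive hypothesis to get a decomposition $\scrP_F$ of $\aff(F)$, thickens each $\scrP_F$ into a decomposition $\widetilde{\scrP}_F$ of $\RR^k$ by adjoining $\sigma\pm\RR_{\geq 0}n_F$ for a transverse direction $n_F$, and takes the common refinement $\bigcap_F\widetilde{\scrP}_F$; the reduction to $n=k$ is by taking the Minkowski sum of the $k$-dimensional decomposition with the fan of $\PP^{n-k}$ in a transverse subspace. You instead take, in one shot, the face complex of the affine hyperplane arrangement generated by $n-k$ hyperplanes cutting out $\aff(P)$ together with the facet hyperplanes of $P$, observe that $P$ is then (the closure of) a single chamber of the induced arrangement on $\aff(P)$, and kill the lineality space by adjoining extra hyperplanes so the cells become strongly convex. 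Both are sound; your argument is shorter and makes rationality transparent when $P$ is rational, whereas the paper's is more self-contained (it appeals only to common refinements, not to the structure of arrangement face complexes). One small inaccuracy to flag: you assert that $\aff(P)$ is automatically rational ``since $P$ is a polyhedron''; that is false in general (e.g.\ the segment from $(0,0)$ to $(1,\sqrt{2})$), though it is true in the only place the lemma is used (Lemma \ref{lem_good_poly}, where the polyhedra $\foj_v^\sigma$, $\fod_E^\sigma$, $A_i$ are rational by hypothesis), and the lemma as stated does not claim rationality of $\scrP$, so nothing breaks. Also, be slightly more explicit that ``add generic rational hyperplanes to kill lineality'' means adding hyperplanes whose normals, together with those already present, span $(\RR^n)^*$; that suffices and keeps the arrangement a finite refinement, so $P$ remains a union of $k$-cells.
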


\begin{proof}
We first remark that is enough to prove the result for $n=k$: indeed, if $\scrP'$ is a polyhedral decomposition of the $k$-dimensional hull of $P$ such that $P$ is a union of $k$-dimensional cells of $\scrP'$, then, choosing a $(n-k)$-dimensional linear subspace $V$ transverse to the affine hull of $P$, $\scrP=\scrP'+ \Sigma_{\PP^{n-k}}$ is a polyhedral decomposition of $\RR^n$ with the same property, where $\Sigma_{\PP^{n-k}}$ is the fan of $\PP^{n-k}$ viewed as a fan in $V$. 

We prove the result for $k$-dimensional polytopes in $\RR^k$ by induction on $k$.
For $k=0$, there is nothing to prove. We now treat the induction step.
By the induction hypothesis, for every codimension 1 face $F$ of $P$, there exists a polyhedral decomposition $\scrP_F$ of the $(k-1)$-dimensional affine hull of $F$ such that $F$ is a union of $(k-1)$-dimensional cells of $\scrP_F$. For every such $F$, choose $n_F \in \Z^k$ not tangent to $F$, and define the polyhedral decomposition $\widetilde{\scrP}_F$ of $\RR^k$ with cells $\sigma+\RR_{\geq 0}n_F$ and $\sigma+\RR_{\leq 0}n_F$ for all cells $\sigma$ of $\scrP_F$. Then, $P$ is a union of $k$-dimensional cells of the polyhedral decomposition $\scrP:=\cap_F \widetilde{\scrP}_F$ of $\RR^k$, where the  intersection is taken over the codimension 1 faces $F$ of $P$.  
\end{proof}

\begin{remark}
    We gave an elementary proof of Lemma \ref{lem_polyh} for completeness. Much stronger results exist: for example, one could assume that $P$ is actually a $k$-cell of $\scrP$ by considering the cone over $P\times \{1\}$ in $\RR^n\times \RR$ and then using the rather difficult result that a cone can always be completed in a complete fan, see \cite{fans} and references there. We will not use these non-trivial results.
\end{remark}

\begin{lemma} \label{lem_good_poly}
For every tropical data $(\omega,\boldsymbol{\gamma},\mathbf{A})$ with rational $\mathbf{A}$, a good polyhedral decomposition exists.
\end{lemma}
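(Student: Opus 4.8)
The plan is to build the good polyhedral decomposition $\scrP$ as a common refinement of finitely many auxiliary polyhedral decompositions, one for each polyhedron that we are required to align with cells of $\scrP$. First I would enumerate the finite list of polyhedra in $M_\RR$ that conditions (1) and (2) of Definition \ref{def:good polyhedral} demand be unions of faces of $\scrP$: for each of the finitely many $(d-2)$-dimensional faces $\sigma$ of $\cM^\trop_{\omega,\boldsymbol{\gamma},\mathbf{A}}$ (finiteness coming from \cite[Proposition 2.1]{NS} together with the discussion after Definition \ref{def_type}), we collect the polyhedra $\foj_v^\sigma$ for $v\in V(\Gamma_\sigma)$ and $\fod_E^\sigma$ for $E\in E(\Gamma_\sigma)\cup L(\Gamma_\sigma)$, and we also collect the $r$ affine hyperplanes $A_1,\dots,A_r$. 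Since $\mathbf{A}$ is rational, and since the polyhedra $\foj_v^\sigma$, $\fod_E^\sigma$ are cut out by rational equations (they are defined by the rational data $\omega$, $\boldsymbol{\gamma}$, $\mathbf{A}$ via Lemma \ref{lem_prep} and Theorem \ref{thm_trop}), every polyhedron in this finite list is rational.

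Next I would apply Lemma \ref{lem_polyh} to each polyhedron $P$ in this finite list: for each such $P$, of dimension $k_P$, there is a polyhedral decomposition $\scrP_P$ of $M_\RR\simeq\RR^d$ in which $P$ is a union of $k_P$-dimensional cells. One checks that the construction in the proof of Lemma \ref{lem_polyh} preserves rationality — the transverse subspace $V$ and the vectors $n_F$ can be chosen in $\Q^d$, and the fan of $\PP^{n-k}$ is rational — so each $\scrP_P$ is a rational polyhedral decomposition. Then I would set $\scrP:=\bigcap_P \scrP_P$, the common refinement, whose cells are the intersections $\bigcap_P \Xi_P$ over choices of a cell $\Xi_P\in\scrP_P$. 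This is again a rational polyhedral decomposition of $M_\RR$: strong convexity of cells is preserved under intersection, and properties (i)–(ii) of Definition \ref{polyhedral decomposition} follow from the corresponding properties of each $\scrP_P$ (a standard verification — intersection of faces is a face, and the intersection of two cells of the refinement is the common face induced cell-by-cell).

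It remains to verify that $\scrP$ is good, i.e. satisfies (1) and (2) of Definition \ref{def:good polyhedral}. For each polyhedron $P$ in our list: since $P$ is a union of cells of $\scrP_P$ and $\scrP$ refines $\scrP_P$, every cell of $\scrP$ is contained in a cell of $\scrP_P$, hence $P$ is automatically a union of cells of $\scrP$. The only point needing care is the dimension statement — that $\foj_v^\sigma$ is a union of \emph{$(d-2)$-dimensional} faces of $\scrP$ and $\fod_E^\sigma$ a union of \emph{$(d-1)$-dimensional} faces — but this is immediate from Theorem \ref{thm_trop}: under the hypothesis $\dim\fod_{L_{\mathrm{out}}}^\sigma=d-1$ we have $\dim\foj_v^\sigma=d-2$ and $\dim\fod_E^\sigma=d-1$, so any cell of $\scrP$ contained in $\foj_v^\sigma$ has dimension at most $d-2$, and the top-dimensional ones cover $\foj_v^\sigma$ (and similarly for $\fod_E^\sigma$); the $A_i$ are hyperplanes so the same reasoning gives (2). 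I expect no serious obstacle here: the only mild subtlety is keeping track of rationality through Lemma \ref{lem_polyh}, and confirming that the $(d-2)$-dimensional faces $\sigma$ with $\dim\fod^\sigma_{L_{\mathrm{out}}}=d-1$ are the only ones whose polyhedra enter Definition \ref{def:good polyhedral}(1) — lower-dimensional faces are handled by the face-closure property (i) of polyhedral decompositions, since $\foj^\sigma_v$ and $\fod^\sigma_E$ for a face $\sigma'\subset\sigma$ sit inside those for $\sigma$.
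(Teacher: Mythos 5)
Your proof is correct and takes the same route as the paper's: enumerate the finitely many polyhedra $\foj_v^\sigma$, $\fod_E^\sigma$, $A_i$ that must be aligned, apply Lemma~\ref{lem_polyh} to each, and take the common refinement. You add a useful check the paper leaves implicit — that rationality of $\omega$, $\boldsymbol{\gamma}$, $\mathbf{A}$ propagates through Lemma~\ref{lem_polyh} and the refinement, which is needed since Definition~\ref{def:good polyhedral} requires a \emph{rational} decomposition — and you invoke Lemma~\ref{lem_polyh} exactly as stated (union of cells) rather than the slightly stronger ``as a face'' phrasing the paper uses.
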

\begin{proof}
By Theorem \ref{thm_trop}, we have $\dim \foj_v^{\sigma}=d-2$ and $\dim \fod_E^\sigma=d-1$
for every $\foj_v^\sigma$ and $\fod_E^\sigma$ as in Definition \ref{def:good polyhedral}. 
Moreover, by Lemma \ref{lem_polyh}, for every $\foj_v^\sigma$, $\fod_E^\sigma$ or $A_i$ as in Definition \ref{def:good polyhedral}, there exists polyhedral decompositions of $M_{\RR}$ containing respectively $\foj_v^\sigma$, $\fod_E^\sigma$ or $A_i$ as a face. Taking the intersection of all these polyhedral decompositions for every $\foj_v^\sigma$, $\fod_E^\sigma$ and $A_i$, we obtain a good polyhedral decomposition.
\end{proof}

Let $(\omega,\boldsymbol{\gamma},\mathbf{A})$ be a tropical data with rational $\mathbf{A}$.
Given a good polyhedral decomposition $\scrP$ for $(\omega,\boldsymbol{\gamma},\mathbf{A})$, whose existence is guaranteed by Lemma \ref{lem_good_poly}, one constructs a toric degeneration as in \cite[\S 3]{NS}.
Let $\overline{M}:=M \oplus \Z$, and let $\overline{\Sigma}_\scrP$ be the fan in $\overline{M}_\RR:= \overline{M} \otimes \RR$, whose faces are the cones over the cells of $\scrP$ viewed in $M_\RR \times \{1\} \subset \overline{M}_\RR$. We denote by $X_{\scrP}$ the corresponding $(d+1)$-dimensional toric variety. The projection on the $\RR$-factor of $\overline{M}_\RR=M_\RR \oplus \RR$ defines a map of fans $\overline{\Sigma}_\scrP \rightarrow \RR_{\geq 0}=\Sigma(\AA^1)$, and so a toric morphism
\begin{equation}
\label{Eq: total space}
\pi_{\scrP}: \shX_{\scrP} \longrightarrow \AA^1  \end{equation}
whose fiber $\pi_{\scrP}^{-1}(z)$ for $z \in \GG_m=\AA^1 \setminus \{0\}$ is the toric variety $X_{\Sigma_{\scrP}}$ associated to the \emph{asymptotic fan} $\Sigma_{\scrP}$ of $\scrP$, defined by
\begin{equation}
\label{Eq: Asymptotic fan}
\Sigma_\P:=\big\{\lim_{a\to0} a\Xi\subset M_\RR\,\big|\,
\Xi\in\P\big\}\,.    
\end{equation}
Irreducible components of the central fiber $\pi_{\scrP}^{-1}(0)$ are in one-to-one correspondence with the vertices of $\scrP$.
By rescaling $M_{\RR}$ if necessary, we can assume all vertices of $\scrP$
lie in $M$ and then the central fiber $\pi_{\scrP}^{-1}(0)$ is reduced. 

We also obtain a degeneration of the constraints $\mathbf{H}=(H_1,\dots,H_r)$ determined by the choice of the tropical constraints $\mathbf{A}=(A_1,
\dots,A_r)$.
Up to rescaling $\scrP$ if necessary, one can assume that $M \cap A_i \neq \emptyset$ for every $1\leq i \leq r$.
Then, one chooses a point $P_i \in M \cap A_i$ for every $1\leq i \leq r$. The point $P_i \in M$
determines a point $(P_i,1)\in \overline{M}$ and hence a one-parameter
subgroup $\GG_m(P_i,1)\subseteq \overline{M}\otimes \GG_m$. 
This can be viewed as a section $\sigma_i$
of the projection $\overline{M}\otimes\GG_m\rightarrow
\GG_m$ onto the last coordinate. For every $z\in \GG_m$, $\sigma_i(z)$
acts on the fiber of $\shX_{\scrP}\rightarrow \AA^1$ over $z\in\GG_m
\subset \AA^1$. Recall that the inverse image of $\GG_m$ in $\shX_{\scrP}$
is $X_{\Sigma_{\scrP}}\times\GG_m$. 
We then define $\overline{H_i}$ to be the closure of
the subset
\[
\bigcup_{z\in\GG_m} (\sigma_i(z)H_i,z) \subseteq X_{\Sigma_{\scrP}}\times\GG_m
\]
in $\shX_{\scrP}$. 
By construction, $\overline{H}_i$ is an hypersurface in the divisor $\overline{D_i}$ of $\shX_\scrP$ corresponding to the ray $\RR_{\geq 0}(\iota_{\gamma_i}\omega, 1)$ in $\overline{\Sigma}$. We denote the tuple
$\overline{\mathbf{H}}:=(\overline{H}_1,\dots,\overline{H}_r)$.

\subsection{Decomposition formula}
\label{sec_decomp}
We fix a good polyhedral decomposition $\scrP$
for a tropical data $(\omega, \boldsymbol{\gamma},\mathbf{A})$ with rational $\mathbf{A}$ as in \S\ref{sec_good_polyh}. 
In this section, we express the log Gromov--Witten invariants $N_\rho^{\mathrm{toric}}(X_{\Sigma_\scrP})$ of the general fiber of the toric degeneration $\pi_{\scrP}: \shX_{\scrP} \longrightarrow \AA^1$ in terms of log Gromov--Witten invariants of the special fiber.

We define as in Definition \ref{def_stable_log} the notion of $(\omega,\boldsymbol{\gamma})$-marked stable maps to $\shX_\scrP$, the only difference being that the contact orders at the marked points are now $(\iota_{\gamma_i}\omega,0)$ and $(\iota_\gamma \omega, 0)$ in $\overline{M}=M \oplus \Z$. We denote by 
$\cM_{\omega,\boldsymbol{\gamma}}^\log(\shX_\scrP/\AA^1)$ the moduli space of $(\omega,\boldsymbol{\gamma})$-marked stable maps to $\pi_{\scrP}:\shX_\scrP \rightarrow \AA^1$. 
We have a natural morphism
$\cM_{\omega,\boldsymbol{\gamma}}^\log(\shX_\scrP/\AA^1)
\rightarrow \AA^1$
whose fiber over $z\in\GG_m$ is $\cM^\log_{\omega,\boldsymbol{\gamma}}(X_{\Sigma_{\scrP}})$.

\begin{lemma} \label{lem_log_smooth}
The moduli space $\cM_{\omega,\boldsymbol{\gamma}}^\log(\shX_\scrP/\AA^1)$ is log smooth over $\AA^1$ and is of pure dimension $d-2+r+1$.
\end{lemma}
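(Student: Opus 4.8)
The plan is to reduce the claim to the corresponding statement for the moduli space $\cM^{\log}_{\omega,\boldsymbol{\gamma}}(X_{\Sigma_\scrP})$ of stable log maps to the \emph{general} fiber, which is already known to be log smooth of dimension $d-2+r$ by \cite[Proposition 3.3.1]{ran2017toric} (cited after Definition~\ref{def_stable_log}), together with the fact that the total space $\pi_\scrP : \shX_\scrP \to \AA^1$ is a log smooth morphism when $\shX_\scrP$ and $\AA^1$ are given their toric log structures. The key point is that, because $\scrP$ was rescaled so that the central fiber $\pi_\scrP^{-1}(0)$ is reduced, $\pi_\scrP$ is a \emph{log smooth and integral} morphism; hence $\shX_\scrP \to \AA^1$ is, \'etale-locally on the base, a model of the type to which the theory of stable log maps to a log smooth target over a log smooth base applies.

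First I would set up the moduli space $\cM_{\omega,\boldsymbol{\gamma}}^\log(\shX_\scrP/\AA^1)$ precisely as a relative moduli space of basic stable log maps of genus $0$ with $r+1$ marked points and the prescribed contact orders $(\iota_{\gamma_i}\omega,0)$ and $(-\iota_\gamma\omega,0)$ in $\overline{M}=M\oplus\Z$, with target $\shX_\scrP \to \AA^1$. By the log smoothness and properness results of Gross--Siebert and Abramovich--Chen \cite{logGW,logGWbyAC}, recalled in \S\ref{subsubsec:stable punctured}, this moduli space is a proper Deligne--Mumford log stack, and it is log smooth over $\AA^1$ of the expected relative dimension, which for genus $0$ stable maps with $n$ marked points to a $d$-dimensional target with these contact orders is $d-3+(n)=d-3+(r+1)=d-2+r$; adding the base dimension $1$ gives total dimension $d-2+r+1$, as claimed. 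Concretely I would invoke the same toric local computation underlying \cite[Proposition 3.3.1]{ran2017toric}: near a stable log map, the obstruction space for the relative log deformation problem vanishes because the target is toric and the contact orders are incidence conditions with toric boundary divisors, so the relative moduli space is smooth of the expected dimension over $\AA^1$ with its log structure.

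The cleanest route is to exhibit $\cM_{\omega,\boldsymbol{\gamma}}^\log(\shX_\scrP/\AA^1)$ as being cut out, inside the moduli space of stable log maps to the ambient toric variety $\shX_\scrP$ (with no relative constraint), by the log smooth morphism $\pi_\scrP:\shX_\scrP\to\AA^1$; that is, composing a stable log map $f:C\to\shX_\scrP$ with $\pi_\scrP$ and taking tropicalizations lands one in $\Sigma(\AA^1)=\RR_{\ge0}$, and the relative moduli space is the locus mapping to a fixed point of $\AA^1$ — but here one works over all of $\AA^1$, so the structure morphism to $\AA^1$ is log smooth because $\pi_\scrP$ is. The dimension count then follows from the dimension of the absolute moduli space $\cM^\log_{\omega,\boldsymbol{\gamma}}(\shX_\scrP)$, which is $d+1-2+r$, and the fact that the fiber over $z\in\GG_m$ is precisely $\cM^\log_{\omega,\boldsymbol{\gamma}}(X_{\Sigma_\scrP})$ of dimension $d-2+r$, so the morphism to $\AA^1$ is dominant with fibers of the expected dimension; combined with the log smoothness of the total space over $\kk$ of dimension $d+1-2+r=d-1+r$, we get that the relative log smooth structure over $\AA^1$ has relative dimension $d-2+r$, i.e.\ total dimension $d-2+r+1$.

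The main obstacle I expect is the verification that the morphism $\cM_{\omega,\boldsymbol{\gamma}}^\log(\shX_\scrP/\AA^1)\to\AA^1$ is \emph{log smooth}, i.e.\ that no components of positive excess dimension appear over the central fiber $\pi_\scrP^{-1}(0)$; a priori, stable log maps to the reducible central fiber could contribute extra deformations coming from the non-trivial ghost sheaf of the log point. This is handled exactly as in Nishinou--Siebert \cite[\S 4--5]{NS} and in the analysis behind \cite[Proposition 3.3.1]{ran2017toric}: the log smoothness of $\pi_\scrP$ (which uses that the central fiber is reduced, hence the map of fans $\overline{\Sigma}_\scrP\to\RR_{\ge0}$ carries lattice points to lattice points) ensures that the relative log deformation theory of a stable log map over $\AA^1$ with its toric log structure has the same tangent-obstruction behaviour as the deformation theory of stable log maps to $X_{\Sigma_\scrP}$ over the trivial base, so the obstruction groups vanish fiberwise and log smoothness over $\AA^1$ follows. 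I would also note that purity of the dimension follows because all strata — indexed by combinatorial types of tropical curves in $(M_\RR,\scrP)$ compatible with the degeneration — have the same dimension $d-2+r+1$, which in turn follows from Theorem~\ref{thm_trop} and the analysis in \S\ref{sec_polyh} of how adding divalent vertices at walls of $\scrP$ interacts with the integral structure; no stratum can have excess dimension because the corresponding tropical moduli cones all have dimension at most $d-2+r$, with equality on the trivalent (general-constraint) locus.
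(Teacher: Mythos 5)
Your core idea is the right one: the target being toric over $\AA^1$ is what kills the obstructions, so the relative moduli space is log smooth over $\AA^1$ of relative dimension $d-2+r$. But your execution is looser than what the argument actually requires, and differs from the paper's proof in a way worth flagging. The paper gives a crisp two-step argument. First, it observes that the relative log tangent bundle is \emph{trivial}: $T^\log_{\shX_\scrP/\AA^1}\cong M\otimes\cO_{\shX_\scrP}$, since $\shX_\scrP\to\AA^1$ is toric with relative lattice $M$. For genus $0$ domains this gives $H^1(C,f^\star T^\log_{\shX_\scrP/\AA^1})=0$ immediately, so relative log deformations of $f$ with fixed domain are unobstructed. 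Second, it shows that the stack $\mathfrak{M}^\log(\AA^1)$ of prestable log maps to $\AA^1$ is log smooth over $\AA^1$: one identifies $\mathfrak{M}^\log(\AA^1)$ with Olsson's stack $\mathrm{Log}_{\mathfrak{M}\times\AA^1}$, and uses that $\mathrm{Log}_S\to S$ is log \'etale for any fs log stack $S$, together with $\mathfrak{M}\times\AA^1$ being log smooth over $\AA^1$. Combining the two ingredients gives log smoothness of the moduli space over $\AA^1$, and the expected dimension count gives $d-2+r+1$.

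Your write-up hits the first ingredient only implicitly (you say the target is toric so obstructions vanish, but you never isolate $T^\log_{\shX_\scrP/\AA^1}\cong M\otimes\cO$, which is what actually makes the $H^1$ vanish), and the second ingredient is missing entirely. Instead, you defer to \cite[Prop.\ 3.3.1]{ran2017toric} and \cite{NS}; but those results are stated for stable log maps to a single toric variety over a trivial base, not for a toric degeneration over $\AA^1$ with its divisorial log structure. Extending them to the relative setting is exactly where one needs to control the log deformation theory of the domain curve over the nontrivial log base $\AA^1$, which is the content of the Olsson step. Asserting that things work ``exactly as in Nishinou--Siebert'' because the central fiber is reduced and $\pi_\scrP$ is integral is not a proof of this; the reducedness of the central fiber is relevant elsewhere (e.g.\ for the decomposition formula), but it is not what gives log smoothness of the moduli stack over $\AA^1$. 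Your ``cleanest route'' paragraph about the relative moduli space being ``cut out inside'' the absolute one is also not a coherent construction: the relative moduli space is a moduli stack of maps to the family $\shX_\scrP\to\AA^1$ and comes equipped with its morphism to $\AA^1$ by definition; it is not carved out of some ambient absolute moduli space by the map $\pi_\scrP$. Finally, the last paragraph about purity, referring to stratifications by tropical types and dimensions of tropical moduli cones, is not needed: once log smoothness over $\AA^1$ with constant relative dimension is established, purity of the total dimension is immediate, and the paper does not invoke a stratification argument at this point.
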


\begin{proof}
The relative log tangent bundle to $\shX_\scrP/\AA^1$ is trivial, given by 
$T^\log_{\shX_\scrP/\AA^1}=M \otimes \cO_{\shX_\scrP}$, and so we have $H^1(C,f^{\star}T^\log_{\shX_\scrP/\AA^1})=0$
for every genus $0$ stable log maps  $f: C \rightarrow \shX_\scrP/\AA^1$.
It follows from \cite{logGW} that relative deformations of genus $0$ stable log maps to $\shX_\scrP/\AA^1$ with fixed domain are unobstructed. Hence, it is enough to show that the moduli stack $\mathfrak{M}^{\log}(\AA^1)$ of non-necessarily basic prestable log maps to $\AA^1$ is log smooth over $\AA^1$. We have  $\mathfrak{M}^{\log}(\AA^1)=\mathrm{Log}_{\mathfrak{M} \times \AA^1}$, where $\mathfrak{M}$ is the moduli stack of prestable curves and $\mathrm{Log}_{\mathfrak{M} \times \AA^1}$ is Olsson's stack \cite{Olsson03} of log schemes over $\mathfrak{M} \times \AA^1$.
As $\mathfrak{M}$ is log smooth over the trivial log point, $\mathfrak{M} \times \AA^1$ is log smooth over $\AA^1$, and so the result follows because $\mathrm{Log}_{\mathfrak{M} \times \AA^1} \rightarrow \AA^1$ factors into $\mathrm{Log}_{\mathfrak{M} \times \AA^1} \rightarrow \mathfrak{M} \times \AA^1 \rightarrow \AA^1$, and for any log stack $S$ the natural morphism $\mathrm{Log}_S \rightarrow S$ is log \'etale by \cite[Theorem 4.6(iii)]{Olsson03}
(the corresponding morphism of stacks $\mathrm{Log}_S \rightarrow \mathrm{Log}_S$ is the identity and so is in particular \'etale). 
\end{proof}

We also define as in \eqref{eq_diag} 
the moduli space $\cM_{\omega,\boldsymbol{\gamma},\overline{\mathbf{H}}}^\log (\shX_\scrP/\AA^1)$ of $(\omega,\boldsymbol{\gamma})$-marked stable maps to $\shX_\scrP \rightarrow \AA^1$ matching the constraint $\overline{\mathbf{H}}$ by the fiber diagram (both in the schemes and fs log schemes categories)
\begin{equation} \label{eq_diag_deg}
\begin{tikzcd}
\cM_{\omega,\boldsymbol{\gamma},\overline{\mathbf{H}}}^\log(\shX_\scrP/\AA^1)
\arrow[r]
\arrow[d]
&
\cM_{\omega,\boldsymbol{\gamma}}^{\log}(\shX_\scrP/\AA^1)
\arrow[d]
\\
\prod_{i=1}^r \overline{H}_i
\arrow[r,"\iota_{\overline{H}}"]
& 
\prod_{i=1}^r \overline{D}_i\,,
\end{tikzcd}
\end{equation}
We have a natural morphism
$\cM_{\omega,\boldsymbol{\gamma},\overline{\mathbf{H}}}^\log(\shX_\scrP/\AA^1)
\rightarrow \AA^1$
whose fiber over $z\in\GG_m$ is $\cM^\log_{\omega,\boldsymbol{\gamma},\{\sigma_i(z)\cdot H_i\}_i}(X_{\Sigma_{\scrP}})$.

For every $1 \leq i\leq r$, denote by $\overline{A}_i$ the half-hyperplane in $\overline{M}_\RR$ obtained  as the closure in $\overline{M}_\RR$  of $\RR_{\geq 0} (A_i,1)$. By construction, one has a natural projection $\overline{A}_i \rightarrow \RR_{\geq 0}$ whose fiber over $1$ is $A_i$, and whose fiber over $0$ is $A_i^0=(\gamma_i^\perp)_\RR$.
As in \S \ref{sec_tropical_constraints}, we define the moduli space $\cM_{\omega,\boldsymbol{\gamma},\overline{\mathbf{A}}}^\trop(\overline{\Sigma}_\scrP)
$ of $(\omega,\boldsymbol{\gamma})$-marked tropical curves in $(\overline{M}_\RR, \overline{\Sigma}_\scrP)$ matching the constraint $\overline{\mathbf{A}}:=(\overline{A}_1,\dots,\overline{A}_r)$. There is a natural map 
$\cM_{\omega,\boldsymbol{\gamma},\overline{\mathbf{A}}}^\trop(\overline{\Sigma}_\scrP) \rightarrow \RR_{\geq 0}$ whose fiber over $1$ is $\cM_{\omega,\boldsymbol{\gamma},\mathbf{A}}^\trop(\scrP)$ and whose fiber over $0$
is $\cM_{\omega,\boldsymbol{\gamma},\mathbf{A}^0}^\trop(\Sigma_\scrP)$.
Using this, we obtain a natural map
\begin{equation}
   \label{Eq: phi} 
   \tilde{\Phi} \colon \{\mathrm{Faces~of~} \cM^\trop_{\omega,\boldsymbol{\gamma},\mathbf{A}}(\scrP)   \} \longrightarrow \{\mathrm{Faces~of~} \cM^\trop_{\omega,\boldsymbol{\gamma},\mathbf{A}^0}(\Sigma_\scrP)   \}\,,
\end{equation}
defined as follows. Given a face $\tilde{\sigma}$ of $\cM^\trop_{\omega,\boldsymbol{\gamma},\mathbf{A}}(\scrP)$, 
$\tilde{\Phi}(\tilde{\sigma})$ is the intersection of the fiber $\cM_{\omega,\boldsymbol{\gamma},\mathbf{A}^0}^\trop(\Sigma_\scrP)$ over $0$ with the face $\overline{\RR_{\geq 0}(\tilde{\sigma},1)}$ of $\cM_{\omega,\boldsymbol{\gamma},\overline{\mathbf{A}}}^\trop(\overline{\Sigma}_\scrP)$ obtained as the closure of the cone over $\tilde{\sigma}$.

\begin{definition}
\label{def_very_good}
A good polyhedral decomposition $\scrP$ of $M_\RR$ is \emph{very good} if:
\begin{itemize}
    \item[(i)] every vertex of $\scrP$ lies in $M$,
    \item[(ii)] for every $(d-2)$-dimensional face $\tilde{\sigma}$ of $\cM^\trop_{\omega,\boldsymbol{\gamma},\mathbf{A}}(\scrP)$, the natural projection $T_{\overline{\RR_{\geq 0}(\tilde{\sigma},1)}} \rightarrow \Z$ is surjective, where $T_{\overline{\RR_{\geq 0}(\tilde{\sigma},1)}}$ is the integral tangent space to the face $\overline{\RR_{\geq 0}(\tilde{\sigma},1)}$ of  $\cM_{\omega,\boldsymbol{\gamma},\overline{\mathbf{A}}}^\trop(\overline{\Sigma}_\scrP)$
\end{itemize}
\end{definition}

\begin{lemma} \label{lem_very_good}
For every tropical data $(\omega,\boldsymbol{\gamma},\mathbf{A})$ with rational $\mathbf{A}$, there exists a very good polyhedral decomposition for $(\omega,\boldsymbol{\gamma},\mathbf{A})$.
\end{lemma}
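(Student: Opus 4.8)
The plan is to promote a good polyhedral decomposition, which exists by Lemma~\ref{lem_good_poly}, to a very good one by successively refining it and, more importantly, by rescaling the lattice. The key observation is that condition (i) of Definition~\ref{def_very_good} is harmless: once we have a good polyhedral decomposition $\scrP$, all of its vertices have rational coordinates (since $\scrP$ is rational and the polyhedra $\foj_v^\sigma$, $\fod_E^\sigma$, $A_i$ used in its construction are rational), so after replacing the lattice $M$ by $\frac{1}{n}M$ for a suitable positive integer $n$ — equivalently, rescaling $M_\RR$ — we may assume every vertex of $\scrP$ lies in $M$. Note that this rescaling does not change $\scrP$ as a polyhedral complex, only its integral structure, and it also does not affect the tropical data in any essential way (the hyperplanes $A_i$ and directions $\iota_{\gamma_i}\omega$ are simply rescaled), so $\scrP$ remains good.

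The substantive point is condition (ii): for every $(d-2)$-dimensional face $\tilde\sigma$ of $\cM^\trop_{\omega,\boldsymbol{\gamma},\mathbf{A}}(\scrP)$, the projection $T_{\overline{\RR_{\geq 0}(\tilde\sigma,1)}} \to \Z$ onto the last coordinate must be surjective. The cone $\overline{\RR_{\geq 0}(\tilde\sigma,1)}$ is a $(d-1)$-dimensional rational cone in $\cM^\trop_{\omega,\boldsymbol{\gamma},\overline{\mathbf{A}}}(\overline\Sigma_\scrP)$, and its image under the projection to $\RR_{\geq 0}$ is all of $\RR_{\geq 0}$ (by construction of $\overline{\mathbf{A}}$, the fiber over $1$ is $\tilde\sigma$, which is nonempty). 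Hence the projection $T_{\overline{\RR_{\geq 0}(\tilde\sigma,1)}} \otimes \QQ \to \QQ$ is already surjective; the only possible failure is that the image in $\Z$ is $m\Z$ for some integer $m \geq 2$. I would fix this, one face at a time, by a further rescaling of the lattice: replacing $\overline M = M \oplus \Z$ by $M \oplus \frac{1}{m}\Z$ makes the projection $T_{\overline{\RR_{\geq 0}(\tilde\sigma,1)}} \to \frac{1}{m}\Z$ surjective for that face. Since there are finitely many $(d-2)$-dimensional faces of $\cM^\trop_{\omega,\boldsymbol{\gamma},\mathbf{A}}(\scrP)$, taking the least common multiple of all the relevant integers $m$ and rescaling the $\Z$-factor of $\overline M$ accordingly simultaneously achieves condition (ii) for all faces at once.

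One subtlety to address: rescaling only the $\Z$-factor of $\overline M$ (rather than all of $\overline M$) must be done in a way compatible with keeping $\scrP$'s vertices in $M$ and with the fan $\overline\Sigma_\scrP$ still being defined over the lattice. Here I would observe that rescaling the $\Z$-factor by $1/m$ corresponds, on the level of the total space $\shX_\scrP$, to a ramified base change $\AA^1 \to \AA^1$, $z \mapsto z^m$, after which the vertices of $\scrP$ (which are the points $(v,1)$, $v$ a vertex of $\scrP$) still lie in the rescaled lattice $M \oplus \frac{1}{m}\Z$, so condition (i) is preserved. Alternatively — and perhaps more cleanly — one can rescale $\scrP$ itself by replacing it with $m \cdot \scrP$ (dilating all polyhedra by the factor $m$), which rescales the lattice direction of the cone over $\scrP\times\{1\}$ relative to the $M$-directions; this keeps everything over the original lattice $\overline M$ and has the same effect on the projection to $\Z$. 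Either bookkeeping works; I would present the argument with the dilation of $\scrP$ since it avoids introducing base changes.

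The main obstacle is purely one of keeping the bookkeeping straight: making sure that each rescaling step preserves all the properties established in the previous steps (goodness of $\scrP$, rationality, integrality of vertices, and the dimension statements of Theorem~\ref{thm_trop} which underlie Definition~\ref{def:good polyhedral}). None of these properties is destroyed by dilation, since dilation is a lattice-preserving affine operation up to rescaling and all the relevant combinatorial data — the faces $\sigma$ of $\cM^\trop_{\omega,\boldsymbol{\gamma},\mathbf{A}}$, their associated polyhedra $\foj_v^\sigma$, $\fod_E^\sigma$, and the trivalence statements — are invariant under rescaling of $M_\RR$. So the proof is short: invoke Lemma~\ref{lem_good_poly} to get a good $\scrP$, dilate to put all vertices in $M$, compute the finitely many indices obstructing condition (ii), dilate once more by their least common multiple, and check that goodness survives.
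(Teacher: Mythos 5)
Your high-level strategy — invoke Lemma~\ref{lem_good_poly}, then rescale — is the same as the paper's two-sentence proof. But when you flesh out the rescaling you promise to ``check that goodness survives'' without actually doing so, and it does not: dilating $\scrP$ to $m\scrP$ while leaving $\mathbf{A}$ fixed destroys both conditions of Definition~\ref{def:good polyhedral}. Condition~(2) requires each affine hyperplane $A_i$ to be a union of $(d-1)$-dimensional faces of $\scrP$; the $(d-1)$-dimensional faces of $m\scrP$ are $m\Xi$, and a nonzero affine hyperplane $A_i$ is a union of these only when $A_i$ is $m$-scale-invariant, i.e.\ passes through the origin — which fails for a general $\boldsymbol{\gamma}$-constraint. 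Condition~(1) fails for the same reason: the polyhedra $\foj_v^\sigma$ and $\fod_E^\sigma$ are affine, not conical, hence not stable under $x\mapsto mx$. Your first paragraph has a similar imprecision: replacing $M$ by $\frac{1}{n}M$ leaves $\scrP$ and $\mathbf{A}$ alone but changes the dual lattice to $nN$ (altering the divisibilities $|\gamma_i|$ that appear in the product formulas and possibly ejecting $\gamma_i$ from the lattice entirely), whereas ``rescaling $M_\RR$'' moves $\scrP$ and $\mathbf{A}$; these are not the same operation and neither, on its own, keeps the tropical data fixed while fixing the lattice.

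The rescaling that actually works is to dilate the entire tropical data simultaneously: replace $\scrP$ by $m\scrP$ \emph{and} $\mathbf{A}$ by $m\mathbf{A}$. Since dilation is a symmetry of the whole picture, $m\scrP$ is then good for $(\omega,\boldsymbol{\gamma},m\mathbf{A})$, and your denominator-clearing computation makes it very good for $m$ sufficiently divisible. This is formally weaker than the lemma as stated (it produces a very good $\scrP$ for the rescaled constraint, not the original one), but it suffices for the only application in Theorem~\ref{Thm:log_trop_thm}: there $\mathbf{A}$ is already chosen freely among general rational constraints, dilation identifies $\cM^\trop_{\omega,\boldsymbol{\gamma},\mathbf{A}}$ with $\cM^\trop_{\omega,\boldsymbol{\gamma},m\mathbf{A}}$, and the quantities $N_\sigma^\trop$ and $k_\sigma$ on the right-hand side of \eqref{eq_main} are visibly invariant under that identification. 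You should either prove the lemma in this weaker-but-sufficient form, or keep $\mathbf{A}$ explicitly in the dilation bookkeeping; as written, the step ``dilate once more by their least common multiple, and check that goodness survives'' does not go through.
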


\begin{proof} By Lemma \ref{lem_good_poly}, good polyhedral decompositions exist.
The result follows because a good polyhedral decomposition can always be rescaled into a very good one. 
\end{proof}

From now on, we assume that $\scrP$ is a very good polyhedral decomposition.
Let $\tilde{\sigma}$ be a $(d-2)$-dimensional face of $\cM^\trop_{\omega,\boldsymbol{\gamma},\mathbf{A}}(\scrP)$. 
The relative interior of the face $\overline{\RR_{\geq 0}(\tilde{\sigma},1)}$ 
of $\cM_{\omega,\boldsymbol{\gamma},\overline{\mathbf{A}}}^\trop(\overline{\Sigma}_\scrP)$
parametrizes $(\omega,\boldsymbol{\gamma})$-marked tropical curves in $(\overline{M}_\RR,\overline{\Sigma}_\scrP)$ of a given type $\tau_{\tilde{\sigma}}$.
Let $\cM^\log_{\tilde{\sigma}}(\shX_\scrP/\AA^1)$ 
(resp.\, $\cM^\log_{\tilde{\sigma},\overline{\mathbf{H}}}(\shX_\scrP/\AA^1)$) be
the closure in $\cM^\log_{\omega,\boldsymbol{\gamma}}(\shX_\scrP/\AA^1)$
(resp.\ $\cM^\log_{\omega,\boldsymbol{\gamma},\overline{\mathbf{H}}}(\shX_\scrP/\AA^1)$) of the locus of stable log maps of type $\tau_{\tilde{\sigma}}$.
These moduli spaces fit in a fiber diagram 
(both in the schemes and fs log schemes categories)
\begin{equation} \label{eq_diag_deg1}
\begin{tikzcd}
\cM^\log_{\tilde{\sigma},\overline{\mathbf{H}}}(\shX_\scrP/\AA^1)
\arrow[r]
\arrow[d]
&
\cM^\log_{\tilde{\sigma}}(\shX_\scrP/\AA^1)
\arrow[d]
\\
\prod_{i=1}^r \overline{H}_i
\arrow[r,"\iota_{\overline{H}}"]
& 
\prod_{i=1}^r \overline{D}_i\,,
\end{tikzcd}
\end{equation}

Similarly, given a $(d-2)$-dimensional face 
$\rho$ of $\cM^\trop_{\omega,\boldsymbol{\gamma},\mathbf{A}^0}(\Sigma_\scrP)$, we denote by $\cM^\log_{\rho}(\shX_\scrP/\AA^1)$ (resp.\ $\cM^\log_{\rho,\overline{\mathbf{H}}}(\shX_\scrP/\AA^1)$) the closure in $\cM^\log_{\omega,\boldsymbol{\gamma}}(\shX_\scrP/\AA^1)$ 
(resp.\ $\cM^\log_{\omega,\boldsymbol{\gamma},\overline{\mathbf{H}}}(\shX_\scrP/\AA^1)$) of the locus of stable log maps of type 
determined by the interior of the face $\rho$. There is a natural map 
\[\cM^\log_{\rho,\overline{\mathbf{H}}}(\shX_\scrP/\AA^1) \longrightarrow \AA^1\] 
whose fiber over $1$ is the moduli space  
$\cM^\log_{\rho,\mathbf{H}}(X_{\Sigma_\scrP})$ defined in \S \ref{sec_log_gw_invts}. Moreover, 
the fiber over $0$ is the union of the moduli spaces $\cM^\log_{\tilde{\sigma},\overline{\mathbf{H}}}(\shX_\scrP/\AA^1)$ where 
 $\tilde{\sigma}$ are the faces of 
 $\cM^\trop_{\omega,\boldsymbol{\gamma},\mathbf{A}}(\scrP)$
such that $\tilde{\Phi}(\tilde{\sigma})=\rho$.

\begin{lemma} \label{lem_log_smooth1}
The moduli spaces $\cM^\log_{\tilde{\sigma}}(\shX_\scrP/\AA^1)$ and $\cM^\log_{\rho}(\shX_\scrP/\AA^1)$ are log smooth over $\AA^1$.
Moreover, $\cM^\log_{\tilde{\sigma}}(\shX_\scrP/\AA^1)$ is of pure dimension $r$ and $\cM^\log_{\rho}(\shX_\scrP/\AA^1)$ is of pure dimension $r+1$.
\end{lemma}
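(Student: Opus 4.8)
The plan is to deduce this from Lemma~\ref{lem_log_smooth} by exhibiting $\cM^\log_{\tilde\sigma}(\shX_\scrP/\AA^1)$ and $\cM^\log_{\rho}(\shX_\scrP/\AA^1)$ as closures of unions of log strata of the log smooth stack $\cM_{\omega,\boldsymbol{\gamma}}^\log(\shX_\scrP/\AA^1)$, and then reading off their dimensions from the ranks of the relevant basic monoids. By Lemma~\ref{lem_log_smooth}, $\cM_{\omega,\boldsymbol{\gamma}}^\log(\shX_\scrP/\AA^1)$ is log smooth over $\AA^1$ of pure dimension $d-2+r+1$. By construction $\cM^\log_{\tilde\sigma}(\shX_\scrP/\AA^1)$ (resp.\ $\cM^\log_{\rho}(\shX_\scrP/\AA^1)$) is the closure of the locus of stable log maps whose tropicalisation has the fixed combinatorial type $\tau_{\tilde\sigma}$ (resp.\ $\tau_\rho$), and this locus is a union of log strata along which the ghost monoid is generically the basic monoid $Q_{\tau_{\tilde\sigma}}$ (resp.\ $Q_{\tau_\rho}$) of Definition~\ref{def:basic monoid}.

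The first step is to check that the closure of such a log stratum, inside a log scheme log smooth over $\AA^1$, is again log smooth over $\AA^1$, of pure dimension $d-2+r+1-\rk Q_\tau^{\gp}$. Via the \'etale-local toric model of a log smooth morphism this closure is locally modelled on a toric variety, hence log smooth over $\kk$; the delicate point is that it remains log smooth \emph{relatively} over $\AA^1$, i.e.\ that fixing the type $\tau$ does not force the $\AA^1$-coordinate of the tropicalisation to become constant. This is precisely what the ``very good'' hypothesis on $\scrP$ secures: by Definition~\ref{def_very_good}(ii) the tangent lattice of the cone $\overline{\RR_{\geq 0}(\tilde\sigma,1)}$ surjects onto $\Z$, so the corresponding family stays flat with reduced fibre over $0\in\AA^1$, and the same holds for the cones governing $\cM^\log_\rho(\shX_\scrP/\AA^1)$ because these arise as the images under the map $\tilde{\Phi}$ of \eqref{Eq: phi} of the former. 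Equivalently, one may rerun the deformation-theoretic argument of Lemma~\ref{lem_log_smooth} for stable log maps marked by the type $\tau$ in the sense of Abramovich--Chen--Gross--Siebert \cite{ACGSI}: as there, $T^\log_{\shX_\scrP/\AA^1}=M\otimes\cO_{\shX_\scrP}$ is trivial, so $H^1(C,f^{\star}T^\log_{\shX_\scrP/\AA^1})=0$ and relative deformations with fixed domain and fixed type are unobstructed, and one reduces to the log smoothness over $\AA^1$ of the relevant stratum of Olsson's stack $\mathrm{Log}_{\mathfrak{M}\times\AA^1}$.

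The second step is the rank computation. Since $\Hom(Q_\tau^{\vee},\RR_{\geq 0})$ is the cone parametrising tropical maps of type $\tau$, $\rk Q_\tau^{\gp}$ equals its dimension. Here I would invoke that $\scrP$ is good (Definition~\ref{def:good polyhedral}): as the constraint affine hyperplanes $A_i$ are unions of cells of $\scrP$ and the $i$-th leg has weighted direction $\iota_{\gamma_i}\omega\in(\gamma_i^\perp)_\RR$, the matching conditions are already recorded by the combinatorial type --- the leg lies on its constraint as soon as the cone of $\overline{\Sigma}_\scrP$ (resp.\ $\Sigma_\scrP$) assigned to it does. Hence the cone of tropical maps of type $\tau_{\tilde\sigma}$ is precisely the face $\overline{\RR_{\geq 0}(\tilde\sigma,1)}$ of $\cM^\trop_{\omega,\boldsymbol{\gamma},\overline{\mathbf{A}}}(\overline{\Sigma}_\scrP)$, the $(d-1)$-dimensional cone over the $(d-2)$-dimensional face $\tilde\sigma$, and the cone of tropical maps of type $\tau_\rho$ is precisely the $(d-2)$-dimensional face $\rho$ of $\cM^\trop_{\omega,\boldsymbol{\gamma},\mathbf{A}^0}(\Sigma_\scrP)$. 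Thus $\rk Q_{\tau_{\tilde\sigma}}^{\gp}=d-1$ and $\rk Q_{\tau_\rho}^{\gp}=d-2$, so $\dim\cM^\log_{\tilde\sigma}(\shX_\scrP/\AA^1)=r$ and $\dim\cM^\log_{\rho}(\shX_\scrP/\AA^1)=r+1$. I expect the main obstacle to be the relative log smoothness over $\AA^1$ of these stratum closures from the first step: understanding how the fixed combinatorial type interacts with the degeneration over $0\in\AA^1$ is exactly why the ``very good'' refinement of $\scrP$ was introduced, and turning this into a proof --- either through the Abramovich--Chen--Gross--Siebert formalism of $\tau$-marked maps or through a direct analysis of $\mathrm{Log}_{\mathfrak{M}\times\AA^1}$ --- is the technical core.
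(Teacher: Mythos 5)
Your overall strategy coincides with the paper's: deduce the lemma from Lemma~\ref{lem_log_smooth} by recognising $\cM^\log_{\tilde\sigma}(\shX_\scrP/\AA^1)$ and $\cM^\log_{\rho}(\shX_\scrP/\AA^1)$ as (closures of) log strata of the log smooth stack $\cM^\log_{\omega,\boldsymbol{\gamma}}(\shX_\scrP/\AA^1)$, and then read off dimensions from the ranks of the basic monoids. The paper's proof is a single sentence, so you are effectively unpacking the same argument. Your rank computations $\rk Q_{\tau_{\tilde\sigma}}^{\gp}=d-1$ and $\rk Q_{\tau_\rho}^{\gp}=d-2$ are correct, and your observation that the goodness of $\scrP$ (Definition~\ref{def:good polyhedral}) makes the matching constraints $\overline{\mathbf A}$ automatic from the combinatorial type is exactly the point that justifies the identification of $\overline{\RR_{\geq 0}(\tilde\sigma,1)}$ with the full cone of tropical maps of type $\tau_{\tilde\sigma}$; the paper leaves this implicit.

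Where you go slightly astray is in the role you assign to the \emph{very good} hypothesis, Definition~\ref{def_very_good}(ii). You claim that the integral surjectivity of $T_{\overline{\RR_{\geq 0}(\tilde\sigma,1)}}\to\Z$ is what secures log smoothness over $\AA^1$. It is not needed for that. By Kato's criterion in characteristic $0$, what matters is injectivity of $\Z=\NN^{\gp}\to Q_\tau^{\gp}$; torsion in the cokernel is harmless because its order is a unit. Injectivity is automatic: the cone $\overline{\RR_{\geq 0}(\tilde\sigma,1)}$ surjects onto $\RR_{\geq 0}$ as a map of real cones, so $1\in\NN$ maps to a nonzero, hence non-torsion, element of $Q_{\tau_{\tilde\sigma}}^{\gp}$. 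The very good hypothesis is purely an integrality condition, and it is invoked only later, in Theorem~\ref{thm_decomp}, to ensure that the central fibre of $\cM^\log_\rho(\shX_\scrP/\AA^1)\to\AA^1$ is reduced so that the multiplicities in the ACGS decomposition formula \cite[Corollary 3.2]{ACGSI} are all equal to $1$. Relatedly, the phrase ``stays flat with reduced fibre'' conflates the two moduli spaces: $\cM^\log_{\tilde\sigma}(\shX_\scrP/\AA^1)$ is supported entirely over $0\in\AA^1$, so the underlying scheme map is not flat, and that is fine because log smoothness does not require flatness of the underlying scheme morphism. These are genuine conceptual slips, but they do not invalidate the argument since the conclusion (log smoothness over $\AA^1$) is correct for the reason the paper implicitly uses, namely that type strata of a log smooth morphism are log smooth over the base.
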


\begin{proof}
By Lemma \ref{lem_log_smooth}, the moduli space $\cM^\log_{\omega,\boldsymbol{\gamma}}(\shX_\scrP/\AA^1)$ is log smooth over $\AA^1$, of dimension $d-2+r+1$. 
The result follows from the fact that $\cM^\log_{\tilde{\sigma}}(\shX_\scrP/\AA^1)$ and $\cM^\log_{\rho}(\shX_\scrP/\AA^1)$ are log strata of $\cM^\log_{\omega,\boldsymbol{\gamma}}(\shX_\scrP/\AA^1)$.
\end{proof}

By Lemma \ref{lem_log_smooth1}, one can consider the $r$-dimensional fundamental class $[\cM^\log_{\tilde{\sigma}}(\shX_\scrP/\AA^1)]$. We define a $0$-dimensional virtual fundamental class on $\cM^\log_{\tilde{\sigma},\overline{\mathbf{H}}}(\shX_\scrP/\AA^1)$ by
\begin{equation} \label{eq_vclass2}[\cM^\log_{\tilde{\sigma},\overline{\mathbf{H}}}(\shX_\scrP/\AA^1)]^\virt := \iota_{\overline{H}}^! [\cM^\log_{\tilde{\sigma}}(\shX_\scrP/\AA^1)]\end{equation}
and the corresponding log Gromov--Witten invariant 
\begin{equation} \label{eq_N_tilde_sigma}
N_{\tilde{\sigma}}^{\mathrm{toric}} (\shX_\scrP/\AA^1) := \deg [\cM^\log_{\tilde{\sigma},\overline{\mathbf{H}}}(\shX_\scrP/\AA^1)]^\virt\,.\end{equation}

In the following theorem, we express the invariants $N_\rho^{\mathrm{toric}}(X_{\Sigma_\scrP})$ defined in 
    \eqref{eq_N_toric}
in terms of the invariants $N_{\tilde{\sigma}}^{\mathrm{toric}} (\shX_\scrP/\AA^1)$
defined in \eqref{eq_N_tilde_sigma}.
For every $(d-2)$-dimensional face $\rho$ of $\cM^\trop_{\omega,\boldsymbol{\gamma},\mathbf{A}^0}(\Sigma)$, denote by $\tilde{S}_{\rho}$ the set of $(d-2)$-dimensional faces $\tilde{\sigma}$
of $\cM^\trop_{\omega,\boldsymbol{\gamma},\mathbf{A}}(\scrP)$ such that $\Tilde{\Phi}(\Tilde{\sigma})=\rho$. 

\begin{theorem} \label{thm_decomp}
Let $\scrP$ be a very good polyhedral decomposition for $(\omega, \boldsymbol{\gamma},\mathbf{A})$,
and let $\rho$ be a $(d-2)$-dimensional face of $\cM^\trop_{\omega,\boldsymbol{\gamma},\mathbf{A}^0}(\Sigma)$. Then, we have 
\begin{equation} \label{eq_decomp} N_\rho^{\mathrm{toric}}(X_{\Sigma_{\scrP}})
= \sum_{\Tilde{\sigma}\in \Tilde{S}_\rho} N_{\tilde{\sigma}}^{\mathrm{toric}} (\shX_\scrP/\AA^1) \,.\end{equation}
\end{theorem}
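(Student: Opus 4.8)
The plan is to deduce \eqref{eq_decomp} from a conservation-of-number argument along the one-parameter family carried by the toric degeneration $\pi_\scrP\colon \shX_\scrP\to\AA^1$, comparing the fibres over $1\in\GG_m$ and over $0$. Fix a general $[\mathbf H]\in(\PP^1)^r$. By Lemma \ref{lem_log_smooth1}, $\cM^\log_\rho(\shX_\scrP/\AA^1)$ is log smooth over $\AA^1$ of pure dimension $r+1$; being log smooth its underlying stack has toric, hence Cohen--Macaulay, singularities, so (being equidimensional over the regular curve $\AA^1$) the structure morphism $p\colon \cM^\log_\rho(\shX_\scrP/\AA^1)\to\AA^1$ is flat and $p^{-1}(z)$ is a pure-dimensional Weil divisor for every $z$. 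Set $\beta_\rho := \iota_{\overline H}^!\,[\cM^\log_\rho(\shX_\scrP/\AA^1)]$, the one-dimensional cycle class on $\cM^\log_{\rho,\overline{\mathbf H}}(\shX_\scrP/\AA^1)$ cut out by the codimension-$r$ regular embedding $\iota_{\overline H}$ of the fibre square \eqref{eq_diag_deg1}. Since $\cM^\log_{\rho,\overline{\mathbf H}}(\shX_\scrP/\AA^1)$ is a closed substack of the proper-over-$\AA^1$ moduli stack $\cM^\log_{\omega,\boldsymbol\gamma,\overline{\mathbf H}}(\shX_\scrP/\AA^1)$, pushing $\beta_\rho$ forward to $\AA^1$ gives $c\,[\AA^1]$ for a single $c\in\Z$; by compatibility of refined Gysin maps with proper pushforward \cite[Ch.~6]{Fult}, $\deg\,\iota_z^!\beta_\rho=c$ for all $z$, hence $\deg\,\iota_1^!\beta_\rho=\deg\,\iota_0^!\beta_\rho=c$.

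Next I would identify both sides of this equality. Over $z=1$, the family restricted to $\GG_m$ is $\cM^\log_\rho(X_{\Sigma_\scrP})\times\GG_m$, so $\iota_1^![\cM^\log_\rho(\shX_\scrP/\AA^1)]=[\cM^\log_\rho(X_{\Sigma_\scrP})]$; commuting $\iota_1^!$ past $\iota_{\overline H}^!$ and using $\overline H_i\cap\{z=1\}=H_i$ together with \eqref{eq_vclass1}--\eqref{eq_N_toric} and the independence of $N_\rho^{\mathrm{toric}}$ of the choice of constraint, we get $c=N_\rho^{\mathrm{toric}}(X_{\Sigma_\scrP})$. Over $z=0$, flatness of $p$ gives $\iota_0^![\cM^\log_\rho(\shX_\scrP/\AA^1)]=[\,p^{-1}(0)\,]=\sum_{\tilde\sigma\in\tilde S_\rho}m_{\tilde\sigma}\,[\cM^\log_{\tilde\sigma}(\shX_\scrP/\AA^1)]$ for multiplicities $m_{\tilde\sigma}\in\Z_{\geq1}$, where I use the description of the central fibre of $\cM^\log_{\rho,\overline{\mathbf H}}(\shX_\scrP/\AA^1)\to\AA^1$ recalled above to see that its top-dimensional components are exactly the $\cM^\log_{\tilde\sigma}(\shX_\scrP/\AA^1)$ with $\tilde\sigma\in\tilde S_\rho$. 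Commuting $\iota_0^!$ past $\iota_{\overline H}^!$ and invoking \eqref{eq_vclass2}--\eqref{eq_N_tilde_sigma} then yields $c=\sum_{\tilde\sigma\in\tilde S_\rho}m_{\tilde\sigma}\,N_{\tilde\sigma}^{\mathrm{toric}}(\shX_\scrP/\AA^1)$.

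The remaining, and main, point is to show $m_{\tilde\sigma}=1$ for every $\tilde\sigma\in\tilde S_\rho$; this is exactly where the hypothesis that $\scrP$ is \emph{very good} enters. The number $m_{\tilde\sigma}$ is the order of vanishing of $p$ along the divisor $\cM^\log_{\tilde\sigma}(\shX_\scrP/\AA^1)$. \'Etale-locally at the generic point of that divisor, the log smooth morphism $p$ is toric and modeled on the morphism of cones $\overline{\RR_{\geq0}(\tilde\sigma,1)}\to\RR_{\geq0}=\Sigma(\AA^1)$ obtained by tropicalizing $\cM^\log_\rho(\shX_\scrP/\AA^1)\to\AA^1$, so $m_{\tilde\sigma}$ equals the index in $\Z$ of the image of the induced map on integral tangent spaces $T_{\overline{\RR_{\geq0}(\tilde\sigma,1)}}\to\Z$. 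By Definition \ref{def_very_good}(ii) this map is surjective, so $m_{\tilde\sigma}=1$, and combining the two displays above gives $N_\rho^{\mathrm{toric}}(X_{\Sigma_\scrP})=c=\sum_{\tilde\sigma\in\tilde S_\rho}N_{\tilde\sigma}^{\mathrm{toric}}(\shX_\scrP/\AA^1)$, which is \eqref{eq_decomp}. I expect the only genuinely delicate step to be the identification of the local tropical model of $p$ near $\cM^\log_{\tilde\sigma}(\shX_\scrP/\AA^1)$ — and thus of $m_{\tilde\sigma}$ with the tangent-space index; once this is in place, the rest is routine intersection theory resting on Lemmas \ref{lem_log_smooth} and \ref{lem_log_smooth1}.
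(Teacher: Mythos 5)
Your argument reaches the correct conclusion, but by a genuinely different route from the paper. The paper's proof applies the decomposition result of Abramovich--Chen--Gross--Siebert \cite[Corollary 3.2]{ACGSI} directly to the log smooth morphism $\cM^\log_\rho(\shX_\scrP/\AA^1)\to\AA^1$: that result already packages the statement $[\cM^\log_\rho(X_{\Sigma_\scrP})]=\sum_{\tilde\sigma} m_{\tilde\sigma}[\cM^\log_{\tilde\sigma}(\shX_\scrP/\AA^1)]$ with a combinatorial description of the $m_{\tilde\sigma}$, and the only thing left for the paper is to notice that the very-good hypothesis forces $m_{\tilde\sigma}=1$. You instead reprove the decomposition by hand: flatness of $p$ (via log regularity/Cohen--Macaulayness plus miracle flatness over the regular curve $\AA^1$), then conservation of number by pushing the Gysin-refined class forward to $\AA^1$, then a local toric computation identifying $m_{\tilde\sigma}$ with the index of the image of $T_{\overline{\RR_{\geq0}(\tilde\sigma,1)}}\to\Z$. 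Both proofs rest on the same three pillars — Lemma \ref{lem_log_smooth1}, the identification of the top-dimensional strata of the central fibre with the $\cM^\log_{\tilde\sigma}(\shX_\scrP/\AA^1)$ for $\tilde\sigma\in\tilde S_\rho$, and Definition \ref{def_very_good}(ii) to kill the multiplicities — and your computation of $m_{\tilde\sigma}$ as a lattice index is exactly the content the ACGS formula supplies. Your version is more self-contained (no appeal to \cite{ACGSI}) but is slightly longer and requires you to verify that log smoothness over the log regular curve $\AA^1$ really does give the CM property and that all components of $p^{-1}(0)$ dominate the expected strata, points that the black-box approach sidesteps. Both are valid; the paper's is shorter given that the decomposition formula is available.
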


\begin{proof}
By Lemma \ref{lem_log_smooth1}, the moduli space $\cM^\log_{\rho}(\shX_\scrP/\AA^1)$ is log smooth over $\AA^1$ and of dimension $r+1$. The general fiber of $\cM^\log_{\rho}(\shX_\scrP/\AA^1)$ is the $r$-dimensional moduli space $\cM^\log_\rho(X_{\Sigma_\scrP})$, whereas the irreducible components of the central fiber over $0 \in \AA^1$ are exactly the $r$-dimensional moduli spaces $\cM^\log_{\tilde{\sigma}}(\shX_\scrP/\AA^1)$ for $\Tilde{\sigma} \in \Tilde{S}_\rho$.
Hence, it follows from the decomposition result of \cite[Corollary 3.2]{ACGSI} for the fibers of log smooth morphisms that 
\begin{equation} \label{eq_decomp_classes}[\cM^\log_\rho(X_{\Sigma_\scrP})]=\sum_{\Tilde{\sigma}\in \Tilde{S}_\rho}[\cM^\log_{\tilde{\sigma}}(\shX_\scrP/\AA^1)]\,.\end{equation}
The multiplicities present in the general formula of \cite[Corollary 3.2]{ACGSI}  are all equal to $1$ in our situation because we are assuming that $\scrP$ is a very good polyhedral decomposition and so Definition \ref{def_very_good}(ii) is satisfied (geometrically, 
the central fiber of $\cM^\log_{\rho}(\shX_\scrP/\AA^1) \rightarrow \AA^1$ is reduced). Now, applying $\iota_{\overline{H}}^{!}$ to both sides of \eqref{eq_decomp_classes}, we obtain by \eqref{eq_vclass1}-\eqref{eq_vclass2} that 
\begin{equation}
[\cM^\log_{\rho,\mathbf{H}}(X_{\Sigma_\scrP})]^\virt=\sum_{\Tilde{\sigma}\in \Tilde{S}_\rho}[\cM^\log_{\tilde{\sigma},\overline{\mathbf{H}}}(\shX_\scrP/\AA^1)]^\virt\,.\end{equation}
Taking the degree on both sides gives \eqref{eq_decomp} by \eqref{eq_N_toric}-\eqref{eq_N_tilde_sigma}.
\end{proof}

\subsection{The log-tropical correspondence for a fixed tropical type}

In this section, we relate the log Gromov--Witten invariants $N_{\Tilde{\sigma}}^{\mathrm{toric}}(\shX_\scrP/\AA^1)$ defined in \eqref{eq_N_tilde_sigma} with the tropical multiplicities $N_\sigma^\trop$ defined in \eqref{eq_N_trop}.

Let $\tilde{\sigma}$ be a $(d-2)$-dimensional face 
of $\cM^\trop_{\omega,\boldsymbol{\gamma},\mathbf{A}}(\scrP)$ such that 
$\dim \fod_{L_{\mathrm{out}}}^{\Tilde{\sigma}}=d-1$ and $\dim \Tilde{\Phi}(\Tilde{\sigma})=d-2$. Tropical curves in the relative interior of $\Tilde{\sigma}$ are of type $\tau_{\tilde{\sigma}}$, and we defined in
\S \ref{sec_decomp} the moduli spaces $\cM^\log_{\tilde{\sigma}}(\shX_\scrP/\AA^1)$ and $\cM^\log_{\tilde{\sigma},\overline{\mathbf{H}}}(\shX_\scrP/\AA^1)$ 
as closures of the loci of stable log maps of type $\tau_{\tilde{\sigma}}$.
 The following lemma
 \ref{lem_closed}
 shows that stable log maps in 
$\cM^\log_{\tilde{\sigma},\overline{\mathbf{H}}}(\shX_\scrP/\AA^1)$ are automatically of type $\tau_{\tilde{\sigma}}$, and so taking the closure was actually unnecessary in this case.
Denote by $\cM^{\log,0}_{\tilde{\sigma}}(\shX_\scrP/\AA^1)$ the open dense locus in $\cM^\log_{\tilde{\sigma}}(\shX_\scrP/\AA^1)$ consisting of stable log maps of type $\tau_{\tilde{\sigma}}$.

\begin{lemma} \label{lem_closed}
Stable log maps in $\cM^\log_{\tilde{\sigma},\overline{\mathbf{H}}}(\shX_\scrP/\AA^1)$  are of type $\tau_{\tilde{\sigma}}$, that is, we have
\[ \cM^\log_{\tilde{\sigma},\overline{\mathbf{H}}}(\shX_\scrP/\AA^1) \subset \cM^{\log,0}_{\tilde{\sigma}}(\shX_\scrP/\AA^1) \,. \]
\end{lemma}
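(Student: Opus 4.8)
The plan is to show that any stable log map in $\cM^\log_{\tilde{\sigma},\overline{\mathbf{H}}}(\shX_\scrP/\AA^1)$, which a priori lies in the closure of the locus of type $\tau_{\tilde\sigma}$ and hence has tropicalization a face of $\cM^\trop_{\omega,\boldsymbol{\gamma},\overline{\mathbf{A}}}(\overline{\Sigma}_\scrP)$ that is a \emph{degeneration} of $\overline{\RR_{\geq0}(\tilde\sigma,1)}$, is forced by the constraints $\overline{\mathbf{H}}$ to have tropicalization exactly $\tau_{\tilde\sigma}$. The key dimension estimate is the one already established: by Theorem \ref{thm_trop}, every $(d-2)$-dimensional face $\sigma$ of $\cM^\trop_{\omega,\boldsymbol{\gamma},\mathbf{A}}$ with $\dim\fod^\sigma_{L_{\mathrm{out}}}=d-1$ has a trivalent graph $\Gamma_\sigma$ with all $\dim\fod_E^\sigma=d-1$ and $\dim\foj_v^\sigma=d-2$; combined with Definition \ref{def_general_constraints}(iii) (generality of $\mathbf{A}$), any face of $\cM^\trop_{\omega,\boldsymbol{\gamma}_I,\mathbf{A}_I}$ that is not trivalent has dimension strictly less than $d-2$, and the same therefore holds after refinement by a (very) good $\scrP$ and after passing to the degeneration family $\cM^\trop_{\omega,\boldsymbol{\gamma},\overline{\mathbf{A}}}(\overline{\Sigma}_\scrP)$.

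First I would make explicit what the tropicalization of a stable log map in $\cM^\log_{\tilde\sigma,\overline{\mathbf{H}}}(\shX_\scrP/\AA^1)$ can be: by \S\ref{subsec: The moduli space of mw log maps} it is a family of $(\omega,\boldsymbol{\gamma})$-marked tropical curves in $(\overline{M}_\RR,\overline{\Sigma}_\scrP)$ matching $\overline{\mathbf{A}}$ (the constraint tropicalizes the condition $f(x_i)\in\overline{H}_i$ via the toric morphisms $\pi_i$ of Lemma \ref{lem_log_lift}), so its type $\tau'$ indexes a face $\sigma'$ of $\cM^\trop_{\omega,\boldsymbol{\gamma},\overline{\mathbf{A}}}(\overline{\Sigma}_\scrP)$ contained in the closed face $\overline{\RR_{\geq0}(\tilde\sigma,1)}$. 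Since $\tau_{\tilde\sigma}$ is the generic type on that face, $\sigma'$ is a face of $\overline{\RR_{\geq0}(\tilde\sigma,1)}$, and I must rule out proper faces. Next I would run the standard dimension-counting / rigidity argument in the style of \cite[\S 4--6]{NS} and \cite{NS} (tracking legs at infinity as in \cite[Appendix]{GPS}): because $\tilde\sigma$ is $(d-2)$-dimensional with $\dim\fod_{L_{\mathrm{out}}}^{\tilde\sigma}=d-1$ and $\dim\tilde\Phi(\tilde\sigma)=d-2$, Theorem \ref{thm_trop} and Lemma \ref{lem_prep1} force $\Gamma_{\tilde\sigma}$ to be trivalent; any proper face $\sigma'\subsetneq\overline{\RR_{\geq0}(\tilde\sigma,1)}$ corresponds to contracting an edge, producing an overvalent vertex, and by the generality of $\mathbf{A}$ (Definition \ref{def_general_constraints}) the moduli of such curves matching $\overline{\mathbf{A}}$ has dimension strictly less than the $d-2$ expected dimension; but $\cM^\log_{\tilde\sigma,\overline{\mathbf{H}}}(\shX_\scrP/\AA^1)$ is the Gysin pullback of the $r$-dimensional (by Lemma \ref{lem_log_smooth1}) stratum $\cM^\log_{\tilde\sigma}(\shX_\scrP/\AA^1)$ along the codimension-$r$ embedding $\iota_{\overline{H}}$, hence $0$-dimensional, which contradicts a positive-codimension degeneration unless no point of it has a degenerate tropicalization. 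One should phrase this via the tropical-to-log correspondence: a log map with tropicalization in a proper face $\sigma'$ would, after imposing the $r$ constraints $\overline{\mathbf{H}}$, sweep out a stratum of strictly negative expected dimension, which is empty for general $\overline{\mathbf{H}}$ by Theorem \ref{thm_enum} applied to the subcurves, or directly because such $\sigma'$ has $\dim\tilde\Phi(\sigma')<d-2$.

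The cleanest route, and the one I would actually write, is to reduce to the already-proven Theorem \ref{thm_enum}: $\cM^\log_{\tilde\sigma}(\shX_\scrP/\AA^1)$ restricted over $\GG_m\subset\AA^1$ is the trivial family with fiber $\cM^\log_{\tau_{\tilde\sigma}}(X_{\Sigma_\scrP})$, and the locus where the type strictly degenerates is the boundary of this stratum closure, which has strictly smaller dimension; since $\iota_{\overline{H}}$ is a regular embedding of codimension $r$ and $\cM^\log_{\tilde\sigma}(\shX_\scrP/\AA^1)$ has pure dimension $r$, for general $\overline{\mathbf{H}}$ the pullback $\cM^\log_{\tilde\sigma,\overline{\mathbf{H}}}(\shX_\scrP/\AA^1)$ is a finite set of points, and finiteness plus the dimension drop on the boundary forces every such point to lie in the open stratum $\cM^{\log,0}_{\tilde\sigma}(\shX_\scrP/\AA^1)$ — this is a transversality/Kleiman-style genericity statement for the evaluation morphism $\underline{\nu}$ restricted to boundary strata, exactly as in the proof of Theorem \ref{thm_enum}. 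I expect the main obstacle to be bookkeeping: one must verify that imposing the $r$ hyperplane constraints $\overline{\mathbf{A}}$ on a tropical curve with a non-trivalent (overvalent) vertex genuinely cuts dimension below $d-2$ \emph{within the degeneration family} $\cM^\trop_{\omega,\boldsymbol{\gamma},\overline{\mathbf{A}}}(\overline{\Sigma}_\scrP)$ and not merely in the central fiber, which requires combining Lemma \ref{lem_dimension} with the very-goodness of $\scrP$ (Definition \ref{def_very_good}(ii)) so that the extra $\RR_{\geq0}$-direction contributes exactly one to the dimension; once that is in place the argument is a direct application of the genericity of $\overline{\mathbf{H}}$.
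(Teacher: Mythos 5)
There is a genuine gap, and it stems from reversing the direction of the log/tropical stratification. You treat the stable log maps in $\cM^\log_{\tilde\sigma,\overline{\mathbf{H}}}(\shX_\scrP/\AA^1)$ as having tropicalization in a \emph{proper face} of $\overline{\RR_{\geq0}(\tilde\sigma,1)}$ (a smaller cone, coming from contracting an edge of $\Gamma_{\tilde\sigma}$), and then you set out to rule out those. That is the wrong direction. Contracting an edge in the tropical curve corresponds to \emph{smoothing} the corresponding node of the log curve, i.e.\ to a \emph{less} degenerate log map with a strictly smaller ghost monoid — such a map lies in the boundary of a lower-dimensional cone but is \emph{not} in the closure of the type-$\tau_{\tilde\sigma}$ stratum. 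By contrast, the closure $\cM^\log_{\tilde\sigma}(\shX_\scrP/\AA^1)$ consists of type-$\tau_{\tilde\sigma}$ maps together with their \emph{degenerations}, which have \emph{more} nodes, hence dual graphs with more edges, hence basic monoids of strictly larger rank, hence tropical types whose cones strictly \emph{contain} $\overline{\RR_{\geq0}(\tilde\sigma,1)}$ as a face and are of dimension $> d-2$.

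Once the direction is corrected, the paper's argument is immediate and does not require any of the extra machinery you invoke (Kleiman transversality, Theorem \ref{thm_enum} applied to subcurves, the $0$-dimensionality of the Gysin pullback, the very-goodness condition on $\scrP$). If a log map in $\cM^\log_{\tilde\sigma,\overline{\mathbf{H}}}(\shX_\scrP/\AA^1)$ had type $\tau\neq\tau_{\tilde\sigma}$, its tropical type would parametrize a face of $\cM^\trop_{\omega,\boldsymbol\gamma,\mathbf A}(\scrP)$ of dimension strictly greater than $d-2$; but Definition \ref{def_general_constraints}(ii) forces every face of $\cM^\trop_{\omega,\boldsymbol\gamma,\mathbf A}$ to have dimension $\leq d-2$, and the same bound passes to the refinement $\cM^\trop_{\omega,\boldsymbol\gamma,\mathbf A}(\scrP)$ as explained in \S\ref{sec_polyh}. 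This gives the contradiction directly. You did correctly identify the crucial input — the genericity of the constraint $\mathbf A$ controlling the tropical dimensions — but you apply it to the wrong set of strata, which is why you are then driven to bring in additional transversality arguments that are neither needed nor, as written, well-founded (a "positive-codimension degeneration" of the kind you describe would in fact be a smoothing and is not in the closure you need to analyze).
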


\begin{proof}
If there were a stable log map in $\cM^\log_{\tilde{\sigma},\overline{\mathbf{H}}}(\shX_\scrP/\AA^1)$ of type $\tau \neq \tau_{\Tilde{\sigma}}$, then the closure of the tropical curves of type $\tau$ would be a face of $\cM^\trop_{\omega,\boldsymbol{\gamma},\mathbf{A}}(\scrP)$ 
containing strictly $\tau_{\Tilde{\sigma}}$, and so in particular of dimension $> d-2$. But as the $\boldsymbol\gamma$-constraint $\mathbf{A}$
is general, every face of
$\cM^\trop_{\omega,\boldsymbol{\gamma},\mathbf{A}}$ is of dimension $\leq d-2$ by Definition \ref{def_general_constraints}(ii) and the same is true for   $\cM^\trop_{\omega,\boldsymbol{\gamma},\mathbf{A}}(\scrP)$ because $\cM^\trop_{\omega,\boldsymbol{\gamma},\mathbf{A}}(\scrP)$
is a polyhedral refinement of $\cM^\trop_{\omega,\boldsymbol{\gamma},\mathbf{A}}$
by \S\ref{sec_polyh}. Hence, we obtained a contradiction.
\end{proof}

By Lemma \ref{lem_closed}, it will be enough to study stable log maps of type $\tau_{\Tilde{\sigma}}$. We give below an explicit description of stable log maps of type $\tau_{\Tilde{\sigma}}$.

By Definition \ref{def:good polyhedral} of a good polyhedral decomposition, for each vertex $v$ of $\Gamma_{\Tilde{\sigma}}$, $\foj_v^{\Tilde{\sigma}}$ lies in the interior of an
$n-2$-dimensional cell $\mathcal{P}_v$ of $\scrP$, and for each edge or leg  $E$,
$\fod_E^{\Tilde{\sigma}}$ lies in the interior of an $n-1$-dimensional cell
$\mathcal{P}_E$ of $\scrP$. Let $Z_v, Z_{E}\subseteq \shX_{\scrP}$ be the
closed toric strata corresponding to $\mathcal{P}_v, \mathcal{P}_E$ respectively. The strata $Z_v$ and $Z_E$ are toric subvarieties of $\mathcal{X}_\scrP$ with $\dim Z_v=2$ and $\dim Z_E=1$. Denote by $T_v$ and $T_E$ the integral tangent spaces to $\mathcal{P}_v$ and $\mathcal{P}_E$ respectively. Then, $Z_v$ is a toric surface
given by a fan in the lattice
\[
M_v:=M/T_v\,,
\]
and we have $Z_E \simeq \PP^1$ with fan naturally in $M_E:=M/T_E$.

If $f: C/W \rightarrow \shX_{\scrP}/\AA^1$ is a stable log map of type $\tau_{\Tilde{\sigma}}$, then the dual graph of $C$ is given by the graph $\Gamma_{\Tilde{\sigma}}$. We denote by $\underline{C}_v \simeq \PP^1$ the irreducible component corresponding to $v \in V(\Gamma_{\Tilde{\sigma}})$. Moreover, for every edge or leg $E$ of $\Gamma_{\Tilde{\sigma}}$, we denote by $x_E$ the corresponding special point (node or marked point) on $C$. For every $v \in V(\Gamma_{\Tilde{\sigma}})$, we have $f(C_v) \subset Z_v$ and the only points of $C_v$ mapped by $f$ in the toric boundary of $Z_v$ are the points $x_E$ for $E$ an edge or leg adjacent to $v$. In addition, the point $f(x_E)$ is contained in the interior of the big torus orbit $\GG_m$ of $Z_E \simeq \PP^1$, and the contact order of $f(C_v)$ to $Z_E$ at $f(x_E)$ is specified by the image $\bar{u}_{v,E}$ in $M_v=M/T_v$ of the weighted direction $u_{v,E}\in M$. In particular, the morphism of schemes $\underline{f}_v \colon \underline{C}_v \rightarrow Z_v$ obtained by restricting $f$ to $\underline{C}_v$ is torically transverse as in \cite[Definition 4.1]{NS}. In fact, as the vertices of $\Gamma_{\Tilde{\sigma}}$ are either divalent or trivalent, $\underline{f}_v \colon \underline{C}_v \rightarrow Z_v$ is a line in the sense of \cite[Definition 5.1]{NS}.
Moreover, if $E$ is the leg $L_i$ of $\Gamma_{\Tilde{\sigma}}$ for some $1\leq i \leq r$, then the intersection $Z_{L_i} \cap \overline{H_i}$ consists of a single point, because the defining equation of $\overline{H}_i$ is $z^{\frac{\gamma_i}{|\gamma_i|}}=c_i$ and $\frac{\gamma_i}{|\gamma_i|}$ is primitive. The stable log map $f: C/W \rightarrow \shX_{\scrP}/\AA^1$ matches $\overline{\mathbf{H}}$ if and only if $\{f(x_{L_i})\}=Z_{L_i} \cap \overline{H}_i$ for all $1 \leq i \leq r$.

\begin{lemma}  \label{lem_reduced}
The moduli stack $\cM^\log_{\tilde{\sigma},\overline{\mathbf{H}}}(\shX_\scrP/\AA^1)$ is reduced and $0$-dimensional.
\end{lemma}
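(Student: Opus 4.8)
The plan is to show that $\cM^\log_{\tilde{\sigma},\overline{\mathbf{H}}}(\shX_\scrP/\AA^1)$ consists of finitely many points, each reduced, by giving an explicit parametrization of stable log maps of type $\tau_{\tilde\sigma}$ matching $\overline{\mathbf{H}}$ and counting the choices. First I would recall from the discussion preceding the lemma that, by Lemma~\ref{lem_closed}, every stable log map in $\cM^\log_{\tilde{\sigma},\overline{\mathbf{H}}}(\shX_\scrP/\AA^1)$ is of type $\tau_{\tilde\sigma}$, so its dual graph is $\Gamma_{\tilde\sigma}$ and each component $\underline f_v\colon \underline C_v\to Z_v$ is a line in a toric surface $Z_v$ (in the sense of \cite[Definition 5.1]{NS}), landing torically transversally with prescribed contact orders $\bar u_{v,E}$ at the points $x_E$. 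The domain curve is rigid as a prestable curve (it is a chain/tree of $\PP^1$'s with marked and nodal points determined combinatorially), so the moduli is governed by the choice, for each vertex $v$, of the image line $\underline f_v$ in $Z_v$, subject to the matching conditions at the nodes and the constraints $\overline{\mathbf{H}}$ at the legs, together with the choice of the log enhancement.

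The key steps, in order: (1) Fix the target stratum structure: each $Z_v$ is a toric surface with fan in $M_v=M/T_v$, each $Z_E\simeq \PP^1$ with fan in $M_E=M/T_E$, and the contact data $\bar u_{v,E}$ is fixed by $\tau_{\tilde\sigma}$. (2) Describe a line in $Z_v$ with two or three prescribed contact points with the toric boundary: by \cite[Proposition 5.3 / \S5]{NS}, such a torically transverse line through a fixed general point (or with a fixed contact point on the boundary) is unique up to the torus action, and the torus orbit of such lines is cut down to finitely many by the incidence conditions coming from the adjacent components. Concretely, for a trivalent $v$ with children $E_1,E_2$ and parent $E_{\mathrm{out}}$, once the intersection points $f(x_{E_i})\in Z_{E_i}\simeq \PP^1$ are fixed, the line in $Z_v$ through those two points is determined up to finitely many choices (a lattice-index count, exactly the local multiplicity $|\omega(\gamma_{E_1},\gamma_{E_2})|$ appearing in Lemma~\ref{lem: product2 for Ntrop}), and this in turn determines $f(x_{E_{\mathrm{out}}})$. (3) Propagate this from the leaves to the root: at each leg $L_i$, the matching condition $\{f(x_{L_i})\}=Z_{L_i}\cap \overline H_i$ is a single point because $\gamma_i/|\gamma_i|$ is primitive, so the induction has a well-defined finite set of starting data; at each interior vertex the finitely-many-choices step above applies; finally the root leg $L_{\mathrm{out}}$ imposes no further constraint (its direction is already balanced). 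This shows $\underline{\cM}^\log_{\tilde\sigma,\overline{\mathbf H}}(\shX_\scrP/\AA^1)$ is a finite set of points. (4) For the log structure: over each such underlying stable map, the basic log structure is rigid — the basic monoid $Q_{\tau_{\tilde\sigma}}$ has $\rk Q_{\tau_{\tilde\sigma}}^\gp=d-2$, but the matching with $\overline{\mathbf H}$ (equivalently, the strictness statement in \S\ref{subsec: The moduli space of mw log maps} and the fiber diagram \eqref{eq_diag_deg1}) pins down the map to $(\PP^1)^r$ and hence the log point; there is a unique log enhancement, with no infinitesimal log deformations and no automorphisms of the log structure beyond the finite ones already accounted for. (5) Reducedness: invoke that $\cM^\log_{\tilde\sigma}(\shX_\scrP/\AA^1)$ is log smooth over $\AA^1$ of pure dimension $r$ by Lemma~\ref{lem_log_smooth1}, so $\cM^{\log,0}_{\tilde\sigma}$ is smooth of dimension $r$ over $\AA^1$; the map $\underline\nu$ to $(\PP^1)^r$ realizing $\overline{\mathbf H}$ as a complete intersection of $r$ general fibers is transverse by the genericity of $[\mathbf H]$ (as in Theorem~\ref{thm_enum}), so the fiber $\cM^\log_{\tilde\sigma,\overline{\mathbf H}}(\shX_\scrP/\AA^1)$ is smooth of dimension $0$, i.e. reduced.

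The main obstacle I expect is Step~(2)--(3): verifying that the line-by-line reconstruction is genuinely finite and produces reduced points, i.e. that the incidence conditions at the nodes are transverse. This is exactly the content of the Nishinou--Siebert analysis of lines in toric surfaces \cite[\S5]{NS}, but here it must be carried out in the relative setting over $\AA^1$ (with target $\shX_\scrP$) and with the constraints $\overline H_i$ at the legs, and one must check that the genericity of $\mathbf A$ (through Definition~\ref{def_general_constraints} and Theorem~\ref{thm_trop}) together with the very-goodness of $\scrP$ guarantees that all the relevant lattices $T_v$, $T_E$ behave as in the rigid case — in particular that $\dim T_v\Gamma_{\tilde\sigma}=2$ at every vertex, which is Theorem~\ref{thm_trop}(ii), so that each $Z_v$ is genuinely a surface and each local count is the nonzero integer $|\omega(\gamma_{E_1,v},\gamma_{E_2,v})|$. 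Once finiteness and torically-transverse-ness are established, reducedness follows formally from log smoothness and the Gysin pullback along $\iota_{\overline H}$ being a transverse complete intersection.
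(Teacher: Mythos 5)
Your proposal takes a genuinely different route from the paper, but it has two serious gaps.

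The paper's proof is a short deformation-theoretic argument: it identifies the tangent space at a point $f$ with $\ker(\overline{\mathrm{gl}}_{\Tilde{\sigma}}\otimes\kk)$, where $\overline{\mathrm{gl}}_{\Tilde{\sigma}}$ is a ``reduced'' gluing map $\prod_v M_v \to \prod_E M_E \times \prod_i M/\gamma_i^\perp$, shows $\overline{\mathrm{gl}}_{\Tilde{\sigma}}\otimes\kk$ is surjective by lifting from the surjectivity of $\mathrm{gl}_{\Tilde{\sigma}}\otimes\kk$ (Lemma~\ref{lem_finite}), and then does the Euler-characteristic count on the trivalent graph to see domain and codomain have equal dimension, forcing $\overline{\mathrm{gl}}_{\Tilde{\sigma}}\otimes\kk$ to be an isomorphism and hence the tangent space to vanish. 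This simultaneously gives 0-dimensionality and reducedness at each point, with no separate finiteness argument needed.

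Your proposal instead tries to enumerate the stable log maps directly. Two things go wrong. First, Step~(4) is simply false: there is \emph{not} a unique log enhancement of a given pre-log curve of type $\tau_{\tilde\sigma}$. The number of log gluings is $|\coker\Psi_{\Tilde{\sigma}}|$, which is typically $>1$; this count is the whole content of the punctured-gluing part of the proof of Theorem~\ref{thm_count}, which uses the present Lemma as an input. In fact your enumeration strategy (Steps (1)--(3) especially, with line counts in the surfaces $Z_v$) is essentially the pre-log curve count in Theorem~\ref{thm_count}, so if you developed it fully you would be running into a circularity — and in any case the lattice index for the local line count is $|M/(\cL_{1,v}^{\sat}+\cL_{2,v}^{\sat})|$, not $|\omega(\gamma_{E_{1,v}},\gamma_{E_{2,v}})|$; you are conflating the primitive-direction line multiplicity with the full tropical vertex weight.

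Second, Step~(5) does not establish reducedness. You invoke Theorem~\ref{thm_enum}, but that result is about the moduli space $\cM^\log_{\omega,\boldsymbol\gamma,\mathbf H}(X_\Sigma)$ of maps to the \emph{general} fiber $X_\Sigma$ with a \emph{generic} $[\mathbf H]\in(\PP^1)^r$. Here the target is the degenerating family $\shX_\scrP/\AA^1$ and, more importantly, you are sitting over the \emph{central} fiber with constraints $\overline{\mathbf H}$ that are rigidly determined by the rational tropical constraint $\mathbf A$ (via the points $P_i\in M\cap A_i$), so there is no genericity to invoke. You also slide from ``log smooth'' to ``smooth'' for $\cM^{\log,0}_{\tilde\sigma}$, which is not automatic. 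The point of the paper's approach is precisely to avoid needing a Bertini-type transversality statement in the degenerate setting: it computes the tangent space of the fiber at each point directly and shows it is zero, which is stronger than and independent of any generic-transversality input.
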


\begin{proof}
The deformation theory of stable log maps to $\shX_\scrP/\AA^1$ is controlled by the log tangent bundle $T_{\shX_\scrP/\AA^1}^{\log}=M \otimes \cO_{\shX_\scrP}$. Let $f: C/W \rightarrow \shX_{\scrP}/\AA^1$ be a stable log map in $\cM^\log_{\tilde{\sigma},\overline{\mathbf{H}}}(\shX_\scrP/\AA^1)$. The map $f$ is of type $\tau_{\Tilde{\sigma}}$ by Lemma \ref{lem_closed}.
As $C$ is of genus $0$, we have $H^1(C,
f^\star T_{\shX_\scrP/\AA^1}^{\log})=0$, and so the tangent space to $f$ in $\cM^{\log}_{\omega,\boldsymbol{\gamma},\mathbf{H}}(\shX_\scrP/\AA^1)$ is $T_f := H^0(C,N_f)$, where $N_f:= f^\star T_{\shX_\scrP/\AA^1}^{\log}/T_{C/W}^\log$. As $f$ is a union of lines $\underline{f}_v: \underline{C}_v \rightarrow Z_v$, one checks as in the proof of \cite[Theorem 7.3]{NS} that $T_f=\ker \mathrm{gl}_{\Tilde{\sigma}} \otimes \kk$, where $\mathrm{gl}_{\Tilde{\sigma}}$ is the gluing map
\[\mathrm{gl}_{\Tilde{\sigma}}: 
\prod_{v \in V(\Gamma_{\Tilde{\sigma}})} M 
\longrightarrow \prod_{E\in E(\Gamma_{\Tilde{\sigma}})} M/\Z u_E \times \prod_{i=1}^r M/\gamma_i^{\perp}
\]
as in \eqref{eq_gl}.  
On the other hand, the tangent space $\overline{T}_f$ to $f$ in $\cM^\log_{\tilde{\sigma},\overline{\mathbf{H}}}(\shX_\scrP/\AA^1)$ is obtained by restricting the infinitesimal deformations to those tangent to the strata $Z_v$ and $Z_E$, and so is the kernel of the restricted map $\overline{\mathrm{gl}}_{\Tilde{\sigma}}\otimes \kk$, where 
\[ \overline{\mathrm{gl}}_{\Tilde{\sigma}}: 
\prod_{v \in V(\Gamma_{\Tilde{\sigma}})} M_v 
\longrightarrow \prod_{E\in E(\Gamma_{\Tilde{\sigma}})} M_E \times \prod_{i=1}^r M/\gamma_i^{\perp}\,.
\]
As $\dim \Tilde{\sigma}=d-2$, the map $\mathrm{gl}_{\Tilde{\sigma}} \otimes \kk$ is surjective by Lemma \ref{lem_finite}. The natural projections $M \rightarrow M_v$ and $M \rightarrow M_E$ induce a commutative diagram 
\[\begin{tikzcd}
\prod_{v \in V(\Gamma_{\Tilde{\sigma}})} M \arrow[r,"\mathrm{gl}_{\Tilde{\sigma}}"]
\arrow[d]
&
\prod_{E\in E(\Gamma_{\Tilde{\sigma}})} M \times \prod_{i=1}^r M/\gamma_i^{\perp} 
\arrow[d]
\\
\prod_{v \in V(\Gamma_{\Tilde{\sigma}})} M_v  \arrow[r,"\overline{\mathrm{gl}}_{\Tilde{\sigma}}"] &
\prod_{E\in E(\Gamma_{\Tilde{\sigma}})} M_E \times \prod_{i=1}^r M/\gamma_i^{\perp}\,,
\end{tikzcd}\]
As the right vertical arrow of this diagram is surjective, the surjectivity of $\mathrm{gl}_{\Tilde{\sigma}} \otimes \kk$ implies the surjectivity of $\overline{\mathrm{gl}}_{\Tilde{\sigma}} \otimes \kk$. Finally, because the trivalent graph obtained from $\Gamma_{\Tilde{\sigma}}$ by erasing the divalent vertices has $r+1$ legs, $r-1$ vertices, and $r-2$ edges, the difference between the dimensions of the domain and the codomain of $\overline{\mathrm{gl}}_{\Tilde{\sigma}} \otimes \kk$ is $2(r-1)-(r-2+r)=0$, and so $\overline{\mathrm{gl}}_{\Tilde{\sigma}} \otimes \kk$ is actually an isomorphism. In particular, $\overline{T}_f=0$ and so 
$\cM^\log_{\tilde{\sigma},\overline{\mathbf{H}}}(\shX_\scrP/\AA^1)$ is reduced of dimension $0$ at the point $f$. 
\end{proof}

\begin{lemma}\label{lem_enum}
The log Gromov--Witten invariant $N_{\Tilde{\sigma}}^\mathrm{toric}$ is equal to the count with automorphisms of the $(\omega,\boldsymbol{\gamma})$-marked stable log maps to $\shX_\scrP/\AA^1$ of type $\tau_{\tilde{\sigma}}$ and matching $\overline{\mathbf{H}}$.
\end{lemma}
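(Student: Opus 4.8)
The plan is to combine the reducedness and dimension-zero statement of Lemma \ref{lem_reduced} with the very-goodness of $\scrP$ to show that the virtual fundamental class $[\cM^\log_{\tilde{\sigma},\overline{\mathbf{H}}}(\shX_\scrP/\AA^1)]^\virt$ agrees with the actual fundamental class, and then to identify that fundamental class with the weighted point-count. First I would recall that $\cM^\log_{\tilde{\sigma},\overline{\mathbf{H}}}(\shX_\scrP/\AA^1)$ was defined, via the fiber diagram \eqref{eq_diag_deg1}, as the Gysin pullback $\iota_{\overline{H}}^! [\cM^\log_{\tilde{\sigma}}(\shX_\scrP/\AA^1)]$ of the $r$-dimensional fundamental class of the log-smooth space $\cM^\log_{\tilde{\sigma}}(\shX_\scrP/\AA^1)$ along the regular codimension-$r$ embedding $\iota_{\overline{H}}$. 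By Lemma \ref{lem_reduced}, the resulting scheme is reduced and $0$-dimensional, hence is a finite disjoint union of reduced points (as a stack, a finite union of reduced gerbes over points). For such a space the refined Gysin class coincides with the ordinary fundamental class, so $[\cM^\log_{\tilde{\sigma},\overline{\mathbf{H}}}(\shX_\scrP/\AA^1)]^\virt = [\cM^\log_{\tilde{\sigma},\overline{\mathbf{H}}}(\shX_\scrP/\AA^1)]$.

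Next I would take degrees. By \eqref{eq_N_tilde_sigma}, $N_{\tilde{\sigma}}^{\mathrm{toric}}(\shX_\scrP/\AA^1) = \deg[\cM^\log_{\tilde{\sigma},\overline{\mathbf{H}}}(\shX_\scrP/\AA^1)]^\virt = \deg[\cM^\log_{\tilde{\sigma},\overline{\mathbf{H}}}(\shX_\scrP/\AA^1)]$, and the degree of the fundamental class of a reduced proper $0$-dimensional Deligne--Mumford stack is precisely the number of its geometric points counted with weight $1/|\Aut|$, i.e.\ the count with automorphisms. By Lemma \ref{lem_closed}, every stable log map parametrized by $\cM^\log_{\tilde{\sigma},\overline{\mathbf{H}}}(\shX_\scrP/\AA^1)$ is automatically of type $\tau_{\tilde{\sigma}}$, so this count is exactly the count with automorphisms of $(\omega,\boldsymbol{\gamma})$-marked stable log maps to $\shX_\scrP/\AA^1$ of type $\tau_{\tilde{\sigma}}$ and matching $\overline{\mathbf{H}}$, which is the claim.

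The only point requiring any care is the passage from the virtual class to the ordinary fundamental class: one must confirm that the Gysin pullback $\iota_{\overline{H}}^!$ applied to the fundamental class of the smooth (log-smooth, hence smooth after stripping the log structure) ambient moduli space $\cM^\log_{\tilde{\sigma}}(\shX_\scrP/\AA^1)$ really does produce the fundamental cycle of the scheme-theoretic fiber product, once that fiber product is known to be reduced of the expected dimension $0$. This is a standard fact about refined Gysin homomorphisms for regular embeddings (see \cite[Chapter 6]{Fult}): when the pullback locus has the expected dimension, the refined Gysin class is the fundamental class of the pullback scheme, and reducedness was established in Lemma \ref{lem_reduced}. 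I expect this to be the main (and essentially only) obstacle; everything else is bookkeeping.

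\begin{proof}
By \eqref{eq_diag_deg1} and \eqref{eq_vclass2}, the class $[\cM^\log_{\tilde{\sigma},\overline{\mathbf{H}}}(\shX_\scrP/\AA^1)]^\virt$ is the refined Gysin pullback $\iota_{\overline{H}}^! [\cM^\log_{\tilde{\sigma}}(\shX_\scrP/\AA^1)]$ along the regular embedding $\iota_{\overline{H}}\colon \prod_{i=1}^r \overline{H}_i \hookrightarrow \prod_{i=1}^r \overline{D}_i$ of codimension $r$, where $\cM^\log_{\tilde{\sigma}}(\shX_\scrP/\AA^1)$ is of pure dimension $r$ by Lemma \ref{lem_log_smooth1}. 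By Lemma \ref{lem_reduced}, the fiber product $\cM^\log_{\tilde{\sigma},\overline{\mathbf{H}}}(\shX_\scrP/\AA^1)$ is reduced and of pure dimension $0$, which is the expected dimension. Hence, by the standard properties of refined Gysin homomorphisms for regular embeddings \cite[Chapter 6]{Fult}, the refined Gysin class coincides with the fundamental class of the (reduced) fiber product:
\[ [\cM^\log_{\tilde{\sigma},\overline{\mathbf{H}}}(\shX_\scrP/\AA^1)]^\virt = [\cM^\log_{\tilde{\sigma},\overline{\mathbf{H}}}(\shX_\scrP/\AA^1)]\,. \]
Taking degrees and using \eqref{eq_N_tilde_sigma},
\[ N_{\tilde{\sigma}}^{\mathrm{toric}}(\shX_\scrP/\AA^1) = \deg [\cM^\log_{\tilde{\sigma},\overline{\mathbf{H}}}(\shX_\scrP/\AA^1)]\,. \]
Since $\cM^\log_{\tilde{\sigma},\overline{\mathbf{H}}}(\shX_\scrP/\AA^1)$ is a reduced, proper, $0$-dimensional Deligne--Mumford stack, the degree of its fundamental class equals the number of its geometric points, each counted with weight $1/|\Aut|$. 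Finally, by Lemma \ref{lem_closed}, every stable log map in $\cM^\log_{\tilde{\sigma},\overline{\mathbf{H}}}(\shX_\scrP/\AA^1)$ is of type $\tau_{\tilde{\sigma}}$. Therefore $N_{\tilde{\sigma}}^{\mathrm{toric}}(\shX_\scrP/\AA^1)$ is the count with automorphisms of the $(\omega,\boldsymbol{\gamma})$-marked stable log maps to $\shX_\scrP/\AA^1$ of type $\tau_{\tilde{\sigma}}$ and matching $\overline{\mathbf{H}}$.
\end{proof}
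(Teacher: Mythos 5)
Your proof is correct and takes essentially the same route as the paper: both rely on Lemma~\ref{lem_reduced} to get reducedness and dimension zero, identify the virtual class with the fundamental class via the Gysin-pullback description \eqref{eq_vclass2}, and then read off the degree as a point count with automorphisms. (Incidentally, the paper's own proof mis-cites Lemma~\ref{lem_finite} in two places where Lemma~\ref{lem_reduced} is clearly intended; you cited the correct lemma.) One small cautionary remark: your parenthetical ``log-smooth, hence smooth after stripping the log structure'' is false in general --- a log smooth scheme can have toric singularities. What is actually used is that $\cM^\log_{\tilde{\sigma}}(\shX_\scrP/\AA^1)$ is Cohen--Macaulay (it is toroidal), together with the fact that the fiber product is of the expected dimension and reduced, so that the refined Gysin class agrees with the ordinary fundamental cycle. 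Better yet, the proof of Lemma~\ref{lem_reduced} shows that the gluing map $\overline{\mathrm{gl}}_{\tilde\sigma}\otimes\kk$ is an isomorphism, i.e.\ the intersection with $\prod_i \overline{H}_i$ is transverse, which directly forces all multiplicities in $\iota_{\overline{H}}^!$ to be~$1$ without appealing to smoothness of the ambient space.
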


\begin{proof}
By Lemma \ref{lem_reduced}, $\cM^\log_{\tilde{\sigma},\overline{\mathbf{H}}}(\shX_\scrP/\AA^1)$ is a $0$-dimensional scheme and so \eqref{eq_vclass2} implies that 
$[\cM^\log_{\tilde{\sigma},\overline{\mathbf{H}}}(\shX_\scrP/\AA^1)]^\virt=[\cM^\log_{\tilde{\sigma},\overline{\mathbf{H}}}(\shX_\scrP/\AA^1)]$, and so $N_{\Tilde{\sigma}}^\trop=\deg [\cM^\log_{\tilde{\sigma},\overline{\mathbf{H}}}(\shX_\scrP/\AA^1)]$ by \eqref{eq_N_tilde_sigma}. 
By Lemma \ref{lem_reduced}, $\cM^\log_{\tilde{\sigma},\overline{\mathbf{H}}}(\shX_\scrP/\AA^1)$ is also reduced, and so $N_{\Tilde{\sigma}}^\trop$ is equal to the count with automorphisms of points in $\cM^\log_{\tilde{\sigma},\overline{\mathbf{H}}}(\shX_\scrP/\AA^1)$. 
\end{proof}

Recall from \S\ref{sec_polyh} that $\Tilde{\sigma}$ is contained in a $(d-2)$-dimensional face $\sigma$ of $\cM_{\omega, \boldsymbol{\gamma},\mathbf{A}}$, that we denote by $T_\sigma$ and $T_{\Tilde{\sigma}}$ the integral tangent spaces to $\sigma$ and $\Tilde{\sigma}$ respectively, and that the natural inclusion $T_{\Tilde{\sigma}} \subset T_\sigma$ is of finite index by Lemma \ref{lem:two gammas compare}.
We can now state the log-tropical correspondence theorem for a fixed face $\Tilde{\sigma}$.

\begin{theorem} \label{thm_count}
Fix a tropical data $(\omega,\boldsymbol{\gamma},\mathbf{A})$ with rational $\mathbf{A}$ as in \eqref{Eq:tropical data}, and let $\scrP$ be a very good polyhedral decomposition for $(\omega,\boldsymbol{\gamma},\mathbf{A})$. Let $\tilde{\sigma}$ be a $(d-2)$-dimensional face 
of $\cM^\trop_{\omega,\boldsymbol{\gamma},\mathbf{A}}(\scrP)$ such that 
$\dim \fod_{L_{\mathrm{out}}}^{\Tilde{\sigma}}=d-1$ and $\dim \Tilde{\Phi}(\Tilde{\sigma})=d-2$, and let $\sigma$ be the
$(d-2)$-dimensional face of $\cM_{\omega, \boldsymbol{\gamma},\mathbf{A}}$ containing $\Tilde{\sigma}$.
Then, we have 
\begin{equation}
N_{\tilde{\sigma}}^{\mathrm{toric}} (\shX_\scrP/\AA^1)= \frac{N_\sigma^\trop}{|T_\sigma/T_{\Tilde{\sigma}}|}\,,
\end{equation}
where 
$N_{\Tilde{\sigma}}^{\mathrm{toric}}(\shX_\scrP/\AA^1)$ is the log Gromov--Witten invariant defined in \eqref{eq_N_tilde_sigma} and $N_\sigma^\trop$ is the tropical multiplicity defined in \eqref{eq_N_trop}.
\end{theorem}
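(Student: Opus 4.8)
The plan is to compare the two enumerative counts by directly analyzing the moduli space $\cM^\log_{\tilde\sigma,\overline{\mathbf H}}(\shX_\scrP/\AA^1)$ via the gluing description of stable log maps of type $\tau_{\tilde\sigma}$ obtained in the discussion preceding Lemma \ref{lem_reduced}. By Lemma \ref{lem_enum}, $N_{\tilde\sigma}^{\mathrm{toric}}(\shX_\scrP/\AA^1)$ is the number (counted with automorphisms) of $(\omega,\boldsymbol\gamma)$-marked stable log maps to $\shX_\scrP/\AA^1$ of type $\tau_{\tilde\sigma}$ matching $\overline{\mathbf H}$. Such a stable log map is assembled from a collection of lines $\underline f_v\colon \underline C_v\simeq \PP^1\to Z_v$ in the toric surfaces $Z_v$, torically transverse and with prescribed contact orders $\bar u_{v,E}$ at the points $x_E$, subject to the matching constraints $f(x_{L_i})\in \overline{H}_i$ and to gluing: the images $f(x_E)$ at an interior edge $E$, viewed as points in $\GG_m\subset Z_E\simeq\PP^1$, must agree when computed from the two adjacent surfaces. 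First I would recall that on a toric surface a line with two prescribed contact orders to toric divisors is, up to the torus action, rigid — so the combinatorial content of a stable log map of type $\tau_{\tilde\sigma}$ is exactly a choice of a lift of the tropical curve $h\in\Int(\tilde\sigma)$ together with a $\GG_m$-worth of data per vertex encoding the position of the line in $Z_v$, all glued along edges; this is precisely the content of Nishinou--Siebert's \cite[Theorem 7.3]{NS} and of the proof of Lemma \ref{lem_reduced}.

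The key step is then a counting argument via a Cayley--Hamilton / torsor computation. Writing the solutions of the gluing problem as a torsor over the kernel of the relevant gluing map and counting via cokernels: the count of stable log maps of type $\tau_{\tilde\sigma}$ matching $\overline{\mathbf H}$ is the cardinality of the cokernel of the gluing map $\overline{\mathrm{gl}}_{\tilde\sigma}$ built from the refined lattices $M_v = M/T_v$, $M_E=M/T_E$, appearing in the proof of Lemma \ref{lem_reduced} — equivalently, since that map is an isomorphism over $\kk$ but an isogeny over $\Z$, it is the cardinality of $\coker\overline{\mathrm{gl}}_{\tilde\sigma}$ over $\Z$. So concretely I would show
\[
N_{\tilde\sigma}^{\mathrm{toric}}(\shX_\scrP/\AA^1) = |\coker \overline{\mathrm{gl}}_{\tilde\sigma}|\,,
\]
by an explicit solve-vertex-by-vertex argument following the flow on $\Gamma_{\tilde\sigma}$ from leaves to root (each step being the count of lines through a fixed point on a toric surface with fixed contact directions, which is an elementary lattice index).

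Next I would relate $|\coker\overline{\mathrm{gl}}_{\tilde\sigma}|$ to $N_\sigma^\trop = |\coker\mathrm{gl}_\sigma|$ and to $|T_\sigma/T_{\tilde\sigma}|$. The map $\overline{\mathrm{gl}}_{\tilde\sigma}$ is the one attached to $\Gamma_{\tilde\sigma}$ with divalent vertices, while $\mathrm{gl}_\sigma$ is attached to $\Gamma_\sigma$ with those vertices erased. The passage from $\mathrm{gl}_\sigma$ (or rather the equivalent map $\Psi_\sigma$ of \eqref{Eq:Psi}) to the divalent-refined map $\Psi_{\tilde\sigma}$ of \eqref{Eq: WidetildePsi} is controlled precisely by Lemma \ref{lem:two gammas compare}, which gives the long exact sequence \eqref{eq_les}
\[
0\to T_{\tilde\sigma}\to T_\sigma \to \prod_{v\in B}\Z/w_v\Z \to \coker\Psi_{\tilde\sigma}\to\coker\Psi_\sigma\to 0\,,
\]
while the comparison between $\coker\Psi_\sigma$ and $N_\sigma^\trop=|\coker\mathrm{gl}_\sigma|$ is Lemma \ref{lem:coker psi}. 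So I would (i) identify $\coker\overline{\mathrm{gl}}_{\tilde\sigma}$ with $\coker\Psi_{\tilde\sigma}$ — the point being that passing from $M$ to the $M_v$'s and $M_E$'s exactly quotients out the tangent spaces $T_v$, $T_E$, which is how $\Psi_{\tilde\sigma}$ is built — and then (ii) chase the two lemmas: from \eqref{eq_les}, $|\coker\Psi_{\tilde\sigma}| = |\coker\Psi_\sigma|\cdot \prod_{v\in B}w_v / |T_\sigma/T_{\tilde\sigma}|$, and then combining with Lemmas \ref{lem:coker psi}, \ref{lem: product for Ntrop} (which compute $|\coker\Psi_\sigma|$ and $N_\sigma^\trop$ as the same product of local indices up to the saturation factors) one checks the $\prod w_v$ and the saturation discrepancies cancel, leaving exactly $N_{\tilde\sigma}^{\mathrm{toric}}(\shX_\scrP/\AA^1)= N_\sigma^\trop/|T_\sigma/T_{\tilde\sigma}|$.

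The main obstacle I anticipate is step (i): carefully justifying that the count of stable log maps of type $\tau_{\tilde\sigma}$ matching $\overline{\mathbf H}$ is literally $|\coker\overline{\mathrm{gl}}_{\tilde\sigma}|$ rather than some a priori different lattice index, i.e. making the deformation-theoretic linearization in Lemma \ref{lem_reduced} into an actual count over $\kk$. This requires knowing that each local gluing equation on a toric surface $Z_v$ — "a torically transverse line through a prescribed point of $Z_E$ with prescribed contact order" — has exactly one solution per point, so that globally the fiber of the evaluation/gluing over $\overline{\mathbf H}$ is a torsor under the (finite) group $\coker\overline{\mathrm{gl}}_{\tilde\sigma}$; this is essentially \cite[Propositions 5.3--5.5, Theorem 7.3]{NS} adapted to our divalent-refined situation, plus the observation that the divalent vertices contribute factors of $w_v$ (the index by which the refined edge-length lattice fails to be saturated), which is exactly the bookkeeping already isolated in Lemma \ref{lem:two gammas compare}. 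Once this identification is in place, the remainder is the purely lattice-theoretic diagram chase described above, which I would present compactly by invoking Lemmas \ref{lem:coker psi} and \ref{lem:two gammas compare} rather than redoing the inductions.
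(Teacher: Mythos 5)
The proposal has two genuine gaps that make it fail as written, even though the high-level strategy (reduce to lattice indices, then chase through Lemmas \ref{lem:coker psi} and \ref{lem:two gammas compare}) is in the right spirit.

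\textbf{Gap 1: the claim $N_{\tilde\sigma}^{\mathrm{toric}}(\shX_\scrP/\AA^1) = |\coker\overline{\mathrm{gl}}_{\tilde\sigma}|$ does not address the log structure.} Your ``solve-vertex-by-vertex'' argument only counts the scheme-theoretic pre-log maps $\underline f\colon \underline C\to \shX_\scrP$, i.e.\ the lines on the toric surfaces $Z_v$ satisfying the gluing and matching conditions. It does not count how many fine saturated log structures enhance a given scheme map to a stable log map over a log point. The paper treats these as two separate counts: the pre-log count $\bigl(\prod_{v\text{ divalent}} w_v^{-1}\bigr)\bigl(\prod_{v\text{ trivalent}} N_v\bigr)$ (note the automorphism factors $w_v^{-1}$ for the $w_v{:}1$-covers at divalent vertices, which your argument also silently drops), and then, as the genuinely hard step, the number of log enhancements per pre-log curve. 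The latter is computed via the gluing theory of punctured log maps (\cite{ACGSII}, \cite{MarksNotes}): one lifts each component to a basic punctured log map, then counts gluings via the map $\overline\Psi$ of \cite[Defn.~4.2]{MarksNotes}, getting $|\coker\overline\Psi| = |\coker\Psi_{\tilde\sigma}|$. This punctured gluing step has no analogue in your proposal; one cannot simply cite \cite[Propositions~5.3--5.5, Theorem~7.3]{NS}, because for $d>2$ the relevant log maps have non-trivial log structure in a more essential way (they are contained in the boundary) and the tropical curves move in families, which is exactly the regime NS does not cover.

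\textbf{Gap 2: the identification $\coker\overline{\mathrm{gl}}_{\tilde\sigma} \cong \coker\Psi_{\tilde\sigma}$ is unjustified and apparently false.} The map $\Psi_{\tilde\sigma}$ of \eqref{Eq: WidetildePsi} has domain $\prod_v T_v\times\prod_E\Z$ and codomain $\prod_E T_E$; it is built from the tangent lattices $T_v, T_E\subset M$. The map $\overline{\mathrm{gl}}_{\tilde\sigma}$ has domain $\prod_v M/T_v$ and codomain $\prod_E M/T_E\times\prod_i M/\gamma_i^\perp$; it is built from the quotient lattices. Your stated justification (``passing from $M$ to the $M_v$'s and $M_E$'s exactly quotients out the tangent spaces, which is how $\Psi_{\tilde\sigma}$ is built'') is the wrong way around: $\Psi_{\tilde\sigma}$ is built from the tangent spaces, not from the quotients. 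These maps are related by duality, not by an identification of cokernels. In the paper, what is actually shown is $\coker\Psi_{\tilde\sigma}\cong\coker\overline\Psi$, with $\overline\Psi$ the \emph{punctured} gluing map (built from $Q_v^*$ and $P_E^*$, which sit in extensions $0\to T_v\to Q_v^*\to\Z\to 0$ coming from $\shX_\scrP\to\AA^1$), and the isomorphism uses the very-goodness hypothesis in an essential way. No such identification is available for $\overline{\mathrm{gl}}_{\tilde\sigma}$. Concretely, assuming both of your claims, the theorem would require $\prod_{v\in B} w_v = \prod_{v\text{ trivalent}} N_v$, a numerical coincidence between divalent and trivalent local data with no reason to hold; this is a signal that at least one of your two steps is wrong.

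In short, you correctly identified the key difficulty (``making the deformation-theoretic linearization into an actual count over $\kk$''), but the proposed resolution sidesteps rather than addresses it: the actual count requires both the pre-log scheme count and the punctured-log gluing count, and neither reduces to the cokernel of $\overline{\mathrm{gl}}_{\tilde\sigma}$ alone.
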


\begin{proof}
By Lemma \ref{lem_enum}, $N_{\Tilde{\sigma}}^\mathrm{toric}$ is equal to the count with automorphisms of the $(\omega,\boldsymbol{\gamma})$-marked stable log maps to $\shX_\scrP/\AA^1$ of type $\tau_{\tilde{\sigma}}$ and matching $\overline{\mathbf{H}}$.
Let $\underline{C}$ be the scheme-theoretic domain curve of stable log maps of type $\tau_{\Tilde{\sigma}}$, with irreducible components $\underline{C}_v \simeq \PP^1$ for $v \in V(\Gamma_{\Tilde{\sigma}
})$ and special points (nodes and marked points) $x_E$ for $E \in E(\Gamma_{\Tilde{\sigma}})\cup L(\Gamma_{\Tilde{\sigma}})$. Imitating the terminology of \cite[Definition 4.3]{NS}, we will first count the number of pre-log curves to $\shX_\scrP/\AA^1$ of type $\tau_{\Tilde{\sigma}}$ and matching $\overline{\mathbf{H}}$, that is, the number of scheme morphisms $\underline{f}\colon  \underline{C} \rightarrow \shX_\scrP$, such that:
\begin{itemize}
    \item[(i)] $\underline{f}(\underline{C}_v)\subset Z_v$ for every $v \in V(\Gamma_{\Tilde{\sigma}})$,
    \item[(ii)] the only points of $\underline{C}_v$ mapped by $\underline{f}$ in the toric boundary of $Z_v$ are the points $x_E$ for $E$ an edge or leg adjacent to $v$,  
\item[(iii)] the contact order of $f(C_v)$ to $Z_E$ at $f(x_E)$ is specified by the image $\bar{u}_{v,E}$ in $M_v=M/T_v$ of the weighted direction $u_{v,E}\in M$,
\item[(iv)] 
the point $f(x_E)$ is contained in the interior of the big torus orbit $\GG_m$ of $Z_E \simeq \PP^1$ for every $E \in E(\Gamma_{\Tilde{\sigma}})\cap L(\Gamma_{\Tilde{\sigma}})$,
\item[(v)] 
$\{\underline{f}(x_{L_i})\}=Z_{L_i} \cap \overline{H}_i$
for every $1\leq i \leq r$. 
\end{itemize}
We will then count the number of ways to lift a pre-log curve $\underline{f}:\underline{C}\rightarrow \shX_{\scrP}$ to a stable log curve $f: C/W \rightarrow \shX_{\scrP}/\AA^1$.

To count the number of pre-log curves of type $\tau_{\Tilde{\sigma}}$ and matching $\mathbf{H}$, we proceed by induction following the flow on $\Gamma_{\Tilde{\sigma}}$ starting at the leaves and ending at the root.
We use the notations introduced in the proofs of Lemmas \ref{lem: product for Ntrop}-\ref{lem: product2 for Ntrop}-\ref{lem:coker psi}, but also applied to $\Gamma_{\Tilde{\sigma}}$,
so that a vertex $v$ may be divalent also, so in particular
there may be only a $\Gamma_{\tilde{\sigma}_1}$ and not both a $\Gamma_{\tilde{\sigma}_1}$,
$\Gamma_{\tilde{\sigma}_2}$ attached to a vertex $v$. 

Let $v$ be a vertex of $\Gamma_{\Tilde{\sigma}}$.
If $v$ is divalent, we are 
in a situation similar to \cite[Proposition 5.5]{NS}: we have a $\PP^1$-fibration
structure $Z_v\rightarrow \PP^1$ induced by the map of fans given
by
\[
M/T_v\rightarrow M/\mathcal{L}_{1,v}^{\sat}\simeq \Z\,.
\]
Recall from \S\ref{sec_polyh} that 
we denote $w_v := 
|\overline{u}_E|$, where $E$ is an edge adjacent to $v$
and $\overline{u}_E$ is the image of $u_E$ in the rank two lattice $M/T_v$. 
Then the vertex $v$ corresponds to a component $\underline{C}_v$ of the pre-log curve
which maps $w_v:1$ to a fiber of $Z_v\rightarrow \PP^1$, totally
ramified over the two one-dimensional toric strata of $Z_v$ this fiber 
intersects. Because this stable map has an automorphism group 
$\Z/w_v \Z$, its count with automorphism is $w_v^{-1}$.
If the child edge of $v$ is not a leg, we deduce that $N_{\Tilde{\sigma}_{\mathrm{out}}}=w_v^{-1}N_{\Tilde{\sigma}_1}$.
If the child edge of $v$ is a leg $L_i$, then  $f(x_{L_i})$ is constrained to equal the point $Z_{L_i} \cap \overline{H}_i$, and so $N_{\Tilde{\sigma}_{\mathrm{out}}}=w_v^{-1}$.

Next consider the case with $v$ trivalent.
Assume first that $E_1$ and $E_2$ are not legs.
Then $Z_v$ is a toric surface, and
the edges or legs $E_{1}$, $E_{2}$ and $E_{\out}$ determine three divisors
$Z_{1}=Z_{E_{1}}, Z_2=Z_{E_{2}}$ and $Z_{\out}=Z_{E_{\out}}$ 
of $Z_v$. We then need to count
lines in the sense of \cite[Definition 5.1]{NS} which intersect the boundary 
of $Z_v$ in three points, with maximal tangency order, and
meeting $Z_j$ at one of the $N_{\Tilde{\sigma}_j}$ points where the pre-log curves of type $\tau_{\Tilde{\sigma}_j}$
meet $Z_v$. Given fixed 
input locations,
the number of such curves is 
\[
N_v:=\left\lvert \frac{u_{E_1}}{|u_{E_1}|}\wedge \frac{u_{E_2}}{|u_{E_2}|}\right\rvert \,,
\]
by \cite[Proposition 5.7]{NS}. This can be rewritten as
\begin{equation}
\label{eq:Nv}
N_v:=\big|M/(\mathcal{L}_{1,v}^{\sat}+\mathcal{L}_{2,v}^{\sat})\big|.
\end{equation}
Then we obtain $N_{\Tilde{\sigma}_{\mathrm{out}}}=N_v
N_{\Tilde{\sigma}_1} N_{\Tilde{\sigma}_2}$.
If $E_1$ is an edge and $E_2$ is a leg, we similarly obtain $N_{\Tilde{\sigma}_{\mathrm{out}}}=N_v
N_{\Tilde{\sigma}_1}$, and if both $E_1$ and $E_2$ are legs, we have $N_{\Tilde{\sigma}_{\mathrm{out}}}=N_v$.
Putting this all together, the count with automorphisms of pre-log curves of type $\tau_{\Tilde{\sigma}}$ and matching $\mathbf{H}$ is
\begin{equation}
\label{eq:prelog number}
\left(\prod_{\hbox{$v$ divalent}} w_v^{-1}\right)
\left(\prod_{\hbox{$v$ trivalent}} N_v\right).
\end{equation}
We may now calculate the number of logarithmic enhancements of each
pre-log curve, that is the number of lifts of each pre-log curve to a stable log curve. We use the theory
of \emph{punctured log curves}, introduced in \cite{ACGSII}. Examples of punctured log curves are obtained by restricting the log structure of a log curve to one of its irreducible components. Conversely, punctured log curves can be glued together to produce log curves.
First note that the cones corresponding to $Z_v$ and $Z_E$ in the fan of $\mathcal{X}_\scrP$ are the cones $\mathbf{C}\scrP_v$ and $\mathbf{C}\scrP_E$ over $\scrP_v$ and $\scrP_E$ respectively. We will denote by $P_v$ and $P_E$ the monoids of integral points in the cones $\Hom(\mathbf{C}\scrP_v,\Z)$ and $\Hom(\mathbf{C}\scrP_E,\Z)$, dual to $\mathbf{C}\scrP_v$ and $\mathbf{C}\scrP_E$, respectively. Then, the monoids $P_v$ and $P_E$ are stalks of the ghost sheaf $\overline{\mathcal{M}}_{\mathcal{X}_\scrP}$ at generic points of $Z_v$ and $Z_E$ respectively.

Now, consider one of the pre-log curves $\ul{f}:\ul{C}\rightarrow
\shX_{\scrP}$ constructed above. For every vertex $v \in V(\Gamma_{\Tilde{\sigma}})$, we write
$\ul{f}_v:\ul{C}_v\rightarrow \ul{\shX_{\scrP}}$ for the restriction of $\ul{f}$ to the irreducible component $\ul{C}_v \simeq \PP^1$ of $\ul{C}$ corresponding to $v$.

We first note that we can enhance the stable map $\ul{f}_v$ to a basic
punctured log map $f_v:C_v^\circ/W_v\rightarrow\shX_{\scrP}$ in a unique way so that the
punctured points have contact orders given by the weighted tangent vectors
to the edges adjacent to $v$ (and pointing away from $v$).
The base $W_v$ needs to be the log point $W_v :=
\Spec(Q_v \rightarrow \kk)$ with ghost sheaf $Q_v$ given by the basic monoid. By definition the basic monoid is the dual of the moduli space of tropical maps with domain the dual graph of $
\underline{C}_v$: here the only modulus is the location
of the vertex, which is constrained to lie in ${\bf C}\mathcal{P}_v$, and
trace out a $(d-2)+1$-dimensional subcone $\mathcal{C}_v$ of that cone. The basic monoid $Q_v$ is the monoid of integral points of this cone $\mathcal{C}_v$. Note that, the inclusion $\mathcal{C}_v \subset {\bf C}\mathcal{P}_v$ induces a projection
\begin{equation}
\label{Eq:phiv}
     \phi_v \colon P_v \longrightarrow Q_v \,.
\end{equation}
Consider the curve $\ul C_v$, and recall that we denote by $x_E$ the marked point corresponding to an edge or leg $E$ adjacent to $v$. As in \cite{katoF}, the curve $\ul C_v$ lifts uniquely to a log curve, which is log smooth over $W_v$ with marked points $x_E$. We use the notation $(C_v,\mathcal{M}_{C_v})$ to denote this log curve. 

On the complements of the marked points on $C_v$, the map $\ul{f}_v$ admits a unique lift to a log map $f_v$. Indeed, for $p$ in $\ul C_v$, which is not a marked point, $\ul{f}_v(p)$ lies in the interior of $Z_v$, so we have $\overline{\cM}_{X_\scrP, \ul{f}_v(p)} = P_v$. Then, locally $f_v$ is given by the map:
\begin{align*}
\mathcal{M}_{X_\scrP,\ul f_v(p)} =  \mathcal{O}^*_{\ul X_\scrP,f(p)}  \oplus P_v & \longrightarrow \mathcal{M}_{C_v,p} =  \mathcal{O}^*_{\ul C_v,p} \oplus Q_v \\    
    (s,r) & \longmapsto (\ul{f}_v^{\star}(s),\phi_v(r))
\end{align*}
 
For a marked point $x_E$, we will show in a moment that locally near $x_E$, there exists a unique puncturing $C_v^\circ$ of $C_v$ and the map $\ul{f}_v$ admits a unique lift to a punctured log map $f_v : C_v^\circ \to X_\scrP$. First note that the inclusion $\mathbf{C}\mathcal{P}_v \subset \mathbf{C}\mathcal{P}_E$ induces a projection $P_E \to P_v$ whose composition with $\phi_v$ in \eqref{Eq:phiv} gives a map
\begin{equation} \label{eq_phi_vE}
\phi_{v,E} \colon P_E \longrightarrow Q_v \end{equation}
Moreover, the weighted direction $u_E$ of the edge $E$ is an integral tangent vector to $\mathbf{C}\mathcal{P}_E$, hence can be viewed as an element in $P_E^* \coloneqq \Hom(P_E,\Z)$. Denote by $\mathcal{M}_{(\ul C_v, x_E)}$ the divisorial log structure on $\ul C_v$, defined by the marked point $x_E$ viewed as a divisor. Then, for the stalk of the ghost sheaf, we have $\overline{\mathcal{M}}_{(\ul C_v, x_E),x_E} = \NN$. So, from \cite{katoF} we have
\[ \mathcal{M}_{C_v,x_E} = ( \mathcal{O}^*_{\ul C_v,x_E} \oplus Q_v )  \oplus_{\mathcal{O}^*_{\ul C_v,x_E}} \mathcal{M}_{(C_v,x_E),x_E} \,.\]
Let $x=0$ be a local equation of $x_E$ in $\ul C_v$. One can naturally view $x$ as an element of $\mathcal{M}_{(C_v,x_E),x_E}$. Denote by $\mathcal{M}_{C_v^\circ,x_E} $ the submonoid of $ ( \mathcal{O}^*_{\ul C_v,x_E} \oplus Q_v )  \oplus_{\mathcal{O}^*_{\ul C_v,x_E}} \mathcal{M}^{\gp}_{(C_v,x_E),x_E} $ generated by $ \mathcal{O}^*_{\ul C_v,x_E} \oplus Q_v $ and elements of the form $(s,\phi_{v,E}(r),x^{u_E(r)})$, for $s \in \mathcal{O}^*_{\ul C_v,x_E}$ and $r\in P_E$. Then, $\mathcal{M}_{C_v^\circ,x_E} $ is the unique puncturing of $C_v$ at $x_E$ such that locally $\ul f_v$ lifts to a punctured log map $f_v: C_v^0 \to \mathcal{X}_\scrP$. This map is given by
\begin{align*}
\mathcal{M}_{X_\scrP,\ul f_v(x_E)} =  \mathcal{O}^*_{\ul X_\scrP,\ul{f}_v(x_E)}  \oplus P_E & \longrightarrow \mathcal{M}_{C_v^\circ,x_E} \subset  
( \mathcal{O}^*_{\ul C_v,x_E} \oplus Q_v )  \oplus_{\mathcal{O}^*_{\ul C_v,x_E}} \mathcal{M}^{\gp}_{(C_v,x_E),x_E}  \\    
    (s,r) & \longmapsto (\ul{f}_v^{\star}(s),\phi_{v,E}(r), x^{u_E(r)})\,,
\end{align*}
Here we use that $\ul{f}(x_E)$ is contained in the interior of $Z_E$, and so $\bar{\mathcal{M}}_{X_\scrP,\ul{f}_v(x_E)} = P_E$.

This shows that the restriction of $\ul{f} \colon \ul C \to \ul{\mathcal{X}}_\scrP$, to any irreducible component $\ul C_v$ lifts uniquely to a punctured log map. In the remaining part of the proof, we will count the number of ways to glue these punctured maps to a stable log map $f \colon C \to \mathcal{X}_\scrP$. For this we will follow \cite{gross2023remarks}.

We consider punctured log maps $f_v \colon C_v^\circ/W_v \rightarrow X_{\scrP}$ over the log points $W_v=\Spec (Q_v \rightarrow \kk)$, which we want to glue at marked points $x_E$ such that $\overline{\mathcal{M}}_{X_\scrP,\ul{f}_v(x_E)} = P_E$.
Let $Q_v^*=\Hom(Q_v,\Z)$ and $P_E^*=\Hom(P_E,\Z)$. As in \cite[Defn.4.2]{gross2023remarks}, there is a gluing map
\[
\overline\Psi:\prod_{v\in V(\Gamma_{\Tilde{\sigma}})} Q_v^*\times \prod_{E\in E(\Gamma_{\Tilde{\sigma}})}
\Z\longrightarrow \prod_{E\in E(\Gamma_{\Tilde{\sigma}})} P_E^* \,,
\]
given by
\[
\overline\Psi\left((q_v),(\ell_E)\right)=(\phi_{v_E,E}^{\star}(q_{v_E})+\ell_E u_E 
-\phi_{v'_E, E}^{\star}(q_{v'_E}))_{E\in E(\tilde\Gamma)}\,,
\]
where for every vertex $v$ adjacent to an edge $E$, 
$\phi_{v,E}^{\star} : Q_v^\star \rightarrow  P_E^{\star}$ is the dual of the map $\phi_{v,E}$ in 
\eqref{eq_phi_vE}, where
each edge $E$ is oriented from endpoints $v_E$ to $v'_E$ and
$u_E$ is the weighted tangent vector to the edge $E$ pointing from
$v_E$ to $v'_E$.

We explain how to compare the map $\overline\Psi$ with the map  $\Psi_{\Tilde{\sigma}}$ in \eqref{Eq: WidetildePsi}, following \cite[\S 5.1]{gross2023remarks}. 
The morphism $\shX_{\scrP}\rightarrow \AA^1$ induces maps $Q_v^*\rightarrow\Z$ and
$P_E^*\rightarrow\Z$ with kernels isomorphic to $T_v$ and $T_E$ respectively. 
We then obtain a diagram
\[
\xymatrix@C=30pt
{
0\ar[r]&\prod_v T_v\times \prod_E \Z\ar[r]\ar[d]_{\Psi_{\Tilde{\sigma}}}&
\prod_v Q_v^*\times \prod_E\Z\ar[r]\ar[d]_{\overline\Psi}&
\prod_v \Z\ar[d]_{\partial}\ar[r]& 0\\
0\ar[r]&\prod_E T_E\ar[r]&\prod_E P_E^*\ar[r]&\prod_E\Z\ar[r]&0
}
\]
Here the maps to $\prod\Z$ are induced by the above maps 
$Q_v^*, P_E^*\rightarrow\Z$, $\Psi_{\Tilde{\sigma}}$ is the restriction
of $\overline\Psi$ to the kernels, hence the $\Psi_{\Tilde{\sigma}}$ previously
defined, and $\partial$ can be viewed as the
coboundary map for calculating simplicial cohomology of $\Gamma_{\Tilde{\sigma}}$,
so that $\ker\partial=\Z$ and $\coker\partial=0$. 
Thus by the snake lemma we obtain a long exact sequence
\[
0\rightarrow\ker \Psi_{\Tilde{\sigma}}\rightarrow \ker\overline\Psi\rightarrow
\Z\rightarrow \coker \Psi_{\Tilde{\sigma}} \rightarrow \coker\overline{\Psi}
\rightarrow 0.
\]
Note that $\ker\partial$ is generated by $(1,\cdots, 1)$. As the kernel
$\ker\overline\Psi$ is precisely the integral tangent space $T_{\overline{\RR_{\geq 0}(\Tilde{\sigma},1)}}$
to $\overline{\RR_{\geq 0}(\Tilde{\sigma},1)}$,
the map $\ker\overline\Psi\rightarrow\ker\partial$ is actually surjective
by Definition \ref{def_very_good}(ii) because we are assuming that $\scrP$ is a very good polyhedral decomposition.
Thus $\coker \Psi_{\Tilde{\sigma}} \simeq \coker\overline\Psi$.

By Lemmas \ref{lem:coker psi} and \ref{lem:two gammas compare}, the cokernel $\coker \Psi_{\Tilde{\sigma}} \simeq \coker\overline\Psi$ is finite and so we are in a tropically transverse gluing situation in the sense of Definition 4.8 of \cite{gross2023remarks}. Hence, by Theorem 4.9 of \cite{gross2023remarks}, the moduli space $W$ of glued stable log maps is non-empty. Therefore, one can apply Theorem 4.4 of \cite{gross2023remarks} and obtain that $W$ has $|\coker \Psi_{\Tilde{\sigma}} |$ connected components. Lemma \ref{lem_reduced} implies that $W$ is reduced of dimension $0$, and 
hence there are $|\coker \Psi_{\Tilde{\sigma}} |$ choices of log gluings.

Now by Lemmas \ref{lem:two gammas compare} and \ref{lem:coker psi}, we have
\begin{align*}
|\coker \Psi_{\Tilde{\sigma}}|= {} & |\coker \Psi_\sigma| \frac{\prod_v w_v}{|T_{\sigma}/
T_{\tilde\sigma}|}\\
= {} &\frac{\prod_{v\in\Gamma}|(\cL_{1,v}^{\sat}+\cL_{2,v}^{\sat})/
(\cL_{1,v}+\cL_{2,v})| \prod_v w_v}{|T_{\sigma}/
T_{\tilde\sigma}|}.
\end{align*}
Combining with \eqref{eq:prelog number} and
\eqref{eq:Nv} and Lemma \ref{lem: product for Ntrop}, the total count with automorphisms of the $(\omega,\boldsymbol{\gamma})$-marked stable log maps to $\shX_\scrP/\AA^1$ of type $\tau_{\tilde{\sigma}}$ and matching $\overline{\mathbf{H}}$ is 
\begin{align*}
&\frac{ 
\prod_{v\in\Gamma}|M/(\cL_{1,v}^{\sat}+\cL_{2,v}^{\sat})|\cdot |(\cL_{1,v}^{\sat}+\cL_{2,v}^{\sat})/
(\cL_{1,v}+\cL_{2,v})|}{ |T_{\sigma}/T_{\tilde\sigma}|}\\
= {} &\frac{ 
\prod_{v\in\Gamma}|M/(\cL_{1,v}+\cL_{2,v})|}{ |T_{\sigma}/T_{\tilde\sigma}|}\\
= {} & \frac{N_{\sigma}^\trop}{|T_{\sigma}/T_{\tilde\sigma}|} \,.
\end{align*}
This concludes the proof of Theorem \ref{thm_count}.
\end{proof}

In the purely toric set-up, a general gluing formula for log Gromov--Witten invariants is also proven in \cite{wu2021splitting}. However, in the proof Theorem \ref{thm_count}, we used rather an explicit toric degeneration and then the gluing formula of \cite{gross2023remarks}. The main motivation for this choice is that we expect the explicit description of the log curves in the special fiber of the toric degeneration to be useful to define and study the higher dimensional version of the higher genus log Gromov--Witten invariants considered in \cite{MR3904449}, and which should be related to refined DT invariants.

\subsection{The proof of the log-tropical correspondence}

In this section, we prove our main log-tropical correspondence result comparing the log Gromov--Witten invariants $N_\rho^{\mathrm{toric}}(X_\Sigma)$ with the tropical multiplicities $N_\sigma^\trop$.

Recall that choosing a $\boldsymbol{\gamma}$-constraint $\mathbf{A}$ as in Definition \ref{Def affine constraint} amounts to choosing a point in $[\mathbf{A}]=([A_i])_i \in 
\prod_{i=1}^r M_\RR/(\gamma_i^{\perp})_\RR\simeq \RR^r$. 
The constraint $\mathbf{A}^0$ defined in \eqref{Eq: A0} corresponds to the origin $0 \in \RR^r$. We have a natural map 
\begin{equation}
   \label{Eq: Phi} 
   \Phi \colon \{\mathrm{Faces~of~} \cM^\trop_{\omega,\boldsymbol{\gamma},\mathbf{A}}  \} \longrightarrow \{\mathrm{Faces~of~} \cM^\trop_{\omega,\boldsymbol{\gamma},\mathbf{A}^0}   \}
\end{equation}
defined as follows. Given a face $\sigma$ of $\cM^\trop_{\omega,\boldsymbol{\gamma},\mathbf{A}}$, the relative interior $\Int(\sigma)$ of $\sigma$ parametrize $(\omega,\boldsymbol{\gamma})$-tropical curves in $M_\RR$ of a given type $\tau_\sigma$ matching the constraint $\mathbf{A}$. Let $\sigma'$ be the face of
$\cM^\trop_{\omega,\boldsymbol{\gamma}}$ such that $\Int(\sigma')$ parametrize $(\omega,\boldsymbol{\gamma})$-tropical curves of type $\tau_\sigma$.
As in \eqref{eq_ev_trop}, we have an evaluation map at the legs
\[ \ev \colon \sigma' \longrightarrow \prod_{i=1}^r M_\RR/(\gamma_i^{\perp})_\RR\simeq \RR^r\]
such that $\sigma=\ev^{-1}([\mathbf{A}])$, and we set $\Phi(\sigma):= \ev^{-1}(0)$, which is naturally a face of $\cM^\trop_{\omega,\boldsymbol{\gamma},\mathbf{A}^0}$. Intuitively, $\Phi(\sigma)$ is obtained from $\sigma$ by rescaling the point $[\mathbf{A}] \in \RR^r$ in the space of $\boldsymbol{\gamma}$-constraints by a parameter $t$ and then taking the limit $t \rightarrow 0$. As a face of $\cM^\trop_{\omega,\boldsymbol{\gamma},\mathbf{A}^0}$ is necessarily a cone, we say that $\sigma$ is \emph{asymptotically equivalent} to the cone $\Phi(\sigma)$.

If $\Sigma$ is a $\boldsymbol{\gamma}$-fan and $\rho$ is a $(d-2)$-dimensional face of $\cM^\trop_{\omega, \boldsymbol{\gamma},\mathbf{A}^0}(\Sigma)$, we denote by $S_\rho$ the set of faces $\sigma$ of 
$\cM^\trop_{\omega,\boldsymbol{\gamma},\mathbf{A}}$ such that $\rho \subset \Phi(\sigma)$. 
As $\dim \rho =d-2$ and $\mathbf{A}$ is a general $\boldsymbol{\gamma}$-constraint as in Definition \ref{def_general_constraints}, the condition $\sigma \in S_d$ automatically implies $\dim \sigma =d-2$. If in addition $\dim \fod_{L_{\mathrm{out}}}^\rho=d-1$, then $\dim \fod_{L_{\mathrm{out}}}^\sigma=d-1$, and so one can consider the tropical multiplicity $N_\sigma^\trop$ as in \eqref{eq_N_trop} and the tropical coefficient $k_\sigma$ as in \eqref{eq_coeff}.

\begin{theorem}[\textbf{Theorem \ref{thm_log_trop_intro}}]
\label{Thm:log_trop_thm}
Fix a tropical data $(\omega,\boldsymbol{\gamma},\mathbf{A})$ as in \eqref{Eq:tropical data}, and $\Sigma$ a $\boldsymbol{\gamma}$-fan as in Definition \ref{def_gamma_fan}.
For every $(d-2)$-dimensional face $\rho$ of $\cM^\trop_{\omega,\boldsymbol{\gamma},\mathbf{A}^0}(\Sigma)$ such that $\dim \fod_{L_{\mathrm{out}}}^{\rho}=d-1$, we have 
\begin{equation} \label{eq_main}
k_\rho N_\rho^{\mathrm{toric}}(X_\Sigma)=\sum_{\sigma \in S_\rho}  k_\sigma N_\sigma^\trop \,,\end{equation}
where $N_\rho^{\mathrm{toric}}(X_\Sigma)$ is the log Gromov--Witten invariant of the toric variety $X_{\Sigma}$ as in \eqref{eq_N_toric}, $N_\sigma^\trop$ are the tropical multiplicities as in \eqref{eq_N_trop}, and the tropical coefficients $k_\rho$ and $k_\sigma$ are defined as in \eqref{eq_coeff}.
\end{theorem}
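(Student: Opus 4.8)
The plan is to assemble the proof of Theorem \ref{Thm:log_trop_thm} from the pieces already established in the previous subsections, by passing to a very good polyhedral decomposition, using the decomposition formula, the fixed-type log-tropical correspondence, and the product formula for tropical multiplicities. First I would reduce to the case of a \emph{rational} general $\boldsymbol{\gamma}$-constraint $\mathbf{A}$: since $N_\rho^{\mathrm{toric}}(X_\Sigma)$ does not depend on $\mathbf{A}$ at all, and the right-hand side $\sum_{\sigma\in S_\rho} k_\sigma N_\sigma^\trop$ is (by the discussion preceding the theorem, together with Lemma \ref{lem_dimension}) locally constant in $\mathbf{A}$ on the open dense set of general constraints, it suffices to prove \eqref{eq_main} for one convenient rational general $\mathbf{A}$; rationality is needed so that the toric degeneration machinery of \S\ref{sec_good_polyh}--\S\ref{sec_decomp} applies. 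By Lemma \ref{lem_very_good} I then fix a very good polyhedral decomposition $\scrP$ for $(\omega,\boldsymbol{\gamma},\mathbf{A})$, and I note that $X_\Sigma$ may be taken to be the general fiber $X_{\Sigma_\scrP}$ of the degeneration, since the asymptotic fan $\Sigma_\scrP$ is a $\boldsymbol{\gamma}$-fan and $N_\rho^{\mathrm{toric}}$ is a log Gromov--Witten invariant insensitive to the choice of $\boldsymbol{\gamma}$-fan (one may also invoke deformation invariance across $\boldsymbol{\gamma}$-fans, or simply absorb this into the choice of $\Sigma$).

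Next I would run the chain of identities. By Theorem \ref{thm_decomp},
\begin{equation}
N_\rho^{\mathrm{toric}}(X_{\Sigma_\scrP}) = \sum_{\tilde\sigma\in\tilde S_\rho} N_{\tilde\sigma}^{\mathrm{toric}}(\shX_\scrP/\AA^1)\,,
\end{equation}
where $\tilde S_\rho$ is the set of $(d-2)$-dimensional faces $\tilde\sigma$ of $\cM^\trop_{\omega,\boldsymbol{\gamma},\mathbf{A}}(\scrP)$ with $\tilde\Phi(\tilde\sigma)=\rho$. For each such $\tilde\sigma$ one has $\dim\fod_{L_{\mathrm{out}}}^{\tilde\sigma}=d-1$ (inherited from $\dim\fod_{L_{\mathrm{out}}}^\rho=d-1$ via the degeneration, since $\fod_{L_{\mathrm{out}}}^\rho$ is the limit of $\fod_{L_{\mathrm{out}}}^{\tilde\sigma}$), so Theorem \ref{thm_count} applies and gives $N_{\tilde\sigma}^{\mathrm{toric}}(\shX_\scrP/\AA^1)=N_\sigma^\trop/|T_\sigma/T_{\tilde\sigma}|$, where $\sigma$ is the face of $\cM^\trop_{\omega,\boldsymbol{\gamma},\mathbf{A}}$ containing $\tilde\sigma$. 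The key remaining combinatorial point is to organize the fibers of the two maps $\tilde\Phi$ and $\Phi$: the faces $\sigma\in S_\rho$ are exactly those faces of $\cM^\trop_{\omega,\boldsymbol{\gamma},\mathbf{A}}$ with $\rho\subset\Phi(\sigma)$, and for a fixed such $\sigma$ the faces $\tilde\sigma$ of the refinement $\cM^\trop_{\omega,\boldsymbol{\gamma},\mathbf{A}}(\scrP)$ that are contained in $\sigma$, have $\dim\tilde\sigma=d-2$, and satisfy $\tilde\Phi(\tilde\sigma)=\rho$, partition the relevant part of $\sigma$. I expect this to be the main obstacle: one must check that $\sum_{\tilde\sigma\subset\sigma,\ \tilde\Phi(\tilde\sigma)=\rho} 1/|T_\sigma/T_{\tilde\sigma}|$ equals $k_\sigma/k_\rho$.

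To handle that, I would compare the lattice indices directly. The refinement $\cM^\trop_{\omega,\boldsymbol{\gamma},\mathbf{A}}(\scrP)\simeq\cM^\trop_{\omega,\boldsymbol{\gamma},\mathbf{A}}$ subdivides $\sigma$ into the $\tilde\sigma$'s, and the ``sum of reciprocal indices'' $\sum_{\tilde\sigma}|T_\sigma/T_{\tilde\sigma}|^{-1}$ is precisely the ratio of the covolume of $T_\sigma$ in $\sigma$ to the total lattice-normalized volume carried by the pieces $\tilde\sigma$ of its $\scrP$-refinement; equivalently it records how the evaluation-at-$v_{\mathrm{out}}$ map behaves under adding divalent vertices. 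Using the definition \eqref{eq_coeff} of $k_\sigma$ as $|\Lambda_{\fod_{L_{\mathrm{out}}}^\sigma}/\cL_\sigma|$ with $\cL_\sigma=p(T_\sigma)+\Z u_{L_{\mathrm{out}}}$, and the analogous statement for $\rho$ (noting $\fod_{L_{\mathrm{out}}}^\rho$ is the cone over $\fod_{L_{\mathrm{out}}}^\sigma$ with the extra direction $u_{L_{\mathrm{out}}}$ already present, so the saturations agree up to the $-\iota_\gamma\omega$ ray), I would show that each individual term rearranges as $k_\sigma N_\sigma^\trop/|T_\sigma/T_{\tilde\sigma}| = k_\rho^{-1} k_\sigma N_\sigma^\trop$ after summing over $\tilde\sigma$; alternatively, invoke Lemma \ref{lem: product2 for Ntrop} to replace $k_\sigma N_\sigma^\trop$ by the explicit product $\frac{|\gamma|}{\prod_i|\gamma_i|}\prod_v|\omega(\gamma_{E_{1,v}},\gamma_{E_{2,v}})|$ over vertices of $\Gamma_\sigma$, which is manifestly unchanged under subdivision by divalent vertices (divalent vertices contribute no factor), reducing the claim to the purely lattice-theoretic identity $\sum_{\tilde\sigma}|T_\sigma/T_{\tilde\sigma}|^{-1}\cdot k_\rho = k_\sigma$ about the $w_v$-torsion appearing in Lemma \ref{lem:two gammas compare}. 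Summing the resulting equalities over all $\sigma\in S_\rho$ yields $k_\rho N_\rho^{\mathrm{toric}}(X_\Sigma)=\sum_{\sigma\in S_\rho} k_\sigma N_\sigma^\trop$, which is \eqref{eq_main}. The one genuinely delicate bookkeeping step, then, is the comparison of $k_\rho$, the $|T_\sigma/T_{\tilde\sigma}|$, and the $w_v$'s — everything else is a concatenation of the cited theorems.
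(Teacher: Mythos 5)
Your architecture matches the paper's: reduce to rational $\mathbf{A}$, fix a very good polyhedral decomposition $\scrP$, apply Theorem \ref{thm_decomp}, then Theorem \ref{thm_count}, then sum. But there are two places where you leave the argument open, and the second is where the proof actually lives.

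First, the reduction "to a particular $\boldsymbol{\gamma}$-fan" cannot simply be absorbed into the choice of $\Sigma$: the theorem quantifies over all $\boldsymbol{\gamma}$-fans, so you must prove that the quantity $k_\rho N_\rho^{\mathrm{toric}}(X_\Sigma)$ is unchanged when passing to a refinement $\Sigma'$. The paper does this by invoking \cite[Theorem 1.4]{johnston2022birational} on the behavior of log Gromov--Witten invariants under log-\'etale modifications, which gives $N_{\rho'} = |T_\rho/T_{\rho'}|^{-1} N_\rho$ for $\rho' \subset \rho$, and then the identity $k_{\rho'} N_{\rho'} = k_\rho N_\rho$. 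This is a genuine ingredient, not a deformation-invariance statement.

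Second, and more importantly, your final bookkeeping step is where you acknowledge "I expect this to be the main obstacle," and the picture you set up -- potentially several $\tilde\sigma \subset \sigma$ with $\tilde\Phi(\tilde\sigma)=\rho$, requiring the identity $\sum_{\tilde\sigma} |T_\sigma/T_{\tilde\sigma}|^{-1} = k_\sigma/k_\rho$ -- is more complicated than what actually happens. The paper observes that for each $\sigma \in S_\rho$ there is a \emph{unique} $(d-2)$-dimensional face $\tilde\sigma$ of $\cM^\trop_{\omega,\boldsymbol{\gamma},\mathbf{A}}(\scrP)$ contained in $\sigma$ with $\tilde\Phi(\tilde\sigma) = \rho$, so that $\tilde S_\rho \to S_\rho$ is a bijection; then each term is handled by the two facts $|T_\sigma/T_{\tilde\sigma}| = k_{\tilde\sigma}/k_\sigma$ (from the definition \eqref{eq_coeff} together with the injectivity of $p$ and $u_{L_\out}\notin\Lambda_{\foj_{v_\out}^\sigma}$) and $k_{\tilde\sigma} = k_\rho$ (which is exactly what Definition \ref{def_very_good}(ii), the very-good condition, buys you, beyond guaranteeing the reduced central fiber in Theorem \ref{thm_decomp}). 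Your proposed route via the "subdivision-invariance" of $k_\sigma N_\sigma^\trop$ from Lemma \ref{lem: product2 for Ntrop} does not directly yield the needed index identity, and your heuristic comparison of $\Lambda_{\fod_{L_\out}^\rho}$ with $\Lambda_{\fod_{L_\out}^\sigma}$ sits in the wrong ambient lattice ($\overline M$ versus $M$). In short: you located the right obstacle but did not identify the uniqueness of $\tilde\sigma$ above $\sigma$ nor the role of the very-good condition in forcing $k_{\tilde\sigma} = k_\rho$, which is what makes the computation collapse to a single term per $\sigma$.
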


\begin{proof}
If $\Sigma'$ is a refinement of $\Sigma$, then, as in \S\ref{sec_polyh}, $\cM^\trop_{\omega,\boldsymbol{\gamma},\mathbf{A}^0}(\Sigma')$ is a polyhedral refinement of $\cM^\trop_{\omega,\boldsymbol{\gamma},\mathbf{A}^0}(\Sigma)$. If $\rho'$ is a $(d-2)$-dimensional face of  $\cM_{\omega,\boldsymbol{\gamma}}(\Sigma')$ contained in $\rho$, then, by \cite[Theorem 1.4]{johnston2022birational} on the behavior of log Gromov--Witten invariants under log-\'etale transformations, we have 
$N_{\rho'}^{\mathrm{toric}}(X_{\Sigma'})=\frac{1}{|T_{\rho}/T_{\rho'}|} N_\rho^{\mathrm{toric}}(X_\Sigma)$,
and so, using \eqref{eq_coeff},
$k_{\rho'} N_{\rho'}^{\mathrm{toric}}(X_{\Sigma'})=k_\rho N_\rho^{\mathrm{toric}}(X_\Sigma)$.
Hence, it is enough to prove \eqref{eq_main} for a particular $\boldsymbol{\gamma}$-fan to have the result for all $\boldsymbol{\gamma}$-fans.

As $\QQ$ is dense in $\RR$, we can assume without loss of generality that $\mathbf{A}$ is rational.
Let $\scrP$ be a very good polyhedral decomposition of $M_\RR$ for $(\omega,\boldsymbol{\gamma},\mathbf{A})$. Such $\scrP$ exists by Lemma \ref{lem_very_good}. We will prove \eqref{eq_main} for the $\boldsymbol{\gamma}$-fan $\Sigma_\scrP$ asymptotic to $\scrP$. By Theorem \ref{thm_decomp}, we have 
\[N_\rho^{\mathrm{toric}}(X_{\Sigma_{\scrP}})
= \sum_{\Tilde{\sigma}\in \Tilde{S}_\rho} N_{\tilde{\sigma}}^{\mathrm{toric}} (\shX_\scrP/\AA^1)\,, \]
where $\tilde{S}_{\rho}$ is the set of $(d-2)$-dimensional faces $\tilde{\sigma}$
of $\cM^\trop_{\omega,\boldsymbol{\gamma},\mathbf{A}}(\scrP)$ such that $\Tilde{\Phi}(\Tilde{\sigma})=\rho$,
where $\Tilde{\Phi}$ is given by \eqref{Eq: phi}.
Every $(d-2)$-dimensional face $\tilde{\sigma}$ of $\cM^\trop_{\omega,\boldsymbol{\gamma},\mathbf{A}}(\scrP)$ is contained in a unique $(d-2)$-dimensional face $\sigma$ of $\cM^\trop_{\omega,\boldsymbol{\gamma},\mathbf{A}}$, and we have 
\[N_{\tilde{\sigma}}^{\mathrm{toric}} (\shX_\scrP/\AA^1) =\frac{N_\sigma^\trop}{|T_\sigma/T_{\tilde{\sigma}}|}\]
by Theorem \ref{thm_count}.
The maps $\tilde{\Phi}$ in \eqref{Eq: phi} and $\Phi$ in \eqref{Eq: Phi} are compatible with the polyhedral refinements of the moduli spaces induced by $\scrP$ and $\Sigma_\scrP$, and so the conditions $\tilde{\Phi}(\tilde{\sigma})=\rho$ and $\tilde{\sigma}\subset \sigma$ imply $\rho \subset \Phi(\sigma)$.
Conversely, if $\sigma$ is a $(d-2)$-dimensional face of $\cM^\trop_{\omega,\boldsymbol{\gamma},\mathbf{A}}(\scrP)$ such that $\rho \subset \Phi(\sigma)$, there exists a unique
$(d-2)$-dimensional face $\tilde{\sigma}$ of $\cM^\trop_{\omega,\boldsymbol{\gamma},\mathbf{A}}(\scrP)$ such that $\Phi(\tilde{\sigma})=\rho$.
Hence, we obtain
\[N_\rho^{\mathrm{toric}}(X_{\Sigma_{\scrP}})
= \sum_{\sigma\in S_\rho} \frac{N_{\sigma}^\trop}{|T_\sigma/T_{\tilde{\sigma}}|}\,. \]
Finally, we have 
$|T_{\sigma}/T_{\tilde{\sigma}}| = \frac{k_{\tilde \sigma}}{k_\sigma}$ by \eqref{eq_coeff},
and $k_\rho
= k_{\tilde{\sigma}}$ by Definition \ref{def_very_good}(ii) of a very good polyhedral decomposition.

\end{proof}

\section{Quiver DT invariants, flow trees and log curves}
In this section, after shortly reviewing in \S\ref{sec_aft}-\ref{sec_flow_trees} how to calculate quiver DT invariants tropically using attractor flow trees as in \cite{ABflow}, we prove our quiver DT-log Gromov--Witten correspondence in \S\ref{sec_dt_gw}.

\subsection{Attractor flow trees and tropical curves} 
\label{sec_aft}

Let $\omega$ be a skew-symmetric form on $N$. We fix a $r$-tuple 
$\boldsymbol{\gamma}=(\gamma_1,\dots,\gamma_r)$ of elements $\gamma_i \in N$ such that $\iota_{\gamma_i} 
\omega \neq 0$ for every $1\leq i\leq r$, and $\iota_\gamma \omega \neq 0$, where $\gamma:=\sum_{i=1}^r \gamma_i$.

\begin{definition}
A \emph{rooted tree} is a genus $0$ graph, without divalent vertices, and with a single univalent vertex, referred to as the \emph{root} and denoted by $R_T$. 
\end{definition}

Given a rooted tree $T$ with $r$ legs  $L_1,\dots, L_r$ decorated by $\gamma_1,\dots,\gamma_r$, we define for every edge or leg $E$ of $T$, the class $\gamma_E$ of $E$ by $\gamma_E =\sum_i \gamma_i$, where the sum is over the indices $1\leq i \leq r$ such that the leg $L_i$ is a descendant of $E$, that is such that $E$ is contained in the path from the root to $L_i$.

\begin{definition} \label{def_attractor_tree}
Fix a point $\theta \in \gamma^{\perp}$ and a $\boldsymbol{\gamma}$-constraint $\mathbf{A}$. 
A parametrized $(\omega,\boldsymbol{\gamma})$-marked \emph{attractor flow tree} with root $\theta$ and matching $\mathbf{A}$ is a pair $(T,h)$, where 
\begin{itemize}
    \item[(i)] $T$ is a rooted tree with $r$ legs decorated by $\gamma_1,\dots,\gamma_r$, such that, for every edge of leg $E$, we have $\iota_{\gamma_E} \omega \neq 0$, and for every vertex $v \in V(E)$ distinct from the root $R_T$, 
    there exist edges or legs $E_1$ and $E_2$
adjacent to $v$ such that 
\[\omega(\gamma_{E_1},\gamma_{E_2}) \neq 0\,.\] 
    \item[(ii)] $h \colon T \rightarrow M_\RR$
    is a proper continuous map such that $h(R_T)=\theta$, $h(L_i) \subset A_i$ for every $1 \leq i \leq r$, and for every edge or leg $E$, the restriction $h|_E$ is an embedding with image contained in an affine line of oriented direction $\iota_{\gamma_E}\omega$, where the orientation is defined following the flow starting at the root of $T$ and ending at the leaves.
\end{itemize}
\end{definition}

An isomorphism of parameterized attractor flow tree $h:T \to M_\RR$ and $h_0 :T_0 \to M_\RR$
is a homeomorphism $\Psi:T\to T_0$ respecting the marking of the legs, the weights of the edges and legs, and such that $h=h_0\circ \Psi$. An \emph{attractor flow tree} is an isomorphism class of parameterized attractor flow trees.
We denote by $\mathcal{T}_{\omega,\boldsymbol{\gamma},\mathbf{A}}^\theta$ the set of $(\omega,\boldsymbol{\gamma})$-marked attractor flow trees with root $\theta$ and matching $\mathbf{A}$.

Let $h: T\rightarrow M_\RR$ be an $(\omega,\boldsymbol{\gamma})$-marked attractor flow tree with root $\theta$ and matching $\mathbf{A}$. We denote by $\overline{T}$ be the graph obtained from $T$ by removing the root and viewing the edge $E_{\mathrm{out}}$ of $T$ adjacent to the root as a leg $L_{\mathrm{out}}$ of $\overline{T}$. We extend the map $h \colon T\rightarrow M_\RR$ to a map $\overline{h}:\overline{T} \rightarrow M_\RR$
by extending the segment $h(E_{\mathrm{out}})$  to a half-line in the direction of $-\iota_\gamma \omega$.
By construction, $\overline{h}:\overline{T} \rightarrow M_\RR$ is an $(\omega,\boldsymbol{\gamma})$-marked tropical curve in $M_\RR$ matching $\mathbf{A}$, and such that $\theta \in \Int(\overline{h}(L_{\mathrm{out}}))$.

\begin{definition} \label{def_family}
Fix a point $\theta \in \gamma^{\perp}$ and a $\boldsymbol{\gamma}$-constraint $\mathbf{A}$.
Let $(h:T \rightarrow M_\RR) \in \mathcal{T}_{\omega,\boldsymbol{\gamma},\mathbf{A}}^\theta$ be an $(\omega,\boldsymbol{\gamma})$-marked attractor flow tree with root  $\theta$ and matching $\mathbf{A}$. The \emph{family of tropical curves corresponding to $h$} is the face $\rho_h$ of $\mathcal{M}^\trop_{\omega,\boldsymbol{\gamma}, \mathbf{A}}$ whose relative interior parametrize tropical curves with the same type as the tropical curve $\overline{h}: \overline{T} \rightarrow M_\RR$.
\end{definition}

\begin{lemma}\label{lem_flow_trees_nice}
Fix a $\mathbf{\gamma}$-constraint 
$\mathbf{A}$
and a general point $\theta \in (\gamma^\perp)_\RR$
Then, for every $(\omega,\boldsymbol{\gamma})$-marked attractor flow tree $(h:T \rightarrow M_\RR) \in \mathcal{T}_{\omega,\boldsymbol{\gamma},\mathbf{A}}^\theta$, the family $\rho_h$ of tropical curves corresponding to $h$ satisfy the following conditions:
\begin{itemize}
    \item[(i)] For every edge or leg $E$ of $\Gamma_{\rho_h}$, $\dim \fod_E^{\rho_h}=d-1$.
    \item[(ii)] For every vertex $v$ of $\Gamma_{\rho_h}$, $\dim T_v \Gamma_{\rho_h} =2$, and $\dim \foj_v^{\rho_h}=d-2$.
\end{itemize}
\end{lemma}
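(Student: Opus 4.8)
The plan is to show that an attractor flow tree, after extending its root to infinity, automatically lands in a $(d-2)$-dimensional face $\rho_h$ of $\cM^\trop_{\omega,\boldsymbol{\gamma},\mathbf{A}}$ satisfying $\dim \fod^{\rho_h}_{L_{\mathrm{out}}}=d-1$, and then invoke Theorem \ref{thm_trop} for the two conclusions. So the real content is to verify the hypotheses of Theorem \ref{thm_trop} for $\rho_h$. The key observation is that the defining property of an attractor flow tree --- namely that each edge $E$ has direction $\iota_{\gamma_E}\omega$, and at each non-root vertex $v$ there are two adjacent edges $E_1,E_2$ with $\omega(\gamma_{E_1},\gamma_{E_2})\neq 0$ --- is exactly what forces local planarity and non-degeneracy. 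I would first record that $\overline{h}$ is a genuine $(\omega,\boldsymbol{\gamma})$-marked tropical curve matching $\mathbf{A}$ (this is stated in the text just before Definition \ref{def_family}), so $\rho_h$ is indeed a well-defined face of $\cM^\trop_{\omega,\boldsymbol{\gamma},\mathbf{A}}$.

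\textbf{Key steps.} First I would show $\dim \fod^{\rho_h}_{L_{\mathrm{out}}}=d-1$. By Lemma \ref{lem_prep}(i) we always have $\dim \fod^{\rho_h}_{L_{\mathrm{out}}}\le d-1$, so it suffices to produce a $(d-1)$-dimensional family of positions for $\overline{h}(L_{\mathrm{out}})$. Here one uses that $\theta$ is general in $(\gamma^\perp)_\RR$: the affine constraints $A_i$ impose one linear condition each on the positions of the leaf vertices, but since the tree has $r$ leaves and $r-1$ internal vertices in the trivalent case, and the root is unconstrained except to lie on the line through $\theta$ in direction $-\iota_\gamma\omega$, a dimension count analogous to the one in Lemma \ref{lem_dimension} and Lemma \ref{lem_finite} gives that the position of $v_{\mathrm{out}}$ (hence of $\fod^{\rho_h}_{L_{\mathrm{out}}}$, which is $\foj^{\rho_h}_{v_{\mathrm{out}}}+\RR_{\geq 0}u_{L_{\mathrm{out}}}$) sweeps out a $(d-1)$-dimensional polyhedron --- provided the directions $\gamma_{E_1},\gamma_{E_2}$ at each internal vertex are not proportional, which is guaranteed by $\omega(\gamma_{E_1},\gamma_{E_2})\neq 0$ in Definition \ref{def_attractor_tree}(i). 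Once $\dim \fod^{\rho_h}_{L_{\mathrm{out}}}=d-1$ is established and $\dim\rho_h=d-2$ is confirmed (again by the dimension count, using genericity of $\theta$ and $\mathbf{A}$ to rule out overvalent or superabundant behaviour), conclusions (i) and (ii) follow immediately: (i) is Theorem \ref{thm_trop}(i), and (ii) combines $\dim T_v\Gamma_{\rho_h}=2$ from Theorem \ref{thm_trop}(ii) with $\dim\foj^{\rho_h}_v=d-2$ from the same.

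\textbf{Main obstacle.} The hard part will be establishing $\dim\rho_h=d-2$ and $\dim\fod^{\rho_h}_{L_{\mathrm{out}}}=d-1$ rigorously from genericity of $\theta$ alone, since the constraint structure here is different from the purely-$\mathbf{A}$ setting of Lemma \ref{lem_dimension}: the root is pinned to the single point $\theta$ rather than to a hyperplane, so one is intersecting the $(d-2+r-\mathrm{ov})$-dimensional moduli cone of the type of $\overline{h}$ with $r$ hyperplane conditions (from $\mathbf{A}$) and with the condition that $h(R_T)=\theta$ exactly. I would handle this by working on $\cM^\trop_{\omega,\boldsymbol{\gamma},\mathbf{A}}$ first --- where by Definition \ref{def_general_constraints} every face has dimension $\le d-2$, with equality iff trivalent --- and then observing that the extra condition "$\theta\in\Int(\overline{h}(L_{\mathrm{out}}))$" is automatically satisfiable for $\theta$ general on the line $\fod^{\rho_h}_{L_{\mathrm{out}}}$ precisely when that locus is $(d-1)$-dimensional, i.e.\ when the graph is trivalent and locally planar at every vertex. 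The non-proportionality inputs $\omega(\gamma_{E_1},\gamma_{E_2})\neq 0$ are what rule out the degenerate cases where Lemma \ref{lem_key} or Lemma \ref{lem_locally_planar} would otherwise force a drop in dimension. I expect the cleanest route is to prove $\dim \fod^{\rho_h}_{L_{\mathrm{out}}}=d-1$ directly by an inductive argument down the tree, mirroring the proof of Lemma \ref{lem_prep1} but run in reverse to propagate the full-dimensionality outward, and then read off everything else from Theorem \ref{thm_trop}.
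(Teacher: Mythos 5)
Your overall intuition is sound---the nondegeneracy $\omega(\gamma_{E_1},\gamma_{E_2})\neq 0$ at each non-root vertex is indeed the decisive input, and the induction runs from root to leaves---but the specific route you outline has a genuine gap. You want to invoke Theorem~\ref{thm_trop}, which requires as a hypothesis that $\dim\rho_h=d-2$ in addition to $\dim\fod^{\rho_h}_{L_{\mathrm{out}}}=d-1$. You never rigorously establish $\dim\rho_h=d-2$, and the route you gesture at (a dimension count ``analogous to Lemma~\ref{lem_dimension} and Lemma~\ref{lem_finite}'') does not apply: those results require $\mathbf{A}$ to be general in the sense of Definition~\ref{def_general_constraints}, whereas the present lemma assumes only that $\mathbf{A}$ is an arbitrary $\boldsymbol{\gamma}$-constraint (the key case is $\mathbf{A}=\mathbf{A}^0$, which is maximally non-general), and they also hinge on trivalence, whereas attractor flow trees can have vertices of arbitrarily high valency. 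Worse, in the paper $\dim\rho_h=d-2$ is \emph{derived} from this lemma (via Lemma~\ref{lem_tree_dim}), not the other way around, so using it as an input to Theorem~\ref{thm_trop} here would be circular: from $\dim\fod^{\rho_h}_{L_{\mathrm{out}}}=d-1$ alone one only gets $\dim\rho_h\geq d-2$ (surjectivity of the evaluation $\rho_h\to\foj^{\rho_h}_{v_{\mathrm{out}}}$), and injectivity (hence $\leq d-2$) is exactly what Lemma~\ref{lem_flow_tree}/Lemma~\ref{lem_tree_dim} prove using the conclusions you are trying to establish.

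The paper's proof avoids Theorem~\ref{thm_trop} entirely. The initial step is the one you identify: $\dim\fod^{\rho_h}_{L_{\mathrm{out}}}=d-1$ holds because otherwise $\theta$ would be forced into a proper subspace of $(\gamma^\perp)_\RR$, contradicting genericity of $\theta$. The inductive step then uses only the two preparatory lemmas that do \emph{not} require $\dim\sigma=d-2$: given $\dim\fod^{\rho_h}_E=d-1$ for the parent edge $E$ of a vertex $v$, Lemma~\ref{lem_locally_planar} gives $\dim T_v\Gamma_{\rho_h}\leq 2$; Definition~\ref{def_attractor_tree}(i) supplies $E',E''$ adjacent to $v$ with $\omega(\gamma_{E'},\gamma_{E''})\neq 0$, forcing $\dim T_v\Gamma_{\rho_h}\geq 2$; hence $\dim T_v\Gamma_{\rho_h}=2$, so $\dim\foj^{\rho_h}_v=d-2$, and Lemma~\ref{lem_key} propagates $\dim\fod^{\rho_h}_{E_i}=d-1$ to every child edge $E_i$. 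Your final sentence (``prove $\dim\fod^{\rho_h}_{L_{\mathrm{out}}}=d-1$ directly by an inductive argument down the tree'') is the right instinct, but you should carry the induction through to conclusions (i) and (ii) directly using Lemma~\ref{lem_locally_planar} and Lemma~\ref{lem_key} rather than route through Theorem~\ref{thm_trop}, whose hypotheses you cannot verify at this stage.
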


\begin{proof}
We prove the result by induction following the flow on $\Gamma_{\rho_h}$ starting at the root leg and ending at the leaves.

For the initial step of the induction, note that $\dim \fod_{L_{\mathrm{out}}}^{\rho_h}=d-1$. Indeed, we have $\dim \fod_{L_{\mathrm{out}}}^{\rho_h}\leq d-1$ by Lemma \ref{lem_prep}(i), and if we had $\dim \fod_{L_{\mathrm{out}}}^{\rho_h}\leq d-2$, then $\theta$ would be constrained to lie in a strict hyperplane of $(\gamma^\perp)_\RR$, in contradiction with the assumption that $\theta$ is a general point of $(\gamma^\perp)_\RR$.

For the induction step, let $E$ be a leg or vertex of $\Gamma_{\rho_h}$ such that $\dim \fod_E^{\rho_h}=d-1$. 
Denote by $v$ the child vertex of $E$ and $E_1,\dots,E_k$ the children edges of $v$. By Lemma \ref{lem_locally_planar}, the assumption that $\dim \fod_E^{\rho_h}=d-1$ implies that $\dim T_v \Gamma_{\rho_h}\leq 2$.
On the other hand, by Definition \ref{def_attractor_tree}(i) of an attractor flow tree, there exists edges or legs $E'$ and $E''$ adjacent to $v$ such that $\omega(\gamma_{E'},\gamma_{E''})\neq 0$, and so in particular $\dim T_v \Gamma_{\rho_h} \geq 2$. We conclude that $\dim T_v \Gamma_{\rho_h}=2$ and so $\dim \foj_v^{\rho_h}=d-2$.
Finally, $\dim T_v \Gamma_{\rho_h}=2$ and $\dim \fod_E^{\rho_h}=d-1$ imply that $\dim \fod_{E_i}^{\rho_h}=d-1$ for every child edge $E_i$ of $v$ by Lemma \ref{lem_key}.
\end{proof}

\begin{lemma} \label{lem_flow_tree}
Fix a $\mathbf{\gamma}$-constraint
$\mathbf{A}$
and a general point $\theta \in (\gamma^\perp)_\RR$
Let $(h:T \rightarrow M_\RR) \in \mathcal{T}_{\omega,\boldsymbol{\gamma},\mathbf{A}}^\theta$ be an $(\omega,\boldsymbol{\gamma})$-marked attractor flow tree with root $\theta$ and matching $\mathbf{A}$. Then for every edge $E$ of $T$, with parent vertex $v$ and child vertex $v'$, the point $h(v')$ in $M_\RR$ is the unique intersection point of the half-line $h(v)+\RR_{\geq 0}\iota_{\gamma_E}\omega$ with the affine codimension two plane 
$\cap_{i=1}^k A_{E_j}$, where $E_1,\dots, E_k$ are the children edges of $v'$, and the affine hyperplanes $A_{E_j}$ are defined as in \eqref{eq_AE}.
\end{lemma}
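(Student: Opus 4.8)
The plan is to argue by induction following the flow on $T$ from the root down to the leaves, combining Lemma \ref{lem_prep} with the dimension information from Lemma \ref{lem_flow_trees_nice}. First I would fix an edge $E$ of $T$ with parent vertex $v$ and child vertex $v'$, with $E_1,\dots,E_k$ the children edges of $v'$. By Definition \ref{def_attractor_tree}(ii), the point $h(v')$ lies on the half-line $h(v)+\RR_{\geq 0}\iota_{\gamma_E}\omega$, since the direction of $h(E)$ is $\iota_{\gamma_E}\omega$. So the content is twofold: first, that $h(v') \in \bigcap_{j=1}^k A_{E_j}$, and second, that the intersection of this half-line with $\bigcap_{j=1}^k A_{E_j}$ is a single point.

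For the first point, I would invoke Lemma \ref{lem_prep}(ii) applied to the face $\sigma = \rho_h$ of $\cM^\trop_{\omega,\boldsymbol{\gamma},\mathbf{A}}$: since $h \in \Int(\rho_h)$ by construction (via $\overline h$), we have $h(v') \in \foj_{v'}^{\rho_h} \subset \bigcap_{j=1}^k A_{E_j}$, which is exactly the claimed incidence. For the second point — uniqueness of the intersection — I would use that $\dim \bigcap_{j=1}^k A_{E_j} = d-2$. Indeed, the linear space underlying $\bigcap_{j=1}^k A_{E_j}$ is $\bigcap_{j=1}^k \gamma_{E_j}^\perp$, and by Lemma \ref{lem_flow_trees_nice}(ii) we have $\dim \foj_{v'}^{\rho_h}=d-2$, while $\Lambda_{\foj_{v'}^{\rho_h}} \subset \bigcap_{j=1}^k \gamma_{E_j}^\perp$ by Lemma \ref{lem_prep}(ii); hence $\dim \bigcap_{j=1}^k \gamma_{E_j}^\perp \geq d-2$, and since $\dim T_{v'}\Gamma_{\rho_h}=2$ forces the $\RR$-span of the $\gamma_{E_j}$ to be $2$-dimensional (using $u_{E_j}=-\iota_{\gamma_{E_j}}\omega$ from Lemma \ref{lem_class} and non-degeneracy of the pairing on that plane, as in the proof of Lemma \ref{lem_key}), we get $\dim \bigcap_{j=1}^k \gamma_{E_j}^\perp = d-2$ exactly. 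Then an affine line meets an affine codimension-two plane in at most one point unless the line is contained in the plane; the latter would force $\iota_{\gamma_E}\omega \in \bigcap_{j=1}^k \gamma_{E_j}^\perp$, i.e. $\gamma_E$ pairs trivially with each $\gamma_{E_j}$. But $\gamma_E = \sum_{j=1}^k \gamma_{E_j}$ together with the attractor-tree condition in Definition \ref{def_attractor_tree}(i) (some pair of adjacent edges at $v'$ pairs nontrivially) would then give a contradiction — here I would mimic the argument in Lemma \ref{lem_key} showing $u_E \notin \Lambda_{\foj_{v'}^{\rho_h}}$, or equivalently appeal directly to Theorem \ref{thm_trop}(iii) applied to a $(d-2)$-dimensional face asymptotically equivalent to $\rho_h$.

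The main obstacle I anticipate is the bookkeeping around $\rho_h$: one must check that $\rho_h$ genuinely satisfies the hypotheses under which Lemmas \ref{lem_prep}, \ref{lem_locally_planar}, \ref{lem_key} and Theorem \ref{thm_trop} apply — in particular that $h$ lies in the relative interior of $\rho_h$ and that $\dim \fod_{L_{\mathrm{out}}}^{\rho_h}=d-1$, which is precisely the input handled by Lemma \ref{lem_flow_trees_nice} under the genericity of $\theta$. Once that is in place, the argument is essentially a transversality statement: a line in a fixed rational direction meeting a codimension-two affine plane transversally, where transversality is exactly the statement $\iota_{\gamma_E}\omega \notin \bigcap_j \gamma_{E_j}^\perp$. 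I would therefore structure the proof as: (1) recall $h \in \Int(\rho_h)$ and the conclusions of Lemma \ref{lem_flow_trees_nice}; (2) deduce $h(v') \in h(v)+\RR_{\geq 0}\iota_{\gamma_E}\omega$ from Definition \ref{def_attractor_tree}(ii) and $h(v') \in \bigcap_j A_{E_j}$ from Lemma \ref{lem_prep}(ii); (3) show $\dim \bigcap_j A_{E_j}=d-2$ and that $\iota_{\gamma_E}\omega$ is not tangent to it, using Lemma \ref{lem_class}, $\dim T_{v'}\Gamma_{\rho_h}=2$, and the attractor-tree condition exactly as in Lemma \ref{lem_key}; (4) conclude uniqueness of the intersection point.
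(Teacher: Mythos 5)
Your proof is correct and takes essentially the same route as the paper's: the two incidences come from Definition \ref{def_attractor_tree}(ii) and Lemma \ref{lem_prep}(ii), and uniqueness of the intersection point follows from the two dimension facts of Lemma \ref{lem_flow_trees_nice} (namely $\dim T_{v'}\Gamma_{\rho_h}=2$ and $\dim \fod_E^{\rho_h}=d-1$), which the paper packages tersely as: the half-line lies in $A_E$, $\bigcap_j A_{E_j}$ is an affine hyperplane in $A_E$, and the half-line is transverse to it. Two minor refinements to your write-up: Theorem \ref{thm_trop}(iii) applies directly to $\rho_h$ itself (by Lemmas \ref{lem_tree_dim} and \ref{lem_flow_trees_nice} it is a $(d-2)$-dimensional face with $\dim\fod_{L_{\mathrm{out}}}^{\rho_h}=d-1$, so no ``asymptotically equivalent face'' is needed), and your alternative ``direct'' contradiction with Definition \ref{def_attractor_tree}(i) does work but only because $\mathrm{span}(\gamma_{E_j})$ is two-dimensional (which needs both $\dim T_{v'}\Gamma_{\rho_h}=2$ for $\geq 2$ and $\dim\foj_{v'}^{\rho_h}=d-2$ for $\leq 2$) so that $\omega$ restricted to it is either identically zero or nondegenerate --- this should be made explicit if you take that route.
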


\begin{proof}
By Definition \ref{def_attractor_tree}(ii), we have $h(v') \in h(v)+\RR_{\geq 0}\iota_{\gamma_E} \omega$. On the other hand, we have $h(v') \in \cap_{i=1}^k A_{E_i}$ by Lemma \ref{lem_prep}(ii). As $\iota_{\gamma_E} \omega \neq 0$, $h(v)+\RR_{\geq 0}\iota_{\gamma_E} \omega$ is a half-line line in $A_E$. As $\dim T_v \Gamma_{\rho_h}=2$ by Lemma \ref{lem_flow_trees_nice}, $\cap_{i=1}^k A_{E_i}$ is an affine hyperplane in
$A_E$. Finally, as $\dim \fod_E^{\rho_h}=d-1$ by Lemma \ref{lem_flow_trees_nice}, $h(v)+\RR_{\geq 0}\iota_{\gamma_E} \omega$ is not contained in $\cap_{i=1}^k A_{E_i}$, and so the intersection $(h(v)+\RR_{\geq 0}\iota_{\gamma_E} \omega)\cap (\cap_{i=1}^k A_{E_i})$ consists of at most one point. One concludes that
$(h(v)+\RR_{\geq 0}\iota_{\gamma_E} \omega)\cap (\cap_{i=1}^k A_{E_i})=\{h(v')\}$.
\end{proof}

\begin{lemma}\label{lem_tree_dim}
Fix a $\mathbf{\gamma}$-constraint
$\mathbf{A}$ and a general point $\theta \in (\gamma^\perp)_\RR$.
Let $(h:T \rightarrow M_\RR) \in \mathcal{T}_{\omega,\boldsymbol{\gamma},\mathbf{A}}^\theta$ be an $(\omega,\boldsymbol{\gamma})$-marked attractor flow tree with root $\theta$ and matching $\mathbf{A}$.
Let $v_{\mathrm{out}}$ be the unique vertex of $\Gamma_{\rho_h}$ adjacent to the root leg $L_{\mathrm{out}}$. Then, the evaluation map 
\begin{align*}
    \rho_h &\longrightarrow \foj_{v_{\mathrm{out}}}^{\rho_h}\\
    \overline{h} &\longmapsto \overline{h}(v_{\mathrm{out}})
\end{align*}
is a bijection. In particular, the dimension of the family $\rho_h$ of tropical curves corresponding to $h$ is $\dim \rho_h=d-2$.
\end{lemma}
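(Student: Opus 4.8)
\textbf{Proof plan for Lemma \ref{lem_tree_dim}.}
The plan is to show the evaluation map $\overline{h} \mapsto \overline{h}(v_{\mathrm{out}})$ is a bijection onto $\foj_{v_{\mathrm{out}}}^{\rho_h}$ by constructing an explicit inverse, using the attractor property to reconstruct the whole tropical curve from the position of $v_{\mathrm{out}}$. First I would observe that by Lemma \ref{lem_flow_trees_nice}, every vertex $v$ of $\Gamma_{\rho_h}$ satisfies $\dim T_v\Gamma_{\rho_h}=2$ and every edge or leg $E$ satisfies $\dim \fod_E^{\rho_h}=d-1$, so all the hypotheses of Lemma \ref{lem_flow_tree} are in force. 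The key point is then that Lemma \ref{lem_flow_tree} gives a deterministic recursive procedure: given the position $h(v)$ of a vertex $v$ and a child edge $E$ of $v$ with child vertex $v'$, the point $h(v')$ is \emph{uniquely} determined as the single intersection of the half-line $h(v)+\RR_{\geq 0}\iota_{\gamma_E}\omega$ with the codimension-two affine plane $\cap_{j=1}^k A_{E_j}$ cut out by the children $E_1,\dots,E_k$ of $v'$. (When $E$ is a leg $L_i$, its image is constrained to lie in $A_i$, which is automatic and imposes no further branching.)

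Next I would run the induction along the flow from the root to the leaves. Fix a point $p \in \foj_{v_{\mathrm{out}}}^{\rho_h}$; we must show there is a unique tropical curve $\overline{h}' \in \rho_h$ with $\overline{h}'(v_{\mathrm{out}})=p$. Setting $\overline{h}'(v_{\mathrm{out}})=p$, the images of all remaining vertices are forced one after another by the recursion of Lemma \ref{lem_flow_tree}, since $\Gamma_{\rho_h}$ is a tree rooted at $v_{\mathrm{out}}$ and each vertex has a unique parent edge. Once all vertex positions are fixed, the images of all edges and legs are fixed as the corresponding segments or half-lines in the directions $\iota_{\gamma_E}\omega$. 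One must check this candidate $\overline{h}'$ is genuinely a tropical curve of the type $\tau_{\rho_h}$ — the balancing condition holds because the weighted directions are the $\iota_{\gamma_E}\omega$ inherited from the type, and the combinatorial type is preserved because $p$ lies in the relative interior locus $\foj_{v_{\mathrm{out}}}^{\rho_h}$ over which all the relevant intersections remain transverse and nonempty (this is where one uses that $\theta$ is general and the $\fod_E^{\rho_h}$ have the expected dimension). This shows the evaluation map is injective and surjective onto $\foj_{v_{\mathrm{out}}}^{\rho_h}$.

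Finally, to get $\dim \rho_h = d-2$: the evaluation map $\rho_h \to \foj_{v_{\mathrm{out}}}^{\rho_h}$ is affine on $\rho_h$ (it is the restriction of a linear projection), and a bijection, hence an isomorphism of polyhedra, so $\dim \rho_h = \dim \foj_{v_{\mathrm{out}}}^{\rho_h}$; and $\dim \foj_{v_{\mathrm{out}}}^{\rho_h}=d-2$ by Lemma \ref{lem_flow_trees_nice}(ii). I expect the main obstacle to be the bookkeeping in verifying that the reconstructed curve $\overline{h}'$ stays in the relative interior of $\rho_h$ (i.e. does not degenerate to a smaller face) and that each intersection in the recursion is indeed nonempty and transverse for $p$ ranging over all of $\foj_{v_{\mathrm{out}}}^{\rho_h}$ — this amounts to checking that the procedure of Lemma \ref{lem_flow_tree}, applied with $h(v_{\mathrm{out}})$ replaced by an arbitrary point of $\foj_{v_{\mathrm{out}}}^{\rho_h}$, never produces an empty or positive-dimensional intersection, which follows from the dimension statements in Lemma \ref{lem_flow_trees_nice} together with the fact that the half-line direction $\iota_{\gamma_E}\omega$ is not tangent to $\cap_j A_{E_j}$ (Theorem \ref{thm_trop}(iii) applied to $\rho_h$, or directly the last paragraph of the proof of Lemma \ref{lem_flow_tree}).
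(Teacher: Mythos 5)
Your proof is correct and takes essentially the same route as the paper: injectivity via the recursion of Lemma \ref{lem_flow_tree} along the flow from root to leaves, and the dimension count via Lemma \ref{lem_flow_trees_nice}(ii). One small simplification: the surjectivity you labour over at the end is automatic, since $\foj_{v_{\mathrm{out}}}^{\rho_h}$ is by definition \eqref{Eq:jv} the closure of the image of the evaluation map, so there is nothing to check there; the only real content is uniqueness.
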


\begin{proof}
    Following the flow on $\Gamma_{\rho_h}$ starting at the root leg and ending at the leaves, $\overline{h}$ is inductively uniquely determined by $\overline{h}(v_{\mathrm{out}})$ as in Lemma \ref{lem_flow_tree}. Moreover, we have $\dim \foj_{v_{\mathrm{out}}}^{\rho_h}=d-2$ by Lemma \ref{lem_flow_trees_nice}(ii).
\end{proof}

\subsection{The flow tree formula}
\label{sec_flow_trees}

Let $(Q,W)$ be a quiver with potential as in 
\S \ref{sec_dt_intro}. Let $\omega$ be the corresponding skew-symmetric form on $N=\Z^{Q_0}$ as in \eqref{Eq: Euler form}. We fix a dimension vector $\gamma \in N$ and a general stability parameter $\theta \in (\gamma^\perp)_\RR$. As reviewed in \eqref{eq_reconstruction_intro}, the DT invariant $\Omega_\gamma^\theta$ can be expressed in terms of the simpler attractor DT invariants in terms of universal coefficients 
$F_r^\theta(\gamma_1,\dots,\gamma_r)$ indexed by decompositions $\gamma=\sum_{i=1}^r \gamma_i$. These coefficients further decompose as 
\begin{equation}\label{eq_trees}
F_r^\theta(\gamma_1,\dots,\gamma_r)
=\sum_{h \in \mathcal{T}_{\omega,\boldsymbol{\gamma},\mathbf{A}^0}} F_{r,h}^\theta(\gamma_1,\dots,\gamma_r) \,,\end{equation}
where $\boldsymbol{\gamma}=(\gamma_1,\dots,\gamma_r)$, $\mathbf{A}^0$ is the $\boldsymbol{\gamma}$-constraint defined by the linear hyperplanes $(\gamma_i^\perp)_\RR$, and the sum is over the $(\omega,\boldsymbol{\gamma})$-marked attractor flow trees with root $\theta$ and matching $\mathbf{A}^0$ as in Definition \ref{def_attractor_tree}. The decomposition \eqref{eq_trees} is obtained by iterative use of the wall-crossing formula. In general, the attractor flow trees $h \in \mathcal{T}_{\omega,\boldsymbol{\gamma},\mathbf{A}^0}$ may contain vertices of arbitrary high valency, as in \cite[\S3]{KS}. In this section, following \cite{ABflow}, we reformulate the flow tree formula of \cite{ABflow} expressing the coefficients $F_{r,h}^\theta(\gamma_1,\dots,\gamma_r)$ in terms of simpler trivalent tropical curves matching constraints $\mathbf{A}$ which are generic small perturbations of $\mathbf{A}^0$.

For a $\boldsymbol{\gamma}$-constraint $\mathbf{A}$, recall from  \eqref{Eq: Phi} that we have a map
\begin{equation}
   \Phi \colon \{\mathrm{Faces~of~} \cM^\trop_{\omega,\boldsymbol{\gamma},\mathbf{A}}   \} \longrightarrow \{\mathrm{Faces~of~} \cM^\trop_{\omega,\boldsymbol{\gamma},\mathbf{A}^0}   \}\,.
\end{equation}
For every face $\sigma$ of $\cM^\trop_{\omega,\boldsymbol{\gamma},\mathbf{A}}$, $\Phi(\sigma)$ is the face of $\cM^\trop_{\omega,\boldsymbol{\gamma},\mathbf{A}^0}$ obtained from $\sigma$ by rescaling $\mathbf{A}$ to $\mathbf{A}^0$ in the space $\prod_{i=1}^r M_\RR/(\gamma_i^\perp)_\RR \simeq \RR^r$ of $\boldsymbol{\gamma}$-constraints. Finally, recall that for every attractor tree $h \in \mathcal{T}_{\omega,\boldsymbol{\gamma},\mathbf{A}}^\theta$, we defined 
in Definition \ref{def_family}
the corresponding family of tropical curves $\rho_h$, which is a face of $\cM^\trop_{\omega,\boldsymbol{\gamma},\mathbf{A}^0}$. By Lemma \ref{lem_tree_dim}, we have $\dim \rho_h=d-2$. Hence, if $\mathbf{A}$ is a general $\boldsymbol{\gamma}$-constraint as in Definition \ref{def_general_constraints}, and $\sigma$ is a face of $\cM^\trop_{\omega,\boldsymbol{\gamma},\mathbf{A}}$ such that $\Phi(\sigma)=\rho_h$, then $\dim \sigma=d-2$ and $\Gamma_\sigma$ is trivalent.

\begin{theorem} \label{thm_F_trop}
Fix a general point $\theta \in (\gamma^\perp)_\RR$ and an attractor flow tree $(h\colon T\rightarrow M_\RR) \in \mathcal{T}_{\omega,\boldsymbol{\gamma},\mathbf{A}^0}^\theta$. Then, for every general $\boldsymbol{\gamma}$-constraint $\mathbf{A}$, we have
\begin{equation}
    \label{eq:flow_to_trop}
F_{r,h}^\theta(\gamma_1,\dots,\gamma_r) = \sum_{\sigma\in \Phi^{-1}(\rho_h)} \prod_{v \in V(\Gamma_\sigma)} |\omega(\gamma_{E_{1,v}},\gamma_{E_{2,v}}) | \, ,
\end{equation}
where for every vertex $v \in V(\Gamma_\sigma)$, $E_{1,v}$ and $E_{2,v}$ are the two children edges of $v$. 
\end{theorem}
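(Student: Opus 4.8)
\textbf{Proof proposal for Theorem \ref{thm_F_trop}.}

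The plan is to reduce the statement to the flow tree formula of \cite{ABflow} by matching the combinatorial objects on both sides. On the right-hand side of \eqref{eq:flow_to_trop}, the sum runs over faces $\sigma$ of $\cM^\trop_{\omega,\boldsymbol{\gamma},\mathbf{A}}$ with $\Phi(\sigma)=\rho_h$; as noted just before the statement, genericity of $\mathbf{A}$ (Definition \ref{def_general_constraints}) together with $\dim\rho_h=d-2$ (Lemma \ref{lem_tree_dim}) forces $\dim\sigma=d-2$ and $\Gamma_\sigma$ trivalent. First I would recall the precise shape of the flow tree formula of \cite{ABflow}: the coefficient $F_{r,h}^\theta(\gamma_1,\dots,\gamma_r)$ is computed as a sum, over trivalent ``perturbed attractor flow trees'' obtained by a generic small perturbation of $h$, of a product of local factors $|\omega(\gamma_{E_1},\gamma_{E_2})|$ over the vertices, where $E_1,E_2$ are the two children edges at each vertex (possibly with signs that cancel because of the absolute values — I would check that the flow tree formula in \cite{ABflow} is stated exactly with these $|\omega(\cdot,\cdot)|$ factors, which is the combinatorial content of the ``$S$-factors'' in the Alexandrov--Pioline conjecture).

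The key step is then a bijection: for a general $\boldsymbol{\gamma}$-constraint $\mathbf{A}$ obtained as a small generic perturbation of $\mathbf{A}^0$, the faces $\sigma$ of $\cM^\trop_{\omega,\boldsymbol{\gamma},\mathbf{A}}$ with $\Phi(\sigma)=\rho_h$ are in natural bijection with the trivalent perturbed attractor flow trees deforming $h$, in a way that identifies $\Gamma_\sigma$ (forgetting the root-leg distinction) with the underlying tree of the perturbation, and matches the weighted directions $u_{E}=-\iota_{\gamma_E}\omega$ of the edges of $\Gamma_\sigma$ (Lemma \ref{lem_class}) with the attractor directions of the corresponding edges of the perturbed tree. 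I would build this bijection by using Lemma \ref{lem_flow_tree} and Lemma \ref{lem_tree_dim}: an attractor flow tree is rigid once $\theta$ is fixed, and its deformation-to-$\mathbf{A}$ is governed by exactly the same inductive ``intersect the half-line $h(v)+\RR_{\ge0}\iota_{\gamma_E}\omega$ with $\cap_j A_{E_j}$'' recipe that determines a tropical curve in a $(d-2)$-face $\sigma\in\Phi^{-1}(\rho_h)$. Concretely, given $\sigma$ with $\Phi(\sigma)=\rho_h$, degenerating $\mathbf{A}\to\mathbf{A}^0$ collapses $\sigma$ onto $\rho_h$; conversely, resolving $\rho_h$ under perturbation of $\mathbf{A}^0$ produces exactly the trivalent trees appearing in the flow tree formula, because both are governed by the same wall-crossing/tree-splitting combinatorics at each vertex of valency $>3$ of $\Gamma_{\rho_h}$. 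Once the bijection is set up, the local factor $|\omega(\gamma_{E_{1,v}},\gamma_{E_{2,v}})|$ on the tropical side is literally the local factor on the flow-tree side, and the formula follows by multiplying over vertices and summing.

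The main obstacle I anticipate is verifying that the bijection is exact — that is, that $\Phi^{-1}(\rho_h)$ (restricted to $(d-2)$-dimensional faces) is neither larger nor smaller than the index set of the flow tree formula of \cite{ABflow}, and that the multiplicities agree with no extra lattice-index corrections. This requires care because $\Phi$ a priori only remembers the combinatorial type after degeneration, so I must check: (a) every trivalent perturbation of $h$ that contributes nontrivially to $F_{r,h}^\theta$ does arise from some $\sigma$ with $\Phi(\sigma)=\rho_h$ and is realized by an honest tropical curve matching $\mathbf{A}$ (using that $h$ is non-superabundant, genus $0$, via \cite[Prop.~2.10]{MR} as in Lemma \ref{lem_dimension}); and (b) no face $\sigma$ with $\Phi(\sigma)=\rho_h$ corresponds to a ``fake'' splitting with $\omega(\gamma_{E_{1,v}},\gamma_{E_{2,v}})=0$ at some vertex — but such $\sigma$ contribute $0$ to the right-hand side anyway, so they are harmless. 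A secondary subtlety is the sign/orientation bookkeeping: the flow tree formula naturally produces signed contributions, and one must invoke the argument from \cite{ABflow} (or re-derive it) that after summing over the orderings/perturbations the signs combine into the absolute values $|\omega(\gamma_{E_{1,v}},\gamma_{E_{2,v}})|$ vertex by vertex. I would handle this last point by citing the relevant statement of \cite{ABflow} rather than reproving it, and devote the bulk of the argument to the combinatorial bijection between $\Phi^{-1}(\rho_h)$ and the perturbed-tree index set.
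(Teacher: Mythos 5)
Your proposal matches the paper's proof in structure and in its key ingredients: both reduce to the flow tree formula of \cite{ABflow}, both set up a bijection between the $(d-2)$-dimensional faces $\sigma\in\Phi^{-1}(\rho_h)$ and the trivalent perturbed attractor flow trees in that formula (using Lemma~\ref{lem_flow_tree} and Lemma~\ref{lem_tree_dim} for the inductive rigidity of the perturbed trees), and both handle the absolute-value versus signed-factor issue by invoking the sign analysis already carried out in \cite{ABflow} (specifically, that the formula's sign $-\epsilon_{T_r,v}^{\tilde\theta,\omega}$ agrees with the sign of $\eta(e_{v'},e_{v''})$ whenever the contribution is nonzero, and that the ``fake'' splittings with vanishing $\omega$-pairing drop out). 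The only details you did not pin down — the auxiliary lattice $\mathcal{N}=\bigoplus_i\Z e_i$ with pullback form $\eta$, the $y\to-1$ specialization from refined to numerical DT invariants, and the precise cross-references within \cite{ABflow} — are bookkeeping needed to make your step-by-step argument literal, not a different route.
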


\begin{proof}
We explain below how Theorem \ref{thm_F_trop} follows from the flow tree formula given in \cite[Theorem 5.6]{ABflow}.
In \cite{ABflow}, we introduce a 
bigger lattice  $\mathcal{N}:=\bigoplus_{i=1}^r \Z e_i$ and we consider the map $p \colon \mathcal{N} \rightarrow N$ defined by $e_i \mapsto \gamma_i$. We also define the pullback skew-symmetric form 
$\eta$ on $\mathcal{N}$ by 
$\eta(e_i,e_j) := \omega(\gamma_i, \gamma_j)$.
By duality, we have a map 
$q : M_\RR \rightarrow \mathcal{M}_\RR=\Hom(\mathcal{N},\RR)$. Let $\tilde{\theta}$ be a small generic perturbation of $ q(\theta)$ in
$(\sum_{i=1}^r e_i)^{\perp}$.
Then, \cite[Thm.$5.6$]{ABflow} states that \begin{equation}\label{eq_ftf}[F_r^\theta(\gamma_1,\dots,\gamma_r):=\sum_{T_r} \prod_{v\in V_{T_r}^\circ} -\epsilon_{{T_r},v}^{\tilde{\theta},\omega} 
\eta(e_{v'},e_{v''})  \,,\end{equation}
where the sum is over rooted binary trees ${T_r}$ with $r$ leaves decorated by $\{e_1,\dots,e_r\}$, $V_{T_r}^\circ$ denotes the set of vertices of $T_r$ distinct from the root, for any $v \in V_{T_r}^\circ$, $e_v \in \mathcal{N}$ is the sum of $e_i$'s attached to leaves descendant from $v$, and $\epsilon_{T_r,v}^{\tilde{\theta},\omega} \in \{ -1,0,1 \}$ is defined in \cite[Eq.\! (1.11)]{ABflow} in terms of the discrete attractor flow \cite[Eq.\! (1.10)]{ABflow} embedding the trees $T_r$ in $\mathcal{M}_\RR$. Note that \cite[Theorem 5.6]{ABflow} is actually stated for the refined DT invariants, in which case $F_r^\theta(\gamma_1,\dots,\gamma_r)$ is a function of an additional variable $y$, and we obtained \eqref{eq_ftf} by taking the limit $y \rightarrow -1$ to recover the case of numerical DT invariants $\Omega_\gamma^{+,\theta}$ defined in \eqref{eq_dt_intro} (the limit $y \rightarrow 1$ corresponding to the signed DT invariants $\Omega_\gamma^\theta$).

As explained in \cite[Remark 4.25]{ABflow}, the trees embedded in $\mathcal{M}_\RR$ by the discrete attractor flow are contained in a copy of $M_\RR$ in $\mathcal{M}_\RR$.
Moreover, it follows from the proof of \cite[Eq.\! (4.57)]{ABflow} and from Lemma \ref{lem_flow_tree} that these embedded trees are
exactly attractor flow trees as in Definition \ref{def_attractor_tree}, with root at a general point $\tilde{\theta}$ of $A_{L_{\mathrm{out}}}$ close enough to $\theta$, and matching a small generic perturbation $\mathbf{A}$ of the constraint $\mathbf{A}^0$, if and only if $\epsilon_{T_r,v}^{\tilde{\theta},\omega} \neq 0$, and that in this case,
 $-\epsilon_{{T_r},v}^{\tilde{\theta},\omega}$ has the same sign as $ 
\eta(e_{v'},e_{v''})$ (see the paragraph above \cite[Eq.\! (4.52)]{ABflow}). In particular, a tree $T_r$ with a non-zero contribution to \eqref{eq_ftf} contributes $\prod_{v\in V_{T_r}^\circ}|
\eta(e_{v'},e_{v''})|$, and so \eqref{eq:flow_to_trop}
follows from \eqref{eq_ftf}.

\end{proof}

\begin{remark}
    Theorem \ref{thm_F_trop} could also be deduced from \cite[Theorem 4.4]{mandel2020disks} expressing the functions attached to the walls of a scattering diagram in terms of tropical disks. The proof of \cite[Theorem 5.6]{ABflow} is also based on the study of a scattering diagram in $\mathcal{M}_\RR$ which can be viewed as a universal family for the perturbed scattering diagrams in $M_\RR$ considered in \cite[Theorem 4.4]{mandel2020disks} (see \cite[Remark 4.25]{ABflow}).
\end{remark}

\subsection{The quiver DT-log Gromov--Witten correspondence}
\label{sec_dt_gw}

Let $(Q,W)$ be a quiver with potential as in 
\S \ref{sec_dt_intro}. Let $\omega$ be the corresponding skew-symmetric form on $N=\Z^{Q_0}$. We fix a dimension vector $\gamma \in N$ such that $\iota_\gamma \omega \neq 0$
and a decomposition $\gamma=\sum_{i=1}^r \gamma_i$ such that $\iota_{\gamma_i}\omega \neq 0$ for all $1\leq i\leq r$. We denote $\boldsymbol{\gamma}=(\gamma_1,\dots,\gamma_r)$.
We also fix a general stability parameter $\theta \in (\gamma^\perp)_\RR$. In this section, we prove our main result relating the coefficients  $F_r^\theta(\gamma_1,\dots,\gamma_r)$
in \eqref{eq_reconstruction_intro}-\eqref{eq_trees_intro}, expressing DT invariants in terms of attractor DT invariants, and log Gromov--Witten invariants of toric varieties.

Recall that given an attractor flow tree $(h: T \rightarrow M_\RR)\in \mathcal{T}_{\omega,\boldsymbol{\gamma}, 
\mathbf{A}^0}^\theta$, we defined in Definition \ref{def_family} the corresponding family of tropical curves $\rho_h$: it is a face of $\mathcal{M}_{\omega,\boldsymbol{\gamma},\mathbf{A}^0}^\trop$ such that $\dim \rho=d-2$ by Lemma \ref{lem_tree_dim} and $\dim \fod_{L_{\mathrm{out}}}^{\rho_h}=d-1$ by Lemma \ref{lem_flow_trees_nice}.
Given a $\boldsymbol{\gamma}$-fan $\Sigma$ as in Definition \ref{def_gamma_fan}, we denote by $\widetilde{\rho}_h$ the $(d-2)$-dimensional face of $\mathcal{M}_{\omega,\boldsymbol{\gamma},\mathbf{A}^0}^\trop(\Sigma)$ contained in $\rho_h$ and containing $\theta$. In particular, one can consider as in \eqref{eq_N_toric} the genus $0$ log Gromov--Witten invariant $N_{\widetilde{\rho}_h}^{\mathrm{toric}}(X_\Sigma)$ of the toric variety $X_\Sigma$ of fan $\Sigma$, and the tropical coefficient $k_{\widetilde{\rho}_h}$ defined as in \eqref{eq_coeff}. By Theorem \ref{thm_enum},  $N_{\widetilde{\rho}_h}^{\mathrm{toric}}(X_\Sigma)$ is actually an enumerative count of log curves.

\begin{theorem}[\textbf{Theorem \ref{thm_dt_gw_intro}}] \label{thm_dt_gw}
Fix a general stability parameter $\theta \in (\gamma^\perp)_\RR$ 
and an attractor flow tree $(h: T \rightarrow M_\RR)\in \mathcal{T}_{\omega,\boldsymbol{\gamma}, 
\mathbf{A}^0}^\theta$. 
Then, for every $\boldsymbol{\gamma}$-fan $\Sigma$,
the coefficient  $F_r^\theta(\gamma_1,\dots,\gamma_r)$
in \eqref{eq_reconstruction_intro}-\eqref{eq_trees_intro}, satisfies
\begin{equation} \label{eq_dt_log}
F_{r,h}^\theta(\gamma_1,\dots,\gamma_r)
= \frac{\prod_{i=1}^r |\gamma_i|}{|\gamma|}
k_{\widetilde{\rho}_h}
N_{\widetilde{\rho}_h}^{\mathrm{toric}}(X_\Sigma)\,.\end{equation}
\end{theorem}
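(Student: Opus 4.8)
The strategy is to combine the two main results already established: the reformulated flow tree formula (Theorem \ref{thm_F_trop}) and the log-tropical correspondence (Theorem \ref{Thm:log_trop_thm}), using the product formula for tropical multiplicities (Lemma \ref{lem: product2 for Ntrop}) as the bridge between them. First I would fix a general $\boldsymbol{\gamma}$-constraint $\mathbf{A}$ close to $\mathbf{A}^0$, so that all the hypotheses of the cited results are in force; by Lemma \ref{lem_tree_dim} we have $\dim \rho_h = d-2$, and by Lemma \ref{lem_flow_trees_nice} we have $\dim \fod_{L_{\mathrm{out}}}^{\rho_h} = d-1$, which is exactly the condition needed to run Theorem \ref{Thm:log_trop_thm} with $\rho = \widetilde{\rho}_h$ (the face of $\cM^\trop_{\omega,\boldsymbol{\gamma},\mathbf{A}^0}(\Sigma)$ contained in $\rho_h$ and containing $\theta$). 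Observe that $S_{\widetilde{\rho}_h}$, the index set appearing on the right of \eqref{eq_main}, is precisely the set of faces $\sigma$ of $\cM^\trop_{\omega,\boldsymbol{\gamma},\mathbf{A}}$ with $\widetilde{\rho}_h \subset \Phi(\sigma)$; since $\widetilde{\rho}_h \subset \rho_h$ and both are $(d-2)$-dimensional with $\Phi(\sigma)$ a cone, this condition is equivalent to $\Phi(\sigma) = \rho_h$, i.e.\ $S_{\widetilde{\rho}_h} = \Phi^{-1}(\rho_h)$. This identification is the key compatibility point, and I would check it carefully using the definitions of $\Phi$ in \eqref{Eq: Phi} and of $\widetilde{\rho}_h$.

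Next I would assemble the chain of equalities. By Theorem \ref{Thm:log_trop_thm},
\[
k_{\widetilde{\rho}_h} N_{\widetilde{\rho}_h}^{\mathrm{toric}}(X_\Sigma) = \sum_{\sigma \in S_{\widetilde{\rho}_h}} k_\sigma N_\sigma^\trop = \sum_{\sigma \in \Phi^{-1}(\rho_h)} k_\sigma N_\sigma^\trop\,.
\]
By Lemma \ref{lem: product2 for Ntrop}, for each such $\sigma$ (which is trivalent of dimension $d-2$ with $\dim \fod_{L_{\mathrm{out}}}^\sigma = d-1$) we have
\[
k_\sigma N_\sigma^\trop = \frac{|\gamma|}{\prod_{i=1}^r |\gamma_i|} \prod_{v \in V(\Gamma_\sigma)} |\omega(\gamma_{E_{1,v}},\gamma_{E_{2,v}})|\,.
\]
Here I should note that the divisibility $|\gamma_{E_{\mathrm{out}}}|$ of the root class $\gamma_{E_{\mathrm{out}}} = \gamma$ appearing in Lemma \ref{lem: product2 for Ntrop} is $|\gamma|$, and the leaf classes are $\gamma_1,\dots,\gamma_r$ with divisibilities $|\gamma_i|$, so the prefactor is indeed $|\gamma|/\prod_i |\gamma_i|$, independent of $\sigma$. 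Pulling this constant out of the sum and applying Theorem \ref{thm_F_trop},
\[
\sum_{\sigma \in \Phi^{-1}(\rho_h)} k_\sigma N_\sigma^\trop = \frac{|\gamma|}{\prod_{i=1}^r |\gamma_i|} \sum_{\sigma \in \Phi^{-1}(\rho_h)} \prod_{v \in V(\Gamma_\sigma)} |\omega(\gamma_{E_{1,v}},\gamma_{E_{2,v}})| = \frac{|\gamma|}{\prod_{i=1}^r |\gamma_i|} F_{r,h}^\theta(\gamma_1,\dots,\gamma_r)\,.
\]
Combining and rearranging gives \eqref{eq_dt_log}.

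I expect the main obstacle to be bookkeeping around the three slightly different moduli spaces of tropical curves in play — $\cM^\trop_{\omega,\boldsymbol{\gamma},\mathbf{A}}$, $\cM^\trop_{\omega,\boldsymbol{\gamma},\mathbf{A}^0}$, and $\cM^\trop_{\omega,\boldsymbol{\gamma},\mathbf{A}^0}(\Sigma)$ — and in particular making the identification $S_{\widetilde{\rho}_h} = \Phi^{-1}(\rho_h)$ airtight, together with verifying that $\widetilde{\rho}_h$ genuinely satisfies the hypothesis $\dim \fod_{L_{\mathrm{out}}}^{\widetilde{\rho}_h} = d-1$ needed to invoke Theorem \ref{Thm:log_trop_thm}. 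The latter follows from Lemma \ref{lem_flow_trees_nice}(i) applied to $\rho_h$, since $\widetilde{\rho}_h$ is a full-dimensional sub-face of $\rho_h$ and the loci $\fod_{L_{\mathrm{out}}}$ coincide, but one should spell this out. I would also remark, as the excerpt does after the statement of Theorem \ref{thm_dt_gw_intro}, that the hypotheses $\iota_\gamma\omega \neq 0$ and $\iota_{\gamma_i}\omega \neq 0$ are exactly what is needed for $\mathcal{T}_{\omega,\boldsymbol{\gamma},\mathbf{A}^0}^\theta$ to carry the flow tree decomposition, so no case is lost. The remaining steps are purely formal manipulation of finite sums and products, requiring no further input.
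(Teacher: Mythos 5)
Your proof is correct and follows essentially the same route as the paper: both proofs chain together Theorem \ref{thm_F_trop}, the product formula of Lemma \ref{lem: product2 for Ntrop}, and Theorem \ref{Thm:log_trop_thm}, with the identification $S_{\widetilde{\rho}_h}=\Phi^{-1}(\rho_h)$ as the linchpin. The paper runs the chain in the opposite direction (from $F_{r,h}^\theta$ toward $N^{\mathrm{toric}}_{\widetilde{\rho}_h}$), but the ingredients and the key compatibility check are identical.
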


\begin{proof}
Fix a general $\boldsymbol{\gamma}$-constraint $\mathbf{A}$, which exists by Lemma 
\ref{lem_dimension}. 
Then, Theorem \ref{thm_F_trop} expresses
the coefficient $F_{r,h}^\theta(\gamma_1,\dots,\gamma_r)$ in terms of the $(d-2)$-dimensional faces $\sigma$ of the moduli space $\mathcal{M}_{\omega,\boldsymbol{\gamma},\mathbf{A}}^\trop$ of $(\omega,\boldsymbol{\gamma})$-marked tropical curves matching $\mathbf{A}$ as
\begin{equation}
    \label{eq_proof_11}
F_{r,h}^\theta(\gamma_1,\dots,\gamma_r) = \sum_{\sigma\in \Phi^{-1}(\rho_h)} \prod_{v \in V(\Gamma_\sigma)} |\omega(\gamma_{E_{1,v}},\gamma_{E_{2,v}}) | \,.
\end{equation}
Using the product formula for tropical multiplicities in Lemma \ref{lem: product2 for Ntrop}, this can be rewritten in terms of the tropical multiplicities $N_\sigma^\trop$ defined in \eqref{eq_N_trop}, and of the tropical coefficients $k_\sigma$ are defined in \eqref{eq_coeff}, as 
\begin{equation}
    \label{eq_proof_12}
F_{r,h}^\theta(\gamma_1,\dots,\gamma_r) = \frac{\prod_{i=1}^r |\gamma_i|}{|\gamma|}\sum_{\sigma\in \Phi^{-1}(\rho_h)} k_\sigma N_\sigma^\trop \, .
\end{equation}
On the other hand, by the log-tropical correspondence of Theorem \ref{Thm:log_trop_thm}, we have 
\begin{equation}\label{eq_proof_13}
k_{\widetilde{\rho}_h}
N_{\widetilde{\rho}_h}^{\mathrm{toric}}(X_\Sigma)=\sum_{\sigma\in S_{\widetilde{\rho}_h}} k_\sigma N_\sigma^{\trop} \,,
\end{equation}
where  $S_{\widetilde{\rho}_h}$ 
is the set of faces $\sigma$ of 
$\cM^\trop_{\omega,\boldsymbol{\gamma},\mathbf{A}}$ such that $\widetilde{\rho}_h \subset \Phi(\sigma)$.
As $\rho_{h}$ is the $(d-2)$-dimensional face of $\mathcal{M}_{\omega,\boldsymbol{\gamma},\mathbf{A}^0}$ containing $\widetilde{\rho}_h$, the set $\Phi^{-1}(\rho_h)$ of $\sigma$'s such that $\Phi(\sigma)=\rho_h$ coincides with the set $S_{\tilde{\rho}_h}$ of $\sigma$'s
such that $\tilde{\rho}_h \subset \Phi(\sigma)$, and so \eqref{eq_dt_log} follows by combining \eqref{eq_proof_12} and \eqref{eq_proof_13}.
\end{proof}

\appendix
\section{Generic log smoothness}

The classical generic smoothness (or Bertini-Sard) theorem states that given $X$ and $Y$ two varieties over a field $\kk$ of characteristic 0, and a morphism $f \colon X \rightarrow Y$, if $X$ is smooth over $\kk$, then there exists an open dense subset $U$ of $Y$ such that for every $u\in U$, the fiber $f^{-1}(u)$ is smooth over $\kk$ (see e.g. \cite[Corollary 10.7]{hartshorne}). We give below a logarithmic version of this result.

\begin{theorem} \label{thm_appendix}
    Let $\kk$ be a field of characteristic $0$. Let $f: X\rightarrow Y$ be a morphism between fine and saturated log schemes over $\kk$. Assume that $X$ is log smooth over the trivial log point $(\Spec \kk, \kk^\star)$, that the scheme underlying $Y$ is integral and that the log structure on $Y$ is generically trivial. Then, there exists an open dense subset $U$ of $Y$ such that for every $u \in U$ the fiber $f^{-1}(u)$, endowed with the log structure restricted from $X$, is log smooth over the trivial log point $(\Spec \kk(u), \kk(u)^\star)$, where $\kk(u)$ is the residue field of $u$.
\end{theorem}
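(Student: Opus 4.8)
The plan is to reduce the logarithmic statement to the classical generic smoothness theorem applied stratum by stratum. The key point is that a log smooth morphism to a trivial log point admits, locally, a clean structural description via charts: by the infinitesimal lifting criterion or Kato's theorem on log smoothness, $X$ log smooth over $(\Spec\kk,\kk^\star)$ means that \'etale-locally $X$ admits a strict smooth morphism to $\Spec\kk[P]$ for a fine saturated monoid $P$, where $\Spec\kk[P]$ carries its toric log structure. Equivalently, the ghost sheaf $\overline{\shM}_X$ stratifies $X$ into locally closed strata $X_\sigma$ (indexed by faces $\sigma$ of the relevant cones, or by the constructible locus where $\overline{\shM}_{X,\bar x}$ is a fixed monoid), and on each such stratum the restricted log structure is, \'etale-locally, pulled back from a strict smooth morphism to a toric stratum. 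The underlying scheme $\ul{X}_\sigma$ of each stratum is then smooth over $\kk$, being locally a smooth scheme over a torus orbit.

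\textbf{Key steps.} First I would recall/establish the local structure of $X$: since $X$ is log smooth over the trivial log point, each locally closed stratum $\ul{X}_\sigma$ (on which $\overline{\shM}_X$ is locally constant) is a smooth $\kk$-scheme, and the log structure on $X_\sigma$ restricted from $X$ is the one associated to a normal crossing-type boundary inside a smooth ambient space --- more precisely it is \'etale-locally divisorial. Second, since $Y$ has generically trivial log structure and integral underlying scheme, there is an open dense $V\subset Y$ over which $\shM_Y$ is trivial; replacing $Y$ by $V$ we may assume $f$ factors through a morphism of schemes $\ul f\colon \ul X\to \ul Y$ together with the trivial-log-structure inclusion, so that the log structure on any fiber $f^{-1}(u)$ is simply $\shM_X|_{f^{-1}(u)}$ and "log smooth over the trivial log point" for the fiber means exactly that the fiber, with this restricted log structure, is again a smooth scheme stratified by its ghost sheaf with the restricted log structure divisorial on each stratum. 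Third, I would apply classical generic smoothness (Bertini--Sard, \cite[Corollary 10.7]{hartshorne}) to the finitely many morphisms $\ul f|_{\ul X_\sigma}\colon \ul X_\sigma\to \ul Y$: each $\ul X_\sigma$ is smooth over $\kk$, so there is an open dense $U_\sigma\subseteq Y$ over which all fibers $(\ul f|_{\ul X_\sigma})^{-1}(u)$ are smooth over $\kk(u)$. Taking $U:=V\cap\bigcap_\sigma U_\sigma$, which is open dense since the stratification is finite, we get that for $u\in U$ the scheme $f^{-1}(u)$ is stratified by the smooth schemes $(\ul f|_{\ul X_\sigma})^{-1}(u)$. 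Fourth, I would check that this implies $f^{-1}(u)$ is log smooth over $(\Spec\kk(u),\kk(u)^\star)$: the restricted log structure on $f^{-1}(u)$ is still, \'etale-locally, divisorial for a normal crossing divisor inside a smooth scheme (the stratification behaves well under the generic-smoothness restriction because the incidence relations among strata are preserved on the open set where dimensions behave correctly), and such log schemes are log smooth over the trivial log point by the standard example \cite[ex. 3.8]{Mark} / Kato's criterion.

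\textbf{Main obstacle.} The delicate point is step four: one must ensure that after restricting to a fiber over a general point $u\in U$, the closure relations among the strata --- i.e. the combinatorial structure of the ghost sheaf $\overline{\shM}_{f^{-1}(u)}$ --- match those of a genuine log smooth scheme, rather than producing, say, non-reduced intersections or the wrong normal crossing combinatorics. Concretely, for each stratum $X_\sigma$ with chart $X_\sigma\to\Spec\kk[P_\sigma]$, the boundary $X\setminus X_\sigma$ meeting the closure $\overline{X_\sigma}$ must, over a general $u$, still cut out a normal crossing divisor in the smooth scheme $(\ul f|_{\overline{\ul X_\sigma}})^{-1}(u)$ with the expected intersection pattern; this requires applying generic smoothness not just to the open strata but compatibly to all the closed strata $\overline{\ul X_\sigma}$ and their intersections simultaneously, and invoking that transversality of the boundary components is an open condition on $Y$. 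Once transversality of the boundary stratification is secured on a common open dense $U$, the log structure on each fiber is divisorial with simple normal crossing boundary inside a smooth scheme, hence log smooth over the trivial log point, which completes the argument.
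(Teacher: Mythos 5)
Your proposal takes a genuinely different route from the paper, but it has two related gaps that the paper's argument avoids. The paper reduces to Kato's theory of log regularity: log smoothness of $X$ over the trivial log point gives log regularity \cite[Prop.\ 8.3(1)]{kato_toric}, log regularity is preserved under localization \cite[Prop.\ 7.2]{kato_toric}, so the generic fiber $X_\eta$ is log regular, hence (in characteristic $0$) log smooth over $(\Spec\kk(\eta),\kk(\eta)^\star)$ \cite[Prop.\ 8.3(2)]{kato_toric}; then one applies Lodh's logarithmic Bertini--Sard theorem \cite[\S 1.8]{lodh2013bertini} and intersects with the open locus where $\shM_Y$ is trivial. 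You instead try to reduce to classical Bertini--Sard applied stratum by stratum.

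The first gap is that you repeatedly characterize log smoothness over the trivial log point as ``the log structure is \'etale-locally divisorial for a normal crossing divisor inside a smooth ambient scheme.'' This is false: log smoothness over the trivial log point permits toric singularities in the underlying scheme. The basic example is $\Spec\kk[P]$ for a non-free fs monoid $P$ (e.g.\ the quadric cone $\Spec\kk[x,y,z]/(xz-y^2)$) with its toric log structure: this is log smooth over the trivial log point but $\ul X$ is singular, and the log structure is \emph{not} divisorial for an NC divisor. Your whole step four, including the ``transversality of boundary components'' fix, is phrased in terms of the SNC picture and so does not cover the general case. The second, related gap is that smoothness of the strata of $f^{-1}(u)$ is necessary but not sufficient for log smoothness of the fiber. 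Kato's log regularity criterion also requires the codimension condition $\dim \mathcal{O}_{f^{-1}(u),\bar x} = \dim \mathcal{O}_{f^{-1}(u),\bar x}/I_{\bar x} + \rk \overline{\shM}_{f^{-1}(u),\bar x}^{\gp}$, and this can fail even with all strata smooth: take $X=\AA^2$ with log structure from $\{xy=0\}$ and $f(x,y)=x+y$; the fiber over $0$ has a smooth stratum at the origin (a point) but is not log regular there, since the fiber is $1$-dimensional, the stratum is $0$-dimensional, and the ghost monoid has rank $2$. You would need to control this dimension condition uniformly over $Y$ in addition to the stratum-wise smoothness, which your transversality sketch gestures at but does not establish, and doing so essentially forces you back to Kato's formalism. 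The paper's route via log regularity handles both issues automatically.
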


\begin{proof}
As $X$ is log smooth over the trivial log point $(\Spec \kk, \kk^\star)$, $X$ is log regular in the sense of \cite[(2.1)]{kato_toric} by \cite[Proposition 8.3 (1)]{kato_toric}. By \cite[Proposition 7.2]{kato_toric} the localization of a log regular log scheme is log regular. So, the generic fiber $X_\eta$ of $X$ over the generic point $\eta$ of $Y$ is also log regular. As $\kk$ is of characteristic $0$, the function field $\kk(\eta)$ of $Y$ is also of characteristic zero, and so $X_\eta$ log regular implies that $X_\eta$ is log smooth over the trivial log point $(\Spec \kk(\eta), \kk(\eta)^{\star})$ by \cite[Proposition 8.3 (2)]{kato_toric}. As the log structure on $Y$ is assumed to be generically trivial, $(\Spec \kk(\eta), \kk(\eta)^{\star})$ is the generic point of $Y$ endowed with the log structure restricted from $Y$, and so there exists an open dense subset $V$ of $Y$ such that $f|_{f^{-1}(V)}$ is log smooth by the generic smoothness result of \cite[\S 1.8]{lodh2013bertini}.
On the other hand, applying  \cite[\S 1.8]{lodh2013bertini} to $Y$ over $(\Spec \kk, \kk^\star)$, one deduces that there exists an open dense subset $W$ of $Y$ such that the log structure of $Y$ is trivial in restriction to $W$. The intersection $U:=V \cap W$ is an open dense subset of $Y$ with trivial log structure such that $f|_{f^{-1}(U)}$ is log smooth. Hence, for every $u\in U$, the fiber $f^{-1}(u)$ is log smooth over the trivial log point $(\Spec \kk(u), \kk(u)^\star)$.
\end{proof}

\FloatBarrier

\bibliographystyle{plain}
\bibliography{bibliography}


\end{document}